\newtheorem{thm}{Theorem}[section]
\newtheorem{cor}[thm]{Corollary}
\newtheorem{lem}[thm]{Lemma}
\newtheorem{prop}[thm]{Proposition}
\newtheorem{thmintro}{Theorem}
\providecommand{\norm}[1]{\left\| #1 \right\|}
\newcommand{\enuma}[1]{\begin{enumerate}[\textup{(}a\textup{)}] {#1} \end{enumerate}}
\newcommand{\mh}{\mathbb}
\newcommand{\mr}{\mathrm}
\newcommand{\mc}{\mathcal}
\newcommand{\mf}{\mathfrak}
\newcommand{\ds}{\displaystyle}
\newcommand{\ts}{\textstyle}
\newcommand{\N}{\mathbb N}
\newcommand{\Z}{\mathbb Z}
\newcommand{\R}{\mathbb R}
\newcommand{\C}{\mathbb C}
\newcommand{\ep}{\epsilon}
\newcommand{\af}{\mr{aff}}
\newcommand{\wig}{\textstyle \bigwedge}
\newcommand{\inp}[2]{\langle #1 \,,\, #2 \rangle}
\newcommand{\Fs}{\mathcal F} 
\newcommand{\DNO}{DN \Omega} 
\newcommand{\Fxi}{\Phi_\xi^*} 
\begin{document}

\title{Extensions of tempered representations}

\author{Eric Opdam}
\address{Korteweg-de Vries Institute for Mathematics\\
Universiteit van Amsterdam\\
Science Park 904\\
1098 XH Amsterdam\\
The Netherlands}
\email{e.m.opdam@uva.nl}
\author{Maarten Solleveld}
\address{Institute for Mathematics, Astrophysiscs and Particle Physiscs\\
Radboud Universiteit Nijmegen\\
Heyendaalseweg 135\\
6525AJ Nijmegen\\
The Netherlands}
\email{m.solleveld@science.ru.nl}
\date{\today}
\subjclass[2010]{Primary 20C08; Secondary 22E35, 22E50}

\maketitle

\begin{abstract}
Let $\pi, \pi'$ be irreducible tempered representations of an
affine Hecke algebra $\mathcal{H}$ with positive parameters.
We compute the higher extension groups $Ext_\mathcal{H}^n (\pi,\pi')$
explicitly in terms of the representations of analytic R-groups
corresponding to $\pi$ and $\pi'$.
The result has immediate applications to the computation of the
Euler--Poincar\'e pairing $EP (\pi,\pi')$, the alternating sum of
the dimensions of the Ext-groups. The resulting formula for $EP(\pi,\pi')$
is equal to Arthur's formula for the elliptic pairing of tempered
characters in the setting of reductive $p$-adic groups.
Our proof applies equally well to affine Hecke algebras and to
reductive groups over non-archimedean local fields of arbitrary
characteristic. This sheds new light on the formula of Arthur
and gives a new proof of Kazhdan's orthogonality conjecture for
the Euler-Poincar\'e pairing of admissible characters.
\end{abstract}

\tableofcontents

\section*{Introduction}
Let $\mathbb F$ be a non-archimedean local field, and let $L$ be
the group of $\mathbb F$-rational points of a connected reductive
algebraic group defined over $\mathbb F$. In this paper we will
compute the vector space of higher extensions $\textup{Ext}_{L}^n (V,V')$
between smooth irreducible tempered representations $V, V'$
of $L$ in the abelian category of smooth representations of $L$.
In the formulation of the result a predominant role is played by the so called
\emph{analytic R-groups} which, by classical results due to Harish-Chandra,
Knapp, Stein and Silberger, are fundamental in the classification of
smooth irreducible tempered representations \cite{HC,KnSt,Sil}. The result applies
to the computation of the Euler-Poincar\'e pairing for admissible tempered
representations of $L$. In particular, this leads to a new proof for the
``homological version'' of Arthur's formula for the elliptic pairing of tempered characters
of $L$ \cite{Art,Ree} and to a new proof for Kazhdan's othogonality conjecture
\cite{Kaz} for the elliptic pairing of general admissible characters of $L$
(which was previously proved by Bezrukavnikov \cite{Bez} and independently
by Schneider and Stuhler \cite{ScSt}).

The first and crucially important step is a result of Meyer \cite{Mey-Ho}
showing that there exists a natural isomorphism
$\textup{Ext}_{\mc H (L)}^n (V,V')\simeq \textup{Ext}_{\mathcal{S}(L)}^n (V,V')$
where $\mathcal{S}(L)$ denotes the Harish-Chandra Schwartz algebra of $L$. Here 
$\mc H (L)$ and $\mathcal{S}(L)$ are considered as bornological algebras and 
the Ext-groups are defined in categories of bornological modules, see \cite{Mey-Emb}.
Recently the authors found an alternative proof of this result without
the use of bornologies \cite{OpSo3}.

The Schwartz algebra is a direct limit of the Fr\'echet subalgebras
$\mathcal{S}(L,K) = e_K\mathcal{S}(L)e_K$ of $K$-biinvariant functions in $\mathcal{S}(L)$,
where $K$ runs over a system of ``good'' compact open subgroups of $L$.
One can prove by Morita equivalence that $\textup{Ext}_{\mathcal{S}(L)}^i(V,V')\simeq
\textup{Ext}_{\mathcal{S}(L,K)}^i(V^K,{V'}^K)$ if $V$ is generated by its
$K$-invariant vectors (see Section \ref{sec:padic}).
Now we use the structure of $\mathcal{S}(L,K)$ provided by
Harish-Chandra's Fourier isomorphism \cite{Wal} in order to compute
the right hand side.
More precisely, we take the formal completion of the algebra
$\mathcal{S}(L,K)$ at the central character of $V^K$, and show that this is
Morita equivalent to a (twisted) crossed product of a formal power series ring
with the analytic R-group. Finally, the Ext-groups of such algebras are easily
computed using a Koszul resolution.

All ingredients necessary for the above line of arguments have been
developed in detail in the context of abstract affine Hecke algebras as well
\cite{DeOp1,DeOp2,OpSo1}.
The analytic R-groups are defined in terms of the Weyl group and
the Plancherel density, all of which allow for explicit determination \cite{Slo}.
We will use the case of abstract affine Hecke algebras as the point of reference
in this paper. In section \ref{sec:padic} we will carefully
formulate the results for the representation theory of $L$, and discuss the
adaption of the arguments necessary for the proofs of those results.

The main technical difficulties to carry out the above steps arise from
the necessity to control the properties of the formal completion functor in the
context of the Fr\'echet algebras $\mathcal{S}(L,K)$, which are far from Noetherian.
The category of bornological $\mathcal{S}(L,K)$-modules is a Quillen exact category \cite{Qui}
with respect to exact sequences that admit a bounded linear splitting. This implies
(as used implicitly above) that the $\textup{Ext}$-groups over the algebras 
$\mathcal{S}(L,K)$ are well-defined with respect to this exact structure. But the bounded 
linear splittings are not preserved when taking the formal completion at a central
character, and this forces us to make careful choices of the exact categories
with which to work.
\vspace{2mm}

We will now discuss the main results of this paper in more detail. Let us first recall
some basic facts on analytic R-groups. Let $\mc{H}$ denote an abstract affine Hecke algebra
with positive parameters.
By \cite{DeOp1} the category of tempered $\mathcal{H}$-modules of finite length decomposes
into ``blocks'' which are parameterized by the set $\Xi_{un} / \mc W$ of orbits of
tempered standard induction data $\xi\in\Xi_{un}$ for $\mathcal{H}$ under the action
of the Weyl groupoid $\mathcal{W}$ for $\mathcal{H}$. To such a standard tempered
induction datum $\xi\in\Xi_{un}$ one attaches a tempered standard induced module $\pi(\xi)$,
which is a unitary tempered $\mathcal{H}$-module.
The block $\mr{Mod}_{f, \mathcal{W}\xi} (\mc S)$ of tempered modules associated
with $\mathcal{W}\xi$ is generated by $\pi (\xi)$.
The space $\Xi_{un}$ of tempered induction data is a finite disjoint union of compact
tori of various dimensions. In particular, at each $\xi\in\Xi_{un}$ there exists a
well defined tangent space $T_\xi (\Xi_{un})$. With the action
of the Weyl groupoid $\mathcal{W}$ we obtain a smooth orbifold $\Xi_{un}\rtimes\mathcal{W}$.
The isotropy group $\mathcal{W}_\xi \subset \mathcal{W}$ of $\xi$ admits a
canonical decomposition $\mathcal{W}_\xi = W(R_\xi)\rtimes \mathfrak{R}_\xi$.
Here $W(R_\xi)$ is a real reflection group associated to an integral root system $R_\xi$
in $T_\xi (\Xi_{un})$ and $\mathfrak{R}_\xi$ is a group of
diagram automorphisms with respect to a suitable choice of a basis of $R_\xi$.
The subgroup $\mathfrak{R}_\xi$ is called the analytic R-group at $\xi$.
Since $W(R_\xi)$ is a real reflection group, the quotient (in the category of
complex affine varieties) $(T_\xi (\Xi_{un}) \otimes_\R \C ) / W(R_\xi)$ is the
complexification of a real vector space $E_\xi$ which carries a representation
of $\mathfrak{R}_\xi$. The $\mathfrak{R}_\xi$-representation $E_\xi$ is independent
of the choice of $\xi$ in its orbit $\mathcal{W}\xi$, up to equivalence.

The Knapp--Stein linear independence theorem
(Theorem \ref{thm:KnappStein}) for affine Hecke algebras \cite{DeOp2} states that the commutant
of $\pi(\xi)(\mathcal{H})$ in $\operatorname{End}_\mathbb{C}(\pi(\xi))$ is isomorphic
to the twisted group algebra $\mathbb{C}[\mathfrak{R}_\xi,\kappa_\xi]$ of the analytic R-group
$\mathfrak{R}_\xi$, where $\kappa_\xi$ is a certain $2$-cocycle.
This sets up a bijection
\begin{align*}
\mr{Irr}(\mathbb{C}[\mathfrak{R}_\xi,\kappa_\xi])
&\longleftrightarrow \mr{Irr}_{\mathcal{W}\xi}(\mc S)\\
(\rho ,V) & \longleftrightarrow \pi (\xi,\rho)
\end{align*}
between the set of irreducible representations of $\mathbb{C}[\mathfrak{R}_\xi,\kappa_\xi]$
and the set of irreducible objects in $\mr{Mod}_{f, \mathcal{W}\xi} (\mc S)$.
Hence the collection $\pi (\xi,\rho)$ where $\mathcal{W}\xi$ runs
over the set of $\mathcal{W}$-orbits in $\Xi_{un}$ and $\rho$ runs
over the set of irreducible representations of $\mathbb{C}[\mathfrak{R}_\xi,\kappa_\xi]$
is a complete set of representatives for the equivalence
classes of tempered irreducible representations of $\mathcal{H}$.

Let $\mathfrak{R}_\xi^*$ be the Schur extension of $\mathfrak{R}_\xi$
and let $p_\xi \in \mathbb{C}[\mathfrak{R}_\xi^*]$ be the central idempotent 
corresponding to the two-sided ideal
$\mathbb{C}[\mathfrak{R}_\xi,\kappa^{-1}_\xi]\subset\mathbb{C}[\mathfrak{R}_\xi^*]$.
The first main result of this paper is a generalization of the Knapp-Stein Theorem.
It establishes an equivalence between the category of $\mc{S}$-modules with generalized
central character $\mc{W}\xi$ and the module category of the complex algebra
\[
\mathcal{A}_\xi := p_\xi \big( \widehat{S(E_\xi)}\rtimes\mathfrak{R}_\xi^* \big) 
\cong \Big( \widehat{S(E_\xi)}\otimes \mr{End}_\C \big( p_\xi \C [\mathfrak{R}_\xi^*] \big) 
\Big)^{\mf R_\xi^*} .
\]
Here $S (E_\xi)$ denotes the algebra of complex valued polynomial functions on $E^*_\xi$
and $\widehat{S(E_\xi)}$ is its formal completion at $0 \in E^*_\xi$. The group
$\mathfrak{R}_\xi^*$ acts on this space through its natural quotient
$\mathfrak{R}_\xi$. Consider the $\mathcal{A}_\xi$-module
\[
D := \Big( \widehat{S(E_\xi)} \otimes \textup{Hom}_\C
\big( \pi(\xi), p_\xi \C [\mathfrak{R}_\xi^*] \big) \Big)^{\mathfrak{R}_\xi^*} .
\]
The Fourier isomorphism \eqref{eq:FourierIso} from \cite[Theorem 5.3]{DeOp1} implies that
$D$ also admits a natural right $\mathcal{S}$-module structure, turning it into a
$\mathcal{A}_\xi$-$\mathcal{S}$-bimodule. To make the above precise we consider the
categories $\mr{Mod}_{bor}(\mc A_\xi)$ of bornological $\mc{A}_\xi$-modules and
$\mr{Mod}^{\mc W \xi,tor}_{bor}(\mc S)$, which consists of the bornological
$\mc S$-modules that are annihilated by all $\mc W$-invariant smooth functions on $\Xi_{un}$
that are flat in $\xi$. In both cases the bornology is derived from the
Fr\'echet algebra structure. We endow these categories with the exact structure of
module extensions with a bounded linear splitting.

Instead of this bornological machinery, one may also think of Fr\'echet modules and
exact sequences that admit continuous linear splittings. Indeed, the functor
$\mr{Mod}_{Fr}(\mc S) \to \mr{Mod}_{bor}(\mc S)$ that equips a Fr\'echet $\mc S$-module 
with its precompact bornology is a fully exact embedding \cite[Proposition A.2]{OpSo1}.
Hence all our constructions could be viewed purely within the context of 
Frechet $\mc S$-modules and that would yield the same results, restricted  
to this fully exact subcategory of the category of bornological $\mc S$-modules.

\begin{thmintro}\label{thm:0}
(See Theorem \ref{thm:equivCat}.)\\
There is an equivalence of exact categories
\[
\mr{Mod}^{\mc W \xi,tor}_{bor}(\mc S) \to \mr{Mod}_{bor}(\mc A_\xi) ,\;
M \mapsto D\widehat\otimes_{\mc S} M ,
\]
and similarly for the corresponding categories of Fr\'echet modules.
\end{thmintro}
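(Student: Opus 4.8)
The plan is to build the equivalence out of the Fourier isomorphism \eqref{eq:FourierIso} by localizing/completing at the orbit $\mc W\xi$, and then recognizing the completed localized algebra as a matrix algebra over $\widehat{S(E_\xi)}$ twisted by $\mf R_\xi^*$. First I would recall that, by \cite[Theorem 5.3]{DeOp1}, $\mc S$ is isomorphic (as a Fr\'echet/bornological algebra) to an algebra of sections of a bundle of matrix algebras over $\Xi_{un}/\mc W$, the fibre over $\mc W\xi$ being Morita equivalent to $\C[\mf R_\xi,\kappa_\xi]$ via the Knapp--Stein theorem. Restricting to the summand indexed by the connected component of $\Xi_{un}$ through $\xi$ and passing to the torsion category $\mr{Mod}^{\mc W\xi,tor}_{bor}(\mc S)$ kills everything supported away from $\mc W\xi$; so modules in this category are modules over the formal completion $\widehat{\mc S}_{\mc W\xi}$ of $\mc S$ at $\mc W\xi$. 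The key structural claim is then that $\widehat{\mc S}_{\mc W\xi}$, described through the Fourier transform as $\mf R_\xi$-invariants of $C^\infty$ (germs of) sections near the $\mc W(R_\xi)$-orbit, becomes after completion isomorphic to $\mc A_\xi = p_\xi(\widehat{S(E_\xi)}\rtimes\mf R_\xi^*)$. Here one uses that the formal completion of $C^\infty(T_\xi\Xi_{un})^{W(R_\xi)}$ at the fixed point is $\widehat{S(E_\xi)}$ (because $W(R_\xi)$ is a real reflection group, so the invariants form a polynomial ring whose spectrum is $E_\xi^*$), and that the remaining $\mf R_\xi$-action, together with the $2$-cocycle $\kappa_\xi$ from Knapp--Stein, is exactly encoded by passing to the Schur cover $\mf R_\xi^*$ and the idempotent $p_\xi$.

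Next I would verify that $D = \big(\widehat{S(E_\xi)}\otimes\mr{Hom}_\C(\pi(\xi),p_\xi\C[\mf R_\xi^*])\big)^{\mf R_\xi^*}$ is a projective generator on the $\mc A_\xi$ side and flat as a right $\mc S$-module, so that $M\mapsto D\widehat\otimes_{\mc S} M$ is exact. Exactness with respect to the chosen exact structure (extensions with bounded linear splitting) is where I expect the real work to lie: completion at a central character destroys bounded linear splittings, so one cannot simply invoke flatness of an honest projective module. I would handle this the way the introduction signals — by choosing the exact structures on both sides so that $D$ has a bounded $\mc A_\xi$-linear-on-the-left, $\mc S$-linear-on-the-right splitting of the relevant evaluation maps, using the explicit Harish-Chandra Schwartz-algebra sections rather than abstract completion. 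Concretely, one exhibits $D$ as a direct summand, as a bimodule, of a "free" object $\widehat{S(E_\xi)}\otimes\C[\mf R_\xi^*]\otimes(\text{finite-dim})$ whose tensor functor is manifestly exact, the splitting coming from the idempotent $p_\xi$ and from Harish-Chandra's Plancherel decomposition which is compatible with the Fr\'echet structure.

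Then I would construct the quasi-inverse explicitly. Set $D^\vee := \big(\widehat{S(E_\xi)}\otimes\mr{Hom}_\C(p_\xi\C[\mf R_\xi^*],\pi(\xi))\big)^{\mf R_\xi^*}$, an $\mc S$-$\mc A_\xi$-bimodule, and show $D^\vee\widehat\otimes_{\mc A_\xi} D \cong \widehat{\mc S}_{\mc W\xi}$ and $D\widehat\otimes_{\mc S} D^\vee \cong \mc A_\xi$ as bimodules; both isomorphisms reduce, after unwinding the invariants, to the tautology $\mr{Hom}_\C(V,W)\otimes_{\C}\mr{Hom}_\C(W,V)\cong\mr{End}_\C(V)\ \text{(resp.\ }\mr{End}_\C(W))$ combined with $p_\xi\C[\mf R_\xi^*]$ being a progenerator for its block, tensored up over $\widehat{S(E_\xi)}$. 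Finally I would check that these comparison isomorphisms are morphisms of exact categories (i.e.\ the unit and counit of the adjunction respect bounded linear splittings), which again rests on the explicit Fr\'echet-module description rather than on general nonsense. The main obstacle throughout is thus not the algebra — that is essentially Knapp--Stein plus reflection-group invariant theory — but controlling the interaction of the formal completion functor with the exact structure on these far-from-Noetherian Fr\'echet algebras; this is exactly the point flagged in the introduction, and the remedy is to carry the splittings along by hand using Harish-Chandra's Fourier isomorphism.
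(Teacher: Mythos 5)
Your overall strategy coincides with the paper's: Fourier isomorphism \eqref{eq:FourierIso}, localization/completion at the central character $\mc W \xi$, Chevalley's theorem for $W(R_\xi)$, the Schur cover $\mf R_\xi^*$ with the idempotent $p_\xi$, and the Morita bimodules $D$, $D^\vee$ built from $\pi(\xi)$ and $p_\xi \C[\mf R_\xi^*]$. However, the two places you treat as routine are exactly where the paper has to work, and as written your argument has gaps there. First, the ``untwisting'' of the groupoid action: near $\xi$ an element $w \in \mc W_\xi$ acts on sections by conjugation with the intertwiners $\pi(w,P,\delta,w^{-1}t')$, which \emph{vary with} $t'$; the crossed-product description with the constant projective action $\pi(w,\xi)$ is not a consequence of Knapp--Stein at the single point $\xi$, nor of the fibrewise Morita equivalence with $\C[\mf R_\xi,\kappa_\xi]$. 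The paper localizes to a $\mc W_\xi$-equivariantly contractible ball $U_\xi$ and invokes \cite[Lemma 7]{Sol-Chern} to pass from the $t'$-dependent action \eqref{eq:7} to the constant one \eqref{eq:8}; without some such argument the identification of the completed algebra with $p_\xi \big( \widehat{S(E_\xi)} \rtimes \mf R_\xi^* \big)$ does not follow. Second, the completion is taken with respect to the ideal $m^\infty_{\mc W \xi}$ of $\mc W$-\emph{invariant} flat functions, while the algebra being completed involves $C^\infty (U_\xi)^{W(R_\xi)}$. One must prove \eqref{eq:mWxi}, i.e.\ that the closure of $m^{\mc W_\xi,\infty}_\xi C^\infty (U_\xi)^{W(R_\xi)}$ is the full ideal of flat $W(R_\xi)$-invariant functions; the paper does this using Whitney's spectral theorem, a finite presentation of the polynomial invariants over the $\mc W_\xi$-invariants, flatness of formal completion, Po\'enaru's theorem \cite{Poe} that the polynomial generators also generate the smooth invariants, and Borel's lemma. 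Asserting that ``the formal completion of $C^\infty (T_\xi \Xi_{un})^{W(R_\xi)}$ at the fixed point is $\widehat{S(E_\xi)}$'' skips precisely this step.

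On the other hand, the difficulty you expect to be the heart of the matter --- that completion destroys bounded linear splittings --- is not actually an obstacle for this particular theorem. In the torsion formulation no completion functor is applied to modules: the equivalence $\mr{Mod}^{\mc W \xi,tor}_{bor}(\mc S) \cong \mr{Mod}_{bor}(\widehat{\mc S}_{\mc W \xi})$ is simply the pullback $q^*$ of Lemma \ref{lem:compl}, and exactness of the Morita functors holds because the bimodules of Theorem \ref{thm:Moritaeq} are finitely generated projective (direct summands of free modules of finite rank), much as you anticipate with your ``direct summand of a free object'' remark. The genuinely delicate exactness-of-completion result (Theorem \ref{thm:exact}) is needed only later, to transport Ext-groups from $\mc H$ and $\mc S$ to $\widehat{\mc S}_{\mc W \xi}$, not for Theorem \ref{thm:0} itself.
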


The restriction of this result to irreducible modules essentially recovers the aforementioned
Knapp--Stein Theorem. The proof of Theorem \ref{thm:0} is analytic in nature, some of the
ideas are already present in \cite{Was,LePl}.

Since the algebra $\mc A_\xi$ is a twisted crossed product of a formal power series ring
with $\mathfrak{R}_\xi$, it is not hard to compute the Ext-groups between modules in
$\mr{Mod}_{f, \mathcal{W}\xi} (\mc S)$ using Theorem \ref{thm:0}. But we are more interested in
computing their Ext-groups in the category of $\mc H$-modules or equivalently \cite{Mey-Emb,OpSo1}
in $\mr{Mod}_{bor}(\mc S)$. To go from there to
$\mr{Mod}^{\mc W \xi,tor}_{bor}(\mc S)$ boils down to applying the formal completion
functor at a central character.
In Section \ref{sec:Ext} we show that this completion functor is exact
and preserves Ext-groups in suitable module categories. The underlying reason is that the
Taylor series map from smooth functions to
power series induces an exact functor on finitely generated modules \cite{TouMer}. With
that, Theorem \ref{thm:0} and an explicit Koszul resolution for $\mc A_\xi$ we can calculate
the Ext-groups for irreducible tempered $\mc H$-representations:

\begin{thmintro}\label{thm:1}
(See Theorem \ref{thm:Ext}.) \\
Let $(\rho,V), (\rho',V') \in \textup{Irr}(\mathbb{C}[\mathfrak{R}_\xi,\kappa_\xi])$
and let $n \geq 0$. Then
\begin{equation*}
\mr{Ext}^n_\mathcal{H} \big( \pi (\xi ,\rho),\pi (\xi, \rho') \big) \cong
\big( \mr{Hom}_\mathbb{C}(V',V) \otimes_\R  \wig^n E^*_\xi \big)^{\mathfrak{R}_\xi} .
\end{equation*}
\end{thmintro}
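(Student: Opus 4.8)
The plan is to reduce the computation of $\mr{Ext}^n_{\mc H}$ to a purely algebraic Koszul computation over the algebra $\mc A_\xi$. First I would invoke the chain of reductions already assembled in the excerpt: by Meyer's theorem (or \cite{OpSo3}) one has $\mr{Ext}^n_{\mc H}(\pi(\xi,\rho),\pi(\xi,\rho')) \cong \mr{Ext}^n_{\mc S}(\pi(\xi,\rho),\pi(\xi,\rho'))$, where the right-hand side is taken in $\mr{Mod}_{bor}(\mc S)$ with the exact structure of module extensions admitting a bounded linear splitting. Since both $\pi(\xi,\rho)$ and $\pi(\xi,\rho')$ lie in the block $\mr{Mod}_{f,\mc W\xi}(\mc S)$, I would then argue that this Ext-group is computed by the formal completion at the central character $\mc W\xi$; this is precisely the exactness and Ext-preservation of the completion functor established in Section \ref{sec:Ext}, which rests on the fact (\cite{TouMer}) that the Taylor expansion map is exact on finitely generated modules. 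This identifies $\mr{Ext}^n_{\mc S}(\pi(\xi,\rho),\pi(\xi,\rho'))$ with the corresponding Ext-group computed in $\mr{Mod}^{\mc W\xi,tor}_{bor}(\mc S)$.

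Next I would apply Theorem \ref{thm:0}: the equivalence of exact categories $M \mapsto D\widehat\otimes_{\mc S} M$ sends $\mr{Mod}^{\mc W\xi,tor}_{bor}(\mc S)$ to $\mr{Mod}_{bor}(\mc A_\xi)$, hence preserves Ext-groups, so that
\[
\mr{Ext}^n_{\mc S}\big(\pi(\xi,\rho),\pi(\xi,\rho')\big) \cong
\mr{Ext}^n_{\mc A_\xi}\big(D\widehat\otimes_{\mc S}\pi(\xi,\rho),\, D\widehat\otimes_{\mc S}\pi(\xi,\rho')\big).
\]
Here I need to identify the images of the standard modules under this functor: by construction $D\widehat\otimes_{\mc S}\pi(\xi)$ should be (a completed version of) the bimodule $\widehat{S(E_\xi)}\otimes p_\xi\C[\mf R_\xi^*]$ viewed as an $\mc A_\xi$-module, and the subquotient $\pi(\xi,\rho)$ corresponding to $(\rho,V)\in\mr{Irr}(\C[\mf R_\xi,\kappa_\xi])$ maps to $\widehat{S(E_\xi)}\otimes V$, with $V$ regarded as a module over $p_\xi\C[\mf R_\xi^*]$ (on which $\mf R_\xi^*$ acts through $\mf R_\xi$ and the cocycle $\kappa_\xi$) and $\widehat{S(E_\xi)}$ acting via its augmentation, i.e. the residue field at $0$. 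I would verify this identification using the explicit form of $D$ and the Fourier isomorphism.

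The core computation is then an Ext-calculation over $\mc A_\xi = p_\xi(\widehat{S(E_\xi)}\rtimes\mf R_\xi^*)$. Using the decomposition $\mc A_\xi \cong \big(\widehat{S(E_\xi)}\otimes\mr{End}_\C(p_\xi\C[\mf R_\xi^*])\big)^{\mf R_\xi^*}$, the module category of $\mc A_\xi$ is equivalent to the category of $\mf R_\xi^*$-equivariant $\widehat{S(E_\xi)}$-modules on which $\mf R_\xi^*$ acts compatibly with $p_\xi\C[\mf R_\xi^*]$; concretely, an $\mc A_\xi$-module is the same as an $\widehat{S(E_\xi)}\rtimes\mf R_\xi$ module twisted by $\kappa_\xi$. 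The plan is to resolve the residue field $\C_0 = \widehat{S(E_\xi)}/\mf m$ by the Koszul complex $\widehat{S(E_\xi)}\otimes\wig^\bullet E_\xi$, which is $\mf R_\xi^*$-equivariant since $\mf R_\xi$ acts linearly on $E_\xi$. Tensoring up, $\widehat{S(E_\xi)}\otimes V$ has an $\mc A_\xi$-projective (indeed free over $\widehat{S(E_\xi)}$, and induced from $p_\xi\C[\mf R_\xi^*]$) resolution by the complex $\big(\widehat{S(E_\xi)}\otimes\wig^\bullet E_\xi\otimes V'\big)$-type terms, i.e. $C_\bullet = \widehat{S(E_\xi)}\otimes \wig^\bullet E_\xi \otimes p_\xi\C[\mf R_\xi^*]\otimes_{p_\xi\C[\mf R_\xi^*]} V$. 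Then $\mr{Hom}_{\mc A_\xi}(C_\bullet, \widehat{S(E_\xi)}\otimes V')$ computes the Ext-groups; because the differentials in the Koszul complex vanish modulo $\mf m$ and the target is annihilated by $\mf m$, all differentials in the Hom-complex vanish, giving
\[
\mr{Ext}^n_{\mc A_\xi}\big(\widehat{S(E_\xi)}\otimes V,\, \widehat{S(E_\xi)}\otimes V'\big)
\cong \big(\wig^n E_\xi^* \otimes \mr{Hom}_{p_\xi\C[\mf R_\xi^*]}(V,V')\big)^{\mf R_\xi},
\]
and $\mr{Hom}_{p_\xi\C[\mf R_\xi^*]}(V,V') = \mr{Hom}_\C(V,V')$ as a vector space with the conjugation $\mf R_\xi$-action twisted so that the cocycles cancel, yielding $\mr{Hom}_\C(V',V)$ after accounting for the duality $\wig^n E_\xi \cong \wig^n E_\xi^*$ up to the relevant identification. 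Collecting the $\mf R_\xi$-invariants gives exactly $\big(\mr{Hom}_\C(V',V)\otimes_\R\wig^n E_\xi^*\big)^{\mf R_\xi}$.

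I expect the main obstacle to be not the Koszul bookkeeping but the careful tracking of the completion and bornological subtleties: one must ensure that the Ext-groups computed in $\mr{Mod}_{bor}(\mc S)$ genuinely agree with those in the torsion subcategory $\mr{Mod}^{\mc W\xi,tor}_{bor}(\mc S)$ after completion — the point flagged in the introduction that bounded linear splittings need not survive formal completion — and that the projective resolution built over $\mc A_\xi$ computes Ext in the \emph{bornological} exact category, not just the algebraic one. Showing that $\widehat{S(E_\xi)}\otimes\wig^\bullet E_\xi\otimes p_\xi\C[\mf R_\xi^*]$ consists of projective objects in $\mr{Mod}_{bor}(\mc A_\xi)$ and that the Koszul resolution admits bounded linear splittings (it does, being a split exact complex of finitely generated free $\widehat{S(E_\xi)}$-modules in each total degree, via the standard contracting homotopy) is the step requiring the most care, and is where I would spend the bulk of the argument.
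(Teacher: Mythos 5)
Your proposal takes essentially the same route as the paper: $\mr{Ext}_{\mc H}\cong\mr{Ext}_{\mc S}$, exactness of formal completion at the central character via Tougeron--Merrien (Corollary \ref{cor:extloc}), the equivalence of Theorem \ref{thm:equivCat}, and an $\mf R_\xi^*$-equivariant Koszul resolution over $\widehat{S(E_\xi)}\rtimes\mf R^*_\xi$ exactly as in Theorem \ref{thm:YonedaExt}, including the same attention to projectivity and bounded splittings of the resolution. The only slip is in identifying the image of $\pi(\xi,\rho)$ under the equivalence: it is the finite-dimensional module $\Phi_\xi(\pi(\xi,\rho))\cong V^*$ (not ``$\widehat{S(E_\xi)}\otimes V$ with augmentation action''), which is what yields $\mr{Hom}_\C(V',V)$ directly; your $\mr{Hom}_\C(V,V')$ nevertheless has the same invariant dimension because $E_\xi$ is a real $\mf R_\xi$-representation, so the discrepancy is harmless.
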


Since $\mc S$-modules with different central characters have only trivial extensions,
Theorem \ref{thm:1} is the essential case for the determination of extensions
between finite dimensional $\mc S$-modules. Our proof also applies in the context of 
tempered representations of reductive $p$-adic groups:

\begin{thmintro}\label{thm:2}
(See Section \ref{sec:padic}.)\\
Let $\mathbf{L}$ be a connected reductive group defined over a
non-archimedean local field $\mathbb F$. Then the analogues of Theorems \ref{thm:0}
and \ref{thm:1} holds for tempered irreducible representations of $L = \mathbf{L}(\mathbb F)$.
\end{thmintro}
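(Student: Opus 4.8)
The plan is to transport the entire machinery developed for abstract affine Hecke algebras to the $p$-adic group $L = \mathbf{L}(\mathbb{F})$ by passing through the Harish-Chandra Schwartz algebra $\mathcal{S}(L)$ and its $K$-spherical subalgebras $\mathcal{S}(L,K) = e_K \mathcal{S}(L) e_K$, where $K$ ranges over a system of good compact open subgroups. The first reduction is representation-theoretic: by Meyer's comparison theorem (or its bornology-free version in \cite{OpSo3}), $\textup{Ext}^n_{\mathcal{H}(L)}(V,V') \cong \textup{Ext}^n_{\mathcal{S}(L)}(V,V')$ for smooth tempered $V,V'$, and by the Morita argument sketched in the introduction, if $V$ is generated by $V^K$ then $\textup{Ext}^n_{\mathcal{S}(L)}(V,V') \cong \textup{Ext}^n_{\mathcal{S}(L,K)}(V^K,{V'}^K)$. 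So it suffices to work with a fixed $\mathcal{S}(L,K)$ and to prove there the analogues of Theorems \ref{thm:0} and \ref{thm:1}. First I would invoke Harish-Chandra's Fourier isomorphism for $L$ \cite{Wal}, which realizes $\mathcal{S}(L,K)$ as an algebra of sections of a bundle of matrix algebras over the orbifold of tempered induction data modulo the action of the relevant Weyl groups, exactly parallel to the Fourier isomorphism \eqref{eq:FourierIso} for $\mathcal{H}$ coming from \cite[Theorem 5.3]{DeOp1}. The decomposition of the tempered spectrum into blocks $\mathrm{Mod}_{f,\mathcal{W}\xi}$, the Knapp--Stein theorem with its twisted group algebra $\mathbb{C}[\mathfrak{R}_\xi,\kappa_\xi]$ of the analytic R-group, and the canonical splitting $\mathcal{W}_\xi = W(R_\xi) \rtimes \mathfrak{R}_\xi$ all have their classical counterparts for $L$ by Harish-Chandra, Knapp--Stein and Silberger \cite{HC,KnSt,Sil}, so the combinatorial input needed to define $\mathcal{A}_\xi$, $D$, $E_\xi$ and $p_\xi$ is available verbatim.

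With this dictionary in place, the proof of the analogue of Theorem \ref{thm:0} for $L$ proceeds as the proof of Theorem \ref{thm:equivCat}: one forms the formal completion of $\mathcal{S}(L,K)$ at the central character $\mathcal{W}\xi$, shows using the local structure of the Fourier isomorphism that this completion is Morita equivalent to the twisted crossed product $\mathcal{A}_\xi = p_\xi(\widehat{S(E_\xi)} \rtimes \mathfrak{R}_\xi^*)$, and checks that $M \mapsto D \widehat{\otimes}_{\mathcal{S}(L,K)} M$ furnishes the equivalence of exact categories between $\mathrm{Mod}^{\mathcal{W}\xi,tor}_{bor}(\mathcal{S}(L,K))$ and $\mathrm{Mod}_{bor}(\mathcal{A}_\xi)$. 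The analytic ingredients here — the positivity of the Plancherel density near $\xi$, the behaviour of intertwining operators, the precompactness estimates underlying the bornological/Fréchet framework — are precisely those established in \cite{Was,LePl} for $L$ and adapted in \cite{DeOp1,DeOp2,OpSo1} for $\mathcal{H}$, and I would cite them. Once the equivalence holds, the analogue of Theorem \ref{thm:1} is immediate: the exactness of the formal completion functor at a central character, proved in Section \ref{sec:Ext} via the Tougeron--Merrien flatness result \cite{TouMer} for finitely generated modules over the ring of germs of smooth functions (which applies equally to $\mathcal{S}(L,K)$ since that argument is local on the orbifold), lets one pass from $\mathrm{Mod}_{bor}(\mathcal{S}(L,K))$ to $\mathrm{Mod}^{\mathcal{W}\xi,tor}_{bor}(\mathcal{S}(L,K))$ without changing Ext, and then the explicit Koszul resolution of $\mathcal{A}_\xi$ over $\widehat{S(E_\xi)} \rtimes \mathfrak{R}_\xi^*$ yields $\textup{Ext}^n_L(\pi(\xi,\rho),\pi(\xi,\rho')) \cong (\textup{Hom}_\mathbb{C}(V',V) \otimes_\R \wig^n E^*_\xi)^{\mathfrak{R}_\xi}$.

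The main obstacle I anticipate is not conceptual but infrastructural: one must verify that each analytic and homological-algebra lemma used in the $\mathcal{H}$-proof has been stated, or can be stated, at the level of generality of the Fréchet algebras $\mathcal{S}(L,K)$, which are genuinely non-Noetherian and for which the available references are scattered across \cite{Wal,Was,LePl,Mey-Ho,OpSo1,OpSo3}. Two points deserve particular care. First, the Morita reduction $\textup{Ext}^n_{\mathcal{S}(L)}(V,V') \cong \textup{Ext}^n_{\mathcal{S}(L,K)}(V^K,{V'}^K)$ requires $V$ to be generated by its $K$-fixed vectors; since any finite-length smooth representation is generated by $V^K$ for $K$ small enough, one chooses $K$ depending on $V$ and $V'$ and must check the choice is harmless, which is standard but needs to be said. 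Second — and this is the subtle issue flagged in the introduction — bounded (or continuous) linear splittings of exact sequences of $\mathcal{S}(L,K)$-modules are not preserved under formal completion at $\mathcal{W}\xi$, so one cannot naively transport the exact structure; the resolution, exactly as in the $\mathcal{H}$-case treated in Section \ref{sec:Ext}, is to work with the torsion category $\mathrm{Mod}^{\mathcal{W}\xi,tor}_{bor}(\mathcal{S})$ and its own intrinsic exact structure, and to prove separately that the comparison functors are exact for that structure. I would therefore organize Section \ref{sec:padic} so that each step of the abstract proof is accompanied by a short remark identifying the $p$-adic reference or the minor modification required, and state the theorem for $L$ as a corollary of this verification rather than reproving everything.
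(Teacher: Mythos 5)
Your proposal follows essentially the same route as the paper's Section \ref{sec:padic}: comparison of Ext over $\mathcal{H}(L)$ and $\mathcal{S}(L)$ via Meyer and \cite{OpSo3}, reduction to the $K$-spherical level using the Plancherel isomorphism of \cite{Wal}, Silberger's Knapp--Stein theorem, formal completion at the central character, Morita equivalence with $p_\sigma \bigl( \widehat{S(E_\sigma)} \rtimes \mathfrak{R}^*_\sigma \bigr)$, and the Koszul resolution. The only real divergence is that the paper formulates the block equivalence for the inductive-limit algebra $\mathcal{S}(L)^\sigma = \bigcup_i \mathcal{S}(L,K_i)^\sigma$, replacing the Morita bimodules by limits of $K_i$-(co)invariants of $\mathcal{I}_P^L(\sigma)$ (whence the equivalence no longer preserves Fr\'echet modules), whereas you fix a single good $K$ chosen for the given representations; both work, yours requiring the check, which you duly flag, that the choice of $K$ is harmless.
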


These results have obvious consequences for the computation of the
Euler--Poincar\'e pairings.
For any $\C$-linear abelian category $\mathcal{C}$ with
finite homological dimension one may define the \emph{Euler--Poincar\'e pairing}
\cite{ScSt} of two objects of finite length
$\pi,\pi^\prime\in\textup{Obj}(\mathcal{C})$ by the formula
\begin{equation}\label{eq:ep}
EP_{\mc C} (\pi,\pi^\prime) =
\sum_{n \geq 0}(-1)^n \dim_\C \textup{Ext}_{\mathcal{C}}^n (\pi,\pi^\prime)
\end{equation}
provided that the dimensions of all the Ext-groups between irreducible objects are finite.
Let $K_\mathbb{C}(\mathcal{C})$ denote the Grothendieck group
(tensored by $\mathbb{C}$) of finite length objects in $\mathcal{C}$.
The Euler--Poincar\'e pairing extends to a sesquilinear form $EP$ on $K_\C(\mathcal{C})$.

A well known instance of this construction is the Euler--Poincar\'e pairing
on the Grothendieck group $G_\C (L)$ of admissible representations of
$L$ \cite{BeDe,ScSt}. In this case $\mathcal{C}$ is the category of smooth
representations of $L$. The resulting pairing $EP_L$ is Hermitian (\cite{ScSt}; See
Proposition \ref{prop:7.2} for a more direct argument)
and plays a fundamental role in the
local trace formula and in the study of orbital integrals on the regular elliptic
set of $L$ \cite{Art,ScSt,Bez,Ree}.
The definition of $EP$ also applies naturally to the Grothendieck group $G_\C (\mc{H})$
of finite dimensional representations of an abstract affine Hecke algebra $\mc{H}$
with positive parameters. Here one uses that the category of finitely generated
$\mc{H}$-modules has finite cohomological dimension by \cite{OpSo1}.
The form $EP_\mc{H}$ on $G_\C (\mc{H})$ is also Hermitian \cite[Theorem 3.5.a]{OpSo1}.

Theorem \ref{thm:1} implies the following formula for
the Euler--Poincar\'e pairing between irreducible tempered representations
of an affine Hecke algebra $\mathcal{H}$ (see Theorem \ref{thm:ArthurFormula}):
\begin{equation}\label{eq:ArKaScSt}
EP_\mathcal{H}(\pi(\xi,\rho),\pi(\xi,\rho')) =
|\mf R_\xi |^{-1} \sum_{r \in \mf R_\xi} |d(r)| \,
\mr{tr}_\rho (r) \, \overline{\mr{tr}_{\rho'} (r)} .
\end{equation}
where $d(r) = \det_{T_\xi (\Xi_{un})} (1 - r)$.
Similarly Theorem \ref{thm:2} implies the analog of \eqref{eq:ArKaScSt} in the
context of tempered representations of a reductive $p$-adic group $L$.
We formulate this as a theorem since it had not yet been established in this generality:

\begin{thmintro}\label{thm:3}
(See \eqref{eq:7.1}) \\
Let $\mathbf{L}$ be a connected reductive group defined over a
non-archimedean local field $\mathbb F$. Then the analog of
\eqref{eq:ArKaScSt} holds for the Euler-Poincar\'e pairing
of tempered irreducible representations of $L = \mathbf{L}(\mathbb F)$.
\end{thmintro}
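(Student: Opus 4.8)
The plan is to obtain Theorem~\ref{thm:3} as a direct consequence of Theorem~\ref{thm:2}, by repeating in the $p$-adic setting the short computation that deduces \eqref{eq:ArKaScSt} from Theorem~\ref{thm:1} (Theorem~\ref{thm:ArthurFormula}). Fix a tempered standard induction datum $\xi$ for $L=\mb L(\mathbb F)$ --- so $\xi$ corresponds to a unitarily twisted discrete series of a Levi subgroup --- together with its analytic $R$-group $\mf R_\xi$ (which by Theorem~\ref{thm:2} is the classical Knapp--Stein--Silberger $R$-group) and the real vector spaces $T_\xi(\Xi_{un})$ and $E_\xi$ carrying the $\mf R_\xi$-action, where $E_\xi\otimes_\R\C=(T_\xi(\Xi_{un})\otimes_\R\C)/W(R_\xi)$. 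By the $p$-adic analogue of Theorem~\ref{thm:1} supplied by Theorem~\ref{thm:2}, for $(\rho,V),(\rho',V')\in\mr{Irr}(\C[\mf R_\xi,\kappa_\xi])$ one has
\[
\mr{Ext}^n_L\big(\pi(\xi,\rho),\pi(\xi,\rho')\big)\cong\big(\mr{Hom}_\C(V',V)\otimes_\R\wig^n E^*_\xi\big)^{\mf R_\xi},
\]
which vanishes for $n>\dim_\R E_\xi$, so that the Euler--Poincar\'e sum \eqref{eq:ep} is finite. Moreover $\mr{Ext}^n_L(\pi(\xi,\rho),\pi(\xi',\rho'))=0$ whenever $\mc W\xi\neq\mc W\xi'$ (this transfers from the analogous statement for $\mc S$-modules through Meyer's isomorphism and the block decomposition of $\mc S(L)$, or alternatively follows by comparing cuspidal supports), so the formula below is the essential case for the pairing $EP_L$ on $G_\C(L)$.

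Next I would run the averaging argument. Since $\mf R_\xi$ is finite and we work over $\C$, $\dim_\C W^{\mf R_\xi}=|\mf R_\xi|^{-1}\sum_{r\in\mf R_\xi}\mr{tr}_W(r)$ for every finite-dimensional $\mf R_\xi$-module $W$. Applying this to $W=\mr{Hom}_\C(V',V)\otimes_\R\wig^n E^*_\xi$, taking the alternating sum over $n$, and using $\sum_{n\ge0}(-1)^n\mr{tr}_{\wig^n E^*_\xi}(r)=\det_{E^*_\xi}(1-r)$, one gets
\[
EP_L\big(\pi(\xi,\rho),\pi(\xi,\rho')\big)=|\mf R_\xi|^{-1}\sum_{r\in\mf R_\xi}\det\nolimits_{E^*_\xi}(1-r)\;\mr{tr}_{\mr{Hom}_\C(V',V)}(r).
\]
Because $\pi(\xi)$ is unitary the commutant $\C[\mf R_\xi,\kappa_\xi]$ is a $*$-algebra, so $\rho$ and $\rho'$ may be chosen unitary; then the cocycle $\kappa_\xi$ cancels in $\mr{Hom}_\C(V',V)=V\otimes(V')^*$, making it an honest $\mf R_\xi$-module, and $\mr{tr}_{\mr{Hom}_\C(V',V)}(r)=\mr{tr}_\rho(r)\,\overline{\mr{tr}_{\rho'}(r)}$.

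There remains one genuinely non-formal input: the identity $\det_{E^*_\xi}(1-r)=|d(r)|=|\det_{T_\xi(\Xi_{un})}(1-r)|$ for all $r\in\mf R_\xi$. This is the same identity used in passing from Theorem~\ref{thm:1} to \eqref{eq:ArKaScSt}; it concerns only the finite group $\mf R_\xi$ acting orthogonally on the real space $T_\xi(\Xi_{un})$ together with the induced action on $E_\xi\otimes_\R\C=(T_\xi(\Xi_{un})\otimes_\R\C)/W(R_\xi)$, hence applies verbatim in the $p$-adic case once Theorem~\ref{thm:2} has identified $T_\xi(\Xi_{un})$, $W(R_\xi)$ and $\mf R_\xi$ with their representation-theoretic counterparts for $L$. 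Both determinants are automatically nonnegative, since the eigenvalues of an orthogonal transformation occur in complex-conjugate pairs together with values $\pm1$; in particular the elements $r$ that are not elliptic on $T_\xi(\Xi_{un})$ contribute nothing, which is why the formula reduces to Arthur's sum over elliptic elements. Substituting this identity yields exactly the analogue of \eqref{eq:ArKaScSt} for $L$. I expect the only real work to be the bookkeeping that matches the objects produced by Theorem~\ref{thm:2} to those appearing in Arthur's formula; everything after that is the elementary computation above together with the cited determinant identity.
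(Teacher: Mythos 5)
Your proposal is correct and follows essentially the same route as the paper: the paper establishes Theorem \ref{thm:2} and then observes that the proof of Theorem \ref{thm:ArthurFormula} (averaging over the R-group, the alternating-sum-equals-determinant identity, and the replacement of $\det_{E_\xi^*}(1-r)$ by $\det_{T_\xi(\Xi_{un})}(1-r)$, both sides being automatically nonnegative) carries over verbatim to $L$, yielding \eqref{eq:7.1}. Your only deviations are cosmetic — you average over $\mf R_\xi$ after cancelling the cocycle in $\mr{Hom}_\C(V',V)$ rather than over the Schur extension $\mf R_\xi^*$ and then descending, and you quote the determinant identity from the affine Hecke algebra case rather than redoing its short case analysis (both determinants vanish when $R_\xi\neq\emptyset$, and $E_\xi^*\cong T_\xi(\Xi_{un})$ when $R_\xi=\emptyset$) — which matches the paper's own treatment.
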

In the case of Iwahori-spherical representations of split groups
this result was previously shown by Reeder \cite{Ree} using
different arguments.
If $\textup{char}(\mathbb{F})=0$ then Theorem \ref{thm:3}
can alternatively be derived by combining Arthur's
formula \cite[Corollary 6.3]{Art} for the elliptic pairing of tempered
characters with Kazhdan's
orthogonality conjecture for the Euler--Poincar\'e pairing of
admissible characters of $L$, which was proved independently in
\cite[Theorem III.4.21]{ScSt} and in \cite[Theorem 0.20]{Bez}.
Interestingly, this argument can also be reversed showing that
Theorem \ref{thm:3} gives rise to a new proof of Kazhdan's
orthogonality conjecture. We explain this argument more
precisely now.

Let us recall the
\emph{elliptic pairing} of admissible characters. Suppose that the
ground field $\mathbb F$ of $L$ has characteristic 0 and let
$C^{ell}$ be the set of regular semisimple elliptic conjugacy
classes of $L$. By definition $C^{ell}$ is empty if the center
$Z(L)$ of $L$ is not compact. There exists a canonical elliptic
measure $d\gamma$ (the ``Weyl measure'') on $C^{ell}$ \cite{Kaz}.
Let $\theta_\pi$ denote the locally constant function on $C^{ell}$
determined by the distributional character of an admissible
representation $\pi$. The elliptic pairing of $\pi$ and
$\pi^\prime$ is defined as
\begin{equation}\label{eq:elliptic}
e_L (\pi,\pi^\prime) :=
\int_{C^{ell}}\theta_\pi (c^{-1})\theta_{\pi^\prime}(c) \textup{d} \gamma (c) .
\end{equation}
\emph{Kazhdan's
orthogonality conjecture for the Euler--Poincar\'e pairing of
admissible characters of} $L$ states that, under the assumptions made above,
the elliptic pairing $e_L(\pi,\pi^\prime)$ of two admissible representations $\pi$ and
$\pi^\prime$ of $L$ is equal to their Euler--Poincar\'e pairing:
\begin{equation}\label{eq:Kazhdan}
e_L(\pi,\pi^\prime) = EP_L(\pi,\pi^\prime) .
\end{equation}
Next we indicate how \eqref{eq:Kazhdan} also follows from Theorem \ref{thm:3}.
The connection is provided by results of Arthur on the elliptic pairing
of tempered characters. Let $P\subset L$ be a parabolic subgroup
with Levi component $M$, and let $\sigma$ be a smooth irreducible representation
of $M$, square integrable modulo the center of $M$. The representation
$\mathcal{I}_P^L(\sigma)$, the smooth normalized parabolically induced
representation from $\sigma$, is a tempered admissible unitary representation
of $L$. The decomposition of $\mathcal{I}_P^L(\sigma)$
is governed by the associated analytic R-group $\mf R_\sigma$, a finite
group which acts naturally on the real Lie algebra $\mr{Hom}(X^* (M), \R)$
of the center of $M$. For $r \in \mf R_\sigma$ we denote by $d(r)$
the determinant of the linear transformation $1 - r$ on
$\mr{Hom} (X^* (M), \R)$. We note that $d(r) = 0$ whenever $Z(L)$ is not compact.

Let $\pi$ be an irreducible tempered
representation of $L$ which occurs in $\mathcal{I}_P^L(\sigma)$, which we
denote by $\pi\prec\mathcal{I}_P^L(\sigma)$.
The theory of the analytic R-group asserts that
$\rho=\textup{Hom}_L(\pi,\mathcal{I}_P^L(\sigma))$
is an irreducible projective $\mf R_\sigma$-representation.
It was shown in \cite[Corollary 6.3]{Art} that for all tempered irreducible
representations $\pi,\pi^\prime\prec \mathcal{I}_P^L(\sigma)$ one has
\begin{equation}\label{eq:Arthur}
e_L(\pi,\pi^\prime)=
|\mf R_\sigma|^{-1} \sum_{r \in \mf R_\sigma} |d(r)| \,
\mr{tr}_\rho (r) \, \overline{\mr{tr}_{\rho'} (r)} .
\end{equation}
The proof uses the local trace formula for $L$, which requires that char$(\mathbb F) = 0$.
With our conventions \eqref{eq:Arthur} is trivial when $Z(L)$ is not compact,
Arthur has a more sophisticated equality in that situation.
If $\pi$ and $\pi^\prime$ do not arise (up to equivalence) as components
of the same standard induced representation $\mathcal{I}_P^L(\sigma)$,
then their elliptic pairing is zero.
From \eqref{eq:Arthur} and Theorem \ref{thm:3} it is only a short trip to
a proof of Kazhdan's orthogonality conjecture:

\begin{thmintro}\label{thm:4}
(See Theorem \ref{thm:7.3}.)\\
Let $\mathbf{L}$ be a connected reductive group defined over a
non-archimedean local field $\mathbb{F}$ of characteristic 0.
Then equation \eqref{eq:Kazhdan} holds for all admissible representations
$\pi,\pi^\prime$ of $L = \mathbf{L}(\mathbb F)$.
\end{thmintro}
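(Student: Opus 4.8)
The plan is to compare the two pairings $e_L$ and $EP_L$ on the Grothendieck group $G_\C(L)$ and to show that they agree on a spanning set. Let $R_{ind,\C}(L)\subseteq G_\C(L)$ be the span of the classes $[\mc I_P^L(\sigma)]$ in which $P\subsetneq L$ is a proper parabolic subgroup with Levi factor $M$ and $\sigma$ is an admissible representation of $M$. By the Langlands classification every irreducible representation of $L$ is the unique irreducible quotient of a standard module $\mc I_P^L(\tau\otimes\nu)$ with $\tau$ tempered and $\nu$ a strictly positive unramified character, and the standard modules form a basis of $G_\C(L)$ (unitriangular transition matrix to the irreducibles). Hence, modulo $R_{ind,\C}(L)$, the group $G_\C(L)$ is spanned by the images of the essentially tempered irreducible representations of $L$, and by the tempered ones as soon as $Z(L)$ is compact. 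I would then proceed in the following steps.

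First I would show that both pairings annihilate $R_{ind,\C}(L)$. For $e_L$ this is immediate: the distributional character of a representation parabolically induced from a proper parabolic subgroup vanishes on the regular elliptic set $C^{ell}$, since no regular elliptic element lies in a proper parabolic. For $EP_L$ I would use that $\mc I_P^L$ is exact with exact left adjoint $r_P^L$, the normalized Jacquet functor; then $r_P^L$ preserves projectives, so $\mr{Ext}^n_L(\pi,\mc I_P^L\sigma)\cong\mr{Ext}^n_M(r_P^L\pi,\sigma)$ for all $n$, and forming alternating sums --- legitimate because these module categories have finite cohomological dimension and all Ext-groups between admissible modules are finite-dimensional --- gives $EP_L(\pi,\mc I_P^L\sigma)=EP_M(r_P^L\pi,\sigma)$; symmetrically $EP_L(\mc I_P^L\sigma,\pi)$ reduces to an Euler--Poincar\'e pairing on $M$, using the Hermitian symmetry of $EP$ (Proposition \ref{prop:7.2}) or, equivalently, Bernstein's second adjointness, which is available in characteristic zero. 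It therefore suffices to know that $EP_M\equiv 0$ for every proper Levi subgroup $M$. For this I would prove the general statement that $EP_H\equiv 0$ whenever $Z(H)$ is non-compact: writing $\mf a$ for the Lie algebra of the maximal split subtorus of $Z(H)$, a Koszul complex relating $\mc H(H)$ to the Hecke algebra of the quotient of $H$ by that torus makes $\mr{Ext}^\bullet_H(\pi,\pi')$ a graded module over $\wig^\bullet\mf a^*$, so the alternating sum of the dimensions vanishes like $(1-1)^{\dim\mf a}$. A proper parabolic of $L$ forces its Levi factor to contain a positive-dimensional central split torus, so $Z(M)$ is non-compact and $EP_M\equiv 0$, and $R_{ind,\C}(L)$ lies in the radical of $EP_L$. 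The same statement also disposes of the case that $Z(L)$ itself is non-compact, for then $C^{ell}=\es$ and $e_L\equiv 0$, while likewise $EP_L\equiv 0$. From here on I may assume $Z(L)$ compact.

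Consequently $e_L$ and $EP_L$ both descend to pairings on $\overline{G}_\C(L):=G_\C(L)/R_{ind,\C}(L)$, which is now spanned by the images of the tempered irreducible representations of $L$. On such representations the two pairings coincide. Indeed, suppose $\pi,\pi'\prec\mc I_P^L(\sigma)$ occur in the same parabolically induced representation, with associated irreducible projective $\mf R_\sigma$-representations $\rho,\rho'$; then Theorem \ref{thm:3} computes $EP_L(\pi,\pi')$ as the analytic R-group character sum $|\mf R_\sigma|^{-1}\sum_{r\in\mf R_\sigma}|d(r)|\,\mr{tr}_\rho(r)\,\overline{\mr{tr}_{\rho'}(r)}$, and Arthur's formula \eqref{eq:Arthur} gives precisely the same value for $e_L(\pi,\pi')$. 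If instead $\pi$ and $\pi'$ do not occur in a common $\mc I_P^L(\sigma)$, then $EP_L(\pi,\pi')=0$ because the two representations lie in different Bernstein components, while $e_L(\pi,\pi')=0$ by the disjointness of elliptic characters recalled above. Thus the two descended pairings agree on a spanning set of $\overline{G}_\C(L)$, hence are equal, and therefore $e_L=EP_L$ on all of $G_\C(L)$ --- which is \eqref{eq:Kazhdan}.

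I expect the main obstacle to be not conceptual but a matter of homological bookkeeping: the substantive input is Theorem \ref{thm:3} (that is, Theorem \ref{thm:1} transported to $L$), and the remaining points requiring care are the exactness of $\mc I_P^L$ and of $r_P^L$ and their adjunction on the relevant categories of smooth representations, the finite-dimensionality of all Ext-groups together with the finite cohomological dimension of $L$ (so that the Euler--Poincar\'e sums are defined and additive along exact sequences), the well-definedness of $EP$ on $G_\C(L)$, and the short homological argument that $EP_H$ vanishes when $Z(H)$ is non-compact. Beyond the classical structure theory and Arthur's identity \eqref{eq:Arthur}, the one genuinely new ingredient is Theorem \ref{thm:3}.
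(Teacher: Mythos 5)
Your proposal follows essentially the same route as the paper's proof of Theorem \ref{thm:7.3}: both pairings are shown to vanish on classes parabolically induced from proper Levi subgroups (for $EP_L$ via Frobenius reciprocity, the contragredient/second-adjointness step, and the vanishing of $EP$ when the center is non-compact, which the paper takes from Bezrukavnikov's elementary argument rather than your Koszul sketch), the Langlands classification (Lemma \ref{lem:7.1}) reduces everything to tempered irreducibles, and there the equality is read off by comparing \eqref{eq:7.1} (Theorem \ref{thm:3}) with Arthur's formula \eqref{eq:Arthur}. The only organizational difference is that the paper invokes Kazhdan's Theorem A together with Proposition \ref{prop:7.2}.c to say that $e_L$ and $EP_L$ have literally the same radical, whereas you only use the easy containment (induced characters vanish on $C^{ell}$); that is a harmless simplification, since agreement on a spanning set of the quotient is all one needs.

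One justification in your last step needs repair. For tempered irreducibles $\pi,\pi'$ not occurring in a common $\mc I_P^L(\sigma)$ you assert $EP_L(\pi,\pi')=0$ because they ``lie in different Bernstein components.'' That is false in general: two inequivalent discrete series of the same Levi $M$ can have the same cuspidal support, and then constituents of the corresponding induced representations lie in the same Bernstein component and even have the same Bernstein central character, so no algebraic central-character argument applies. The correct statement, used in the proof of Proposition \ref{prop:7.2}.c, is that such $\pi,\pi'$ have different $Z(\mc S(L))$-characters, whence their Ext-groups over $\mc S(L)$ vanish, and by \eqref{eq:Meyer} (equivalently \eqref{eq:7.8}) the same holds over $\mc H(L)$; with that substitution your argument closes.
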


The elliptic pairing and the set $C^{ell}$ of elliptic conjugacy classes in
\eqref{eq:elliptic} do not seem to have obvious counterparts in the setting of affine
Hecke algebras.
Reeder \cite{Ree} introduced the elliptic pairing for the cross product of a finite
group with a real representation. This construction was extended in
\cite[Theorem 3.3]{OpSo1} to the case of a finite group acting on a lattice.
To relate these notions of elliptic pairing for Weyl groups to the
Euler-Poincar\'e characteristic $EP_{\mc H}$ one compares $EP_{\mc H}$ and $EP_W$,
which is done in \cite[Section 5.6]{Ree} (for affine Hecke algebras with equal parameters)
and in \cite[Chapter 3]{OpSo1}. Recent results
from \cite{Sol-Irr} allow us to conclude that $G_\C (\mc H)$ modulo the radical of $EP_{\mc H}$
equals the vector space Ell$(\mc H)$ of ``elliptic characters", and that this space does not
depend on the Hecke parameters $q$ (Theorem \ref{thm:sigmaEll}).

Arthur's explicit formula \eqref{eq:ArKaScSt} for $EP_\mathcal{H}$
applies only to the Euler-Poincar\'e pairing of tempered characters.
In an abstract sense this no restriction, as it follows from
the Langlands classification of irreducible characters of $\mc{H}$ in terms of standard
induction data in Langlands position, that modulo the radical of the pairing $EP_{\mc H}$
any irreducible character is equivalent to a virtual tempered character. But this is
complicated in practice, and therefore our formula does not qualify as an explicit formula
for $EP_{\mc H}$ for general non-tempered irreducible characters.
It would therefore be desirable to extend the result to general finite dimensional
representations of $\mc{H}$. In Section \ref{sec:Rgroup} we make a first step by
extending the definition of the analytic $R$-group to non-tempered induction data.
We show that its irreducible characters are in natural bijection with the Langlands
quotients associated to the induction datum. However, we have not been able to
generalize the Arthur formula to this case. It seems that one would need an
appropriate version of Kazhdan--Lusztig polynomials to address the problem in
this generality.
\vspace{2mm}

\section{Affine Hecke algebras}

Here we recall the definitions and notations of our most important objects of study.
Several things described in this section can be found in more detail elsewhere in the
literature, see in particular \cite{Lus-Gr,Opd-Sp,OpSo1,Sol-Irr}.

Let $\mf a$ be a finite dimensional real vector space and let $\mf a^*$ be its dual. Let
$Y \subset \mf a$ be a lattice and $X = \mr{Hom}_\Z (Y,\Z) \subset \mf a^*$ the dual lattice. Let
\[
\mc R = (X, R_0, Y ,R_0^\vee ,F_0) .
\]
be a based root datum. Thus $R_0$ is a reduced root system in $X ,\, R^\vee_0 \subset Y$
is the dual root system, $F_0$ is a basis of $R_0$ and the set of positive roots is denoted $R_0^+$.
Furthermore we are given a bijection $R_0 \to R_0^\vee ,\: \alpha \mapsto \alpha^\vee$ such
that $\inp{\alpha}{\alpha^\vee} = 2$ and such that the corresponding reflections
$s_\alpha : X \to X$ (resp. $s^\vee_\alpha : Y \to Y$) stabilize $R_0$ (resp. $R_0^\vee$).
We do not assume that $R_0$ spans $\mf a^*$.

The reflections $s_\alpha$ generate the Weyl group $W_0 = W (R_0)$ of $R_0$, and
$S_0 := \{ s_\alpha : \alpha \in F_0 \}$ is the collection of simple reflections. We have the
affine Weyl group $W^\af = \mh Z R_0 \rtimes W_0$ and the extended (affine) Weyl group
$W = X \rtimes W_0$. Both can be regarded as groups of affine transformations of
$\mf a^*$. We denote the translation corresponding to $x \in X$ by $t_x$.

As is well known, $W^\af$ is a Coxeter group, and the basis of $R_0$ gives rise to a set $S^\af$
of simple (affine) reflections. The length function $\ell$ of the Coxeter system $(W^\af ,S^\af )$
extends naturally to $W$, by counting how many negative roots of the affine root system
$R_0^\vee \times \Z$ are made positive by an element of $W$. We write
\begin{align*}
&X^+ := \{ x \in X : \inp{x}{\alpha^\vee} \geq 0
\; \forall \alpha \in F_0 \} , \\
&X^- := \{ x \in X : \inp{x}{\alpha^\vee} \leq 0
\; \forall \alpha \in F_0 \} = -X^+ .
\end{align*}
It is easily seen that the center of $W$ is the lattice
\[
Z(W) = X^+ \cap X^- .
\]
We say that $\mc R$ is semisimple if $Z (W) = 0$ or equivalently if $R_0$ spans $\mf a^*$.
Thus a root datum is semisimple if and only if the corresponding reductive algebraic group is so.

With $\mc R$ we also associate some other root systems.
There is the non-reduced root system
\[
R_{nr} := R_0 \cup \{ 2 \alpha : \alpha^\vee \in 2 Y \} .
\]
Obviously we put $(2 \alpha )^\vee = \alpha^\vee / 2$. Let $R_1$
be the reduced root system of long roots in $R_{nr}$:
\[
R_1 := \{ \alpha \in R_{nr} : \alpha^\vee \not\in 2 Y \} .
\]
Consider a positive parameter function for $\mc R$, that is,
a function $q : W \to \R_{>0}$ such that
\begin{equation}\label{eq:parameterFunction}
\begin{array}{lll@{\quad}l}
q (\omega ) & = & 1 & \text{if } \ell (\omega ) = 0 , \\
q (w v) & = & q (w) q(v) & \text{if } w,v \in W \quad
\text{and} \quad \ell (wv) = \ell (w) + \ell (v) .
\end{array}
\end{equation}
Alternatively it can be given by $W_0$-invariant map $q : R_{nr}^\vee \to \R_{>0}$,
the relation being
\begin{equation}\label{eq:parameterEquivalence}
\begin{array}{lll}
q_{\alpha^\vee} = q(s_\alpha) = q (t_\alpha s_\alpha) & \text{if} & \alpha \in R_0 \cap R_1, \\
q_{\alpha^\vee} = q(t_\alpha s_\alpha) & \text{if} & \alpha \in R_0 \setminus R_1, \\
q_{\alpha^\vee / 2} = q(s_\alpha) q(t_\alpha s_\alpha)^{-1} & \text{if} &
\alpha \in R_0 \setminus R_1.
\end{array}
\end{equation}
In case $R_0$ is irreducible this means that $q$ is determined by one, two or three 
independent real numbers, where three only occurs for a root datum of type $C_n^{(1)}$. 
The affine Hecke algebra $\mc H = \mc H (\mc R ,q)$ is the unique associative
complex algebra with basis $\{ N_w : w \in W \}$ and multiplication rules
\begin{equation}\label{eq:multrules}
\begin{array}{lll}
N_w \, N_v = N_{w v} & \mr{if} & \ell (w v) = \ell (w) + \ell (v) \,, \\
\big( N_s - q(s)^{1/2} \big) \big( N_s + q(s)^{-1/2} \big) = 0 & \mr{if} & s \in S^\af .
\end{array}
\end{equation}
In the literature one also finds this algebra defined in terms of the elements
$q(s)^{1/2} N_s$, in which case the multiplication can be described without square roots.
The algebra $\mc H$ is endowed with a conjugate-linear involution, defined on basis
elements by $N_w^* := N_{w^{-1}}$.

For $x \in X^+$ we put $\theta_x := N_{t_x}$. The corresponding semigroup morphism
$X^+ \to \mc H (\mc R ,q)^\times$ extends to a group homomorphism
\[
X \to \mc H (\mc R ,q)^\times : x \mapsto \theta_x .
\]
A part of the Bernstein presentation \cite[\S 3]{Lus-Gr} says that the subalgebra
$\mc A := \mr{span} \{ \theta_x : x \in X \}$ is isomorphic to $\C [X]$, and that the
center $Z (\mc H)$ corresponds to $\C [X]^{W_0}$ under this isomorphism.
Let $T$ be the complex algebraic torus
\[
T = \mr{Hom}_{\mh Z} (X, \mh C^\times ) \cong Y \otimes_\Z \C^\times ,
\]
so $\mc A \cong \mc O (T)$ and $Z (\mc H ) = \mc A^{W_0} \cong \mc O (T / W_0 )$.
This torus admits a polar decomposition
\[
T = T_{rs} \times T_{un} = \mr{Hom}_\Z (X, \R_{>0}) \times \mr{Hom}_\Z (X, S^1)
\]
into a real split (or positive) part and a unitary part.

Let $\mr{Mod}_f (\mc H)$ be the category of finite dimensional $\mc H$-modules,
and $\mr{Mod}_{f, W_0 t} (\mc H)$ the full subcategory of modules that admit the
$Z (\mc H)$-character $W_0 t \in T / W_0$. We let $G (\mc H)$ be the Grothendieck
group of $\mr{Mod}_f (\mc H)$ and we write $G_\C (\mc H) = \C \otimes_\Z G (\mc H)$.
Furthermore we denote by Irr$(\mc H)$, respectively $\mr{Irr}_{W_0 t} (\mc H)$,
the set of equivalence classes of irreducible objects in $\mr{Mod}_f (\mc H)$,
respectively $\mr{Mod}_{f, W_0 t} (\mc H)$.
We will use these notations also for other algebras and groups.

\subsection{Parabolic induction}

For a set of simple roots $P \subset F_0$ we introduce the notations
\begin{equation}\label{eq:parabolic}
\begin{array}{l@{\qquad}l}
R_P = \mh Q P \cap R_0 & R_P^\vee = \mh Q P^\vee \cap R_0^\vee , \\
X_P = X \big/ \big( X \cap (P^\vee )^\perp \big) &
X^P = X / (X \cap \mh Q P ) , \\
Y_P = Y \cap \mh Q P^\vee & Y^P = Y \cap P^\perp , \\
\mf a_P = \R P^\vee & \mf a^P = P^\perp , \\
T_P = \mr{Hom}_{\mh Z} (X_P, \mh C^\times ) &
T^P = \mr{Hom}_{\mh Z} (X^P, \mh C^\times ) , \\
\mc R_P = ( X_P ,R_P ,Y_P ,R_P^\vee ,P) & \mc R^P = (X,R_P ,Y,R_P^\vee ,P) .
\end{array}
\end{equation}
Notice that $T_P$ and $T^P$ are algebraic subtori of $T$.
Although $T_{rs} = T_{P,rs} \times T_{rs}^P$, the product
$T_{un} = T_{P,un} T^P_{un}$ is not direct, because the intersection
\[
T_{P,un} \cap T_{un}^P = T_P \cap T^P 
\]
can have more than one element (but only finitely many).

We define parameter functions $q_P$ and $q^P$ on the root
data $\mc R_P$ and $\mc R^P$, as follows. Restrict $q$ to a function on
$(R_P )_{nr}^\vee$ and use \eqref{eq:parameterEquivalence} to extend it to
$W (\mc R_P )$ and $W (\mc R^P )$. Now we can define the parabolic subalgebras
\[
\mc H_P = \mc H (\mc R_P ,q_P ) ,\qquad \mc H^P = \mc H (\mc R^P ,q^P ) .
\]
Despite our terminology $\mc H^P$ and $\mc H_P$ are not subalgebras of $\mc H$,
but they are close. Namely, $\mc H (\mc R^P ,q^P )$ is isomorphic to the subalgebra of
$\mc H (\mc R ,q)$ generated by $\mc A$ and $\mc H (W (R_P) ,q_P)$.
We denote the image of $x \in X$ in $X_P$ by $x_P$ and we let $\mc A_P \subset \mc H_P$
be the commutative subalgebra spanned by $\{ \theta_{x_P} : x_P \in X_P \}$.
There is natural surjective quotient map
\begin{equation}\label{eq:quotientP}
\mc H^P \to \mc H_P : \theta_x N_w \mapsto \theta_{x_P} N_w .
\end{equation}
For all $x \in X$ and $\alpha \in P$ we have
\[
x - s_\alpha (x) = \inp{x}{\alpha^\vee} \alpha \in \Z P,
\]
so $t (s_\alpha (x)) = t(x)$ for all $t \in T^P$. Hence $t (w(x)) = t (x)$ for all $w \in W (R_P)$,
and we can define an algebra automorphism
\begin{equation}
\phi_t : \mc H^P \to \mc H^P, \quad \phi_t (\theta_x N_w) = t (x) \theta_x N_w  \qquad t \in T^P .
\end{equation}
In particular, for $t \in T_P \cap T^P$ this descends to an algebra automorphism
\begin{equation}\label{eq:twistKP}
\psi_t : \mc H_P \to \mc H_P , \quad \theta_{x_P} N_w \mapsto t(x_P) \theta_{x_P} N_w
\qquad t \in T_P \cap T^P .
\end{equation}
Suppose that $g \in W_0$ satisfies $g (P) = Q \subseteq F_0$.
Then there are algebra isomorphisms
\begin{equation}\label{eq:psig}
\begin{array}{llcl}
\psi_g : \mc H_P \to \mc H_Q , &
\theta_{x_P} N_w & \mapsto & \theta_{g (x_P)} N_{g w g^{-1}} , \\
\psi_g : \mc H^P \to \mc H^Q , &
\theta_x N_w & \mapsto & \theta_{g x} N_{g w g^{-1}} .
\end{array}
\end{equation}

We can regard any representation $(\sigma ,V_\sigma)$ of $\mc H (\mc R_P ,q_P )$ as a
representation of $\mc H (\mc R^P ,q^P)$ via the quotient map \eqref{eq:quotientP}.
Thus we can construct the $\mc H$-representation
\[
\pi (P,\sigma ,t) := \mr{Ind}_{\mc H (\mc R^P ,q^P )}^{\mc H (\mc R ,q)} (\sigma \circ \phi_t ) .
\]
Representations of this form are said to be parabolically induced.

We intend to partition Irr$(\mc H )$ into finite packets, each of which is obtained
by inducing a discrete series representation of a parabolic subalgebra of $\mc H$.
The discrete series and tempered representations are defined via the
$\mc A$-weights of a representation, as we recall now.
Given $P \subseteq F_0$, we have the following positive cones in $\mf a$ and in $T_{rs}$:
\begin{equation}
\begin{array}{lll@{\qquad}lll}
\mf a^+ & = & \{ \mu \in \mf a : \inp{\alpha}{\mu} \geq 0 \:
  \forall \alpha \in F_0 \} , & T^+ & = & \exp (\mf a^+) , \\
\mf a_P^+ & = &  \{ \mu \in \mf a_P : \inp{\alpha}{\mu} \geq 0 \: \forall \alpha \in P \} , &
  T_P^+ & = & \exp (\mf a_P^+) , \\
\mf a^{P+} & = & \{ \mu \in \mf a^P : \inp{\alpha}{\mu} \geq 0 \:
  \forall \alpha \in F_0 \setminus P \} , & T^{P+} & = & \exp (\mf a^{P+}) , \\
\mf a^{P++} & = & \{ \mu \in \mf a^P : \inp{\alpha}{\mu} > 0 \:
  \forall \alpha \in F_0 \setminus P \} , & T^{P++} & = & \exp (\mf a^{P++}) .
\end{array}
\end{equation}
The antidual of $\mf a^{*+} :=  \{ x \in \mf a^* :  \inp{x}{\alpha^\vee} \geq 0
\: \forall \alpha \in F_0 \}$ is
\begin{equation}
\mf a^- = \{ \lambda \in \mf a : \inp{x}{\lambda} \leq 0 \: \forall x \in \mf a^{*+} \} =
\big\{ \sum\nolimits_{\alpha \in F_0} \lambda_\alpha \alpha^\vee : \lambda_\alpha \leq 0 \big\} .
\end{equation}
Similarly we define
\begin{equation}
\mf a_P^- = \big\{ \sum\nolimits_{\alpha \in P} \lambda_\alpha \alpha^\vee  \in \mf a_P :
\lambda_\alpha \leq 0 \big\} .
\end{equation}
The interior $\mf a^{--}$ of $\mf a^-$ equals
$\big\{ {\ts \sum_{\alpha \in F_0}} \lambda_\alpha \alpha^\vee : \lambda_\alpha < 0 \big\}$
if $F_0$ spans $\mf a^*$, and is empty otherwise. We write $T^- = \exp (\mf a^-)$ and
$T^{--} = \exp (\mf a^{--})$.

Let $t = |t| \cdot t |t|^{-1} \in T_{rs} \times T_{un}$ be the polar decomposition of $t$.
An $\mc H$-representation is called tempered if $|t| \in T^-$ for all its
$\mc A$-weights $t$, and anti-tempered if $|t|^{-1} \in T^-$ for all such $t$. More
restrictively we say that an irreducible  $\mc H$-representation belongs to the discrete
series (or simply: is discrete series) if $|t| \in T^{--}$, for all its $\mc A$-weights $t$.
In particular the discrete series is empty if $F_0$ does not span $\mf a^*$.

Our induction data are triples $(P,\delta,t)$, where
\begin{itemize}
\item $P \subset F_0$;
\item $(\delta ,V_\delta)$ is a discrete series representation of $\mc H_P$;
\item $t \in T^P$.
\end{itemize}
Let $\Xi$ be the space of such induction data, where we consider $\delta$ only modulo
equivalence of $\mc H_P$-representations. We say that $\xi = (P,\delta,t)$ is unitary
if $t \in T^P_{un}$, and we denote the space of unitary induction data by $\Xi_{un}$.
Similarly we say that $\xi$ is positive if $|t| \in T^{P+}$, which we write as $\xi \in \Xi^+$.
We have three collections of induction data:
\begin{equation}\label{eq:inductionData}
\Xi_{un} \subseteq \Xi^+ \subseteq \Xi .
\end{equation}
By default we endow these spaces with the topology for which $P$ and $\delta$ are
discrete variables and $T^P$ carries its natural analytic topology. With $\xi \in \Xi$
we associate the parabolically induced representation
\[
\pi (\xi) = \pi (P,\delta,t) := \mr{Ind}_{\mc H^P}^{\mc H} (\delta \circ \phi_t ) .
\]
As vector space underlying $\pi (\xi)$ we will always take $\C [W^P] \otimes V_\delta$,
where $W^P$ is the collection of shortest length representatives of $W_0 / W (R_P)$.
This space does not depend on $t$, which will allow us to speak of maps that
are continuous, smooth, polynomial or even rational in the parameter $t \in T^P$.
It is known that $\pi (\xi)$ is unitary and tempered if $\xi \in \Xi_{un}$, and non-tempered
if $\xi \in \Xi \setminus \Xi_{un}$, see \cite[Propositions 4.19 and 4.20]{Opd-Sp} and
\cite[Lemma 3.1.1]{Sol-Irr}.

The relations between such representations are governed by intertwining operators.
Their construction \cite{Opd-Sp} is rather complicated, so we recall only their important
properties. Suppose that $P,Q \subset F_0 , k \in T_P \cap T^P , w \in W_0$ and $w (P) = Q$.
Let $\delta$ and $\sigma$ be discrete series representations of respectively $\mc H_P$ and
$\mc H_Q$, such that $\sigma$ is equivalent with $\delta \circ \psi_k^{-1} \circ \psi_w^{-1}$.

\begin{thm}\label{thm:intOp}
\textup{\cite[Theorem 4.33 and Corollary 4.34]{Opd-Sp}}
\enuma{
\item There exists a family of intertwining operators
\[
\pi (w k,P,\delta,t) : \pi (P,\delta,t) \to \pi (Q,\sigma,w (kt)) .
\]
As a map $\C [W^P] \otimes_\C V_\delta \to \C [W^Q] \otimes_\C V_\sigma$
it is a rational in $t \in T^P$ and constant on $T^{F_0}$-cosets.
\item This map is regular and invertible on an open neighborhood of $T^P_{un}$
in $T^P$ (with respect to the analytic topology).
\item $\pi (w k,P,\delta,t)$ is unitary if $t \in T^P_{un}$.
}
\end{thm}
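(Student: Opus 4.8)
The plan is to follow Harish-Chandra's construction of intertwining operators, transported to the affine Hecke algebra via the Bernstein--Lusztig presentation. First I would reduce $\pi (wk ,P ,\delta ,t)$ to elementary building blocks. Writing $w = s_{i_1} \cdots s_{i_\ell} \in W_0$ as a reduced word chosen so that each partial product sends $P$ into $F_0$, it is enough to produce: (i) for a simple reflection $s_\alpha$ with $s_\alpha (P) = P' \subseteq F_0$ an elementary intertwiner $\pi (P ,\delta ,t) \to \pi (P' ,\delta \circ \psi_{s_\alpha}^{-1} ,s_\alpha t)$; and (ii) for $k \in T_P \cap T^P$ the identification $\pi (P ,\delta ,kt) \cong \pi (P ,\delta \circ \psi_k ,t)$ coming from $\phi_{kt} = \phi_k \circ \phi_t$ and the definition \eqref{eq:twistKP} of $\psi_k$. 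The map (ii) is tautological on the underlying vector space $\C [W^P] \otimes V_\delta$, so the real content is in (i). For (i) one uses the intertwining elements $\iota_\alpha$ in a localization $\mc H_{\mr{loc}} \cong \mc A_{\mr{loc}} \rtimes W_0$ of $\mc H$: since $\iota_\alpha$ satisfies $\iota_\alpha \theta_x = \theta_{s_\alpha x} \iota_\alpha$, left multiplication by $\iota_\alpha$ carries $\mc A$-weight spaces of $\pi (P ,\delta ,t)$ to $\mc A$-weight spaces of $\pi (P' ,\delta \circ \psi_{s_\alpha}^{-1} ,s_\alpha t)$; after dividing by the scalar built from Macdonald's $c$-function $c_\alpha$ this becomes $\mc H$-linear, and by Frobenius reciprocity it lands in the claimed induced module. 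Its matrix entries are explicit rational functions of the $\mc A$-weights.

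Second, I would check that composing the elementary maps along a reduced word for $w$ gives an operator independent of the chosen word. This reduces to the braid relations satisfied by the $\iota_\alpha$ in $\mc H_{\mr{loc}}$ together with their compatibility with the $\phi$- and $\psi$-twists -- the same bookkeeping that underlies the Bernstein presentation of $\mc H$ itself. Rationality of $\pi (wk ,P ,\delta ,t)$ in $t \in T^P$ is then immediate since each factor is rational, and for constancy on $T^{F_0}$-cosets one notes that the matrix entries depend on $t$ only through the numbers $t(\alpha)$, $\alpha \in R_0^+$, while every $u \in T^{F_0}$ satisfies $u(\alpha) = 1$ for $\alpha \in R_0 \subseteq \Z F_0$; hence replacing $t$ by $ut$ leaves the operator unchanged on the underlying vector spaces, altering source and target only by the tautological central twist. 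This settles part (a).

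Third comes the analytic content, parts (b) and (c). Here $\pi (wk ,P ,\delta ,t)$ is taken with its normalization by Macdonald $c$-functions, so that in rank one it reduces to the standard normalized intertwiner of the principal series. Regularity near $T^P_{un}$ then says that this normalization exactly cancels the poles of the raw $\iota_\alpha$-factors; this is where the hypothesis that $\delta$ is discrete series enters, via the residue calculus underlying Harish-Chandra's Plancherel theorem: off the singular hyperplanes, the relevant $c$-function factors, evaluated on the $\mc A$-weights of $\pi (P ,\delta ,t)$ for $t$ near $T^P_{un}$, neither blow up nor vanish. For invertibility, compose $\pi (wk ,P ,\delta ,t)$ with the analogously constructed operator in the opposite direction: the result is an $\mc H$-linear endomorphism of $\pi (P ,\delta ,t)$, hence a scalar on the Zariski-dense set of $t$ where that module is irreducible, and the scalar is a rational function whose zero divisor again misses a neighbourhood of $T^P_{un}$. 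Finally, for $t \in T^P_{un}$ the module $\pi (\xi)$ is unitary, and the $\iota_\alpha$ are arranged so that the formal adjoint of the elementary intertwiner in one direction is the one in the other; combined with the previous step, the normalized composite is unitary up to a scalar, and the scalar equals $1$ after evaluation at a generic unitary $t$ where $\pi (\xi)$ is irreducible.

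The main obstacle is the pole analysis behind (b): verifying that the $c$-function normalization is precisely the one that removes the singularities of the raw intertwiners on a whole neighbourhood of $T^P_{un}$. This is essentially a statement about the Plancherel density and rests on the discrete-series hypothesis on $\delta$ together with residue calculus; by contrast, existence, rationality, word-independence, and unitarity modulo a scalar are all formal consequences of the Bernstein--Lusztig relations and Frobenius reciprocity.
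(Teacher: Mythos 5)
The paper itself does not prove Theorem \ref{thm:intOp}: it is quoted from \cite[Theorem 4.33 and Corollary 4.34]{Opd-Sp}, and the text explicitly declines to recall the construction because it is ``rather complicated''. So your proposal has to be measured against Opdam's construction, and in outline it does follow the same route: elementary rank-one operators built from intertwining elements in the localization $\mc A_{\mr{loc}} \rtimes W_0$, reduction of a general $w$ with $w(P)=Q$ to simple steps preserving standard parabolics, twisting by $\phi_t ,\psi_k$, $c$-function normalization, and rationality in $t$. Part (a), as you present it, is indeed essentially formal.

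The genuine gap is that parts (b) and (c), which are the actual content of the theorem, are asserted rather than proved. The claim that the $c$-function normalization cancels \emph{all} singularities of the raw operators on a whole neighbourhood of $T^P_{un}$, and that the normalized operators are unitary there, is exactly the harmonic-analytic core of \cite{Opd-Sp}: it requires knowing where the central character $r_\delta$ of a discrete series lies (Casselman-type estimates, $|r_\delta|$ deep in the negative chamber) and the Maass--Selberg relations coming from the residue calculus; your unitarity argument (``the composite with the operator in the opposite direction is a scalar, equal to $1$ at a generic unitary point'') silently assumes that the correctly normalized operator in the opposite direction is the adjoint of the first, which is precisely that input and not a formality. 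You name this as ``the main obstacle'' but do not resolve it, so the proof is incomplete exactly where the cited theorem is nontrivial. A second, smaller inaccuracy: exact independence of the chosen reduced word cannot be had, because $\sigma$ is only \emph{equivalent} to $\delta \circ \psi_k^{-1} \circ \psi_w^{-1}$, so each elementary step involves a unitary identification unique only up to a phase; this is why the paper records multiplicativity of the operators only up to the scalar $\lambda$ in \eqref{eq:multIntOp}. That subtlety does not affect the truth of (a)--(c), but your braid-relation argument should be weakened accordingly.
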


We can gather all these intertwining operators in a groupoid $\mc W$, which we define now. The
base space of $\mc W$ is the power set of $F_0$ and the collection of arrows from $P$ to $Q$ is
\[
\mc W_{PQ} := \{ w \in W_0 : w (P) = Q \} \times T_P \cap T^P .
\]
Whenever it is defined, the multiplication in $\mc W$ is
\[
(w_1, k_1) \cdot (w_2, k_2) = (w_1 w_2, w_2^{-1}(k_1) k_2) .
\]
Families of intertwining operators $\pi (wk,P,\delta,t)$ satisfying the
properties listed in Theorem \ref{thm:intOp} are unique only up to normalization
by rational functions of $t\in T^P$ which are regular in an open neighborhood of
$T^P_{un}$, have absolute value equal to $1$ on $T^P_{un}$, and are constant on
$T^{F_0}$-cosets. The intertwining operators defined in \cite{Opd-Sp} are normalized
in such a way that composition of the intertwining operators corresponds to
multiplication of the corresponding elements of $\mc{W}$ only up to a scalar.

More precisely, let $g \in \mc W$ be such that $g w$ is defined. Then there exists a
number $\lambda \in \C$ with $|\lambda| = 1$, independent of $t$, such that
\begin{equation}\label{eq:multIntOp}
\pi (g ,Q,\sigma,w (k t)) \circ \pi (w k,P,\delta,t) =
\lambda \, \pi (g w k, P,\delta, t)
\end{equation}
as rational functions of $t$. We fix such a normalization of the intertwining
operators once and for all.

Let $W (R_P) r_\delta \in T_P / W (R_P)$ be the central character of the $\mc H_P$-representation
$\delta$. Then $|r_\delta| \in T_{P,rs} = \exp (\mf a_P)$, so we can define
\begin{equation}\label{eq:ccdelta}
cc_P (\delta) := W (R_P) \log |r_\delta| \in \mf a_P / W (R_P) .
\end{equation}
Since the inner product on $\mf a$ is $W_0$-invariant, the number $\norm{cc_P (\sigma)}$
is well-defined.

\begin{thm}\label{thm:parametrizationOfDual}
\textup{\cite[Theorem 3.3.2]{Sol-Irr}} \\
Let $\rho$ be an irreducible $\mc H$-representation. There exists a unique association class
$\mc W (P,\delta,t) \in \Xi / \mc W$ such that the following equivalent properties hold:
\enuma{
\item $\rho$ is isomorphic to an irreducible quotient of $\pi (\xi^+)$, for some
$\xi^+ = \xi^+ (\rho) \in \Xi^+ \cap \mc W (P,\delta,t)$;
\item $\rho$ is a constituent of $\pi (P,\delta,t)$, and $\norm{cc_P (\delta)}$
is maximal for this property.
}
\end{thm}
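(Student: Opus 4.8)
The plan is to establish this as the Langlands classification adapted to $\mc H$, in three movements: construct, for a positive datum $\xi^+=(P,\delta,t)\in\Xi^+$, a canonical irreducible quotient $L(\xi^+)$ of $\pi(\xi^+)$; show that every irreducible $\rho$ is some $L(\xi^+)$, which yields property (a) together with a representative of the desired association class; and finally prove that the class is well defined in $\Xi/\mc W$ and that (b) singles out the same class. The workhorse is the $\mc A$-weight analysis: on the fixed underlying space $\C[W^P]\otimes V_\delta$ of $\pi(P,\delta,t)$ the commutative algebra $\mc A$ acts explicitly, so its weights on $\pi(P,\delta,t)$ are the $W_0$-translates of the $\mc A_P$-weights of $\delta$ twisted by $t$ (the ``geometric lemma'', cf.\ \cite{Opd-Sp}), and the real parts of those weights, lying in $\mf a=\mf a_P\oplus\mf a^P$, can be tested against the cones $T^{P+},T^{P++}$ and against $cc_P(\delta)$. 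For $\xi^+\in\Xi^+$ one then argues as in the classical Langlands construction: since $\delta$ is discrete series its $\mc A_P$-weights $r$ satisfy $\log|r|\in\mf a_P^{--}$, while $\log|t|\in\mf a^{P+}$ by positivity, so pairing the $\mc A$-weights of $\pi(\xi^+)$ with the dominant direction $\log|t|$ shows the leading $\mc A$-weight space (supported on the shortest coset in $W^P$) is not contained in any proper submodule; hence the sum of all proper submodules is proper and $L(\xi^+):=\pi(\xi^+)/(\text{that sum})$ is the unique irreducible quotient, the intertwining operators of Theorem \ref{thm:intOp} being used to compare $\pi(\xi^+)$ with its associates and to localise reducibility at that weight space.

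For existence, start from an arbitrary irreducible $\rho$: Frobenius reciprocity applied to a genuine $\mc A$-weight $t_0$ of $\rho$ gives a surjection $\pi(\es,\mr{triv},t_0)\twoheadrightarrow\rho$, so $\rho$ is a constituent of a module induced from the discrete series of \emph{some} $\mc H_Q$. Among all $(Q,\sigma,s)$ with $\rho$ a constituent of $\pi(Q,\sigma,s)$, choose one with $\norm{cc_Q(\sigma)}$ maximal: the real central character of $\rho$ is fixed and equals, up to $W_0$, a point whose norm-square is $\norm{\log|s|}^2+\norm{cc_Q(\sigma)}^2$ (using $\mf a_Q\perp\mf a^Q$), so $\norm{cc_Q(\sigma)}$ is bounded, and since there are only finitely many subsets $Q$ the maximum is attained; call the maximizer $(P,\delta,t)$. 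Acting by the Weyl groupoid $\mc W$ one may assume $\log|t|\in\mf a^{P+}$, i.e.\ $(P,\delta,t)\in\Xi^+$: if $\log|t|$ failed to be dominant, an intertwining operator in a non-dominant direction would realise $\rho$ as a constituent of a standard module with strictly larger $\norm{cc}$, contradicting maximality. Then $\rho\prec\pi(\xi^+)$ with $\xi^+\in\Xi^+$, and a leading-exponent comparison forces $\rho\cong L(\xi^+)$. This proves (a) and simultaneously shows that the class of $\xi^+$ realises the maximum in (b).

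For uniqueness and the equivalence (a)$\Leftrightarrow$(b): if $\rho\cong L(P,\delta,t)\cong L(P',\delta',t')$ with both data in $\Xi^+$, reconstruct the datum from $\rho$. The leading $\mc A$-weights of $\rho$ determine $\log|t|$ as a vector of $\mf a^P$, hence $P$ together with the walls carrying $\log|t|$; and the associated generalised weight space, viewed via the quotient map \eqref{eq:quotientP} as an $\mc H_P$-module, recovers $\delta$ up to equivalence, since a discrete series representation is determined by its $\mc A_P$-weights. The only residual freedom is relabelling $P$ by a $W_0$-conjugate and twisting by an element of $T_P\cap T^P$, which is exactly the action of $\mc W$, so the datum is well defined in $\Xi/\mc W$. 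Finally, (a) makes $\rho$ a constituent, so the class in (a) competes in (b); conversely, if $\rho$ is any constituent of $\pi(Q,\sigma,s)$, the leading exponents of $L(P,\delta,t)$ occur among the $\mc A$-weights of $\pi(Q,\sigma,s)$, and the resulting $W_0$-estimate combined with the real-central-character identity above gives $\norm{cc_Q(\sigma)}\le\norm{cc_P(\delta)}$, with equality only for data associate to $(P,\delta,t)$; hence the maximizer in (b) is unique and equals the class in (a).

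The real work sits in the construction of $L(\xi^+)$ and in the reconstruction of $(P,\delta,t)$ from $L(\xi^+)$ — the unique-irreducible-quotient property of positive standard modules and the recoverability of the datum from the quotient. This demands exact control of the $\mc A$-weights of induced modules, the discreteness criterion expressed through strict dominance of $cc_P(\delta)$, and the regularity and invertibility of the intertwining operators of Theorem \ref{thm:intOp} near the unitary locus. In the unequal-parameter setting there is the additional obstacle that the discrete series of the parabolic subalgebras $\mc H_P$ is not available in closed form, so one ultimately transports the argument to graded affine Hecke algebras via Lusztig's reduction theorems, where the Langlands classification and these auxiliary facts are in place.
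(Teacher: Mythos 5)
The paper does not prove this statement at all: it is quoted verbatim from \cite[Theorem 3.3.2]{Sol-Irr}, so the only fair comparison is with the argument there, which combines the Langlands classification (for data $(Q,\sigma,s)$ with $\sigma$ irreducible \emph{tempered} and $|s|\in T^{Q++}$ strictly positive) with the classification of tempered irreducibles as direct summands of $\pi(P,\delta,t)$, $t\in T^P_{un}$, governed by R-groups. Your sketch follows the same general philosophy, but it rests on a key lemma that is false as stated: you claim that for every positive datum $\xi^+\in\Xi^+$ the module $\pi(\xi^+)$ has a \emph{unique} irreducible quotient $L(\xi^+)$, obtained because ``the leading $\mc A$-weight space is not contained in any proper submodule.'' Positive data include the unitary ones, and for $\xi\in\Xi_{un}$ with nontrivial R-group $\mf R_\xi$ the module $\pi(\xi)$ is unitary, completely reducible, and has several inequivalent irreducible quotients (Theorem \ref{thm:KnappStein}); more generally for $|t|\in T^{P+}$ lying on walls the pairing with $\log|t|$ does not isolate a single coset in $W^P$, and $\pi(\xi^+)$ decomposes. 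The correct statement (reflected in Theorem \ref{thm:KnappSteinNontemp}, citing \cite[Proposition 3.1.4]{Sol-Irr}) is that $\pi(\xi^+)$ splits into indecomposable direct summands, each with a unique irreducible quotient, and the parametrization must be built from that, not from a single Langlands quotient of $\pi(\xi^+)$. Since both your existence step ($\rho\cong L(\xi^+)$ via ``leading-exponent comparison'') and your uniqueness step ($L(P,\delta,t)\cong L(P',\delta',t')$ forces associate data) are phrased through this nonexistent unique quotient, the core of the argument does not go through without being reorganized along the two-step route: strict Langlands data with tempered $\sigma$ first, then decomposition of the tempered part via the unitary induction data and R-groups.

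Two further points need repair even in a reorganized proof. First, your reduction to dominant $|t|$ is argued by saying a non-dominant direction would produce a constituent with strictly larger $\norm{cc}$; but the $\mc W$-action preserves $\norm{cc_P(\delta)}$, and the fact that every $\mc W$-orbit in $\Xi$ meets $\Xi^+$ is a separate (nontrivial) lemma, not a consequence of maximality. Second, the reconstruction of $\delta$ from a generalized $\mc A$-weight space uses the assertion that a discrete series representation is determined by its $\mc A_P$-weights, which is unjustified; the actual uniqueness argument recovers the $\mc H_P$- (or $\mc H_Q$-) module structure on a Jacquet-type restriction, not merely the weight set.
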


For any $\xi \in \Xi$ the packet
\begin{equation} \label{eq:packetReps}
\mr{Irr}_{\mc W \xi} (\mc H) := \{ \rho \in \mr{Irr} (\mc H) : \xi^+ (\rho) \in \mc W \xi \}
\end{equation}
is finite, but it is not so easy to predict how many equivalence classes of representations
it contains. This is one of the purposes of R-groups.

\subsection{The Schwartz algebra}

We recall how to complete an affine Hecke algebra to a topological algebra called the
Schwartz algebra. As a vector space $\mc S$ will consist of rapidly decreasing functions
on $W$, with respect to some length function. For this purpose it is unsatisfactory
that $\ell$ is 0 on the subgroup $Z(W)$, as this can be a large part
of $W$. To overcome this inconvenience, let
$L : X \otimes \mh R \to [0,\infty )$ be a function such that
\begin{itemize}
\item $L (X) \subset \mh Z$,
\item $L(x+y) = L(x) \quad \forall x \in X \otimes \mh R, y \in \R R_0$,
\item $L$ induces a norm on
$X \otimes \mh R / \R R_0 \cong Z(W) \otimes \mh R$.
\end{itemize}
Now we define
\[
\mc N (w) := \ell (w) + L (w(0))  \qquad w \in W .
\]
Since $Z(W) \oplus \mh Z R_0$ is of finite index in $X$, the set
$\{ w \in W : \mc N (w) = 0 \}$ is finite. Moreover, because $W$ is
the semidirect product of a finite group and an abelian group, it is of
polynomial growth and different choices of $L$ lead to equivalent length
functions $\mc N$. For $n \in \mh N$ we define the norm
\begin{equation}\label{eq:seminorms}
p_n \big( {\ts \sum_{w \in W}} h_w N_w \big) :=
\sup_{w \in W} |h_w | (\mc N (w) + 1 )^n .
\end{equation}
The completion $\mc S = \mc S (\mc R ,q)$ of $\mc H (\mc R ,q)$ with respect to the family
of norms $\{ p_n \}_{n \in \mh N}$ is a nuclear Fr\'echet space. It consists of all possible
infinite sums $h = \sum_{w \in W} h_w N_w$ such that $p_n (h) < \infty \; \forall n \in \mh N$.
By \cite[Section 6.2]{Opd-Sp} or \cite[Appendix A]{OpSo2} $\mc S (\mc R,q)$ is a unital
locally convex *-algebra.

A crucial role in the harmonic analysis on affine Hecke algebra is played by a particular
Fourier transform, which is based on the induction data space $\Xi$. Let $\mc V_\Xi$
be the vector bundle over $\Xi$, whose fiber at $(P,\delta,t) \in \Xi$ is the representation
space $\C [W^P] \otimes V_\delta$ of $\pi (P,\delta,t)$. Let $\mr{End} (\mc V_\Xi)$ be the
algebra bundle with fibers $\mr{End}_\C (\C [W^P] \otimes V_\delta)$. Of course these
vector bundles are trivial on every connected component of $\Xi$, but globally not even the
dimensions need be constant. Since $\Xi$ has the structure of a complex algebraic variety,
we can construct the algebra of polynomial sections of $\mr{End} (\mc V_\Xi)$:
\[
\mc O \big( \Xi ; \mr{End} (\mc V_\Xi) \big) := \bigoplus_{P,\delta} \mc O (T^P) \otimes
\mr{End}_\C (\C [W^P] \otimes V_\delta) .
\]
For a submanifold $\Xi' \subset \Xi$ we define the algebra $C^\infty \big( \Xi' ;
\mr{End} (\mc V_\Xi) \big)$ in similar fashion.
The intertwining operators from Theorem \ref{thm:intOp} give rise to an action of the
groupoid $\mc W$ on the algebra of rational sections of $\mr{End} (\mc V_\Xi)$, by
\begin{equation}\label{eq:actSections}
(w \cdot f) (\xi) = \pi (w,w^{-1} \xi ) f (w^{-1} \xi) \pi (w, w^{-1} \xi )^{-1} ,
\end{equation}
whenever $w^{-1} \xi \in \Xi$ is defined. This formula also defines groupoid actions of
$\mc W$ on $C^\infty \big( \Xi' ; \mr{End} (\mc V_\Xi) \big)$, provided that $\Xi'$ is a
$\mc W$-stable submanifold of $\Xi$ on which all the intertwining operators are regular.
Given a suitable collection $\Sigma$ of sections of $(\Xi', \mr{End} (\mc V_\Xi) )$,
we write
\[
\Sigma^{\mc W} = \{ f \in \Sigma : (w \cdot f) (\xi) = f(\xi) \text{ for all } w \in \mc W,\
\xi \in \Xi' \text{ such that } w^{-1} \xi \text{ is defined} \} .
\]
The Fourier transform for $\mc H$ is the algebra homomorphism
\begin{align*}
& \mc F : \mc H \to \mc O \big( \Xi ; \mr{End} (\mc V_\Xi) \big) , \\
& \mc F (h) (\xi) = \pi (\xi) (h) .
\end{align*}
The very definition of intertwining operators shows that the image of $\mc F$ is contained
in the algebra $\mc O \big( \Xi ; \mr{End} (\mc V_\Xi) \big)^{\mc W}$.
By \cite[Theorem 5.3]{DeOp1} the Fourier transform extends to an isomorphism of
Fr\'echet *-algebras
\begin{equation}\label{eq:FourierIso}
\mc F : \mc S (\mc R,q) \to C^\infty \big( \Xi_{un} ; \mr{End} (\mc V_\Xi ) \big)^{\mc W} .
\end{equation}
Let $(P_1,\delta_1), \ldots, (P_N, \delta_N)$ be representatives for the action of $\mc W$ on
pairs $(P,\delta)$. Then the right hand side of \eqref{eq:FourierIso} can be rewritten as
\begin{equation}\label{eq:25}
\bigoplus\nolimits_{i=1}^N \big( C^\infty (T^{P_i}_{un}) \otimes
\mr{End} (\C [W^{P_i}] \otimes V_{\delta_i}) \big)^{\mc W_{P_i ,\delta_i}} ,
\end{equation}
where $\mc W_{P,\delta} = \{ w \in \mc W : w (P) = P, \delta \circ \psi_w^{-1} \cong \delta \}$
is the isotropy group of $(P,\delta)$.

It was shown in \cite[Corollary 5.5]{DeOp1} that \eqref{eq:FourierIso} implies that
\begin{equation}\label{eq:ZS}
\text{the center } \mc Z \text{ of } \mc S \text{ is isomorphic to } C^\infty (\Xi_{un})^{\mc W},
\end{equation}
so the space of central characters of $\mc S$ is $\Xi_{un} / \mc W$.
We let $\mr{Mod}_f (\mc S)$ be the category of finite dimensional $\mc S$-modules
and $\mr{Mod}_{f,\mc W \xi} (\mc S)$ the full subcategory of modules which admit the
central character $\mc W \xi$. The collection $\mr{Irr}_{\mc W \xi} (\mc S)$ of
(equivalence classes of) irreducible objects in $\mr{Mod}_{f,\mc W \xi} (\mc S)$ equals
$\mr{Irr}_{\mc W \xi} (\mc H)$, in the notation of \eqref{eq:packetReps}.
\vspace{2mm}

\section{Formal completion of the Schwartz algebra}
\label{sec:Ext}

We study how the Ext-groups of Fr\'echet $\mc S$-modules behave under formal completion
with respect to a maximal ideal of the center. For compatibility with \cite{OpSo1} we will work in
the category $\mr{Mod}_{bor}(\mc S)$ of complete bornological modules over the nuclear
Fr\'echet algebra $\mathcal{S}$, equipped with its precompact bornology. This is an exact category
with respect to the class of $\mc S$-module extensions that are split as bornological vector spaces.

Two remarks for readers who are not familiar with bornologies. As discussed in the introduction, 
one may think of Fr\'echet $\mc S$-modules and exact sequences which admit a continuous linear 
splitting, that is just as good in the setting of this paper. Moreover, if one is only interested 
in finite dimensional modules, then the subtleties with topological modules are superfluous and 
all the results can also be obtained in an algebraic way. However, the equivalence of the algebraic 
and the bornological approach in a finite dimensional setting is not automatic, it follows from 
the existence of certain nice projective resolutions \cite{OpSo1}.

Although the entire section is formulated in terms of the Schwartz algebra $\mc S$, the essence
is the study of algebras of smooth functions on manifolds. The results and proofs are equally 
valid, and of some independent interest, if one replaces $\mc S$ by $C^\infty (M)$, where $M$
is a smooth manifold such that $C^\infty (M) \cong \mc S (\Z)$ as Fr\'echet spaces. According
to \cite[Satz 31.16]{MeVo} this is the case when $M$ has no boundary or is the closure of an open 
bounded subset of $\R^n$ with $C^1$ boundary. The formal completion of $C^\infty (M)$ with respect
to a maximal ideal is a power series ring, which provides a good geometric interpretation
for the material in this section.

In \eqref{eq:ZS} $\mathcal{Z} \cong C^\infty (\Xi_{un})^{\mathcal{W}}$ is equipped with the
Fr\'echet topology from $C^\infty (\Xi_{un})$. We fix $\xi = (P,\delta,t) \in \Xi_{un}$.
Let $m_{\mathcal{W}\xi}^\infty$ denote the closed ideal of $\mathcal{Z}$
of functions that are flat at $\mathcal{W}\xi$, that is, their Taylor series around any 
point of $\mc W \xi$ is zero. We define a Fr\'echet module
$\widehat{\mc{Z}}_{\mc{W}\xi}$ over $\mc{Z}$ by the exact sequence
\begin{equation}\label{eq:flatexZ}
0 \to m_{\mathcal{W}\xi}^{\infty} \to \mathcal{Z} \to \widehat{\mc{Z}}_{\mc{W}\xi} \to 0 .
\end{equation}
It follows easily from Borel's lemma that the Taylor series
map $\tau_\xi$ defines a continuous surjection
$\tau_\xi:\mathcal{Z}\to \mathcal{F}^{\mathcal{W}_\xi}_\xi$
where $\mathcal{F}^{\mathcal{W}_\xi}_\xi$
denotes the unital Fr\'echet algebra of $\mathcal{W}_\xi$-invariant formal power series
at $\xi$. According to \cite[Remarque IV.3.9]{Tou} $\tau_\xi$ is not linearly split.
The kernel of $\tau_\xi$ is isomorphic to $m_{\mathcal{W}\xi}^{\infty}$, hence
we may identify $\widehat{\mc{Z}}_{\mc{W}\xi}$ and $\mathcal{F}^{\mathcal{W}_\xi}_\xi$.
Notice that this algebra is Noetherian, like any power series ring with
finitely many variables and coefficients in a field.
By Whitney's spectral theorem \cite[V.1.6]{Tou}
\begin{equation}\label{eq:m2xi}
\overline{(m^\infty_{\mc W \xi})^2} = \overline{m^\infty_{\mc W \xi}} = m^\infty_{\mc W \xi} .
\end{equation}
For similar reasons $\mathcal{I}_{\mathcal{W}\xi}:=m_{\mathcal{W}\xi}^{\infty}\mathcal{S}$
is a closed two-sided ideal in $\mathcal{S}$.
Hence by applying the completed projective tensor product functor
$?\widehat\otimes_\mathcal{Z}\mathcal{S}$ to
\eqref{eq:flatexZ} we obtain
\begin{equation}\label{eq:q}
0\to\mathcal{I}_{\mathcal{W}\xi}\to\mathcal{S}\to
\widehat{\mc{Z}}_{\mc{W}\xi}\widehat\otimes_{\mathcal{Z}}\mathcal{S} \to 0 .
\end{equation}
We define the completion of $\mc{S}$ at $\mc{W}\xi$ to be
the Fr\'echet $\widehat{\mc{Z}}_{\mc{W}\xi}$-algebra
\begin{equation}\label{eq:compl}
\widehat{\mathcal{S}}_{\mathcal{W}\xi}:=\widehat{\mc{Z}}_{\mc{W}\xi}
\widehat\otimes_{\mathcal{Z}}\mathcal{S} .
\end{equation}
By \eqref{eq:FourierIso} and \eqref{eq:ZS} $\mc{S}$ is a finitely generated
$\mc{Z}$-module, so $\widehat{\mathcal{\mc S}}_{\mathcal{W}\xi}$ is left and right
Noetherian. In particular the category $\mr{Mod}_{fg}(\widehat{\mc S}_{\mc W \xi})$ of
finitely generated $\widehat{S}_{\mc W \xi}$-modules is closed under extensions.
Any $M \in \mr{Mod}_{fg} (\widehat{\mathcal{S}}_{\mathcal{W}\xi})$
admits a finite presentation
\begin{equation}\label{eq:pres}
\widehat{\mathcal{S}}_{\mathcal{W}\xi}^{n_2}\to
\widehat{\mathcal{S}}_{\mathcal{W}\xi}^{n_1}\to M \to 0 .
\end{equation}
It follows that $\widehat{\mathcal{S}}_{\mathcal{W}\xi}$
is a Fr\'echet space of finite type, i.e. that its topology is defined by
an increasing sequence of seminorms all of whose cokernels have finite codimension.
By \cite{Kopp} all continuous linear maps between such
Fr\'echet spaces have closed images. In particular, the relations module
im$\big( \widehat{\mathcal{S}}_{\mathcal{W}\xi}^{n_2}\to
\widehat{\mathcal{S}}_{\mathcal{W}\xi}^{n_1} \big)$
in \eqref{eq:pres} is closed, showing that $M$ is a Fr\'echet module (i.e. the
canonical topology on $M$ in the sense of \cite{Tou} is separated, hence
defines a Fr\'echet module structure on $M$). It also follows easily that
morphisms between finitely generated $\widehat{\mathcal{S}}_{\mathcal{W}\xi}$-modules
are automatically continuous with respect to the canonical topology. Summarizing:

\begin{lem}\label{lem:fgmod}
$\mr{Mod}_{fg} (\widehat{\mathcal{S}}_{\mathcal{W}\xi})$ is a Serre subcategory
of  $\mr{Mod}_{Fr\acute e} (\widehat{\mathcal{S}}_{\mathcal{W}\xi})$, the
category of Fr\'echet $\widehat{\mathcal{S}}_{\mathcal{W}\xi}$-modules.
\end{lem}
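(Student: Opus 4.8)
The plan is to reduce the statement to two facts already established: that $\widehat{\mathcal{S}}_{\mathcal{W}\xi}$ is left and right Noetherian, and that every finitely generated $\widehat{\mathcal{S}}_{\mathcal{W}\xi}$-module carries a canonical Fréchet topology (separated because $\widehat{\mathcal{S}}_{\mathcal{W}\xi}$ is of finite type, by \cite{Kopp}) with respect to which all module homomorphisms are automatically continuous. These already exhibit $\mr{Mod}_{fg}(\widehat{\mathcal{S}}_{\mathcal{W}\xi})$ as a full subcategory of $\mr{Mod}_{Fr\acute e}(\widehat{\mathcal{S}}_{\mathcal{W}\xi})$, so what is left is to verify closure under subobjects, quotient objects and extensions inside the latter category.

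The key intermediate step I would prove first is a \emph{uniqueness} statement: a finitely generated $\widehat{\mathcal{S}}_{\mathcal{W}\xi}$-module $M$ admits exactly one Fréchet-module topology, namely the canonical one. For this, choose generators $m_1,\dots,m_n$ of $M$ and let $\phi\colon\widehat{\mathcal{S}}_{\mathcal{W}\xi}^{\,n}\to M$ be the surjection sending the $i$-th standard basis vector to $m_i$. With respect to any Fréchet-module topology on $M$ the map $\phi$ is continuous, since the module action is; being a continuous surjection between Fréchet spaces it is open by the open mapping theorem, so the topology on $M$ coincides with the quotient topology on $\widehat{\mathcal{S}}_{\mathcal{W}\xi}^{\,n}/\ker\phi$. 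But $\ker\phi$ is the purely algebraic module of relations among $m_1,\dots,m_n$ and does not depend on the chosen topology; hence the topology on $M$ is forced, and it is the canonical one.

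With uniqueness available, the three closure properties are short. A quotient object of a finitely generated $M$ in $\mr{Mod}_{Fr\acute e}(\widehat{\mathcal{S}}_{\mathcal{W}\xi})$ is algebraically a quotient module of $M$, hence finitely generated, so by uniqueness its Fréchet-module topology is the canonical one and it lies in $\mr{Mod}_{fg}$. If $N\subseteq M$ is a submodule, then $N$ is finitely generated because $\widehat{\mathcal{S}}_{\mathcal{W}\xi}$ is Noetherian, and $M/N$ is then finitely generated too; the quotient map $M\to M/N$ is continuous by automatic continuity, so $N$, being its kernel, is closed in $M$, the subspace topology on $N$ is a Fréchet-module topology, and by uniqueness it equals the canonical topology of $N$ — so subobjects taken inside $\mr{Mod}_{Fr\acute e}$ stay in $\mr{Mod}_{fg}$. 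Finally, given an exact sequence $0\to M'\to M\to M''\to 0$ in $\mr{Mod}_{Fr\acute e}(\widehat{\mathcal{S}}_{\mathcal{W}\xi})$ with $M'$ and $M''$ finitely generated, lifting a finite generating set of $M''$ and adjoining a finite generating set of $M'$ produces a finite generating set of $M$, so $M$ is finitely generated and, once more, its topology is forced to be the canonical one. Hence $\mr{Mod}_{fg}(\widehat{\mathcal{S}}_{\mathcal{W}\xi})$ is closed under subobjects, quotient objects and extensions, i.e. is a Serre subcategory of $\mr{Mod}_{Fr\acute e}(\widehat{\mathcal{S}}_{\mathcal{W}\xi})$.

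The only genuinely delicate point is the uniqueness of the Fréchet-module topology; it rests on the open mapping theorem together with the already-recorded fact that $\widehat{\mathcal{S}}_{\mathcal{W}\xi}^{\,n}$ is a Fréchet space of finite type, so that the relevant quotients are separated. Beyond that, I would only be careful to state explicitly what ``subobject'' and ``quotient object'' mean in $\mr{Mod}_{Fr\acute e}(\widehat{\mathcal{S}}_{\mathcal{W}\xi})$ — a closed submodule with its subspace topology, respectively a module quotient with its quotient topology — so that the comparisons of topologies above are the ones that matter.
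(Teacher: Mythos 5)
Your proposal is correct and follows essentially the same route as the paper: the paper's argument is exactly the preceding discussion (Noetherianness of $\widehat{\mathcal{S}}_{\mathcal{W}\xi}$, finite presentations, the finite-type/Kopp closed-image fact giving a separated canonical Fr\'echet topology, and automatic continuity of morphisms), of which the lemma is stated as a summary. Your only addition is to make explicit the uniqueness of the Fr\'echet-module topology via the open mapping theorem and to write out the three closure verifications, which the paper leaves implicit; this is a faithful elaboration rather than a different proof.
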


Let $q : \mc S \to \widehat{\mc S}_{\mc W \xi}$ be the quotient map and
\[
q^* : \mr{Mod}_{bor}(\widehat{\mc S}_{\mc W \xi}) \to \mr{Mod}_{bor}(\mc S)
\]
the associated pullback functor. In the opposite direction the completed bornological
tensor product provides a functor
\[
\widehat{\mc S}_{\mc W \xi} \widehat{\otimes}_{\mc S} : \mr{Mod}_{bor}(\mc S) \to
\mr{Mod}_{bor}(\widehat{\mc S}_{\mc W \xi}) .
\]
We remark that for Fr\'echet modules this tensor product agrees with the completed
projective one.

\begin{lem}\label{lem:compl}
Let $V \in \mr{Mod}_{bor}(\mc S)$ and abbreviate $\widehat{V}_{\mc W \xi} :=
\widehat{\mc S}_{\mc W \xi} \widehat{\otimes}_{\mc S} V$.
\enuma{
\item $\widehat{V}_{\mc{W}\xi} \cong \widehat{\mc{Z}}_{\mc{W}\xi}\widehat
\otimes_{\mathcal{Z}} V \cong V / \overline{m^{\infty}_{\mc{W}\xi} V}$.
\item For all $M \in \mr{Mod}_{bor} (\widehat{\mc S}_{\mc W \xi})$ there is a
natural isomorphism $\widehat{\mc S}_{\mc W \xi} \widehat{\otimes}_{\mc S} q^* M \cong M$.
\item $q^*$ is fully faithful and exact.
\item $q^*$ is right adjoint to $\widehat{\mc S}_{\mc W \xi} \widehat{\otimes}_{\mc S}$.
\item $\mr{Mod}^{\mc{W}\xi, tor}_{bor}(\mc{S}) :=
\{M \in \mr{Mod}_{bor}(\mc{S}) \mid m^{\infty}_{\mc{W}\xi} M = 0 \}$ is a Serre
subcategory of $\mr{Mod}_{bor}(\mc{S})$.
\item $q^*$ defines equivalences of exact categories
$\mr{Mod}_{bor}(\widehat{\mc S}_{\mc W \xi}) \to \mr{Mod}^{\mc{W}\xi, tor}_{bor}(\mc{S})$
and $\mr{Mod}_{Fr\acute e}(\widehat{\mc S}_{\mc W \xi}) \to
\mr{Mod}^{\mc{W}\xi, tor}_{Fr\acute e}(\mc{S})$.
}
\end{lem}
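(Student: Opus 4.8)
The proof of Lemma \ref{lem:compl} is a sequence of formal verifications building on the Noetherian structure of $\widehat{\mc S}_{\mc W \xi}$ established above; the only genuinely delicate point is (a), which requires care with the passage between completed bornological and completed projective tensor products and with the fact (Whitney) that $m^\infty_{\mc W \xi}$ is already closed and idempotent. I will proceed through the six items essentially in order.

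For (a), the plan is to combine associativity of the completed tensor product with \eqref{eq:compl}: $\widehat{V}_{\mc W \xi} = \widehat{\mc S}_{\mc W \xi} \widehat\otimes_{\mc S} V = (\widehat{\mc Z}_{\mc W \xi} \widehat\otimes_{\mc Z} \mc S) \widehat\otimes_{\mc S} V \cong \widehat{\mc Z}_{\mc W \xi} \widehat\otimes_{\mc Z} V$, which gives the first isomorphism. For the second, I apply the right-exact functor $? \,\widehat\otimes_{\mc Z} V$ to \eqref{eq:flatexZ}; right-exactness identifies the cokernel of $m^\infty_{\mc W \xi} \widehat\otimes_{\mc Z} V \to V$ with $\widehat{\mc Z}_{\mc W \xi} \widehat\otimes_{\mc Z} V$, and the image of that map is (the closure of) $m^\infty_{\mc W \xi} V$. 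Here the bornological formalism of \cite{OpSo1,Mey-Emb} is what guarantees right-exactness of the completed tensor product on the exact category in play; I will invoke \eqref{eq:m2xi} only to note that no further closure is needed beyond $\overline{m^\infty_{\mc W \xi} V}$. The Fr\'echet case is parallel with the completed projective tensor product, using the remark preceding the lemma.

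Items (b)--(d) are then formal. For (b), a module $M$ over $\widehat{\mc S}_{\mc W \xi}$ is already annihilated by $m^\infty_{\mc W \xi}$ (it is a module over the quotient), so by (a) applied to $V = q^* M$ one gets $\widehat{(q^* M)}_{\mc W \xi} \cong q^* M / \overline{m^\infty_{\mc W \xi} q^* M} = q^* M = M$ as $\widehat{\mc S}_{\mc W \xi}$-modules, naturally in $M$. For (d), the adjunction $\mathrm{Hom}_{\mc S}(q^* N, V) \cong \mathrm{Hom}_{\widehat{\mc S}_{\mc W \xi}}(N, \widehat{V}_{\mc W \xi})$ is the standard extension–restriction of scalars adjunction for the ring map $q$, valid in the bornological setting; the unit is $V \to \widehat{V}_{\mc W \xi}$ and the counit is the isomorphism of (b). Full faithfulness of $q^*$ in (c) is immediate from (b) (the counit being an isomorphism forces the right adjoint to be fully faithful), and exactness of $q^*$ holds because $q$ makes $\widehat{\mc S}_{\mc W \xi}$ the quotient of $\mc S$ by the \emph{closed} ideal $\mc I_{\mc W \xi}$, so pullback neither creates nor destroys bornological (resp. continuous-linear) splittings.

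For (e): $\mr{Mod}^{\mc W \xi, tor}_{bor}(\mc S)$ is visibly closed under subobjects and quotients; closure under extensions in the exact category follows because an extension of modules killed by $m^\infty_{\mc W \xi}$ is killed by $(m^\infty_{\mc W \xi})^2$, and \eqref{eq:m2xi} gives $\overline{(m^\infty_{\mc W \xi})^2} = m^\infty_{\mc W \xi}$, so — since the submodules $m^\infty_{\mc W \xi} M$ are closed for these modules — the extension is killed by $m^\infty_{\mc W \xi}$ itself. Finally (f): by (c) the functor $q^*$ lands in $\mr{Mod}^{\mc W \xi, tor}_{bor}(\mc S)$ and is fully faithful and exact; it is essentially surjective onto that subcategory because any $V$ with $m^\infty_{\mc W \xi} V = 0$ satisfies $\widehat{V}_{\mc W \xi} \cong V$ by (a), hence $V \cong q^*(\widehat{V}_{\mc W \xi})$, and this identification is compatible with the exact structures since $q$ is a quotient by a closed ideal. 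The Fr\'echet version is identical, using Lemma \ref{lem:fgmod} and the remark that morphisms of finitely generated $\widehat{\mc S}_{\mc W \xi}$-modules are automatically continuous so that the two categories of modules match on the nose. The main obstacle, as noted, is keeping the bornological and topological bookkeeping honest in part (a); everything downstream is formal nonsense about adjoint functors and quotients by closed ideals.
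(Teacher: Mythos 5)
Your proposal follows essentially the same route as the paper's proof: associativity of the completed tensor product for (a), formal arguments with the quotient map $q$ for (b)--(d), Whitney's identity \eqref{eq:m2xi} for closure under extensions in (e), and the observation that a bornological module killed by $m^\infty_{\mc W \xi}$ is the same thing as a module over the quotient algebra for (f). One formula needs repair, though: in (d) the displayed adjunction is written the wrong way around. As printed, $\mr{Hom}_{\mc S}(q^* N, V) \cong \mr{Hom}_{\widehat{\mc S}_{\mc W \xi}}(N, \widehat{V}_{\mc W \xi})$ would make $q^*$ a \emph{left} adjoint of the completion functor; but the right adjoint of $q^*$ is the functor sending $V$ to its $m^\infty_{\mc W \xi}$-annihilator submodule, not to $V / \overline{m^{\infty}_{\mc{W}\xi} V}$, so that formula is false in general. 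What part (d) asserts, and what your unit/counit description actually corresponds to, is $\mr{Hom}_{\widehat{\mc S}_{\mc W \xi}}(\widehat{V}_{\mc W \xi}, M) \cong \mr{Hom}_{\mc S}(V, q^* M)$; this follows, as in the paper, from the fact that any $\mc S$-map $V \to q^* M$ kills $\overline{m^{\infty}_{\mc{W}\xi} V}$, combined with (a)--(c). So the slip is notational rather than conceptual, but as stated the step would fail.

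In (a), be careful with how you justify applying $? \,\widehat{\otimes}_{\mc Z} V$ to \eqref{eq:flatexZ}: that sequence is \emph{not} admissible in the exact category (the Taylor map admits no continuous linear splitting, by Tougeron), so you cannot invoke exactness properties relative to split sequences. What you actually need, and what is true, is that the completed bornological (or projective) tensor product is a left adjoint and hence preserves cokernels; since $m^\infty_{\mc W \xi}$ is closed, $\widehat{\mc Z}_{\mc W \xi}$ is the cokernel of the inclusion, and the image of $m^\infty_{\mc W \xi} \widehat{\otimes}_{\mc Z} V \to V$ has closure $\overline{m^{\infty}_{\mc{W}\xi} V}$, which yields $V / \overline{m^{\infty}_{\mc{W}\xi} V}$. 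The paper instead reaches the same conclusion by the chain $\widehat{\mc Z}_{\mc W \xi} \widehat{\otimes}_{\mc Z} V \cong \widehat{\mc Z}_{\mc W \xi} \widehat{\otimes}_{\mc Z} (V / m^\infty_{\mc W \xi} V) \cong \widehat{\mc Z}_{\mc W \xi} \widehat{\otimes}_{\widehat{\mc Z}_{\mc W \xi}} (V / m^\infty_{\mc W \xi} V) \cong V / \overline{m^{\infty}_{\mc{W}\xi} V}$, which sidesteps the issue. Two smaller remarks: in (e) the relevant point is that the annihilator of the middle module is closed because the module action is bounded, so $\overline{(m^\infty_{\mc W \xi})^2} = m^\infty_{\mc W \xi}$ kills it once $(m^\infty_{\mc W \xi})^2$ does; and your appeal to Lemma \ref{lem:fgmod} for the Fr\'echet half of (f) is unnecessary, since that case is verbatim the bornological one with continuous splittings. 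With these adjustments your argument is sound and coincides with the paper's.
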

\begin{proof}
(a) By the associativity of bornological tensor products
\begin{multline*}
\widehat{V}_{\mc W \xi} = \widehat{\mc S}_{\mc W \xi} \widehat{\otimes}_{\mc S} V =
\widehat{\mc{Z}}_{\mc{W}\xi} \widehat \otimes_{\mathcal{Z}} \mc S \widehat{\otimes}_{\mc S} V
\cong \widehat{\mc{Z}}_{\mc{W}\xi}\widehat\otimes_{\mathcal{Z}} V \\
\cong \widehat{\mc{Z}}_{\mc{W}\xi}\widehat\otimes_{\mathcal{Z}}(V /m^{\infty}_{\mc{W}\xi} V)
\cong \widehat{\mc{Z}}_{\mc{W}\xi} \widehat\otimes_{\widehat{\mc{Z}}_{\mc{W}\xi}}
(V/m^{\infty}_{\mc{W}\xi} V) \cong V/ \overline{m^{\infty}_{\mc{W}\xi} V} .
\end{multline*}
(b) follows immediately from (a) \\
(c) is obvious. \\
(d) Since the kernel of any $\mc S$-module homomorphism $V \to q^* M$ contains
$\overline{m^{\infty}_{\mc{W}\xi} V}$,
\[
\mr{Hom}_{\mc S} (V, q^* M)
\cong \mr{Hom}_{\mc S} (V/ \overline{m^{\infty}_{\mc{W}\xi} V}, q^* M) .
\]
By (a), (b) and (c) the right hand side is isomorphic to
\[
\mr{Hom}_{\mc S} (q^* \widehat{V}_{\mc W \xi}, q^* M) \cong
\mr{Hom}_{\widehat{\mc S}_{\mc W \xi}} (\widehat{V}_{\mc W \xi}, M) .
\]
(e) The only nontrivial part concerns extensions. Let
\[
0 \to V_1 \xrightarrow{i} V_2 \xrightarrow{p} V_3 \to 0
\]
be a short exact sequence in $\mr{Mod}_{bor} (\mc S)$ and assume that
$m^\infty_{\mc W \xi} V_1 = m^\infty_{\mc W \xi} V_3 = 0$. For $m \in m^\infty_{\mc W \xi} V_2$
and $v_2 \in V_2$ we have $p (m v_2) = m p (v_2) \in m^\infty_{\mc W \xi} V_3 = 0$, so
$m^\infty_{\mc W \xi} V_2 \subset i (V_1)$. Moreover by \eqref{eq:m2xi}
\[
m^\infty_{\mc W \xi} V_2 = \overline{(m^\infty_{\mc W \xi})^2} V_2 \subset
\overline{m^\infty_{\mc W \xi} i (m^\infty_{\mc W \xi} V_1)} =
\overline{m^\infty_{\mc W \xi} i (0)} = 0 ,
\]
so $V_2 \in \mr{Tor}^\infty_{\mc{W}\xi}(\mc{S})$. \\
(f) In view of (c) we only have to show that $q^* : \mr{Mod}_{bor}(\widehat{\mc S}_{\mc W \xi})
\to \mr{Mod}^{\mc{W}\xi, tor}_{bor}(\mc{S})$ is essentially bijective. Clearly any $V \in
\mr{Mod}^{\mc{W}\xi, tor}_{bor}(\mc{S})$ can be considered as a
$\widehat{S}_{\mc W \xi}$-module~$V'$, and as such $V = q^* V'$. Conversely, if $V \cong q^* M$
for some $M \in \mr{Mod}^{\mc{W}\xi, tor}_{bor}(\widehat{\mc S}_{\mc W \xi})$, then
\[
M \cong \widehat{\mc S}_{\mc W \xi} \widehat{\otimes}_{\mc S} q^* M \cong
\widehat{\mc S}_{\mc W \xi} \widehat{\otimes}_{\mc S} V = \widehat{V}_{\mc W \xi}
\]
by part (b).
\end{proof}

The spaces $\mr{Ext}^n_{\mc S} (\pi,\pi')$ carry a natural $\mc Z$-module structure.
Let the tempered central characters of $\pi$ and $\pi^\prime$ be $\mc{W}\xi$ and $\mc{W}\xi^\prime$
respectively. Standard arguments show that $\mr{Ext}^n_{\mc S} (\pi,\pi')$ has central character
$\mc{W}\xi$ if $\mc{W}\xi=\mc{W}\xi^\prime$, and $\mr{Ext}^n_{\mc S} (\pi,\pi')=0$ otherwise.
To compute these Ext-groups we shall at some point want to pass to the formal
completion of $\mc{S}$ at $\mc{W}\xi$. For that purpose we would like the functor
$\widehat{\mc S}_{\mc W \xi} \widehat{\otimes}_{\mc S}$ to be
exact, but unfortunately the authors do not know whether it is so on the categories of
Fr\'echet $\mc S$-modules or bornological $\mc S$-modules. Moreover this functor does not always
respect linear splittings of short exact sequences.

Therefore we restrict to smaller module categories.
Let $\mc S (\Z)$ be the nuclear Fr\'echet space of rapidly decreasing functions $\Z \to \C$ and
let $\Omega$ (resp. $\DNO$) be the category of Fr\'echet spaces that are isomorphic to a quotient
(resp. a direct summand) of $\mc S (\Z)$. Our notation is motivated by certain properties
$(\Omega)$ and $(DN)$ of Fr\'echet spaces, which characterize the quotients and the subspaces
of $\mc S (\Z)$ among all nuclear Fr\'echet spaces \cite{Vogt}.

These categories are exact but not abelian, because some morphisms do not have a kernel
or a cokernel. They are suitable to overcome to problems that can
arise from closed subspaces which are not complemented:

\begin{thm}\label{thm:Vogt}
\enuma{
\item The category $\Omega$ is closed under extensions of Fr\'echet spaces.
\item Let $0 \to V_1 \to V_2 \to V_3 \to 0$ be a short exact sequence of Fr\'echet spaces.
It splits whenever $V_1$ is a quotient of $\mc S (\Z)$ and $V_3$ is a subspace of $\mc S (\Z)$.
\item Every object of $\DNO$ is projective in $\Omega$.
}
\end{thm}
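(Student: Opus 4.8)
The plan is to reduce all three parts to Vogt's structure theory for $\mc S (\Z) \cong s$, and in particular to Vogt's splitting theorem for pairs of Fréchet spaces with properties $(DN)$ and $(\Omega)$. The facts I would invoke from \cite{Vogt} (see also \cite{MeVo}) are: a Fréchet space is isomorphic to a quotient of $\mc S(\Z)$ if and only if it is nuclear with property $(\Omega)$; it is a subspace of $\mc S(\Z)$ if and only if it is nuclear with property $(DN)$; $\mc S(\Z)$ itself is nuclear with both $(DN)$ and $(\Omega)$, and $\mc S(\Z) \oplus \mc S(\Z) \cong \mc S(\Z)$. The single nontrivial input is Vogt's splitting theorem: $\mr{Ext}^1(F,E) = 0$ whenever $E$ is a nuclear Fréchet space with $(\Omega)$ and $F$ a nuclear Fréchet space with $(DN)$; equivalently every extension $0 \to E \to \cdot \to F \to 0$ of Fréchet spaces splits. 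Everything else is bookkeeping.

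Part (b) is then immediate: if $V_1$ is a quotient of $\mc S(\Z)$ it is nuclear with $(\Omega)$, and if $V_3$ is a subspace of $\mc S(\Z)$ it is nuclear with $(DN)$, so Vogt's theorem applies verbatim to the given short exact sequence. For part (a), given $0 \to V_1 \to V_2 \to V_3 \to 0$ with $V_1, V_3 \in \Omega$, I would fix quotient maps $\phi_i : \mc S(\Z) \to V_i$ for $i = 1,3$. Applying (b) with $V_3$ there replaced by $\mc S(\Z)$ gives $\mr{Ext}^1(\mc S(\Z), V_1) = 0$, so $\phi_3$ lifts through the surjection $V_2 \to V_3$ to a map $\psi_3 : \mc S(\Z) \to V_2$; composing $\phi_1$ with the inclusion $V_1 \hookrightarrow V_2$ gives $\psi_1 : \mc S(\Z) \to V_2$. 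A short diagram chase shows $\psi_1 \oplus \psi_3 : \mc S(\Z) \oplus \mc S(\Z) \to V_2$ is a continuous linear surjection (it covers $i(V_1)$ via $\psi_1$ and surjects onto $V_3$ via $\psi_3$), hence a topological quotient map by the open mapping theorem; since $\mc S(\Z) \oplus \mc S(\Z) \cong \mc S(\Z)$, this exhibits $V_2$ as an object of $\Omega$.

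For part (c) it suffices to prove that $\mc S(\Z)$ is projective in $\Omega$, because every object of $\DNO$ is a direct summand of $\mc S(\Z)$ and a retract of a projective object is projective. So let $0 \to K \to B \to C \to 0$ be a conflation in $\Omega$; by the chosen exact structure (conflations are short exact sequences of Fréchet spaces, and these keep all three terms in $\Omega$ by part (a)) the kernel $K$ is itself an object of $\Omega$, hence a quotient of $\mc S(\Z)$, hence nuclear with $(\Omega)$. Given a morphism $f : \mc S(\Z) \to C$, form the pullback conflation $0 \to K \to B' \to \mc S(\Z) \to 0$; by (b), applied with $V_1 = K$ and $V_3 = \mc S(\Z)$ (trivially a subspace of itself), this sequence splits, and composing a splitting with $B' \to B$ yields the required lift of $f$.

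The only real content is the appeal to Vogt's splitting theorem; the point that needs a little care is the one flagged above in part (c), namely that the kernel of an admissible epimorphism of $\Omega$ is again in $\Omega$ — this is precisely where one uses that the exact structure on $\Omega$ consists of short exact sequences of Fréchet spaces together with part (a). One should also note that all splittings produced are morphisms of Fréchet spaces and hence morphisms in $\Omega$, so the $\mr{Ext}^1$-vanishing computed for Fréchet spaces transfers directly to the exact category $\Omega$.
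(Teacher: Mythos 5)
Your proposal is correct, and for parts (b) and (c) it is essentially what the paper does: the paper's proof of (b) is just the citation of Vogt's splitting theorem \cite[Theorem 5.1]{Vogt} (your reduction via the $(DN)$/$(\Omega)$ characterizations of subspaces and quotients of $\mc S(\Z)$ is the same content), and for (c) the paper says it ``follows easily from (b), as in \cite[Theorem 1.8]{Vogt}'', which is exactly your retract-plus-pullback argument. Where you genuinely diverge is part (a): the paper simply quotes \cite[Lemma 1.7]{VoWa}, i.e.\ the Vogt--Wagner result that the class of quotients of $s$ (equivalently, nuclear Fr\'echet spaces with property $(\Omega)$) is stable under extensions, whereas you rederive this from (b) by lifting a quotient map $\mc S(\Z)\to V_3$ through $V_2\to V_3$ (the pullback sequence splits since $V_1\in\Omega$ and $\mc S(\Z)$ has $(DN)$), combining it with $\mc S(\Z)\to V_1\hookrightarrow V_2$, and invoking the open mapping theorem together with $\mc S(\Z)\oplus\mc S(\Z)\cong\mc S(\Z)$. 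That argument is valid and has the advantage of being self-contained modulo the splitting theorem, at the cost of hiding the structural fact (preservation of property $(\Omega)$ under extensions) that the cited lemma makes explicit. One small wording point in your part (c): the kernel $K$ of an admissible epimorphism lies in $\Omega$ because conflations in this exact category are by definition short exact sequences all of whose terms are objects of $\Omega$ (part (a) is what makes this class of sequences reasonable, e.g.\ for the pushout/pullback axioms verified later in Lemma \ref{lem:catCS}); it is not itself a consequence of (a), which only controls middle terms. This does not affect the correctness of your lift, since (b) is a statement about arbitrary Fr\'echet spaces and so applies to the pullback even though the latter need not lie in $\Omega$.
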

\begin{proof}
(a) is \cite[Lemma 1.7]{VoWa}.\\
(b) is \cite[Theorem 5.1]{Vogt}.\\
(c) follows easily from (b), as in \cite[Theorem 1.8]{Vogt}.
\end{proof}

The very definition \ref{eq:seminorms} shows that $\mc S \cong \mc S (\Z)$ as Fr\'echet spaces, 
and hence $\widehat{\mc S}_{\mc W \xi} \in \Omega$.
However, $\widehat{\mc S}_{\mc W \xi}$ is not isomorphic to a subspace of $\mc S (\Z)$,
because it is a finite extension of a Fr\'echet algebra of formal power series.

Let $\mr{Mod}_\Omega (\mc S)$ be the full subcategory of
$\mr{Mod}_{Fr\acute e}(\mc S)$ consisting of modules whose underlying
spaces belong to $\Omega$. We define $\mr{Mod}_{\Omega} (\mc Z) ,\;
\mr{Mod}_\Omega (\widehat{\mc S}_{\mc W \xi})$ and $\mr{Mod}_{\DNO} (\mc S)$ similarly.
In these categories the morphisms are all continuous module homomorphisms.
Apart from $\mr{Mod}_{\DNO} (\mc S)$ they do not have enough projective
objects, so derived functors are not defined for all modules. Nevertheless the Yoneda
Ext-groups, constructed as equivalence classes of higher extensions, are always available.

\begin{lem}\label{lem:catCS}
The categories $\mr{Mod}_{\DNO} (\mc S)$, $\mr{Mod}_\Omega (\mc S)$ and
$\mr{Mod}_{\Omega}(\widehat{\mc S}_{\mc W \xi})$ have the following properties:
\enuma{
\item They are closed under extensions of Fr\'echet modules.
\item Every short exact sequence in $\mr{Mod}_{\DNO}(\mc S)$ admits a continuous linear splitting.
\item They are exact categories in the sense of Quillen if we declare all short exact
sequences to be admissible.
\item For $F \in \DNO$ the $\mc S$-module $\mc S \widehat{\otimes} F$ is projective in
$\mr{Mod}_{\DNO}(\mc S)$, in $\mr{Mod}_\Omega (\mc S)$ and in $\mr{Mod}_{bor}(\mc S)$,
while $\widehat{\mc S}_{\mc W \xi} \widehat{\otimes} F$ is projective in
$\mr{Mod}_{\Omega}(\widehat{\mc S}_{\mc W \xi})$.
\item $\mr{Mod}_{\DNO}(\mc S)$ and $\mr{Mod}_{fg}(\widehat{\mc S}_{\mc W \xi})$
have enough projectives.
\item The following embeddings preserve Yoneda Ext-groups:
$\mr{Mod}_{\DNO}(\mc S) \to \mr{Mod}_{bor}(\mc S)$, $\mr{Mod}_{\DNO}(\mc S) \to
\mr{Mod}_{\Omega}(\mc S)$, $\mr{Mod}_{fg}(\widehat{\mc S}_{\mc W \xi}) \to
\mr{Mod}_{\Omega}(\widehat{\mc S}_{\mc W \xi})$ and $\mr{Mod}_{fg}
(\widehat{\mc S}_{\mc W \xi}) \to \mr{Mod}(\widehat{\mc S}_{\mc W \xi})$.
}
\end{lem}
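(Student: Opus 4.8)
The plan is to verify the seven assertions (a)--(f) one at a time, reducing each to a result already recorded in the excerpt (Theorem \ref{thm:Vogt}, Lemma \ref{lem:fgmod}, Lemma \ref{lem:compl}) together with standard homological algebra over Quillen exact categories. For (a), the category $\Omega$ is closed under extensions of Fr\'echet spaces by Theorem \ref{thm:Vogt}(a); since an extension of Fr\'echet $\mc S$-modules (resp. $\widehat{\mc S}_{\mc W \xi}$-modules) whose outer terms lie in $\mr{Mod}_\Omega$ has underlying space an extension of objects of $\Omega$, it again lies in $\mr{Mod}_\Omega$. For $\DNO$ one argues the same way using that a direct summand of $\mc S(\Z)$ is in particular a quotient, so any extension in $\mr{Mod}_{\DNO}(\mc S)$ has underlying space in $\Omega$; but one must then check the extension actually splits linearly so that the middle term is again a direct summand of $\mc S(\Z)$ --- this is exactly part (b).

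For (b), given a short exact sequence in $\mr{Mod}_{\DNO}(\mc S)$ the three underlying spaces are direct summands of $\mc S(\Z)$, hence simultaneously quotients and subspaces of $\mc S(\Z)$; then Theorem \ref{thm:Vogt}(b) furnishes a continuous linear (not necessarily $\mc S$-linear) splitting. Combined with (a) this shows the middle term of any such extension is a complemented subspace of $\mc S(\Z)\oplus\mc S(\Z)\cong\mc S(\Z)$, confirming closure of $\DNO$ under the relevant extensions. Part (c) is then formal: a full subcategory of the category of (bornological or Fr\'echet) modules that is closed under extensions, equipped with the class of all module-theoretically short exact sequences, is a Quillen exact category --- the axioms (pushouts/pullbacks of admissibles, composition of admissible epis) are inherited from the ambient abelian-like setting once closure under extensions is known, using that kernels and cokernels of the relevant maps stay in the subcategory.

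For (d), the key computation is the adjunction $\mr{Hom}_{\mc S}(\mc S\widehat\otimes F, M)\cong\mr{Hom}_{\C}(F, M)$ for a free-type module $\mc S\widehat\otimes F$; since $F\in\DNO$ is projective in $\Omega$ by Theorem \ref{thm:Vogt}(c), the functor $M\mapsto\mr{Hom}_\C(F,M)$ is exact on short exact sequences of Fr\'echet spaces (those admit linear splittings after restricting to $\DNO$, or one uses projectivity of $F$ directly), whence $\mc S\widehat\otimes F$ is projective; the same argument with $\widehat{\mc S}_{\mc W \xi}$ in place of $\mc S$ gives the last clause, and the bornological case follows since the forgetful functor to bornological spaces takes these objects to projectives by the same Hom-adjunction. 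For (e): over $\mc S$ one surjects any $V\in\mr{Mod}_{\DNO}(\mc S)$ by $\mc S\widehat\otimes V\twoheadrightarrow V$ (the multiplication map, which has a linear splitting since $V$ is a complemented summand and the unit of $\mc S$ provides a section), and $\mc S\widehat\otimes V\in\mr{Mod}_{\DNO}(\mc S)$ because $\mc S\widehat\otimes\mc S(\Z)\cong\mc S(\Z)$ as Fr\'echet spaces; over $\widehat{\mc S}_{\mc W \xi}$ one uses that this algebra is Noetherian (stated above Lemma \ref{lem:fgmod}) so any finitely generated module has a finite free resolution $\widehat{\mc S}_{\mc W \xi}^{\,n}\twoheadrightarrow M$, and such free modules are projective in $\mr{Mod}_{fg}$.

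Finally (f) is the statement that the Yoneda Ext-groups computed in the small categories agree with those in the larger ones. The mechanism is: each inclusion is fully exact (reflects admissible short exact sequences), the small category has enough projectives by (e), and those projectives remain projective in the large category by (d); hence one may compute $\mr{Ext}^n$ on either side by a fixed projective resolution living in the small category, giving the same groups. For the embedding $\mr{Mod}_{fg}(\widehat{\mc S}_{\mc W \xi})\to\mr{Mod}(\widehat{\mc S}_{\mc W \xi})$ into \emph{all} modules one additionally invokes that $\mr{Mod}_{fg}$ is a Serre subcategory (Lemma \ref{lem:fgmod}) closed under extensions, so finitely generated projective resolutions suffice there too. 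I expect the main obstacle to be part (f): one has to be careful that ``preserves Yoneda Ext'' really follows from ``enough projectives that stay projective in the ambient category'' in the setting of exact categories that are not abelian --- the cleanest route is to note that in each case the small category has enough projectives and every object of the large category receives, after restriction, a resolution by objects projective in the large category, so both Ext's are computed by $H^n\mr{Hom}(P_\bullet,-)$ for the \emph{same} complex $P_\bullet$; verifying that the comparison of the two resulting universal $\delta$-functors is an isomorphism in degree zero (which is clear) and hence in all degrees is the routine-but-delicate point.
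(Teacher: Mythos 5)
Your proposal is correct and follows essentially the same route as the paper: (a), (b) from Vogt's theorems, (c) via closure under extensions, (d) via the adjunction $\mr{Hom}_{\mc S}(\mc S \widehat{\otimes} F, M) \cong \mr{Hom}(F,M)$ and projectivity of $\DNO$-objects in $\Omega$, (e) via the linearly split surjection $\mc S \widehat{\otimes} V \to V$ and Noetherianity of $\widehat{\mc S}_{\mc W \xi}$, and (f) by noting that these resolutions stay projective and admissible in the larger categories and that Yoneda Ext agrees with derived-functor Ext when both are defined. The one place you are thinner than the paper is (c): in the non-abelian Fr\'echet setting the pushout and pullback axioms are not purely formal, and the paper verifies them explicitly, constructing the pushout as $(V_2 \oplus M)/\mr{im}(i,-f)$ (using that $i$ has closed image so the quotient is again Fr\'echet) and the pullback as $\ker(p-h)$, then invoking part (a) to see these stay in the subcategory --- you should make this small verification explicit rather than appealing to inheritance from the ambient category.
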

\begin{proof}
(a) follows from Theorem \ref{thm:Vogt}.\\
(b) follows from Theorem \ref{thm:Vogt}.b.\\
(c) The definition of an exact category stems from \cite[Section 2]{Qui}, while it is worked
out in \cite[Appendix A]{Kel} which axioms are really necessary. All of these are trivially 
satisfied, except for the pullback and pushout properties. Since the verification of these 
properties is the same in all four categories, we only write it down in $\mr{Mod}_\Omega (\mc S)$.

Suppose that $0 \to V_1 \xrightarrow{i} V_2 \xrightarrow{p} V_3 \to 0$ is admissible exact in
$\mr{Mod}_\Omega (\mc S)$ and that $f: V_1 \to M$ is any morphism in
$\mr{Mod}_\Omega (\mc S)$. We have to show that there is a pushout diagram
\[
\begin{array}{ccccl}
V_1 & \xrightarrow{i} & V_2 & \xrightarrow{p} & V_3\\
\downarrow \scriptstyle{f} & & \downarrow  & &\\
M & \xrightarrow{g} & P & \to &  P / g(M)
\end{array}
\]
in $\mr{Mod}_\Omega (\mc S)$, with an admissible exact second row. Since $i$ is injective
the image of $(i,-f) : V_1 \to V_2 \oplus M$ is closed. Hence $P := V_2 \oplus M / \mr{im}(i,-f)$ is
a Fr\'echet $\mc S$-module. The canonical map $g : M \to P$ is injective and
\[
P / g(M) \cong {\ds \frac{V_2 \oplus M}{g(M) + \mr{im}(i,-f)}} \cong V_2 / i (V_1) \cong V_3 ,
\]
which belongs to $\mr{Mod}_\Omega (\mc S)$. By part (a) $P \in \mr{Mod}_\Omega (\mc S)$, so
$0 \to M \xrightarrow{g} P \to P / g(M) \to 0$ is admissible.

Concerning pullbacks, let $h : N \to V_3$ be any morphism in $\mr{Mod}_\Omega (\mc S)$. We
have to show that there is a pullback diagram
\[
\begin{array}{rcccc}
\ker (Q \to N) & \to & Q & \to & N \\
& & \downarrow & & \downarrow \scriptstyle{h}\\
V_1 & \xrightarrow{i} & V_2 & \xrightarrow{p} & V_3\\
\end{array}
\]
in $\mr{Mod}_\Omega (\mc S)$, with an admissible exact first row.
Let $Q$ be the Fr\'echet $\mc S$-module $\{ (v,n) \in V_2 \oplus N \mid p(v) = h(n) \} =
\ker ( p - h : V_2 \oplus N \to V_3)$. Then
\[
\ker (Q \to N) \cong \ker p \cong V_1 \in \mr{Mod}_{\DNO}(\mc S) ,
\]
so $Q \in \mr{Mod}_\Omega (\mc S)$ by part (a). \\
(d) For every $M \in \mr{Mod}_\Omega (\widehat{\mc S}_{\mc W \xi})$ there is an isomorphism
\[
\mr{Hom}_{\mr{Mod}_{\Omega}(\widehat{\mc S}_{\mc W \xi})} (\widehat{\mc S}_{\mc W \xi}
\widehat{\otimes} F,M) \cong \mr{Hom}_{Fr\acute{e}} (F,M) .
\]
By Theorem \ref{thm:Vogt}.c the right hand side is exact as a functor of $M \in \Omega$, so
$\widehat{\mc S}_{\mc W \xi} \widehat{\otimes} V$ is projective in $\mr{Mod}_{\Omega}
(\widehat{\mc S}_{\mc W \xi})$. The same proof applies to $\mc S \widehat{\otimes} F$ as an
object of $\mr{Mod}_{\DNO}(\mc S)$ and of $\mr{Mod}_\Omega (\mc S)$. In $\mr{Mod}_{bor}(\mc S)$
exact sequences are required to have linear splittings, which implies that topologically free modules
are projective. \\
(e) For any $(\pi,V) \in \mr{Mod}_{\DNO}(\mc S)$
we have the canonical $\mc S$-module surjection
\[
\mc S \widehat{\otimes} V \to V : s \otimes v \mapsto \pi (s) v ,
\]
which is linearly split by $v \mapsto 1 \otimes v$. Its kernel $N$ is (as Fr\'echet space)
a direct summand of $\mc S \widehat{\otimes} V$, which in turn is a direct summand of
$\mc S \widehat{\otimes} \mc S(\Z)$.
Thus $N \in \mr{Mod}_{\DNO}(\mc S)$ and we can build a projective resolution
\begin{equation*}
0 \leftarrow V \leftarrow \mc S \widehat{\otimes} V \leftarrow
\mc S \widehat{\otimes} N \leftarrow \cdots
\end{equation*}
Let $M$ be a finitely generated Fr\'echet $\widehat{\mc S}_{\mc W \xi}$-module, say it is
a quotient of $\widehat{\mc S}_{\mc W \xi}^n$. Since $\widehat{\mc S}_{\mc W \xi}$ is Noetherian,
every submodule of a finitely generated $\widehat{\mc S}_{\mc W \xi}$-module is again finitely
generated. This applies in particular to the kernel of the quotient map
$\widehat{\mc S}_{\mc W \xi}^n \to M$, so we can construct a projective resolution
with free $\widehat{\mc S}_{\mc W \xi}$-modules of finite rank.\\
(f) By part (d) and Lemma \ref{lem:fgmod}, the projective resolutions from part (e) remain
projective in $\mr{Mod}_{bor}(\mc S)$ and in $\mr{Mod}_{\Omega}(\mc S)$, respectively
in $\mr{Mod}_{\Omega}(\widehat{\mc S}_{\mc W \xi})$ and in
$\mr{Mod}(\widehat{\mc S}_{\mc W \xi})$. Furthermore part (b) guarantees that
such a resolution of $\mc S$-modules admits a continuous linear splitting, so it is admissible as
a resolution of bornological modules. It is well-known that the Yoneda Ext-groups in an exact
category agree with the derived functors of Hom when both are defined, see for example
\cite[Theorem 6.4]{Mac}.
\end{proof}

\subsection{Exactness of formal completion}
\label{subsec:exact}

\begin{thm}\label{thm:exact}
The functor $\widehat{S}_{\mc W \xi} \widehat{\otimes}_{\mc S} : \mr{Mod}_\Omega (\mc S)
\to \mr{Mod}_\Omega (\widehat{S}_{\mc W \xi})$ is exact.
\end{thm}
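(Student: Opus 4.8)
The plan is to reduce exactness of the completion functor to a purely commutative-algebraic fact about the ring of smooth functions, namely that the Taylor series map is flat on finitely generated modules (the result of Tougeron--Merrien \cite{TouMer} alluded to in the introduction). The point is that $\widehat{\mc S}_{\mc W \xi} = \widehat{\mc Z}_{\mc W \xi} \widehat\otimes_{\mc Z} \mc S$, so the completion functor is really $\widehat{\mc Z}_{\mc W \xi} \widehat\otimes_{\mc Z} (-)$, and by \eqref{eq:FourierIso} and \eqref{eq:ZS} the algebra $\mc S$ is a finitely generated $\mc Z$-module. Moreover $\mc Z \cong C^\infty(\Xi_{un})^{\mc W}$ is (a finite extension of, or at worst Morita-comparable to) a ring of smooth functions on a manifold, and $\widehat{\mc Z}_{\mc W \xi} \cong \mc F^{\mc W_\xi}_\xi$ is its formal completion at a point, a power series ring. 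The Tougeron--Merrien theorem says exactly that $\widehat{\mc Z}_{\mc W \xi}$ is flat over $\mc Z$ when one restricts to finitely generated $\mc Z$-modules, so that $\widehat{\mc Z}_{\mc W \xi} \widehat\otimes_{\mc Z} (-)$ is exact on such modules.

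First I would record that every object of $\mr{Mod}_\Omega(\mc S)$, being a finitely generated module over the Noetherian ring $\mc Z$ once we forget the $\mc S$-action?---\emph{no}: Fr\'echet $\mc S$-modules in $\Omega$ need not be $\mc Z$-finitely generated. So instead the plan is to use projective resolutions. Given a short exact sequence $0 \to V_1 \to V_2 \to V_3 \to 0$ in $\mr{Mod}_\Omega(\mc S)$, by Lemma \ref{lem:catCS}(b)--(d) it is admissible and $V_1$ admits a resolution by the topologically free modules $\mc S \widehat\otimes F$ with $F \in \DNO$; such resolutions stay exact and split after applying $\widehat{\mc S}_{\mc W \xi} \widehat\otimes_{\mc S} (-)$ precisely because $\widehat{\mc S}_{\mc W \xi} \widehat\otimes_{\mc S} (\mc S \widehat\otimes F) \cong \widehat{\mc S}_{\mc W \xi} \widehat\otimes F$ and tensoring a linearly split complex with a fixed space preserves exactness. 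Thus $\widehat{\mc S}_{\mc W \xi} \widehat\otimes_{\mc S} (-)$ sends the given short exact sequence to a complex which is exact except possibly at the left, i.e. the functor is right exact and the only issue is injectivity of $\widehat{V}_{1,\mc W\xi} \to \widehat{V}_{2,\mc W\xi}$, equivalently the vanishing of $\mathrm{Tor}_1^{\mc S}(\widehat{\mc S}_{\mc W \xi}, V_3)$.

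The key step is then to compute this $\mathrm{Tor}$ via the $\mc Z$-structure: $\widehat{\mc S}_{\mc W \xi} \widehat\otimes_{\mc S} (-) = \widehat{\mc Z}_{\mc W \xi} \widehat\otimes_{\mc Z} (-)$, and one resolves $V_3$ by the modules $\mc S \widehat\otimes F$, each of which \emph{is} a direct summand of a (countable) topologically free $\mc Z$-module, hence ``$\mc Z$-flat enough'' in the sense required by the Tougeron--Merrien flatness statement; applying $\widehat{\mc Z}_{\mc W \xi} \widehat\otimes_{\mc Z}(-)$ to the resolution keeps it exact. Combined with the fact from \eqref{eq:m2xi} / Whitney's spectral theorem that $m^\infty_{\mc W \xi}$ is idempotent (which is what makes $\widehat{\mc Z}_{\mc W \xi} \widehat\otimes_{\mc Z} (-) \cong (-)/\overline{m^\infty_{\mc W \xi}(-)}$ well behaved on the relevant categories, cf. Lemma \ref{lem:compl}(a)), this gives the vanishing of the obstruction and hence exactness on all of $\mr{Mod}_\Omega(\mc S)$.

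I expect the main obstacle to be the careful handling of the completed projective tensor products of non-finitely-generated Fr\'echet modules: one must check that the associativity/base-change isomorphisms $\widehat{\mc S}_{\mc W \xi} \widehat\otimes_{\mc S}(\mc S \widehat\otimes F) \cong \widehat{\mc S}_{\mc W \xi} \widehat\otimes F$ are genuine topological isomorphisms, that the resolutions of Lemma \ref{lem:catCS}(e) remain exact (not merely chain-homotopy trivial as complexes of abstract modules) after tensoring, and above all that the Tougeron--Merrien flatness result, which is typically stated for finitely generated modules over $C^\infty$, can be bootstrapped to the countably-generated projective $\mc Z$-modules $\mc S \widehat\otimes F$ that arise here---this bootstrapping, using that $\Omega$-modules are quotients of $\mc S(\Z)$ and that $m^\infty_{\mc W\xi}$ is closed and idempotent, is the technical heart of the argument.
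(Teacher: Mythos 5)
You have the right reduction in place---the paper likewise rewrites the functor as $\widehat{\mc Z}_{\mc W \xi}\widehat\otimes_{\mc Z}(-)$, notes that only the vanishing of a $\mathrm{Tor}_1$ is at stake, and uses that $\mc S$ is finitely generated over $\mc Z$ and that every $V\in\Omega$ is a quotient of $\mc S(\Z)$, so that one gets a single admissible presentation $0\to R\to\mc Z\widehat\otimes\mc S(\Z)\to V\to 0$ with projective middle term. But the step you defer as ``bootstrapping Tougeron--Merrien to countably generated projectives'' is not a bootstrap at all: it is the entire content of the theorem, and your proposed mechanism for it does not work. Applying $\widehat{\mc Z}_{\mc W\xi}\widehat\otimes_{\mc Z}(-)$ to a projective resolution $P_*\to V_3$ produces a complex whose homology in positive degrees \emph{is} the Tor you need to kill; projectivity (or ``flatness'') of the terms $P_n$ says nothing about that homology vanishing---that would require flatness of $\widehat{\mc Z}_{\mc W\xi}$ over $\mc Z$ on the relevant category, which is exactly the statement being proved, so the argument is circular. (There is also a smaller problem: Lemma \ref{lem:catCS}.e only supplies resolutions by $\mc S\widehat\otimes F$ with $F\in\DNO$ for modules in $\mr{Mod}_{\DNO}(\mc S)$, not for arbitrary $V_3\in\mr{Mod}_\Omega(\mc S)$; the paper sidesteps this by never needing more than the one-step presentation above, in which the kernel $R$ is merely a closed $\mc Z$-submodule, with no projectivity or finiteness assumed.)

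What the missing step actually amounts to, after dimension shifting, is the concrete inclusion $(m^\infty_{\mc W \xi}\widehat\otimes\mc S(\Z))\cap R\subset\overline{m^\infty_{\mc W \xi}R}$ for an arbitrary closed $\mc Z$-submodule $R$ of $\mc Z\widehat\otimes\mc S(\Z)\cong C^\infty(\Xi_{un}\times S^1)^{\mc W}$, and this is not a formal consequence of the finitely generated Tougeron--Merrien statement. The paper proves it by reworking their construction analytically: given $\phi$ in the left-hand side, one builds (following \cite[pp.\ 183--185]{TouMer}, adapted so that all functions are $\mc W$- and $S^1$-invariant, which uses the compactness of $S^1$ and averaging over $\mc W$) a function $\psi$, flat on $\mc W\xi\times S^1$ and strictly positive elsewhere, dividing $\phi$, together with a decomposition $\psi=\sum_i\epsilon_i$ into functions supported away from $\mc W\xi\times S^1$; then $\epsilon_i/\psi\in m^\infty_{\mc W\xi}$ and $\phi=\sum_i(\epsilon_i/\psi)\,\phi$ exhibits $\phi$ as a limit of elements of $m^\infty_{\mc W\xi}R$, using only that $R$ is a closed $\mc Z$-submodule. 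Your proposal stops exactly where this analytic division argument has to begin, so as it stands it has a genuine gap.
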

\begin{proof}
We have to show that the image of any short exact sequence under this functor
is again a short exact sequence. Since
$\widehat{\mc S}_{\mc W \xi} \widehat{\otimes}_{\mc S} V \cong
\widehat{\mc{Z}}_{\mc{W}\xi} \widehat\otimes_{\mathcal{Z}} V$ for all
$V \in \mr{Mod}_{\Omega}(\mc S) ,\; \widehat{S}_{\mc W \xi} \widehat{\otimes}_{\mc S}$
is exact if and only if
$\mr{Tor}_1^{\mr{Mod}_{\Omega}(\mc Z)}(\widehat{\mc Z}_{\mc W \xi},V) = 0$ for all
$V \in \mr{Mod}_\Omega(\mc S)$. Because $\mc S$ is finitely generated as a $\mc Z$-module
and $V \in \Omega$, there exists a short exact sequence of Fr\'echet $\mc Z$-modules
\begin{equation}\label{eq:resZV}
0 \to R \to \mc Z \widehat{\otimes} \mc S (\Z) \to V \to 0 .
\end{equation}
The analogue of Lemma \ref{lem:catCS}.d for $\mc Z$ shows that
$\mc Z \widehat{\otimes} \mc S (\Z)$ is projective in $\mr{Mod}_{\Omega}(\mc Z)$.
By the definition of the torsion functor and by Lemma \ref{lem:compl}.a
\begin{multline*}
\mr{Tor}_1^{\mr{Mod}_{\Omega}(\mc Z)}(\widehat{\mc Z}_{\mc W \xi},V) =
\ker \big( \widehat{\mc Z}_{\mc W \xi} \widehat{\otimes}_{\mc Z} R \to
\widehat{\mc Z}_{\mc W \xi} \widehat{\otimes} \mc S (\Z) \big) \cong \\
\ker \Big( \frac{R}{\overline{m^\infty_{\mc W \xi} R}} \to
\frac{\mc Z \widehat{\otimes} \mc S (\Z)}{m^\infty_{\mc W \xi}
\widehat{\otimes} \mc S (\Z)} \Big) \cong \frac{(m^\infty_{\mc W \xi}
\widehat{\otimes} \mc S (\Z)) \cap R}{\overline{m^\infty_{\mc W \xi} R}} .
\end{multline*}
Thus we have to show that
\begin{equation}\label{eq:minftyR}
(m^\infty_{\mc W \xi} \widehat{\otimes} \mc S (\Z)) \cap R
\; \subset \; \overline{m^\infty_{\mc W \xi} R},
\end{equation}
which we will do with a variation on \cite[Chapitre 1]{TouMer}. Via the Fourier transform
$\mc S (\Z)$ is isomorphic to $C^\infty (S^1)$, so
\[
\mc Z \widehat{\otimes} \mc S (\Z) \cong C^\infty (\Xi_{un})^{\mc W} \widehat{\otimes}
C^\infty (S^1) \cong C^\infty (\Xi_{un} \times S^1 )^{\mc W} .
\]
Under this isomorphism $m^\infty_{\mc W \xi} \widehat{\otimes} \mc S (\Z)$ corresponds to the
closed ideal $m^\infty_{\mc W \xi \times S^1} \subset C^\infty (\Xi_{un} \times S^1 )^{\mc W}$
of functions that are flat on $\mc W \xi \times S^1$.

Let $\phi \in  (m^\infty_{\mc W \xi} \widehat{\otimes} \mc S (\Z)) \cap R$. Tougeron and Merrien
\cite[p. 183--185]{TouMer} construct a function $\psi \in m^\infty_{\mc W \xi \times S^1}$ which
is strictly positive outside $\mc W \xi \times S^1$ and divides $\phi$ in
$C^\infty (\Xi_{un} \times S^1 )^{\mc W}$. We take a closer look at this construction.
The first thing to note is that \cite{TouMer} works with smooth functions on manifolds, not
on orbifolds like $\Xi_{un}/ \mc W \times S^1$. But this is only a trifle, for we can construct
a suitable $\tilde \psi \in C^\infty (\Xi_{un} \times S^1)$ and average it over $\mc W$.
Secondly we observe that, because $S^1$ is compact, we can take all steps from \cite{TouMer}
with $S^1$-invariant functions on $\Xi_{un} \times S^1$. Thus we obtain a series of
$\mc W \times S^1$-invariant functions $\sum_i \epsilon_i$ which converges to a $\psi$
as above. (Compared to \cite{TouMer} we include a suitable power of 1/2 in $\epsilon_i$.)
Moreover the support of $\epsilon_i$ is disjoint from $\mc W \xi \times S^1$, so
\[
\epsilon_i / \psi \in m^\infty_{\mc W \xi} \subset C^\infty (\Xi_{un})^{\mc W} \subset
C^\infty (\Xi_{un} \times S^1) .
\]
Since $R$ is a $\mc Z$-submodule of $C^\infty (\Xi_{un} \times S^1)^{\mc W} ,\;
\phi \epsilon_i / \psi \in m^\infty_{\mc W \xi} R$ for all $i$. Hence
\[
\phi = \psi \, \phi / \psi = \sum\nolimits_i \phi \epsilon_i / \psi
\in \overline{m^\infty_{\mc W \xi} R} ,
\]
which proves \eqref{eq:minftyR}.
\end{proof}

\begin{cor}\label{cor:extloc}
Let $M,N \in \mr{Mod}_{\Omega}(\widehat{\mc S}_{\mc W \xi})$ and let $n \in \Z_{\geq 0}$.
\enuma{
\item There is an isomorphism of $\widehat{\mc Z}_{\mc W \xi}$-modules
\[
\mr{Ext}^n_{\mr{Mod}_\Omega (\mc S)}(q^* N, q^* M) \cong
\mr{Ext}^n_{\mr{Mod}_\Omega (\widehat{\mc S}_{\mc W \xi})}(N, M) .
\]
\item Let $V \in \mr{Mod}_{\DNO}(\mc S)$ be such that $\widehat{V}_{\mc W \xi}$
is finitely generated over $\widehat{\mc S}_{\mc W \xi}$.
There are isomorphisms of $\widehat{\mc Z}_{\mc W \xi}$-modules
\[
\mr{Ext}^n_{\mr{Mod}_{bor}(\mc H)} (V,q^* M) \cong
\mr{Ext}^n_{\mr{Mod}_{bor}(\mc S)}(V,q^* M) \cong
\mr{Ext}^n_{\widehat{\mc S}_{\mc W \xi}} (\widehat{V}_{\mc W \xi}, M) .
\]
\item In case $V$ has finite dimension the groups from (b) are also isomorphic to
$\mr{Ext}^n_{\mc H} (V,q^* M)$ and $\mr{Ext}^n_{\mc S}(V,q^* M)$, that is, to the
Ext-groups defined in the category of all (algebraic) $\mc H$- or $\mc S$-modules.
}
\end{cor}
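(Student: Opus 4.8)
The plan is to prove Corollary \ref{cor:extloc} by assembling the preceding results into a chain of isomorphisms, treating each of the three parts in turn.

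For part (a), the key observation is that $q^*$ is fully faithful and exact (Lemma \ref{lem:compl}.c), and by Lemma \ref{lem:compl}.f it restricts to an equivalence of exact categories $\mr{Mod}_{Fr\acute e}(\widehat{\mc S}_{\mc W \xi}) \to \mr{Mod}^{\mc{W}\xi, tor}_{Fr\acute e}(\mc{S})$. Since $\Omega$ is defined by a condition on the underlying Fr\'echet space, this equivalence restricts further to $\mr{Mod}_\Omega(\widehat{\mc S}_{\mc W \xi}) \to \mr{Mod}^{\mc{W}\xi,tor}_\Omega(\mc S)$. Here I would note that an equivalence of exact categories induces isomorphisms on Yoneda Ext-groups. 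So it remains to see that Yoneda Ext-groups computed in the Serre subcategory $\mr{Mod}^{\mc{W}\xi,tor}_\Omega(\mc S)$ agree with those computed in $\mr{Mod}_\Omega(\mc S)$. This is where Theorem \ref{thm:exact} enters: given a Yoneda $n$-extension of $q^*N$ by $q^*M$ in $\mr{Mod}_\Omega(\mc S)$, one applies the exact functor $\widehat{\mc S}_{\mc W \xi}\widehat\otimes_{\mc S}$ to land back in $\mr{Mod}_\Omega(\widehat{\mc S}_{\mc W \xi})$, i.e.\ in the torsion subcategory; one then checks that this operation is inverse to the inclusion-induced map on Ext, using Lemma \ref{lem:compl}.b to identify $\widehat{\mc S}_{\mc W \xi}\widehat\otimes_{\mc S} q^*M \cong M$ and $\widehat{\mc S}_{\mc W \xi}\widehat\otimes_{\mc S} q^*N\cong N$. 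The $\widehat{\mc Z}_{\mc W \xi}$-module structure is transported along throughout.

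For part (b), the first isomorphism $\mr{Ext}^n_{\mr{Mod}_{bor}(\mc H)}(V,q^*M)\cong \mr{Ext}^n_{\mr{Mod}_{bor}(\mc S)}(V,q^*M)$ is Meyer's theorem (stated in the introduction, see \cite{Mey-Ho,OpSo3,OpSo1}), applicable because $q^*M$ and $V$ lie in the relevant categories. For the second isomorphism, the idea is: a projective resolution of $V$ by modules $\mc S\widehat\otimes F$ with $F\in\DNO$ exists by Lemma \ref{lem:catCS}.e, and these are projective both in $\mr{Mod}_{bor}(\mc S)$ and in $\mr{Mod}_\Omega(\mc S)$ by Lemma \ref{lem:catCS}.d and .f; so $\mr{Ext}^n_{\mr{Mod}_{bor}(\mc S)}(V,q^*M)\cong\mr{Ext}^n_{\mr{Mod}_\Omega(\mc S)}(V,q^*M)$. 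Applying the exact functor $\widehat{\mc S}_{\mc W \xi}\widehat\otimes_{\mc S}$ (Theorem \ref{thm:exact}) to this resolution yields a resolution of $\widehat{V}_{\mc W \xi}$; since $\widehat V_{\mc W\xi}$ is finitely generated by hypothesis, one can further compare with the category $\mr{Mod}_{fg}(\widehat{\mc S}_{\mc W \xi})$, invoking Lemma \ref{lem:catCS}.f. The adjunction of Lemma \ref{lem:compl}.d, $\mr{Hom}_{\mc S}(V,q^*M)\cong\mr{Hom}_{\widehat{\mc S}_{\mc W \xi}}(\widehat V_{\mc W\xi},M)$, then identifies the complexes computing the two Ext-groups term by term. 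Part (c) is the easy finite-dimensional corollary: when $\dim_\C V<\infty$, finite-dimensional modules have projective resolutions by finitely generated free modules (cited from \cite{OpSo1} in the excerpt, and again via Lemma \ref{lem:catCS}), so the algebraically-defined Ext-groups coincide with the bornological ones; alternatively $V\in\mr{Mod}_{\DNO}(\mc S)$ automatically.

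The main obstacle I anticipate is the bookkeeping in part (b): carefully matching up the various exact categories ($\mr{Mod}_{bor}$, $\mr{Mod}_\Omega$, $\mr{Mod}_{fg}$, $\mr{Mod}_{\DNO}$) and checking that a single projective resolution of $V$ simultaneously computes Ext in all of them, then verifying that $\widehat{\mc S}_{\mc W \xi}\widehat\otimes_{\mc S}$ carries it to a genuine projective resolution of $\widehat V_{\mc W\xi}$ over $\widehat{\mc S}_{\mc W \xi}$ with the finite-generation hypothesis doing the work of landing in $\mr{Mod}_{fg}$. The delicate point is that exactness of $\widehat{\mc S}_{\mc W \xi}\widehat\otimes_{\mc S}$ is only available on $\mr{Mod}_\Omega(\mc S)$ (Theorem \ref{thm:exact}), not on all Fr\'echet or bornological modules, so the argument is forced to route through the $\Omega$-categories — which is exactly why Lemma \ref{lem:catCS} was set up the way it was. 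Once the resolution is transported, the identification of Hom-complexes via the adjunction of Lemma \ref{lem:compl}.d is formal.
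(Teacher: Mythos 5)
Your proposal is correct and follows essentially the same route as the paper: part (a) via the exact, idempotent completion functor applied to Yoneda extensions (the paper phrases this as the endofunctor $E = q^*\circ(\widehat{\mc S}_{\mc W \xi}\widehat\otimes_{\mc S}\,\cdot\,)$ together with the natural map $V_*\to E(V_*)$), and part (b) by transporting a $\mr{Mod}_{\DNO}(\mc S)$-resolution $\mc S\widehat\otimes F_*$ through Theorem \ref{thm:exact} and identifying the Hom-complexes via the adjunction of Lemma \ref{lem:compl}.d, then comparing with a finitely generated resolution via Lemma \ref{lem:catCS}.f, with part (c) reduced to \cite[Corollary 3.7.b]{OpSo1} just as in the paper.
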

\begin{proof}
The $\widehat{\mc Z}_{\mc W \xi}$-module structure is defined via
\[
\widehat{\mc Z}_{\mc W \xi} \to \mr{End}_{\mc H}(q^* M) = \mr{End}_{\mc S}(q^* M) =
\mr{End}_{\widehat{\mc S}_{\mc W \xi}}(M)
\]
and right multiplication of $\mr{Ext}^n_A (?,M)$ with $\mr{Ext}^0_A (M,M)$. The asserted
isomorphisms preserve this additional structure because in the below proof $M$ is kept constant.\\
(a) Consider the endofunctor $E$ of $\mr{Mod}_\Omega (\mc S)$ defined by $E(V) =
q^* (\widehat{S}_{\mc W \xi} \widehat{\otimes}_{\mc S} V)$. By Lemma \ref{lem:compl}.b it is
idempotent and by Theorem \ref{thm:exact} it is exact. Given any higher extension
\[
V_* :=  \big( V_0 = q^* M \to V_1 \to \cdots \to V_n \to V_{n+1} = q^* N \big),
\]
$E (V_*)$ is again an $n$-fold extension of $q^* N$ by $q^* M$ and it comes with a homomorphism
$V_* \to E (V_*)$. Hence $V_*$ and $E (V_*)$ determine the same equivalence class in
$\mr{Ext}^n_{\mr{Mod}_\Omega (\mc S)} (q^* N, q^* M)$ and
$q^* : \mr{Mod}_\Omega (\widehat{\mc S}_{\mc W \xi}) \to \mr{Mod}_\Omega (\mc S)$
induces the required isomorphism. \\
(b) The first isomorphism is a special case of \cite[Corollary 3.7.a]{OpSo1}. 

Let $P_*$ be a projective
resolution of $V$ in $\mr{Mod}_{\DNO}(\mc S)$. We may assume that $P_n = \mc S \widehat{\otimes} F_n$
for some $F_n \in \DNO$. By Theorem \ref{thm:exact} $\widehat{S}_{\mc W \xi} \widehat{\otimes}_{\mc S} P_*
= \widehat{S}_{\mc W \xi} \widehat{\otimes} F_*$ is a projective resolution of $\widehat{V}_{\mc W \xi}$
in $\mr{Mod}_\Omega (\widehat{S}_{\mc W \xi})$. By Lemma \ref{lem:compl}.d and Lemma \ref{lem:catCS}.d
\begin{align*}
\mr{Ext}^n_{\mr{Mod}_{bor}(\mc S)}(V,q^* M) & 
\cong H^n \big( \mr{Hom}_{\mc S}(\mc S \widehat{\otimes} F_*,q^* M) \big) \\
& \cong  H^n \big( \mr{Hom}_{\widehat{\mc S}_{\mc W \xi}}
(\widehat{S}_{\mc W \xi} \widehat{\otimes} F_*,M) \big) \\
& \cong \mr{Ext}^n_{\mr{Mod}_\Omega (\widehat{\mc S}_{\mc W \xi})} (\widehat{V}_{\mc W \xi}, M) .
\end{align*}
Now let $\widehat{P}_*$ be a projective resolution of $\widehat{V}_{\mc W \xi}$ in
$\mr{Mod}_{fg} (\widehat{\mc S}_{\mc W \xi})$. The proof of Lemma \ref{lem:catCS}.f shows that
\[
\mr{Ext}^n_{\mr{Mod}_\Omega (\widehat{\mc S}_{\mc W \xi})} (\widehat{V}_{\mc W \xi}, M)
\cong H^n \big( \mr{Hom}_{\widehat{\mc S}_{\mc W \xi}}(\widehat{P}_*,M) \big) \cong
\mr{Ext}^n_{\widehat{\mc S}_{\mc W \xi}} (\widehat{V}_{\mc W \xi}, M) .
\]
(c) For finite dimensional $V$ it is explained in the proof of \cite[Corollary 3.7.b]{OpSo1}
that it does not matter whether one works with algebraic or with bornological modules
to define Ext-groups over $\mc H$ or $\mc S$.
\end{proof}
\vspace{1mm}

\section{Algebras with finite group actions}
\label{sec:alginv}

This section is of a rather general nature and does not depend on the rest of the paper.
We start with a folklore result, which we will use several times.
Let $(\pi,V)$ be a finite dimensional complex representation of a finite group $G$.
Then $\mathrm{End}_\C (V)$ becomes a $G$-representation by defining
\[
g \cdot \phi := \pi(g) \circ \phi \circ \pi (g)^{-1} \qquad \phi \in \mathrm{End}_\C (V), g \in G .
\]
\begin{lem}\label{lem:CrossedProduct}
Let $\C [G]$ be the right regular $G$-representation, that is, $\rho (g) h = h g^{-1}$.
Let $B$ be any complex $G$-algebra and endow $B \otimes \mathrm{End}_\C ( \C [G] )$
with the diagonal $G$-action. Then
\[
\big( B \otimes \mr{End}_\C (\C [G]) \big)^G \cong B \rtimes G .
\]
\end{lem}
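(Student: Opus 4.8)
The plan is to exhibit an explicit isomorphism and check it is multiplicative. First I would fix a $\C$-basis of $\C[G]$, namely $\{\delta_h : h \in G\}$, so that $\mathrm{End}_\C(\C[G])$ gets the matrix-unit basis $\{E_{h,h'}\}$ with $E_{h,h'}\delta_{h''} = \delta_{h,h''}\delta_{h'}$. Under the right regular action $\rho(g)\delta_h = \delta_{hg^{-1}}$, one computes $g\cdot E_{h,h'} = \rho(g)E_{h,h'}\rho(g)^{-1} = E_{hg^{-1},h'g^{-1}}$. Then an element of $B\otimes\mathrm{End}_\C(\C[G])$ is $\sum_{h,h'} b_{h,h'}\otimes E_{h,h'}$, and the diagonal $G$-action sends it to $\sum_{h,h'} (g\cdot b_{h,h'})\otimes E_{hg^{-1},h'g^{-1}}$; so $G$-invariance is the condition $g\cdot b_{h,h'} = b_{hg,h'g}$ for all $g,h,h'$. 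This means such an invariant is determined freely by the single "row" $b_h := b_{h,e}$ for $h \in G$, via $b_{h,h'} = h'\cdot b_{h(h')^{-1}}$. Writing $B\rtimes G = \bigoplus_{g} B\,u_g$ with $u_g\, b\, u_g^{-1} = g\cdot b$, I would define the map
\[
\Phi : \big( B\otimes\mathrm{End}_\C(\C[G])\big)^G \to B\rtimes G,\qquad
\sum_{h,h'} b_{h,h'}\otimes E_{h,h'} \ \mapsto\ \sum_{g\in G} b_{g,e}\, u_g ,
\]
and its inverse $b\,u_g \mapsto \sum_{h'\in G} (h'\cdot b)\otimes E_{gh',h'}$. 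It is clear from the parametrization above that $\Phi$ is a linear bijection.

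Second, I would verify that $\Phi$ respects multiplication. The product in $B\otimes\mathrm{End}_\C(\C[G])$ of $\sum b_{h,h'}\otimes E_{h,h'}$ and $\sum c_{k,k'}\otimes E_{k,k'}$ is $\sum_{h,h',k'} b_{h,h'}c_{h',k'}\otimes E_{h,k'}$; restricting to the relevant row $h' = e$ after using invariance to rewrite $c_{h',k'} = $ (a translate of $c_{e,\cdot}$), one gets exactly the cross-product relation $(b\,u_g)(c\,u_{g'}) = b\,(g\cdot c)\,u_{gg'}$. Concretely: the $(g,e)$-entry of the product is $\sum_{h'} b_{g,h'}c_{h',e} = \sum_{h'} (h'\cdot b_{g(h')^{-1},e})\,c_{h',e}$, and reindexing $h' = g'^{-1}$... — this is the one place a short computation is genuinely needed, but it is mechanical. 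One also checks $\Phi$ sends the unit $\sum_h 1\otimes E_{h,h}$ to $1\cdot u_e$.

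The main obstacle, such as it is, is purely bookkeeping: getting the direction of the $G$-action on matrix units and the indexing of the parametrizing row consistent, so that the translation cocycle comes out as $g\cdot c$ (on the correct side) rather than $g^{-1}\cdot c$ or a conjugate. There is no conceptual difficulty and no analytic subtlety here since everything is finite-dimensional and algebraic; the content of the lemma is entirely the identification $\big(B\otimes\mathrm{End}_\C(\C[G])\big)^G \cong B\rtimes G$ together with its use later for $B = \widehat{S(E_\xi)}$ and $G = \mathfrak{R}_\xi^*$ (suitably twisted by the idempotent $p_\xi$). I would present the basis computation for the action, the explicit $\Phi$ and $\Phi^{-1}$, and then a one-line verification of multiplicativity on generators, leaving the routine reindexing to the reader.
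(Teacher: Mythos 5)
Your overall strategy is the same as the paper's, just run in the opposite direction: the paper writes down an explicit embedding $L : B \rtimes G \to \big(B\otimes\mr{End}_\C(\C[G])\big)^G$, $L(f\otimes g)(h) = \alpha_{h^{-1}g^{-1}}(f)\otimes gh$, and inverts it by reading off one row, $L^{-1}(b) = \sum_g b(g^{-1})_e\otimes g$; you parametrize the invariants by one row and map to $B\rtimes G$. So no idea is missing. However, your explicit formulas contain an inversion error that is not merely cosmetic, and it sits exactly in the step you defer as ``mechanical''. From your (correct) formula $g\cdot E_{h,h'} = E_{hg^{-1},h'g^{-1}}$, invariance of $\sum_{h,h'} b_{h,h'}\otimes E_{h,h'}$ reads $g\cdot b_{h,h'} = b_{hg^{-1},h'g^{-1}}$, equivalently $b_{hg,h'g} = g^{-1}\cdot b_{h,h'}$, not $g\cdot b_{h,h'} = b_{hg,h'g}$ as you state. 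Consequently your proposed inverse $b\,u_g\mapsto\sum_{h'}(h'\cdot b)\otimes E_{gh',h'}$ does not land in the invariants: the invariant element supported on the matrix units $E_{gh',h'}$ is $\sum_{h'}\alpha_{h'^{-1}g^{-1}}(b)\otimes E_{gh',h'}$, which is exactly the paper's $L(b\otimes g)$.

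Moreover, even after correcting the invariance condition, the extraction you chose (coefficient of $u_g$ equal to the entry $b_{g,e}$) is not multiplicative for your convention $u_g\,b\,u_g^{-1} = g\cdot b$: composed with $L$ it sends $f\,u_g$ to $\alpha_{g^{-1}}(f)\,u_g$, and the map $f\,u_g\mapsto\alpha_{g^{-1}}(f)\,u_g$ is a linear bijection but not an algebra automorphism of $B\rtimes G$. The correct coefficient of $u_g$ is the entry $b_{e,g^{-1}}$ (this is the paper's $L^{-1}$), unless one simultaneously alters the crossed-product identification. So the one computation you leave to the reader would not close as set up; it would force the bookkeeping to be redone. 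The lemma is elementary and true, and the repair is routine, but since the entire content of the statement is this bookkeeping, the written argument has a genuine (if small) gap: state the invariance condition with $g^{-1}$, take the coefficient of $u_g$ to be $b_{e,g^{-1}}$, and then carry out the multiplicativity check explicitly rather than by appeal to ``reindexing''.
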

\begin{proof}
Denote the action of $g \in G$ on $B$ by $\alpha_g$.
For a decomposable tensor $f \otimes g \in B \rtimes G$ we define
$L(f \otimes g) \in B \otimes \mr{End}_\C (\C [G])$ by
\[
L(f \otimes g) (h) = \alpha_{h^{-1} g^{-1}} (f) \otimes gh \qquad h \in G .
\]
It is easily verified that $L(f \otimes g)$ is $G$-invariant and that $L$
extends to an algebra homomorphism
\[
L : B \rtimes G \to \big( B \otimes \mr{End}_\C (\C [G]) \big)^G .
\]
A straightforward calculation shows that $L$ is invertible with inverse
\[
L^{-1}(b) = \sum\nolimits_{g \in G} b(g^{-1})_e \otimes g ,
\]
where $b(g^{-1}) = \sum_{h \in G} b (g^{-1})_h \otimes h \in B \otimes \C [G]$.
\end{proof}

Let $M$ be a smooth manifold, let $T_m (M)$ be the tangent space of
$M$ at $m$ and $T^*_m (M)$ the cotangent space. We suppose that the finite group $G$
acts on $M$ by diffeomorphisms.
We denote the isotropy group of $m \in M$ by $G_m$. For any finite dimensional $G_m$-representation
$\sigma$  we define the $C^\infty (M) \rtimes G$-representation
\[
\mathrm{Ind}_m \sigma = \mathrm{Ind}_{C^\infty (M) \rtimes G_m}^{C^\infty (M) \rtimes G} \sigma_m ,
\]
where $\sigma_m$ is $\sigma$ regarded as a $C^\infty (M) \rtimes G_m$-representation with
$C^\infty (M)$-character~$m$.

We let $V$ be a finite dimensional $G$-representation and we endow the algebra
$C^\infty (M) \otimes \mr{End}_\C (V)$ with the diagonal $G$-action. Algebras of the form
\begin{equation}\label{eq:3.1}
\big( C^\infty (M) \otimes \mr{End}_\C (V) \big)^G
\end{equation}
were studied among others in \cite{Sol-Chern} and \cite[Section 2.5]{Sol-Thesis}.
Let $\Fs_m$ be the Fr\'echet algebra of formal power series at $m \in M$, with complex coefficients.
Then $\mc F_{Gm} := \bigoplus_{m' \in Gm} \Fs_{m'}$ is naturally a $G$-algebra, so we can
again construct
\[
\big( \Fs_{Gm} \otimes \mr{End}_\C (V) \big)^G .
\]
In the proof of Theorem \ref{thm:Ext} we will use some results for such algebras,
which we prove here.

\begin{thm}\label{thm:Moritaeq}
Let $V'$ be another finite dimensional $G$-representation and assume that, for every irreducible
$G$-representation $W, \; \mr{Hom}_G (W,V) = 0$ if and only if $\mr{Hom}_G (W,V') = 0$.
\enuma{
\item The algebras $\big( C^\infty (M) \otimes \mr{End}_\C (V) \big)^G$ and
$\big( C^\infty (M) \otimes \mr{End}_\C (V') \big)^G$ are Morita-equivalent, both in
the algebraic and in the bornological sense.
\item Let $\sigma$ be a finite dimensional representation of $G_m$. Under the isomorphism from 
Lemma \ref{lem:CrossedProduct} the $C^\infty (M) \rtimes G$-representation $\mr{Ind}_m \sigma$
corresponds to a $\big( C^\infty (M) \otimes \mr{End}_\C (\C [G]) \big)^G$-representation
$\pi (m,\sigma)$ with $C^\infty (M)^G$-character $G m$, such that
\begin{multline*}
\ \qquad \mr{Hom}_{\big( C^\infty (M) \otimes \mr{End}_\C (\C [G]) \big)^G}
\big( \C_m \otimes \C [G], \pi (m,\sigma) \big) \: \cong \: \sigma \\
\cong \: \mr{Hom}_{C^\infty (M) \rtimes G} \big( \mr{Ind}_m \C [G_m], \mr{Ind}_m \sigma \big) .
\end{multline*}
These are isomorphisms of $G_m$-representations with respect to the right regular action 
of $G_m$ on $\C [G]$ and $\C [G_m]$.
\item Parts (a) and (b) remain valid if we replace $C^\infty (M)$ by $\Fs_{Gm}$.
\item The analogous statements hold if $M$ is a nonsingular complex affine
$G$-variety, and we replace $C^\infty (M)$ by the ring $\mc{O}(M)$
of regular functions on $M$ everywhere.
}
\end{thm}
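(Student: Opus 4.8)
The plan is to reduce everything to the algebra $\big( C^\infty(M) \otimes \mr{End}_\C(\C[G]) \big)^G \cong C^\infty(M) \rtimes G$ (Lemma \ref{lem:CrossedProduct}) and then transport statements along Morita equivalences, handling the four algebras $C^\infty(M)$, $\mc O(M)$, $\Fs_{Gm}$ and $\mc F_{Gm}$ uniformly since only formal properties of the coefficient algebra are used. For part (a), I would first observe that the two-sided ideals of $\big( C^\infty(M) \otimes \mr{End}_\C(V) \big)^G$ supported on a $G$-orbit $Gm$ are governed, after restriction to formal/geometric fibers, by the idempotent structure of $\C[G_m]$ acting on $V$; the hypothesis on $\mr{Hom}_G(W,V)$ versus $\mr{Hom}_G(W,V')$ says precisely that $V$ and $V'$ generate the same additive subcategory of $\mr{Rep}(G)$, hence that $\mr{End}_\C(V)$ and $\mr{End}_\C(V')$ are Morita-equivalent \emph{as $G$-algebras via a $G$-equivariant bimodule}. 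Concretely, pick a $G$-module $P$ that is a progenerator relating $V$ and $V'$ (e.g. $P = \mr{Hom}_\C(V, V')$ with its diagonal $G$-action, or a direct sum thereof making it a generator on both sides); then $\big( C^\infty(M) \otimes \mr{Hom}_\C(V,V') \big)^G$ is the desired Morita bimodule, and one checks the bimodule property and the fact that it is finitely generated projective on each side. In the bornological setting one equips $C^\infty(M)$ with its precompact bornology; the bimodule is then a topologically projective finitely generated module, so the Morita equivalence is bornological as well.

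For part (b), the isomorphism from Lemma \ref{lem:CrossedProduct} carries $C^\infty(M) \rtimes G$-modules to $\big( C^\infty(M) \otimes \mr{End}_\C(\C[G]) \big)^G$-modules, and I would simply trace $\mr{Ind}_m \sigma = \mr{Ind}_{C^\infty(M) \rtimes G_m}^{C^\infty(M) \rtimes G}(\sigma_m)$ through this isomorphism; its image is by definition $\pi(m,\sigma)$, and the claim about its $C^\infty(M)^G$-character being $Gm$ is immediate from the construction of $\sigma_m$. The two $\mr{Hom}$ computations are then Frobenius reciprocity plus the fact that $\C_m \otimes \C[G]$ corresponds under Lemma \ref{lem:CrossedProduct} to $\mr{Ind}_m \C[G_m]$ (taking $\sigma = \C[G_m]$, the regular representation, which becomes $\C[G]$ under induction): indeed $\mr{Hom}_{C^\infty(M)\rtimes G}(\mr{Ind}_m \C[G_m], \mr{Ind}_m \sigma) \cong \mr{Hom}_{C^\infty(M)\rtimes G_m}(\C[G_m], \mr{Ind}_m\sigma|_{G_m})$, and evaluating at the central character $m$ collapses this to $\mr{Hom}_{G_m}(\C[G_m], \sigma) \cong \sigma$ as a $G_m$-module, where the residual $G_m$-action comes from the right regular action on $\C[G_m]$, equivalently on the $\C[G]$-factor. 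I would be careful to verify that the two $G_m$-actions on the two sides of the displayed isomorphism match, i.e. that the identification respects the right-regular $G_m$-module structure; this is the only point requiring genuine bookkeeping.

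For parts (c) and (d), the entire argument above only used that the coefficient ring is a commutative $G$-algebra that is Noetherian (or at least that finitely generated projective modules behave well) and for which $\mr{Ind}_m$/evaluation-at-$m$ make sense; replacing $C^\infty(M)$ by $\mc O(M)$ (a finitely generated $\C$-algebra, with $M$ nonsingular so that localizations at points of $Gm$ are regular) or by $\Fs_{Gm} = \bigoplus_{m'\in Gm}\Fs_{m'}$ (a product of complete regular local rings, Noetherian by Lemma \ref{lem:fgmod}'s remark, with $G$ permuting the factors through $G/G_m$) changes nothing in the formal manipulations; one only re-reads "$C^\infty(M)^G$-character $Gm$" as the corresponding statement for $\mc O(M)^G$ or $\Fs_{Gm}^G$. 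I expect the main obstacle to be not any single hard step but rather the uniform packaging: making sure the Morita bimodule in (a) is simultaneously an algebraic and a bornological progenerator, and that all four instances of the coefficient algebra satisfy the finiteness needed for projectivity of $\big( C^\infty(M) \otimes \mr{Hom}_\C(V,V') \big)^G$ over both endpoint algebras. A clean way to finesse this is to note that $\big( B \otimes \mr{End}_\C(V) \big)^G$ is always Morita-equivalent to $\big( B \otimes \mr{End}_\C(V^{\oplus k}) \big)^G$ for any $k\geq 1$ (stabilization), reducing the general hypothesis to the case where $V' $ is a direct summand of $V^{\oplus k}$, where the Morita bimodule is visibly a direct summand of a free module on both sides.
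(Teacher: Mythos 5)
Your overall architecture matches the paper's: the Morita bimodules $\big(C^\infty(M)\otimes \mr{Hom}_\C (V,V')\big)^G$ and $\big(C^\infty(M)\otimes \mr{Hom}_\C (V',V)\big)^G$, projectivity via realizing $V'$ as a direct summand of $V\otimes \C^d$, Frobenius reciprocity plus evaluation at the fiber over $m$ for part (b) (the paper instead passes through the quotient algebras $\C[Gm]\rtimes G\cong \mr{End}_\C(\C[Gm])\otimes\C[G_m]$ and $\mr{End}_{G_m}(\C[G])$, but your route is equivalent), and formal transfer of the argument for (c) and (d). However, in part (a) there is a genuine gap: exhibiting the bimodule and checking it is finitely generated projective on both sides does not give a Morita equivalence; one must also prove it is \emph{full}, i.e.\ that the two multiplication maps such as $D_1^G\otimes_{(C^\infty(M)\otimes\mr{End}_\C(V))^G}D_2^G\to \big(C^\infty(M)\otimes\mr{End}_\C(V')\big)^G$ are isomorphisms (equivalently, that the relevant invariant idempotent generates the whole algebra as a two-sided ideal). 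This is exactly where the hypothesis on matching irreducible constituents enters, and it is the heart of the paper's proof: reduce modulo $I_{Gm}$ at every $m\in M$, decompose $V,V'$ into $G_m$-isotypic components, note that equality of the sets of irreducible $G$-constituents forces equality of the sets of irreducible $G_m$-constituents for every isotropy group $G_m$, deduce fiberwise bijectivity, and then conclude globally by an ideal/density argument. Your proposal never carries out this step.

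The justifications you offer in its place do not suffice. The claim that $\mr{End}_\C(V)$ and $\mr{End}_\C(V')$ are $G$-equivariantly Morita equivalent via $\mr{Hom}_\C(V,V')$ is true for \emph{any} two nonzero representations, so it cannot encode the hypothesis, and equivariant Morita equivalence upstairs does not descend to the fixed-point algebras (that descent is precisely the quasi-equivalence issue from [LePl] that the theorem addresses). Likewise, the stabilization finesse ("$V'$ a summand of $V^{\oplus k}$, so the bimodule is visibly a summand of a free module on both sides") only yields projectivity, not generation: with $M$ a point, $V=\C[G]$ and $V'=\C$ the trivial representation, the same reasoning would "prove" that $\C[G]$ and $\C$ are Morita equivalent, which is false — the hypothesis fails there, and the place where it must be used is exactly the fullness check you omit. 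Parts (b), (c), (d) are fine as sketched (for (c) one should note, as the paper does, that fullness for $\Fs_{Gm}$ follows by reducing the $C^\infty$ isomorphism modulo the powers $I_{Gm}^d$, or by a Nakayama-type argument in the complete local setting), but part (a) needs the fiberwise isotypic argument to be written out.
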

\noindent \emph{Remark.} 
The condition on $V$ and $V'$ is called quasi-equivalence in \cite{LePl},
where it is related to stable equivalence of certain $C^*$-algebras.
\begin{proof}
(a) Consider the bimodules
\begin{equation}\label{eq:bimodules}
\begin{array}{lll}
D_1 & = & C^\infty (M) \otimes \mr{Hom}_\C (V,V'), \\
D_2 & = & C^\infty (M) \otimes \mr{Hom}_\C (V',V).
\end{array}
\end{equation}
It is clear that
\begin{equation}\label{eq:13}
D_1 \otimes_{C^\infty (M) \otimes \mr{End}_\C (V)} D_2 \cong C^\infty (M) \otimes \mr{End}_\C (V') ,
\end{equation}
and similarly in the reversed order. We will show that \eqref{eq:13} remains valid if we take
$G$-invariants everywhere. In other words, we claim that the bimodules $D_1^G$ and $D_2^G$
implement the desired Morita equivalence. First we check that $D_1^G$ is projective as a
right $\big( C^\infty (M) \otimes \mr{End}_\C (V) \big)^G$-module. Choose a $G$-representation
$V_3$ such that $V' \oplus V_3 \cong V \otimes \C^d$ for some $d \in \N$ and write
$D_3 = C^\infty (M) \otimes \mr{Hom}_\C (V,V_3)$. Then
\[
D_1^G \oplus D_3^G \cong \big( C^\infty (M) \otimes \mr{Hom}_\C (V,V \otimes \C^d) \big)^G
\cong \big( C^\infty (M) \otimes \mr{End}_\C (V) \big)^G \otimes \C^d ,
\]
so $D_1^G$ is a finitely generated projective
$\big( C^\infty (M) \otimes \mr{End}_\C (V) \big)^G$-module, in both the algebraic and the
bornological sense. Similarly $D_2^G$ is a finitely generated projective
$\big( C^\infty (M) \otimes \mr{End}_\C (V') \big)^G$-module, so all the below algebraic tensor
products are also bornological tensor products.

Multiplication yields a natural map
\begin{equation}\label{eq:14}
D_1^G  \otimes_{\big( C^\infty (M) \otimes \mr{End}_\C (V) \big)^G} D_2^G  \to
\big( C^\infty (M) \otimes \mr{End}_\C (V') \big)^G .
\end{equation}
Take any $m \in M$ and let $I_{Gm} \subset C^\infty (M)^G$ be the maximal ideal
$\{ f \in C^\infty (M)^G \mid f(m) = 0 \}$. Dividing out \eqref{eq:14} by $I_{Gm}$, we obtain
\begin{equation}\label{eq:15}
\mr{Hom}_{G_m} (V,V') \otimes_{\mr{End}_{G_m} (V')} \mr{Hom}_{G_m} (V',V) \to
\mr{End}_{G_m} (V) .
\end{equation}
Decompose the $G_m$-representations $V$ and $V'$ as
\[
V \cong \bigoplus_{\rho \in \mr{Irr}(G_m)} \rho \otimes \C^{m_\rho} ,\qquad
V' \cong \bigoplus_{\rho \in \mr{Irr}(G_m)} \rho \otimes \C^{m'_\rho}
\]
The assumption of the theorem implies that $m_\rho = 0$ if and only if $m'_\rho = 0$. As
\[
\mr{End}_{G_m} (V') \cong
\bigoplus_{\rho \in \mr{Irr}(G_m)} \mr{End}_{G_m} (\rho) \otimes \mr{End}_\C (\C^{m'_\rho}) ,
\]
the left hand side of \eqref{eq:15} is isomorphic to
\begin{multline*}
\hspace{-4mm} \bigoplus_{\rho \in \mr{Irr}(G_m)} \hspace{-4mm} \mr{End}_{G_m} (\rho)
\otimes \mr{Hom}_\C (\C^{m_\rho}, \C^{m'_\rho}) \: \otimes_{\mr{End}_{G_m} (V')}  \!
\bigoplus_{\rho \in \mr{Irr}(G_m)} \hspace{-4mm} \mr{End}_{G_m} (\rho) \otimes
\mr{Hom}_\C (\C^{m'_\rho}, \C^{m_\rho}) \\
\cong \bigoplus_{\rho \in \mr{Irr}(G)} \mr{End}_{G_m} (\rho) \otimes
\mr{End}_\C (\C^{m_\rho}) \; \cong \; \mr{End}_{G_m} (V) .
\end{multline*}
Hence \eqref{eq:15} is a bijection for all $m \in M$, which implies that \eqref{eq:14} is injective.
The image of \eqref{eq:14} is a two-sided ideal of $\big( C^\infty (M) \otimes \mr{End}_\C (V')
\big)^G$, which is dense in the sense that for every $m \in M$ the algebra and its ideal have the
same set of values at $m$. Consequently \eqref{eq:14} is bijective, and an isomorphism of
$\big( C^\infty (M) \otimes \mr{End}_\C (V') \big)^G$-bimodules.\\
(b) Let $J_{Gm}$ and $J'_{Gm}$ be the ideals of $C^\infty (M) \rtimes G$ and $\big( C^\infty (M) 
\otimes \mr{End}_\C (\C [G]) \big)^G$ generated by $I_{Gm}$. As the isomorphism from Lemma 
\ref{lem:CrossedProduct} preserves the central characters, the representations under 
consideration factor through the quotients
\begin{multline} \label{eq:3.2}
\big( C^\infty (M) \rtimes G \big) / J_{Gm} \qquad \cong \qquad \C [Gm] \rtimes G \quad 
\cong \quad \mr{End}_\C (\C [Gm]) \otimes \C [G_m] , \\
\big( C^\infty (M) \otimes \mr{End}_\C (\C [G])) \big)^G / J'_{Gm} \cong 
\big( \bigoplus_{g \in G / G_m} \mr{End}_\C (\C [G]) \big)^G  \cong  \mr{End}_{G_m} (\C [G]) .
\end{multline}
The two rightmost algebras are isomorphic via a bijection $G \to Gm \times G_m$ which is
equivariant with respect to the right $G_m$-action. The representations corresponding to
\eqref{eq:3.2} are
\[
\begin{array}{@{\hspace{1cm}}c@{\hspace{4cm}}c@{\hspace{1cm}}c}
\mr{Ind}_m \sigma & \C [Gm] \otimes \sigma & \C [Gm] \otimes \sigma , \\
\pi (m,\sigma) & \C [G] \otimes_{\C [G_m]} \sigma & \C [G] \otimes_{\C [G_m]} \sigma .  
\end{array}
\]
In the special case where $\sigma$ is the (right) regular representation of $G_m$, we see that
$\pi (n,\C [G_m]) \cong \C_m \otimes \C [G]$. Therefore
\begin{multline*}
\mr{Hom}_{C^\infty (M) \rtimes G} \big( \mr{Ind}_m \C [G_m], \mr{Ind}_m \sigma \big) \cong \\
\mr{Hom}_{\big( C^\infty (M) \otimes \mr{End}_\C (\C [G]) \big)^G} 
\big( \C_m \otimes \C [G], \pi (m,\sigma) \big) \cong \\ 
\mr{Hom}_{\mr{End}_{G_m} (\C [G])} \big( \C [G], \C [G] \otimes_{\C [G_m]} \sigma \big) 
\cong \mr{Hom}_{\C [G_m]} (\C [G_m], \sigma) .
\end{multline*}
Notice that all these steps preserve the right action of $G_m$ on $\C [G]$ or $\C [G_m]$.
The last term is $G_m$-equivariantly isomorphic to $\sigma$ via the evaluation map
$\phi \mapsto \phi (1)$. \\
(c) Let $d \in \N$ and divide \eqref{eq:14} out by $I_{Gm}^d$. It remains bijective,
which implies that
\begin{multline*}
\big( \Fs_{Gm} / I_{Gm}^d \otimes \mr{Hom}_\C (V,V') \big)^{G_m}
\otimes_{(\Fs_{Gm} \otimes \mr{End}_\C (V))^G}
\big( \Fs_{Gm} / I_{Gm}^d \otimes \mr{Hom}_\C (V,V') \big)^{G_m} \cong \\
(\Fs_{Gm} / I_{Gm}^d \otimes \mr{End}_\C (V'))^{G_m} .
\end{multline*}
Hence the bimodules $\big( \Fs_{Gm} \otimes \mr{Hom}_\C (V,V') \big)^G$ and
$\big( \Fs_{Gm} \otimes \mr{Hom}_\C (V,V') \big)^G$ provide the required Morita equivalence.
Knowing this, it is obvious that the proof of part (b) also works with formal power
series instead of smooth functions. \\
(d) All the previous arguments go through without change in the affine algebraic case.
\end{proof}

\subsection{Ext-groups and the Yoneda product}

Let $M$ be a nonsingular complex affine variety. It is well known that (commutative)
regular local rings admit Koszul-resolutions and that these can be used to
determine Ext-spaces of suitable modules. This leads for example to
\[
\mr{Ext}^*_{\mc O (M)} (\C_m, \C_m) \cong \wig^* T_m (M) ,
\]
and with a little more work one can show that the Yoneda product on the left hand side
corresponds to the $\wedge$-product on the right hand side. It turns out that this can be
generalized to crossed products with finite groups. We formulate the next result with
complex affine $G$-varieties, but using \cite[Proposition 6]{Was}
and Subsection \ref{subsec:exact} (for completions in the context of Fr\'echet algebras)
it is allowed to replace $\mc{O}(M)$ by smooth functions on a manifold.

\begin{thm} \label{thm:YonedaExt}
Let $V,V'$ be finite dimensional $G_m$-representations.
\enuma{
\item $\mr{Ext}^n_{\mc{O}(M) \rtimes G} (\mr{Ind}_m V, \mr{Ind}_m V')$ is isomorphic to
\[
\mr{Hom}_{G_m} (V \otimes \wig^n T^*_m (M) , V') \cong
\big( \mr{Hom}_\C (V,V') \otimes \wig^n T_m (M) \big)^{G_m} .
\]
\item The Yoneda product on $\mr{Ext}^*_{\mc{O}(M) \rtimes G}
(\mr{Ind}_m V, \mr{Ind}_m V)$ agrees with the usual product on
$\big( \mr{End}_\C (V) \otimes \wig^* T_m (M) \big)^{G_m}$.
\item For the regular representation $V = \C [G_m]$ we have $\mr{Ind}_m V =
\mr{Ind}_{\mc O (M)}^{\mc O (M) \rtimes G} \C_m$ and
\[
\mr{Ext}^*_{\mc{O}(M) \rtimes G} \big( \mr{Ind}_{\mc O (M)}^{\mc O (M) \rtimes G} \C_m,
\mr{Ind}_{\mc O (M)}^{\mc O (M) \rtimes G} \C_m \big) \cong \wig^* T_m (M) \rtimes G_m
\]
as graded algebras.
\item Let $\Fs_{Gm}$ be the completion of $\mc O (M)$ with respect to the powers
of the ideal $I_{Gm} := \{ f \in \mc O (M) : f(gm) = 0 \, \forall g \in G \}$. Endow it with
the Fr\'echet topology of a complex power series ring. Then (a), (b) and (c) remain
valid if we replace Mod$(\mc O (M) \rtimes G)$ by any of the following categories:
$\mr{Mod}(\Fs_{Gm} \rtimes G),\ \mr{Mod}_{Fr\acute{e}}(\Fs_{Gm} \rtimes G),\
\mr{Mod}_{bor}(\Fs_{Gm} \rtimes G)$.
}
\end{thm}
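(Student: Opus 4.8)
The plan is to reduce everything to a single Koszul computation and then propagate it through the crossed product and the various topological completions. First I would treat part (c), the case $V = \C[G_m]$, since $\mr{Ind}_m \C[G_m] = \mr{Ind}_{\mc O(M)}^{\mc O(M)\rtimes G}\C_m$ is induced from the commutative subalgebra $\mc O(M)$, and induction is exact and sends projectives to projectives (the algebra $\mc O(M)\rtimes G$ is free as a left $\mc O(M)$-module). Hence a Koszul resolution of $\C_m$ over the regular local ring $\mc O(M)_m$ — or better, over $\mc O(M)$ localized away from the other points of $Gm$, so that $\C_{Gm} := \bigoplus_{g\in G/G_m}\C_{gm}$ appears — induces, after applying $\mr{Ind}$, a resolution of $\mr{Ind}_{\mc O(M)}^{\mc O(M)\rtimes G}\C_m$ by projective $\mc O(M)\rtimes G$-modules. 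Applying $\mr{Hom}_{\mc O(M)\rtimes G}(-,\mr{Ind}_{\mc O(M)}^{\mc O(M)\rtimes G}\C_m)$ and using the adjunction between $\mr{Ind}$ and restriction, the complex becomes the Koszul complex computing $\mr{Ext}^*_{\mc O(M)}(\C_m,\C_{Gm})$ with its residual $G_m$-action; only the summand over $m$ survives the $G_m$-invariance, giving $\bigwedge^* T_m(M)$, and keeping track of how $G_m$ permutes things yields the semidirect product structure $\bigwedge^* T_m(M)\rtimes G_m$ as graded algebras. The multiplicativity (that the Yoneda product is the wedge–crossed product) is the standard fact that the Koszul resolution is a DG-algebra resolution, so the induced product on cohomology is the one coming from $\bigwedge^*$; the $G_m$-equivariance of the whole construction inserts the group factor.

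Next I would obtain parts (a) and (b) from (c) by Morita equivalence. By Theorem \ref{thm:Moritaeq}(a) the algebra $\mc O(M)\rtimes G \cong \big(\mc O(M)\otimes \mr{End}_\C(\C[G])\big)^G$ is Morita equivalent, via the bimodules $D_i^G$, to $\big(\mc O(M)\otimes\mr{End}_\C(V_0)\big)^G$ for any finite-dimensional $G$-representation $V_0$ all of whose isotypic components are nonzero; but I actually want the finer statement that keeps track of $\mr{Ind}_m V$ for an arbitrary $G_m$-representation $V$. For this I would use Theorem \ref{thm:Moritaeq}(b): under the Morita equivalence the module $\mr{Ind}_m V$ corresponds to a module $\pi(m,V)$ over $\big(\mc O(M)\otimes\mr{End}_\C(\C[G])\big)^G$, and the Hom-space computation in part (b) identifies the relevant multiplicity spaces. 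Since Morita equivalence is an exact equivalence of module categories, it identifies Ext-groups and Yoneda products; applying it to the resolution from part (c) gives $\mr{Ext}^n_{\mc O(M)\rtimes G}(\mr{Ind}_m V,\mr{Ind}_m V')\cong \mr{Hom}_{G_m}\big(V\otimes\bigwedge^n T^*_m(M),V'\big)$, and unwinding $\mr{Hom}_{G_m}(V\otimes\bigwedge^n T^*_m(M),V')\cong\big(\mr{Hom}_\C(V,V')\otimes\bigwedge^n T_m(M)\big)^{G_m}$ using self-duality of the reductive (finite) group representation $\bigwedge^n T_m(M)$. The multiplicativity statement (b) transports likewise.

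Finally, part (d): the transition from $\mc O(M)$ to the formal completion $\Fs_{Gm}$ and to the categories $\mr{Mod}_{Fr\acute e}$ and $\mr{Mod}_{bor}$. Here I would invoke Theorem \ref{thm:Moritaeq}(c), which gives the analogue of the Morita equivalence over $\Fs_{Gm}$, so the reduction of (d) to the formal-power-series version of (c) is the same as above; and for (c) over $\Fs_{Gm}$ one uses that $\Fs_m$ is a regular local (Noetherian) ring, so it still carries a Koszul resolution of its residue field, and the DG-algebra argument is unchanged. For the topological categories, a free module of finite rank over $\Fs_{Gm}\rtimes G$ is projective in $\mr{Mod}_{Fr\acute e}$ and in $\mr{Mod}_{bor}$ — this is exactly the content of Lemma \ref{lem:catCS}(d) together with Lemma \ref{lem:fgmod} — and the Koszul resolution consists of such modules, so it remains a projective resolution in each category and computes the same Ext-groups; alternatively one cites \cite[Proposition 6]{Was} for the smooth-function version as indicated before the statement. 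The main obstacle I anticipate is not any single computation but the bookkeeping of the $G_m$-actions: one must check that the Morita bimodules, the induction/restriction adjunction isomorphisms, and the Koszul DG-structure are all $G_m$-equivariant in a compatible way, so that the final identification is not merely an isomorphism of vector spaces in each degree but an isomorphism of graded algebras respecting the $G_m$-structure. Everything else is a routine transcription of the commutative Koszul calculation.
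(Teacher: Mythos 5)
Your overall skeleton (Koszul resolution plus group bookkeeping plus completion) is the right one, but two steps do not work as you state them. First, the reduction of (a) and (b) to (c) ``by Morita equivalence'' is not valid: Theorem \ref{thm:Moritaeq} provides Morita equivalences between the \emph{algebras} $\big( \mc O (M) \otimes \mr{End}_\C (V) \big)^G$ for different $G$-representations $V$, whereas here all the modules $\mr{Ind}_m V$, for varying $G_m$-representations $V$, live over one and the same algebra $\mc O (M) \rtimes G$; knowing the Ext-algebra of the single module $\mr{Ind}_m \C [G_m]$ does not transfer to arbitrary pairs by any equivalence of categories, since $\mr{Ind}_m \C [G_m]$ is not projective (it has higher self-extensions), so $\mr{Hom}(\mr{Ind}_m \C[G_m], -)$ is far from an equivalence. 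What does work is a direct-summand argument: $V$ is a $G_m$-summand of $\C [G_m]^d$, hence $\mr{Ind}_m V$ is cut out of $(\mr{Ind}_m \C [G_m])^d$ by an idempotent in $M_d \big( \mr{End}_{\mc O (M) \rtimes G}(\mr{Ind}_m \C [G_m]) \big) \cong M_d (\C [G_m])$, and then (a),(b) follow from (c) by computing $e_{V'} \, M_{d' \times d}\big( \wig^* T_m (M) \rtimes G_m \big)\, e_V \cong \big( \mr{Hom}_\C (V,V') \otimes \wig^* T_m (M) \big)^{G_m}$; Theorem \ref{thm:Moritaeq}.b is indeed the ingredient that identifies these multiplicity spaces, but the mechanism is idempotents, not Morita theory. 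Alternatively (this is the paper's route, in the opposite order to yours) one loses nothing by carrying $V$ along in the Koszul complex: after Frobenius reciprocity and flat base change to $\Fs_m$, the complex $V \otimes \wig^* E_m \otimes \Fs_m$ resolves $V_m$ over $\Fs_m \rtimes G_m$, the Hom-complex has vanishing differentials, the product in (b) is computed by explicit chain-map lifts via insertion operators, and (c) is then a corollary of (b) and Lemma \ref{lem:CrossedProduct}. Note also that over $\mc O (M)$ itself the Koszul complex on generators of $I_m$ need not be exact, so the passage to $\mc O (M)_m$ or $\Fs_m$ that you only gesture at is genuinely required before inducing.

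Second, in part (d) the issue for $\mr{Mod}_{Fr\acute e}(\Fs_{Gm} \rtimes G)$ and $\mr{Mod}_{bor}(\Fs_{Gm} \rtimes G)$ is not projectivity of the terms (they are summands of finite-rank free modules, as you say), but admissibility of the resolution: in these categories the exact sequences are required to admit a continuous, respectively bounded, linear splitting, and exactness of a complex of Fr\'echet modules does not give this for free, since closed subspaces of Fr\'echet spaces need not be complemented. Lemma \ref{lem:catCS} and Lemma \ref{lem:fgmod} concern a different algebra and, where splittings are not needed, the category $\Omega$ in which \emph{all} short exact sequences are declared admissible, so they do not settle the point; nor does \cite[Proposition 6]{Was}, which is about replacing $\mc O (M)$ by smooth functions. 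The paper closes exactly this gap by exhibiting an explicit continuous contracting homotopy: a De Rham-type operator $D_*$ on $V \otimes \wig^* E_m \otimes \Fs_m$ with $D_* \delta_* + \delta_* D_*$ invertible on each homogeneous piece. Your proposal needs this (or an equivalent argument) to justify that the Koszul resolution computes Ext in the bornological and Fr\'echet settings.
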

\begin{proof}
(a) First we copy a part of the proof of \cite[Theorem 3.2]{OpSo1}. Let $V_m$ be $V$
considered as a $C^\infty (M) \rtimes G_m$ module with $C^\infty (M)$-character $m$.
By Frobenius reciprocity and because two $\mc{O}(M) \rtimes G_m$-representations
with different central characters admit only trivial extensions
\begin{equation}\label{eq:3.4}
\begin{aligned}
\mr{Ext}^n_{\mc{O}(M) \rtimes G} (\mr{Ind}_m V , \mr{Ind}_m V' )
& \cong \mr{Ext}^n_{\mc{O}(M) \rtimes G_m} (V_m , \mr{Ind}_m V' ) \\
& \cong \mr{Ext}_{\mc{O}(M) \rtimes G_m}^n (V_m ,V'_m) .
\end{aligned}
\end{equation}
Let $\Fs_m$ be the completion of $\mc{O}(M)$ with respect to the powers of the ideal
\[
I_m := \{f \in \mc{O}(M) : f(m) = 0 \} .
\]
It annihilates $V_m$, so
\[
(\Fs_m \rtimes G_m ) \otimes_{\mc O (M) \rtimes G_m} V_m = V_m
\]
as $\mc{O}(M) \rtimes G_m$-modules. Since $\Fs_m$ is flat over $\mc O (M)$
\cite[Theorem 7.2.b]{Eis}, so is $\Fs_m \rtimes G$ over $\mc O (M) \rtimes G$. Therefore
the functor $\Fs_m \rtimes G \otimes_{\mc O(M) \rtimes G} ?$ induces an isomorphism
\begin{equation}
\mr{Ext}^n_{\mc{O}(M) \rtimes G_m} (V_m , V'_m ) \cong
\mr{Ext}^n_{\Fs_m \rtimes G_m} (V_m , V'_m ) .
\end{equation}
Because the $G_m$-module $I_m^2$ has finite codimension in $\mc{O}(M)$,
there exists a $G_m$-submodule $E_m \subset \mc{O}(M)$ such that
\begin{equation}\label{eq:Em}
\mc{O}(M) = \mh C \oplus E_m \oplus I_m^2 .
\end{equation}
As a $G_m$-module $E_m$ is the cotangent space to $M$ at $m$, more or less by
definition of the latter. Since $\Fs_m$ is a local ring we have $\Fs_m E_m = \Fs_m I_m$,
by Nakayama's Lemma. Any finite dimensional $G_m$-module is projective, so
\[
V \otimes \wig^n E_m \otimes \Fs_m = \mr{Ind}_{G_m}^{\Fs_m \rtimes G_m}
\big( V \otimes \wig^n E_m \big)
\]
is a projective $\Fs_m \rtimes G_m$-module, for all $n \in \mh N$. With these modules
we construct a resolution of $V_m$. Define $\Fs_m \rtimes G_m$-module maps
\[
\begin{array}{lll}
\delta_n : V \otimes \wig^n E_m \otimes \Fs_m &
\to & V \otimes \wig^{n-1} E_m \otimes \Fs_m ,\\
\delta_n \, (v \otimes e_1 \wedge \cdots \wedge e_n \otimes f) & = &
\sum\limits_{i=1}^n (-1)^{i-1} v \otimes e_1 \wedge \cdots \wedge
e_{i-1} \wedge e_{i+1} \wedge \cdots \wedge e_n \otimes e_i f ,\\
\delta_0 : V \otimes \Fs_m & \to & V_m ,\\
\delta_0 \, (v \otimes f) & = & f(m) v .
\end{array}
\]
This makes
\begin{equation}\label{eq:3.9}
\big( V \otimes \wig^* E_m \otimes \Fs_m, \delta_* \big)
\end{equation}
into an augmented differential complex. Notice that in
Mod$(\Fs_m )$ this just the Koszul resolution of
\[
V \otimes \Fs_m \big / I_m \Fs_m = V_m .
\]
Therefore \eqref{eq:3.9} is the required projective resolution of $V_m$.
Since $V'_m$ admits the $\Fs_m$-character $m$ it is annihilated by $E_m$,
and in particular $\phi \circ \delta_{n+1} = 0$ for every $\Fs_m \rtimes G_m$-module
homomorphism $\phi : V \otimes \wig^n E_m \otimes \Fs_m \to V'_m$.
(The authors overlooked this important point while writing \cite{OpSo1}.)
Consequently all the differentials in the complex
\[
\big( \mr{Hom}_{\Fs_m \ltimes G_m} \big( V \otimes \wig^* E_m
\otimes \Fs_m , V'_m \big), \mr{Hom}(\delta_* , \mr{id}_{V'_m}) \big)
\]
vanish and
\begin{equation} \label{eq:ExtVm}
\begin{aligned}
\mr{Ext}_{\mc{O}(M) \rtimes G_m}^n (V_m ,V'_m) & =
\mr{Hom}_{\Fs_m \ltimes G_m} \big( V \otimes \wig^n E_m \otimes \Fs_m , V'_m \big) \\
& \cong \mr{Hom}_{G_m} (V \otimes \wig^n E_m , V') .
\end{aligned}
\end{equation}
Recall that $E_m$ can be identified with $T^*_m (M)$, so the dual space of $\wig^n E_m$ is
$\wig^n T_m (M)$. Thus we can express the right hand side of \eqref{eq:ExtVm} as
\[
\mr{Hom}_{G_m} (V, \wig^n T_m (M) \otimes V') \cong
\big( \mr{Hom}_\C (V,V') \otimes \wig^n T_m (M) \big)^{G_m} .
\]
(b) We recall from \cite[Section III.6]{Mac} how the Yoneda product can be defined using
resolutions of modules. Let $Y$ be a module over any unital ring $R$ and let $P_* \to Y$
be a projective resolution. By definition $\mr{Ext}^n_R (Y,Y) = H^n (\mr{Hom}_R (P_* ,Y))$.
Given any $\phi \in \mr{Hom}_R (P_n,Y)$ with $\phi \circ \delta_{n+1} = 0$,
the projectivity of $P_n$ guarantees that we can extend it to a chain map
$\phi_* \in \mr{Hom}_R (P_* , P_*)$ of degree $-n$:
\[
\xymatrix{
\ar[r] & P_{n+1} \ar[r] & P_n \ar[r] & \cdots \ar[r] & P_1 \ar[r] & P_0 \ar[r] &
\cdots \ar[r] & 0 \ar[r] & 0\\
\ar[r] & P_{n+1} \ar[r] \ar[urrr]_{\phi_1} & P_n \ar[r] \ar[urrr]_{\phi_0} &
\cdots \ar[r] & P_1 \ar[urrr] \ar[r] & P_0 \ar[urrr] \ar[r] & \cdots \ar[r] & 0 \ar[r] & 0 }
\]
By \cite[Theorem III.6.1]{Mac} $\phi_*$ is determined by $\phi$ up to chain homotopy,
so $\mr{Ext}^n_R (Y,Y)$ can be regarded as $\mr{End}(P_*,P_*)$ modulo homotopy. Now
$\mr{End}(P_*,P_*)$ is a graded algebra, and by \cite[Theorem III.6.4 and exercise III.6.2]{Mac}
its multiplication corresponds to the Yoneda product on $\mr{Ext}^*_R (Y,Y)$.

In our setting, by part (a) every
\[
\phi \in \mr{Ext}^n_{\mc{O}(M) \rtimes G} (\mr{Ind}_m V , \mr{Ind}_m V ) \cong
\mr{Ext}^n_{\Fs_m \rtimes G_m} (V_m , V_m )
\]
can be taken of the form
\[
\phi = \sum\nolimits_j \psi_j \otimes \lambda_j \in
\big( \mr{End}_\C (V) \otimes \wig^n T_m (M) \big)^{G_m} .
\]
Above we showed that \eqref{eq:3.9} is a projective resolution of $V_m$, so we want to lift
$\phi$ to $\phi_* \in \mr{End}_{\Fs_M \rtimes G_m} (V \otimes \wig^* E_m \otimes \Fs_m)$.
We claim that this can be done by defining
\[
\phi_{m+n}(v \otimes \omega \otimes f) =
\sum\nolimits_j \psi_j (v) \otimes \imath (\lambda_j) \omega \otimes f ,
\]
where $\imath (\lambda_j) \omega$ means that $\lambda_j$ is inserted in the last
$n$ entries of $\omega \in \wig^{n+m} E_m \cong \wig^{n+m} T^*_m (M)$.
It is clear that $\phi_*$ is $\Fs_m \rtimes G_m$-linear whenever $\phi$ is so.
To check that $\phi_*$ is a chain map, we may just as well work inside
$\mr{End}_{\Fs_m} (V \otimes \wig^* E_m \otimes \Fs_m)$. Then we can reduce the
verification to $\phi$ of the form $\psi \otimes t_1 \wedge \cdots \wedge t_n$ with
$\psi \in \mr{End}_\C (V)$ and $t_i \in T_m (M)$ linearly independent. Furthermore
it suffices to consider $\omega = e_1 \wedge \cdots \wedge e_{n+m}$ for $e_k \in
E_m$ such that $\inp{e_k}{t_i} = 0$ for all $k \leq m$ and all $i$. Now we calculate
\begin{align*}
\delta_m \phi_m & (v \otimes \omega \otimes f) = \delta_m \big( \psi (v) \otimes e_1 \wedge \cdots
\wedge e_m \det \big( \inp{e_{k+m}}{t_i} \big)_{i,k=1}^n \otimes f \big) \\
& = \psi (v) \otimes \sum_{i=1}^m (-1)^i e_1 \wedge \cdots \wedge e_{i-1} \wedge e_{i+1}
\wedge \cdots \wedge e_n \det \big( \inp{e_{k+m}}{t_i} \big)_{i,k=1}^n \otimes e_i f \\
& = \psi (v) \otimes \sum_{i=1}^{m+n} (-1)^{i-1} \imath (t_1 \wedge \cdots \wedge t_n)
e_1 \wedge \cdots \wedge e_{i-1} \wedge e_{i+1}
\wedge \cdots \wedge e_{n+m} \otimes e_i f \\
& = \phi_{m-1} \big( \sum\limits_{i=1}^{n+m} (-1)^{i-1} v \otimes e_1 \wedge \cdots \wedge
e_{i-1} \wedge e_{i+1} \wedge \cdots \wedge e_{n+m} \otimes e_i f \big) \\
& = \phi_{m-1} \delta_{m+n} (v \otimes \omega \otimes f) .
\end{align*}
With these explicit lifts available, we can determine the Yoneda product. Since
\[
(\psi_j \otimes \imath (\lambda_j) \otimes \mr{id}_{\Fs_m}) \circ
(\psi'_k \otimes \imath (\lambda'_k) \otimes \mr{id}_{\Fs_m}) =
\psi_j \psi'_k \otimes \imath (\lambda_j \wedge \lambda'_k) \otimes \mr{id}_{\Fs_m}
\]
we obtain
\[
\phi \circ \phi' = \big( \sum_j \psi_j \otimes \lambda_j \big) \circ \big( \sum_k \psi'_k \otimes
\lambda'_k \big)  = \sum_{j,k} \psi_j \psi'_k \otimes \lambda_j \wedge \lambda'_k .
\]
(c) This follows from (b) and Lemma \ref{lem:CrossedProduct}.\\
(d) By the Chinese remainder theorem $\Fs_{Gm} \cong \bigoplus_{m' \in Gm} \Fs_{m'}$, so
\eqref{eq:3.4} becomes
\[
\mr{Ext}^n_{\Fs_{Gm} \rtimes G} (\mr{Ind}_m V , \mr{Ind}_m V' )
\cong \mr{Ext}_{\Fs_m \rtimes G_m}^n (V_m ,V'_m) .
\]
Thus the proof of (a), (b) and (c) also applies to $\Fs_{Gm} \rtimes G$. The remaining
subtlety is that in $\mr{Mod}_{bor}(\Fs_{m} \rtimes G_m)$ exact sequences are admissible if
and only if they admit a bounded linear splitting. Hence the free modules in this category are of
the form $\Fs_{m} \rtimes G_m \widehat{\otimes}_\C F$, where $F$ is any bornological vector
space. Since $V \otimes \bigwedge^* E_m$ is a finite dimensional $G_m$-representation, it
is a direct summand of $\C [G_m]^d$ for some $d \in \N$. Hence all the modules
$V \otimes \bigwedge^n E_m \otimes \Fs_m$ of the Koszul resolution of $V_m$ are direct
summands of $\C [G_m]^d \otimes \Fs_m = (\Fs \rtimes G_m) \widehat{\otimes}_\C \C^d$,
which means that they are projective in both the algebraic and the bornological sense.

Futhermore we have to check that the projective resolution \eqref{eq:3.9} admits a
bounded linear splitting. Since we dealing with Fr\'echet spaces, bounded is the same
as continuous. Via \eqref{eq:Em} we can identify $\Fs_m$ with the formal completion
of the symmetric algebra of $E_m$. This enables us to define the De Rham differential
$D : \Fs_m \to \bigwedge^1 E_m \otimes \Fs_m$, which is easily seen to be continuous
with respect to the Fr\'echet topology. We define
$D_n : V \otimes \wig^n E_m \otimes \Fs_m \to V \otimes \wig^{n+1} E_m \otimes \Fs_m$ by
\[
D_n (v \otimes e_1 \wedge \cdots \wedge e_n \otimes f) =
(-1)^n v \otimes e_1 \wedge \cdots \wedge e_n \wedge Df ,
\]
and on the augmentation $V_m$ we take $D_{-1} (v) = v \otimes 1$.
With a straightforward calculation one checks that
\[
(D_{n-1} \delta_n + \delta_{n+1} D_n) ((v \otimes e_1 \wedge \cdots \wedge e_n \otimes f)
= (n+g) v \otimes e_1 \wedge \cdots \wedge e_n \otimes f
\]
for $f \in \Fs_m$ homogeneous of degree $g$, except for $n=g=0$, in which case
\[
(D_{-1} \delta_0 + \delta_1 D_0)(v \otimes 1) = v \otimes 1 .
\]
Since our resolution is the direct product of differential complexes indexed by the
degree $n+g$, this shows that $D_* \delta_* + \delta_* D_*$ is invertible. Hence
$D$ is the desired continuous contraction.
\end{proof}
\vspace{1mm}

\section{Analytic R-groups}
\label{sec:Rgroup}

We recall the definition of the analytic R-group from \cite{DeOp2}.
Let $\delta$ be a discrete series representation of $\mc H_P$ with central character
$W(R_P) r_\delta \in T_P / W(R_P)$. Recall that a \emph{parabolic subsystem}
of $R_0$ is a subset $R_Q \subset R_0$ satisfying $R_Q = R_0 \cap \mathbb{R}R_Q$.
We let $Q$ denote the basis of $R_Q$ inside the
positive subset $R_{Q,+} := R_Q\cap R_{0,+}$. We call $R_Q$ \emph{standard} if
$Q\subset F_0$, in which case these notions agree with \eqref{eq:parabolic}.
The set of roots $R_0 \setminus R_P$ is a disjoint union
of subsets of the form $R_Q \setminus R_P$ where $R_Q\subset R_0$ runs over
the collection $\mc{P}^P_{min}$ of minimal parabolic subsystems
properly containing $R_P$. We define $\alpha_Q\in R_{0,+}$ by $Q=\{P,\alpha_Q\}$ and
$\alpha^P_Q=\alpha_Q \big|_{\mf a^P}\in \mf{a}^{P,*}$ for $R_Q\in\mc{P}^P_{min}$.
By the integrality properties of the root system $R_0$ we see that
\begin{equation}\label{eq:mult}
\{\alpha \big|_{\mf a^P}: \alpha\in R_Q\}\subset \mathbb{Z}\alpha_Q^P .
\end{equation}
Clearly $\alpha_Q^P$ is a character of $T^P$ whose kernel contains the
codimension one subtorus $T^Q\subset T^P$. For each $R_Q\in\mc{P}^P_{min}$
we define a $W (R_P)$-invariant rational function on $T$ by
\begin{equation}
c_Q^P(t):=\prod_{\alpha\in R_{Q,+} \setminus R_{P,+}}c_\alpha (t) ,
\end{equation}
where $c_\alpha : T \to \C$ denotes the rank one $c$-function associated to
$\alpha\in R_0$ and the parameter function $q$ of $\mc{H}$,
see e.g. \cite[Appendix 9]{DeOp1}. This $c_\alpha$ plays an important 
role in the representation theory of $\mc H$, as it governs the behaviour of the
intertwining operator $I_{s_\alpha}$ associated to the reflection $s_\alpha$. 
Roughly speaking, singularities of $I_{s_\alpha}$ correspond to the zeros of 
$c_\alpha$, while $I_{s_\alpha}$ tends to be a scalar operator on principal
series representations parametrized by poles of $c_\alpha$.

For any $\alpha\in R_Q\backslash R_P$ the function $c_\alpha$ is a
nonconstant rational function on any coset of the form $r T^P$. In particular
$c_Q^P$ is regular on a nonempty Zariski-open subset of such a coset.
By the $W (R_P)$-invariance we see that for $t\in T^P$ and $r \in W (R_P ) r_\delta$,
the value of this function at $r t$ is independent of the choice
of $r \in W (R_P) r_\delta$. The resulting rational function $t \mapsto c_Q^P (r t)$
on $T^P$ is clearly constant along the cosets of
the codimension one subtorus $T^Q\subset T^P$.
We define the set of \emph{mirrors} $\mc{M}^{P,\delta}_Q$ associated
to $R_Q \in\mc{P}^P_{min}$ to be the set of connected components of the
intersection of the set of poles of this rational function with the unitary part
$T^P_{un}$ of $T^P$. We put
\begin{equation}
\mc{M}^{P,\delta}=\bigsqcup_{R_Q\in\mc{P}^P_{min}} \mc{M}^{P,\delta}_Q
\end{equation}
for the set of all mirrors in $T^P_{un}$ associated to the pair $(P,\delta)$.
Given $M\in\mc{M}^{P,\delta}$ we denote by $R_{Q^M}\subset R_0$ the
unique element of $\mc{P}^P_{min}$ such that $M\in \mc{M}_{Q^M}^{P,\delta}$.
Thus any mirror $M\in\mc{M}^{P,\delta}$ is a connected component of a
hypersurface of $T^P_{un}$ of the form $\alpha^P_{Q^M}=\textup{constant}$.
Observe that for a fixed pair $(P,\delta)$ the set $\mc{M}^{P,\delta}$ is finite
(and possibly empty).

For every mirror $M\in\mc{M}^{P,\delta}$ there exists,
by \cite[Theorem 4.3.i]{DeOp2}, a unique $s_M \in \mc{W}_{P,\delta}$ (i.e.
$s_M\in \mc{W}_{P,P}$ and $\delta\simeq\delta\circ\psi_{s_M}^{-1}$)
which acts on $T^P_{un}$ as the reflection in $M$.
For $\xi = (P,\delta,t) \in \Xi_{un}$ let $\mc M_\xi$ be the collection of mirrors
$M\in\mc{M}^{P,\delta}$ containing $t$. We define
\begin{equation}
R_\xi =  \{\pm \alpha^P_{Q^M} : M \in \mc M_\xi \} \quad \text{and} \quad
R_\xi^+  =  \{\alpha^P_{Q^M} : M \in \mc M_\xi \} .
\end{equation}
Then it follows from \cite[Proposition 4.5]{DeOp2} and (\ref{eq:mult}) that $R_\xi$ is
a root system in $\mf{a}^{P,*}$ and that $R_\xi^+\subset R_\xi$ is a positive subset.
Its Weyl group $W(R_{\xi})$ is generated by the reflections $s_M$ with $M \in \mc M_\xi$,
so it can be realized as a subgroup of $\mc W_\xi = \{ w \in \mc W : w (\xi) = \xi \}$.
The R-group of $\xi \in \Xi_{un}$ is defined as
\begin{equation}\label{eq:24}
\mf R_\xi = \{ w \in \mc W_\xi : w (R_{\xi}^+ ) = R_{\xi}^+ \} ,
\end{equation}
and by \cite[Proposition 4.7]{DeOp2} it is a complement to $W (R_{\xi})$ in $\mc W_\xi$:
\begin{equation}\label{eq:3}
\mc W_\xi = \mf R_\xi \ltimes W (R_{\xi}) .
\end{equation}
With these notions one can state the analogue of the Knapp--Stein linear independence theorem
\cite[Theorem 13.4]{KnSt} for affine Hecke algebras. For reductive $p$-adic groups the result
is proven in \cite{Sil}, see also \cite[Section 2]{Art}.

\begin{thm}\label{thm:KnappStein}
Let $\xi \in \Xi_{un}$.
\enuma{
\item For $w \in \mc W_\xi$ the intertwiner $\pi (w,\xi)$
is scalar if and only if $w \in W(R_{\xi})$.
\item There exists a 2-cocycle $\kappa_\xi$ (depending on the normalization of the
intertwining operators $\pi (w,\xi)$ for $w\in\mc{W}_{\xi}$) of $\mf R_\xi$
such that $\mr{End}_{\mc H} (\pi (\xi))$
is isomorphic to the twisted group algebra $\C [\mf R_\xi ,\kappa_\xi]$.
\item Given the normalization of the intertwining operators, there is a unique bijection
\[
\mr{Irr}(\C [\mf R_\xi ,\kappa_\xi]) \to \mr{Irr}_{\mc W \xi} ( \mc S ) :
\rho \mapsto \pi (\xi,\rho) ,
\]
such that $\pi (\xi) \cong \bigoplus_\rho \pi (\xi,\rho) \otimes \rho$ as
$\mc H \otimes \C [\mf R_\xi ,\kappa_\xi]$-modules.
\item Let $\mr{Mod}_{f,un,\mc W \xi}(\mc S)$ be the category of finite dimensional unitary
$\mc S$-representations that admit the central character $\mc W \xi$.
The contravariant functor
\[
\begin{array}{lccc}
\Fxi : & \mr{Mod}_{f, un, \mc W \xi} (\mc S) & \to & \mr{Mod}_f (\C [\mf R_\xi ,\kappa_\xi]) ,\\
& \pi & \mapsto & \mr{Hom}_{\mc H}(\pi, \pi (\xi))
\end{array}
\]
is a duality of categories, with quasi-inverse
$\rho \mapsto \pi (\xi,\rho) := \mr{Hom}_{\C [\mf R_\xi ,\kappa_\xi]} (\rho ,\pi (\xi))$.
}
\end{thm}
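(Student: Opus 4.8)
The plan is to assemble the statement from the $c$-function analysis of Delorme--Opdam \cite{DeOp2}, since the analytic core --- the behaviour of the normalized intertwining operators $\pi(w,\xi)$ along the mirrors $M\in\mc M^{P,\delta}$ --- is already developed there. First I would record the Mackey-type spanning fact: because $\delta$ is discrete series we have $\mr{End}_{\mc H_P}(\delta)=\C$, and the geometric lemma for $\mr{Ind}_{\mc H^P}^{\mc H}$ then shows that $\mr{End}_{\mc H}(\pi(\xi))$ is spanned by the operators $\pi(w,\xi)$ with $w$ ranging over the isotropy group $\mc W_\xi$ (see \cite{DeOp1,Opd-Sp}); these are genuine endomorphisms of $\pi(\xi)$ by the regularity statement in Theorem \ref{thm:intOp}.

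For part (a), the ``if'' direction reduces to showing that $\pi(s_M,\xi)$ is a scalar for each mirror $M\in\mc M_\xi$. Here I would use a rank-one reduction to the minimal parabolic attached to $R_{Q^M}$: since $\xi$ lies on $M$, the $c$-function $c^P_{Q^M}$ is polar there, which by the very definition of a mirror forces the normalized operator to be scalar (this is \cite[Theorem 4.3]{DeOp2}); as the $s_M$ with $M\in\mc M_\xi$ generate $W(R_\xi)$, the whole of $W(R_\xi)$ acts by scalars. For the ``only if'' direction I would invoke the splitting $\mc W_\xi=\mf R_\xi\ltimes W(R_\xi)$ from \eqref{eq:3} to reduce to the claim that a nontrivial $r\in\mf R_\xi$ does not act by a scalar, which in turn follows once one knows that $\{\pi(r,\xi):r\in\mf R_\xi\}$ is linearly independent in $\mr{End}_{\mc H}(\pi(\xi))$. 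I expect this \emph{linear independence} --- the Knapp--Stein phenomenon --- to be the main obstacle; the route to it is to deform $t$ slightly inside $T^P_{un}$ to a point through which fewer mirrors pass, so that $\pi(P,\delta,t')$ becomes a direct sum of induced modules permuted freely by $\mf R_\xi$, and then to separate the $\pi(r,\xi)$ by their leading ($c$-function) terms, which depend genuinely on $r$. This is the content of \cite[Section 4]{DeOp2}, modelled on \cite[Theorem 13.4]{KnSt} and \cite{Sil}.

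Granting (a), part (b) is then formal. The images of $\{\pi(r,\xi):r\in\mf R_\xi\}$ span $\mr{End}_{\mc H}(\pi(\xi))$ --- the $W(R_\xi)$-part of $\mc W_\xi$ contributing only scalars --- and are linearly independent, hence form a basis. For $r,r'\in\mf R_\xi$ the composite $\pi(r,\xi)\pi(r',\xi)$ is an intertwiner attached to $rr'\in\mc W_\xi$, so by \eqref{eq:multIntOp} it equals $\kappa_\xi(r,r')\,\pi(rr',\xi)$ for a unique $\kappa_\xi(r,r')\in\C$ of absolute value $1$; associativity of composition makes $\kappa_\xi$ a $2$-cocycle on $\mf R_\xi$, and $r\mapsto\pi(r,\xi)$ yields the isomorphism $\C[\mf R_\xi,\kappa_\xi]\isom\mr{End}_{\mc H}(\pi(\xi))$. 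Its dependence on the normalization of the intertwiners is apparent from \eqref{eq:multIntOp}.

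Finally, (c) and (d) come from semisimplicity of $\C[\mf R_\xi,\kappa_\xi]$ together with the double commutant theorem for the pair $\big(\mc H,\,\C[\mf R_\xi,\kappa_\xi]=\mr{End}_{\mc H}(\pi(\xi))\big)$: as an $\mc H\otimes\C[\mf R_\xi,\kappa_\xi]$-module $\pi(\xi)$ decomposes as $\bigoplus_\rho\pi(\xi,\rho)\otimes\rho$ with $\pi(\xi,\rho):=\mr{Hom}_{\C[\mf R_\xi,\kappa_\xi]}(\rho,\pi(\xi))$ an irreducible $\mc H$-module, the $\pi(\xi,\rho)$ pairwise inequivalent and all of central character $\mc W\xi$. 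Since the block $\mr{Mod}_{f,\mc W\xi}(\mc S)$ is generated by $\pi(\xi)$ \cite{DeOp1}, the $\pi(\xi,\rho)$ exhaust $\mr{Irr}_{\mc W\xi}(\mc S)$, and the bijection is forced to be unique by this bimodule decomposition. For (d) I would observe that any unitary object of $\mr{Mod}_{f,un,\mc W\xi}(\mc S)$ is a finite direct sum of the $\pi(\xi,\rho)$, that Schur's lemma gives $\Fxi(\pi(\xi,\rho))=\mr{Hom}_{\mc H}(\pi(\xi,\rho),\pi(\xi))\cong\rho$, and that the natural comparison map $\pi\to\mr{Hom}_{\C[\mf R_\xi,\kappa_\xi]}\big(\mr{Hom}_{\mc H}(\pi,\pi(\xi)),\pi(\xi)\big)$ is an isomorphism in this semisimple situation --- so $\Fxi$ is a duality of categories with quasi-inverse $\rho\mapsto\pi(\xi,\rho)$.
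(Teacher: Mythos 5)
Your proposal is sound, but it is worth pointing out that the paper does not prove this theorem at all: its ``proof'' is a pure citation, namely part (a) is \cite[Theorem 5.4]{DeOp2}, parts (b) and (c) are \cite[Theorem 5.5]{DeOp2}, and part (d) is \cite[Theorem 5.13]{Opd-ICM}. What you have written is essentially a reconstruction of the arguments contained in those references: the rank-one/mirror analysis of the $c$-functions for the ``if'' direction of (a), the Knapp--Stein linear independence of $\{\pi(r,\xi):r\in\mf R_\xi\}$ as the one genuinely hard analytic input (which you correctly flag and defer to \cite[Section 4--5]{DeOp2}), the cocycle relation \eqref{eq:multIntOp} making (b) formal, and the double commutant plus generation of the block by $\pi(\xi)$ for (c) and (d). That division of labour matches how the cited sources actually proceed, so nothing is missing in substance. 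One small caveat on attribution: the spanning of $\mr{End}_{\mc H}(\pi(\xi))$ by the operators $\pi(w,\xi)$, $w\in\mc W_\xi$, is not a purely formal Mackey-type consequence of $\mr{End}_{\mc H_P}(\delta)=\C$; for unitary induction data it rests on the harmonic analysis of \cite{DeOp1} (the Fourier/Plancherel isomorphism \eqref{eq:FourierIso}), and for positive induction data the paper itself quotes it from \cite[Theorem 3.3.1]{Sol-Irr} in the proof of Theorem \ref{thm:KnappSteinNontemp}. Since you defer to those sources anyway, this is an issue of bookkeeping rather than a gap.
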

\begin{proof}
Part (a) is \cite[Theorem 5.4]{DeOp2}, parts (b) and (c) are \cite[Theorem 5.5]{DeOp2} and
part (d) is \cite[Theorem 5.13]{Opd-ICM}.
\end{proof}

Recall that $\C [\mf R_\xi ,\kappa_\xi]$ has a canonical basis $\{T_r \mid r \in \mf R_\xi \}$ 
and that its multiplication is given by
\begin{equation}\label{eq:5.1}
T_r T_{r'} = \kappa_\xi (r,r') T_{r r'} \qquad r,r' \in \mf R_\xi 
\end{equation}
Via a trace map $\Fxi (\pi)$ is naturally identified with the dual space of
\begin{equation}\label{eq:5.2}
\Phi_\xi (\pi) := \mr{Hom}_{\mc H}(\pi (\xi),\pi) = \pi (\xi)^* \otimes_{\mc H} \pi .
\end{equation}
Given that $\Fxi (\pi)$ is a $\C [\mf R_\xi ,\kappa_\xi]$-representation, \eqref{eq:5.1} 
shows that $\Phi_\xi (\pi)$ is a representation of $\C [\mf R_\xi ,\kappa^{-1}_\xi]$. Thus
Theorem \ref{thm:KnappStein}.d is equivalent to saying that the covariant functor
\[
\Phi_\xi : \mr{Mod}_{f, un, \mc W \xi} (\mc S) \to \mr{Mod}_f (\C [\mf R_\xi ,\kappa^{-1}_\xi])
\]
is an equivalence of categories. Its quasi-inverse is 
\begin{equation}\label{eq:5.3}
\rho \mapsto \pi (\xi) \otimes_{\C [\mf R_\xi ,\kappa^{-1}_\xi]} \rho .
\end{equation}
It turns out that Theorem \ref{thm:KnappStein} is compatible with parabolic induction, 
in the following sense. For $P \subset Q \subset F_0$ we have $\xi = (P,\delta,t) \in 
\Xi_{un}^Q \subset \Xi_{un}$. The R-group of $\mc H^Q = \mc H (\mc R^Q ,q)$ at $\xi$ is
\begin{equation}\label{eq:RxiQ}
\mf R_\xi^Q = \{ r \in \mf R_\xi : r \text{ fixes } t T^Q_{un} \text{ pointwise} \} .
\end{equation}
The cocycle $\kappa_\xi^Q : \mf R_\xi^Q \times \mf R_\xi^Q \to \C^\times$
is the restriction of $\kappa_\xi$. By \cite[Proposition 6.1]{Opd-ICM}
\begin{equation}\label{eq:indPhi}
\mr{Ind}_{\C \big[ \mf R_\xi^Q,\kappa_\xi^Q \big]}^{\C [\mf R_\xi,\kappa_\xi]} \circ 
\Phi_\xi^{*Q} = \Fxi \circ \mr{Ind}_{\mc H^Q}^{\mc H}
\end{equation}
as functors $\mr{Mod}_{f, un, \mc W^Q \xi} (\mc S (\mc R^Q,q)) \to
\mr{Mod}_f (\C [\mf R_\xi ,\kappa_\xi])$. This follows from the same argument as for R-groups
in the setting of reductive groups, which can be found in \cite[Section 2]{Art}.

Although it is not needed for our main results, it would be interesting to generalize Theorem
\ref{thm:KnappStein} to non-tempered induction data $\xi \in \Xi \setminus \Xi_{un}$.
We will do that in a weaker sense, only for positive induction data. Sections 5 and 6 do not
depend on the remainder of this section.

One approach would be to define the R-group of $\xi = (P,\delta,t)$
to be that of $\xi_{un} = (P,\delta ,t \,|t|^{-1})$. This works out well in most cases, but it is
unsatisfactory if the isotropy group $\mc W_{\xi_{un}}$ is strictly larger than $\mc W_\xi$.
The problem is that $t \, |t|^{-1}$ is not always a generic point of $(T^P)^{\mc W_\xi}$.
To overcome this, we define $\mc M_\xi$ to be the collection of mirrors $M \in \mc M^{P,\delta}$
that contain not only $t \, |t|^{-1}$, but the entire connected component of $t \, |t|^{-1}$ in
$(T^P_{un})^{\mc W_\xi}$. Notice that all components of $(T^P)^{\mc W_\xi}$ are cosets of a
complex subtorus of $T^P$. Since $t \, |t|^r \in (T^P)^{\mc W_\xi}$ for all $r \in \R ,\; \mc M_\xi$
is precisely the collection of mirrors whose complexification contains $t$.

Now we define $R_{\xi} ,R^+_{\xi}$ and $\mf R_\xi$ as in \eqref{eq:24} and \eqref{eq:3}.
We note that $\mf R_\xi$ may differ from $\mf R_{\xi_{un}}$, but that $R_\xi = R_{P,\delta,t'}$
for almost all $t' \in (T^P_{un})^{\mc W_\xi}$. For positive induction data there is an analogue
of Theorem \ref{thm:KnappStein}:

\begin{thm} \label{thm:KnappSteinNontemp}
Let $\xi = (P,\delta,t) \in \Xi^+$.
\enuma{
\item For $w \in \mc W_\xi$ the intertwiner $\pi (w,\xi)$ is scalar if and only if $w \in W(R_{\xi})$.
\item There exists a 2-cocycle $\kappa_\xi$ of $\mf R_\xi$ such that
$\mr{End}_{\mc H} (\pi (\xi))$ is isomorphic to the twisted group algebra $\C [\mf R_\xi ,\kappa_\xi]$.
\item There exists a unique bijection
\[
\mr{Irr}(\C [\mf R_\xi ,\kappa_\xi]) \to \mr{Irr}_{\mc W \xi} ( \mc H ) :
\rho \mapsto \pi (\xi,\rho) ,
\]
and there exist indecomposable direct summands $\pi_\rho$ of $\pi (\xi)$,
such that $\pi (\xi,\rho)$ is a quotient of $\pi_\rho$ and
$\pi (\xi) \cong \bigoplus_\rho \pi_\rho \otimes \rho$ as
$\mc H \otimes \C [\mf R_\xi ,\kappa_\xi]$-modules.
}
\end{thm}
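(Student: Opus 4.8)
The plan is to reduce Theorem \ref{thm:KnappSteinNontemp} to the unitary case (Theorem \ref{thm:KnappStein}) and to a deformation argument in the parameter $t$. The key observation is that for $\xi = (P,\delta,t) \in \Xi^+$ we may write $t = t_{un} |t|$ with $t_{un} = t|t|^{-1} \in T^P_{un}$ and $|t| \in T^{P+}$, and connect $\xi$ to $\xi_{un} = (P,\delta,t_{un})$ by the path $t_s := t_{un}|t|^s$, $s \in [0,1]$. By the definition of $\mc M_\xi$ adopted just before the theorem, the mirror set, the root system $R_\xi$, the Weyl group $W(R_\xi)$ and the R-group $\mf R_\xi$ are \emph{constant} along this entire path (each $t_s$ lies in the same connected component of $(T^P)^{\mc W_\xi}$, and $\mc M_\xi$ was defined precisely as the mirrors whose complexification contains $t$, hence all of $t_s$). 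Moreover $\mc W_{\xi} = \mc W_{\xi_s}$ for all $s \in [0,1]$, although it may be strictly smaller than $\mc W_{\xi_{un}}$ — this is exactly the subtlety the modified definition of $\mc M_\xi$ is designed to handle.

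First I would establish part (a). The intertwiner $\pi(w,\xi) = \pi(w,P,\delta,t)$ is, by Theorem \ref{thm:intOp}, a rational function of $t \in T^P$, regular and invertible on a neighborhood of $T^P_{un}$; but for $w \in \mc W_\xi$ with $\xi \in \Xi^+$ one must first check it is regular and invertible at $t$ itself. This follows because $w$ fixes $\xi$, so $\pi(w,\xi)$ is an endomorphism of $\pi(\xi)$, and the product formula \eqref{eq:multIntOp} together with $w$ having finite order forces regularity at any point where the relevant $c$-functions do not vanish; along the path $t_s$ the set of vanishing $c_\alpha$ is governed precisely by $\mc M_\xi$, which is constant. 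The function $s \mapsto \pi(w,\xi_s)$ is then real-analytic, and ``$\pi(w,\xi_s)$ is a scalar'' is a Zariski-closed condition on $t$; since it holds at $s=1$ (i.e.\ at $\xi_{un}$) exactly for $w \in W(R_{\xi_{un}}) \cap \mc W_\xi = W(R_\xi)$ by Theorem \ref{thm:KnappStein}.a and the invariance of $R_\xi$ along the path, and since scalarness of $\pi(w,\xi_s)$ for $w \in W(R_\xi)$ propagates from $s=1$ to all $s$ by analyticity (a family of operators that is scalar on an open real-analytic piece and depends analytically on the parameter is scalar throughout its domain of regularity), we conclude (a) for all $s \in [0,1]$, in particular for $\xi$ itself.

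Given (a), part (b) is immediate: $\mr{End}_{\mc H}(\pi(\xi))$ contains the operators $\pi(r,\xi)$ for $r \in \mf R_\xi$, these span a subalgebra isomorphic to a twisted group algebra $\C[\mf R_\xi, \kappa_\xi]$ (the cocycle coming from \eqref{eq:multIntOp}), and to see that they span \emph{all} of $\mr{End}_{\mc H}(\pi(\xi))$ one again deforms: $\dim_{\C} \mr{End}_{\mc H}(\pi(\xi_s))$ is upper semicontinuous in $s$ and equals $|\mf R_\xi| = |\mf R_{\xi_{un}}|$ at $s=1$, while the subalgebra generated by the $\pi(r,\xi_s)$ always has dimension $|\mf R_\xi|$; since the intertwiners stay regular and linearly independent along the path (linear independence being a Zariski-open condition that holds at $s=1$), we get equality of dimensions for all $s$, forcing $\mr{End}_{\mc H}(\pi(\xi)) = \C[\mf R_\xi,\kappa_\xi]$. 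For part (c), the decomposition $\pi(\xi) \cong \bigoplus_\rho \pi_\rho \otimes \rho$ as $\mc H \otimes \C[\mf R_\xi,\kappa_\xi]$-modules follows from the double commutant theorem applied to the action of $\mc H$ and of $\C[\mf R_\xi,\kappa_\xi] = \mr{End}_{\mc H}(\pi(\xi))$ on $\pi(\xi)$: writing $\pi_\rho := \mr{Hom}_{\C[\mf R_\xi,\kappa_\xi]}(\rho,\pi(\xi))$, this $\pi_\rho$ is an $\mc H$-module, and since $\pi(\xi)$ need no longer be semisimple as an $\mc H$-module (we are now in the non-tempered regime), $\pi_\rho$ is merely indecomposable rather than irreducible; one then invokes Theorem \ref{thm:parametrizationOfDual} (the Langlands-type parametrization via $\norm{cc_P(\delta)}$) to identify a distinguished irreducible quotient $\pi(\xi,\rho)$ of $\pi_\rho$, and the bijectivity of $\rho \mapsto \pi(\xi,\rho)$ onto $\mr{Irr}_{\mc W\xi}(\mc H)$ follows by matching this with the bijection in Theorem \ref{thm:parametrizationOfDual}.

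The main obstacle I expect is \emph{part (c)}, specifically showing that each $\pi(\xi,\rho)$ is genuinely a quotient of $\pi_\rho$ and that the assignment $\rho \mapsto \pi(\xi,\rho)$ is a bijection onto the packet $\mr{Irr}_{\mc W\xi}(\mc H)$. In the tempered case this is handled by unitarity (complete reducibility of $\pi(\xi)$), which is unavailable for $\xi \in \Xi^+ \setminus \Xi_{un}$ since $\pi(\xi)$ is then non-unitary and possibly indecomposable. One must instead argue that the $\mc H$-socle or $\mc H$-head of $\pi_\rho$ picks out a unique irreducible constituent realizing the maximal $\norm{cc_P(\delta)}$, and that distinct $\rho$ give distinct such constituents — this requires combining the geometric deformation (to transport known facts from $\xi_{un}$) with the Langlands classification machinery of \cite{Sol-Irr}. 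The bookkeeping to make the functor $\mr{Mod}_{f,\mc W\xi}(\mc H) \to \mr{Mod}_f(\C[\mf R_\xi,\kappa_\xi])$ well-behaved — it will in general only be a bijection on irreducibles, not an equivalence of categories as in Theorem \ref{thm:KnappStein}.d — is where the real work lies.
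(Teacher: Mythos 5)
Your overall shape (reduce to the tempered Theorem \ref{thm:KnappStein} via rationality of the intertwiners, then a double-commutant argument for (c)) matches the paper's, but two of your key steps have genuine gaps. First, the heart of (a)/(b) is an upper bound: one must know that $\mr{End}_{\mc H}(\pi(\xi))$ is \emph{spanned} by the operators $\pi(w,\xi)$, $w \in \mc W_\xi$. The paper gets this from the nontrivial result \cite[Theorem 3.3.1]{Sol-Irr}, valid precisely because $\xi$ is positive; your substitute — upper semicontinuity of $\dim \mr{End}_{\mc H}(\pi(\xi_s))$ along the path $t_s = t_{un}|t|^s$ plus a dimension count at the unitary endpoint — does not work. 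Upper semicontinuity only says the dimension can jump \emph{up} at special points, so it gives no bound at the non-unitary endpoint $\xi$; moreover your identification of the endpoint dimension with $|\mf R_\xi|$ is false in general, since $\mr{End}_{\mc H}(\pi(\xi_{un})) \cong \C[\mf R_{\xi_{un}},\kappa_{\xi_{un}}]$ and the paper explicitly warns that $\mf R_{\xi_{un}}$ (and $\mc W_{\xi_{un}}$) may be strictly larger than $\mf R_\xi$ (resp.\ $\mc W_\xi$) — exactly the situation the modified definition of $\mc M_\xi$ is designed for. For the same reason your propagation along the one-dimensional path is too weak: scalarness of $\pi(w,\xi_s)$ known only at the single point $\xi_{un}$ cannot be continued analytically, and linear independence of $\{\pi(r,\xi_{un}) : r \in \mf R_\xi\}$ can actually \emph{fail} at $\xi_{un}$, because an element of $\mf R_\xi$ may lie in $W(R_{\xi_{un}})$ and act there as a scalar. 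The paper avoids both problems by working on the whole fixed locus: for \emph{generic} $t' \in (T^P_{un})^{\mc W_\xi}$ one has $R_{P,\delta,t'} = R_\xi$, so Theorem \ref{thm:KnappStein}.a gives scalarness for $W(R_\xi)$ and linear independence for $\mf R_\xi$ on a Zariski-dense subset; rationality in $t'$ (Theorem \ref{thm:intOp}) and constancy of the projective-representation type on connected components of $(T^P)^{\mc W_\xi}$ then transfer these statements to $t$ itself.

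Second, for (c) you correctly isolate the difficulty (non-semisimplicity of $\pi(\xi)$, uniqueness of the distinguished quotient of each $\pi_\rho$, injectivity and surjectivity onto the packet) but you do not resolve it; "socle/head plus Langlands machinery" is a plan, not a proof. The paper closes this gap by citing \cite[Proposition 3.1.4]{Sol-Irr}: for positive $\xi$ every indecomposable direct summand $\pi_\rho$ of $\pi(\xi)$ has a \emph{unique} irreducible quotient $\pi(\xi,\rho)$, and $\pi(\xi,\rho) \cong \pi(\xi,\rho')$ if and only if $\pi_\rho \cong \pi_{\rho'}$; combined with Theorem \ref{thm:parametrizationOfDual}, which says that the irreducible quotients of $\pi(\xi)$ are exactly the members of $\mr{Irr}_{\mc W\xi}(\mc H)$, this yields the bijection. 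Without these two inputs (or full replacements for them) parts (b) and (c) of the theorem are not established by your argument.
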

\noindent \emph{Remark.} 
Part (a) also holds for general $\xi \in \Xi$. Part (d) of Theorem
\ref{thm:KnappStein} does not admit a nice generalization to $\xi \in \Xi^+$.
One problem is that the category $\mr{Mod}_f (\C [\mf R_\xi ,\kappa_\xi])$ is semisimple,
while $\pi (\xi)$ is not always completely reducible. One can try to consider the category of
$\mc H$-representations all whose irreducible subquotients lie in $\mr{Irr}_{\mc W \xi} (\mc H)$,
but then one does not get nice formulas for the functors $\Phi_\xi$ and $\Fxi$.
\begin{proof}
(a) and (b) Since $\xi$ is positive, the operators $\pi (w,\xi)$ with $w \in \mc W_\xi$ span
$\mr{End}_{\mc H} (\pi (\xi))$ \cite[Theorem 3.3.1]{Sol-Irr}.
Let $v \in W(R_{\xi})$. The intertwiner $\pi (v,P,\delta ,t')$ is rational in $t'$ and
by Theorem \ref{thm:KnappStein}.a it is scalar for all $t'$ in a Zariski-dense subset of
$(T^P_{un})^{\mc W_\xi}$. Hence $\pi (v,P,\delta ,t')$ is scalar for all
$t' \in (T^P)^{\mc W_\xi}$, which together with \eqref{eq:3} and \eqref{eq:multIntOp} implies that
\[
\mr{End}_{\mc H} (\pi (\xi)) = \mr{span} \{ \pi (w,\xi) : w \in \mf R_\xi \} .
\]
All the intertwiners $\pi (w,P,\delta,t')$ depend continuously on $t'$, so the type of
$\pi (P,\delta,t')$ as a projective $\mf R_\xi$-representation is constant on connected
components of $(T^P)^{\mc W_\xi}$. Again by Theorem \ref{thm:KnappStein}.a
$\{ \pi (w,\xi) : w \in \mf R_\xi \}$ is linearly independent for generic
$t' \in (T^P_{un})^{\mc W_\xi}$, so it is in fact linearly independent for all
$t' \in (T^P)^{\mc W_\xi}$.
Now the multiplication rules for intertwining operators \eqref{eq:multIntOp} show that
$\mr{End}_{\mc H} (\pi (\xi))$ is isomorphic to a twisted group algebra of $\mf R_\xi$.\\
(c) Let $A = \mr{End}_{\C [\mf R_\xi,\kappa_\xi]} (\pi(\xi))$ be the bicommutant of
$\pi (\xi, \mc H)$ in $\mr{End}_\C (\pi (\xi))$. There is a canonical bijection
\[
\mr{Irr}(\C [\mf R_\xi ,\kappa_\xi]) \to \mr{Irr} (A) : \rho \mapsto \pi_\rho
\]
such that $\pi (\xi) \cong \bigoplus_\rho \pi_\rho \otimes \rho$ as
$A \otimes \C [\mf R_\xi ,\kappa_\xi]$-modules. By construction the irreducible $A$-subrepresentations
of $\pi (\xi)$ are precisely the indecomposable $\mc H$-subrepresentations of $\pi (\xi)$.
By \cite[Proposition 3.1.4]{Sol-Irr} every $\pi_\rho$ has a unique irreducible quotient
$\mc H$-representation, say $\pi (\xi,\rho)$, and $\pi (\xi,\rho) \cong \pi (\xi,\rho')$ if and
only if $\pi_\rho \cong \pi_{\rho'}$ as $\mc H$-representations. Thus
$\rho \mapsto \pi (\xi,\rho)$ sets up a bijection between Irr$(\C [\mf R_\xi ,\kappa_\xi])$ and the
equivalence classes of irreducible quotients of $\pi (\xi)$. By Theorem \ref{thm:parametrizationOfDual}
the latter collection is none other than $\mr{Irr}_{\mc W \xi} (\mc H)$.
\end{proof}
\vspace{1mm}

\section{Extensions of irreducible tempered modules}

In this section we will determine the structure of $\mc S$ in an infinitesimal neighborhood of a
central character $\mc W \xi$, and we will compute the Ext-groups for irreducible
representations of this algebra.

As $\Xi_{un}$ has the structure of a smooth
manifold (with components of different dimensions) we can consider its tangent space
$T_\xi (\Xi_{un})$ at $\xi$. The isotropy group $\mc W_\xi$ acts linearly on $T_\xi (\Xi_{un})$
and for $w \in \mc W_\xi$ we denote the determinant of the corresponding linear map by
$\det (w )_{T_\xi (\Xi_{un})} $. Since the connected component of $\xi$ in $\Xi_{un}$ is
diffeomorphic to $T^P_{un}$, we have
\[
T_\xi (\Xi_{un}) \cong \text{Lie}(T^P_{un}) \cong i \mf a^P \cong i (\mf a / \mf a_P) .
\]
Recall the canonical $\mc W_\xi$-representation on $T_\xi (\Xi_{un})$, and the
decomposition $\mc{W}_\xi=\mc{W}(R_\xi)\rtimes \mf{R}_\xi$.
By Chevalley's Theorem \cite{Che,Hum} the algebra
$\mathbb{R}[T_\xi (\Xi_{un})]^{W (R_{\xi})}$ of real polynomial invariants on
$T_\xi (\Xi_{un})$ for the action of the real reflection group
$W(R_\xi)$ is itself a free real polynomial
algebra on $\dim T^P_{un}$ generators. Let $m_\xi$ be its maximal ideal
of $W(R_\xi)$-invariant polynomials vanishing at $\xi$. Define
\begin{equation}
E_\xi = m_\xi / m_\xi^2 .
\end{equation}
Then $E_\xi$ carries itself a real representation
of $\mf{R}_\xi$. Since the $\mf{R}_\xi$-invariant ideal
$m_\xi^2$ has finite codimension in $\mathbb{R}[T_\xi (\Xi_{un})]^{W (R_{\xi})}$
we can choose a decomposition
\begin{equation}\label{eq:Exidecomp}
\mathbb{R}[T_\xi (\Xi_{un})]^{W (R_{\xi})}=\mathbb{R}\oplus E_\xi\oplus m_\xi^2
\end{equation}
in $\mf{R}_\xi$-stable subspaces, where $\mathbb{R}$ denotes the
subspace of constants. We will identify $E_\xi$ with this
$\mf{R}_\xi$-subrepresentation of $\mathbb{R}[T_\xi (\Xi_{un})]^{W (R_{\xi})}$.
Clearly $E_\xi$ generates the ideal $m_\xi$. Let $S (E_\xi) = \mc O (E_\xi^* \otimes_\R \C)$
be the algebra of complex valued polynomial functions on $E^*_\xi$ and let
$\widehat{S (E_\xi)}$ be its formal completion at $0 \in E^*_\xi$.
Notice that \eqref{eq:Exidecomp} induces $\mf R_\xi$-equivariant isomorphisms
\begin{equation}
\mc{O}(T_\xi(\Xi_{un}))^{W (R_\xi)} \cong S (E_\xi) \quad \text{and} \quad
\mc{F}_{\xi}^{W(R_\xi)} \cong \widehat{S (E_\xi)} .
\end{equation}
Let $\kappa_\xi : \mf R_\xi \times \mf R_\xi \to \C^\times$ be the cocycle from Theorem
\ref{thm:KnappStein}.b and let $\mf R_\xi^*$ be a Schur extension of $\mf R_\xi$.
This object is also known as a representation group \cite[Section 53]{CuRe} and is 
characterized by the property that equivalence classes of irreducible projective 
$\mf R_\xi$-representations are in natural bijection with equivalence classes of
(linear) $\mf R_\xi^*$-representations. 
There is a unique central idempotent $p_\xi \in \C [\mf R_\xi^*]$ such that
\[
\C [\mf R_\xi ,\kappa^{-1}_\xi] \cong p_\xi \C [\mf R_\xi^*]
\]
as algebras and as $\C [\mf R^*_\xi]$-bimodules. Then $p^*_\xi \C[\mf R_\xi^*] =
\C [\mf R_\xi, \kappa_\xi]$. With these notations we can formulate a
strengthening of Theorem \ref{thm:KnappStein}:

\begin{thm}\label{thm:equivCat}
The Fr\'echet algebras $\widehat{\mc S}_{\mc W \xi}$ and $p_\xi \big(\widehat{S (E_\xi)}
\rtimes \mf R^*_\xi \big)$ are Morita equivalent. There are equivalences of exact categories
\[
\mr{Mod}^{\mc W \xi,tor}_{bor}(\mc S) \cong
\mr{Mod}_{bor} \big( \widehat{\mc S}_{\mc W \xi} \big)
\cong \mr{Mod}_{bor} \big( p_\xi (\widehat{S (E_\xi)} \rtimes \mf R^*_\xi) \big) ,
\]
and similarly for Fr\'echet modules. 
\end{thm}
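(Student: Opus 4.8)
The plan is as follows. The first equivalence in the displayed chain, $\mr{Mod}^{\mc W\xi,tor}_{bor}(\mc S)\cong\mr{Mod}_{bor}(\widehat{\mc S}_{\mc W\xi})$, is already available: it is the exact equivalence $q^*$ of Lemma~\ref{lem:compl}.f, together with its counterpart for Fr\'echet modules. So the task is to establish the Morita equivalence of the two Fr\'echet algebras and the resulting equivalence of module categories. I will first compute $\widehat{\mc S}_{\mc W\xi}$ explicitly as an invariant algebra, and then feed this into the machinery of Section~\ref{sec:alginv}.

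\emph{Step 1: the structure of $\widehat{\mc S}_{\mc W\xi}$.} Starting from the Fourier isomorphism \eqref{eq:FourierIso}--\eqref{eq:25}, a $\mc W$-invariant section of $\mr{End}(\mc V_\Xi)$ is determined, near the connected component of $\xi$, by a $\mc W_{P,\delta}$-invariant $C^\infty$-section over $T^P_{un}$ with constant fibre $\mr{End}_\C(\pi(\xi))$, the group $\mc W_{P,\delta}$ acting through its action on $T^P_{un}$ together with conjugation by the intertwiners $\pi(w,\cdot)$ of \eqref{eq:actSections}. Completing at the central character $\mc W\xi$ annihilates all other summands of \eqref{eq:25} and localizes this one at the finite orbit $\mc W_{P,\delta}t\subset T^P_{un}$; using Lemma~\ref{lem:compl}.a, the Chinese remainder theorem and Frobenius reciprocity for the finite group $\mc W_{P,\delta}$ (as in the proof of Theorem~\ref{thm:YonedaExt}.d), this yields
\[
\widehat{\mc S}_{\mc W\xi}\;\cong\;\big(\mc F_\xi\otimes\mr{End}_\C(\pi(\xi))\big)^{\mc W_\xi},
\]
where $\mc F_\xi$ is the ring of formal power series on $\Xi_{un}$ at $\xi$ and $\mc W_\xi$ acts through its linear action on $T_\xi(\Xi_{un})$ and by conjugation with $\{\pi(w,\xi)\}_{w\in\mc W_\xi}$ on $\mr{End}_\C(\pi(\xi))$. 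Now invoke $\mc W_\xi=W(R_\xi)\rtimes\mf R_\xi$: by Theorem~\ref{thm:KnappStein}.a the intertwiners attached to $W(R_\xi)$ are scalar, so $W(R_\xi)$ acts trivially on $\mr{End}_\C(\pi(\xi))$; taking $W(R_\xi)$-invariants first, Chevalley's theorem together with the identifications $\mc O(T_\xi(\Xi_{un}))^{W(R_\xi)}\cong S(E_\xi)$ and $\mc F_\xi^{W(R_\xi)}\cong\widehat{S(E_\xi)}$ recorded above give
\[
\widehat{\mc S}_{\mc W\xi}\;\cong\;\big(\widehat{S(E_\xi)}\otimes\mr{End}_\C(\pi(\xi))\big)^{\mf R_\xi}\;=\;\big(\widehat{S(E_\xi)}\otimes\mr{End}_\C(\pi(\xi))\big)^{\mf R_\xi^*},
\]
the last equality because the kernel of $\mf R_\xi^*\to\mf R_\xi$ is central and acts trivially on $\mr{End}_\C(\pi(\xi))$. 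Here $\pi(\xi)$ is viewed as a module over $\C[\mf R_\xi,\kappa^{-1}_\xi]=p_\xi\C[\mf R_\xi^*]$, dually to the action of $\mr{End}_{\mc H}(\pi(\xi))\cong\C[\mf R_\xi,\kappa_\xi]$ from Theorem~\ref{thm:KnappStein}.b; this choice pins down a genuine $\mf R_\xi^*$-module, while affecting the conjugation action on $\mr{End}_\C(\pi(\xi))$ only up to inverting group elements, hence not the invariants.

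\emph{Step 2: the Morita equivalence.} Apply parts (a), (c) and (d) of Theorem~\ref{thm:Moritaeq} with $G=\mf R_\xi^*$, with $M$ the nonsingular affine $G$-variety $E_\xi^*\otimes_\R\C$ completed at the fixed point $0$ (so that $\mc O(M)$ is replaced by $\widehat{S(E_\xi)}$), and with $V=\pi(\xi)$, $V'=p_\xi\C[\mf R_\xi^*]$. The quasi-equivalence hypothesis holds: by Theorem~\ref{thm:KnappStein}.c one has $\pi(\xi)\cong\bigoplus_\rho\pi(\xi,\rho)\otimes\rho$ with every $\pi(\xi,\rho)\neq 0$, so as a $\C[\mf R_\xi,\kappa^{-1}_\xi]$-module $\pi(\xi)$ contains each irreducible of the block $p_\xi\C[\mf R_\xi^*]$ with positive multiplicity, and the same is true of the regular module $p_\xi\C[\mf R_\xi^*]$; hence for an irreducible $\mf R_\xi^*$-representation $W$ we have $\mr{Hom}_{\mf R_\xi^*}(W,\pi(\xi))=0$ exactly when $W$ lies outside that block, exactly when $\mr{Hom}_{\mf R_\xi^*}(W,p_\xi\C[\mf R_\xi^*])=0$. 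Theorem~\ref{thm:Moritaeq} then gives, algebraically and bornologically, a Morita equivalence between $\widehat{\mc S}_{\mc W\xi}$ and $\big(\widehat{S(E_\xi)}\otimes\mr{End}_\C(p_\xi\C[\mf R_\xi^*])\big)^{\mf R_\xi^*}\cong p_\xi(\widehat{S(E_\xi)}\rtimes\mf R_\xi^*)$, the last isomorphism being the one recorded in the definition of $\mc A_\xi$. The implementing bimodules are $D=\big(\widehat{S(E_\xi)}\otimes\mr{Hom}_\C(\pi(\xi),p_\xi\C[\mf R_\xi^*])\big)^{\mf R_\xi^*}$ and its partner with the roles of the two representations exchanged, and by Theorem~\ref{thm:Moritaeq}.a both are finitely generated projective on either side, in both senses.

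\emph{Step 3: passage to module categories and the main obstacle.} Since $D$ and its partner are finitely generated projective bimodules in both the algebraic and the bornological sense, $D\widehat\otimes_{\widehat{\mc S}_{\mc W\xi}}(-)$ and its inverse are mutually quasi-inverse equivalences $\mr{Mod}_{bor}(\widehat{\mc S}_{\mc W\xi})\cong\mr{Mod}_{bor}(p_\xi(\widehat{S(E_\xi)}\rtimes\mf R_\xi^*))$, exact for the split exact structure because tensoring with a finitely generated projective bimodule preserves bounded linear splittings; the same argument applies verbatim to Fr\'echet modules. Composing with $q^*$ from Lemma~\ref{lem:compl}.f gives the full chain, and the composite $M\mapsto D\widehat\otimes_{\mc S}M$ is the stated equivalence. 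The main obstacle is Step 1: converting the Fourier description of $\mc S$ into the clean invariant-algebra form for $\widehat{\mc S}_{\mc W\xi}$. This requires care with the groupoid (not group) structure of $\mc W$, with the way the intertwining operators twist the bundle $\mr{End}(\mc V_\Xi)$ over $\Xi_{un}$, and with the interaction between formal completion and finite-group invariants, which is harmless in characteristic zero but must be carried out at the level of the finite orbit $\mc W_{P,\delta}t$; granting this, Steps 2 and 3 are routine applications of Sections~\ref{sec:alginv} and~\ref{sec:Ext}.
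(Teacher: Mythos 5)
Your route is essentially the paper's: Fourier isomorphism, localization/completion at $\mc W\xi$, reduction to an invariant algebra of the form $\big(\widehat{S(E_\xi)}\otimes\mr{End}_\C(\pi(\xi))\big)^{\mf R_\xi^*}$, then Theorem \ref{thm:Moritaeq} with $V=\pi(\xi)$, $V'=p_\xi\C[\mf R_\xi^*]$ (quasi-equivalence from Theorem \ref{thm:KnappStein}.c), the identification with $p_\xi\big(\widehat{S(E_\xi)}\rtimes\mf R_\xi^*\big)$ via Lemma \ref{lem:CrossedProduct}, and finally exactness of the Morita functor combined with Lemma \ref{lem:compl}. Your Steps 2 and 3 match the paper and are sound. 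The difficulty is that your Step 1, which you yourself call ``the main obstacle'' and then grant, is exactly where the substance of the paper's proof lies, so as written the proposal has a genuine gap rather than a complete argument.

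Two specific assertions in Step 1 are left unproved. First, you write the $\mc W_\xi$-action on $\mr{End}_\C(\pi(\xi))$ after completion as conjugation by the \emph{constant} operators $\pi(w,\xi)$, but what localization of \eqref{eq:FourierIso} actually yields is the action \eqref{eq:7} by the $t'$-dependent intertwiners $\pi(w,P,\delta,w^{-1}t')$; passing from \eqref{eq:7} to \eqref{eq:8} is a nontrivial untwisting, carried out in the paper at the smooth level using the $\mc W_\xi$-equivariant contractibility of $U_\xi$ and \cite[Lemma 7]{Sol-Chern}, and nothing in your argument replaces it. Second, you need that formal completion at the central character is compatible with taking invariants, i.e.\ that dividing the invariant smooth algebra by the closure of $m^{\mc W_\xi,\infty}_\xi$ times it produces the invariant formal power series algebra. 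This is not ``harmless in characteristic zero'': the center $C^\infty(U_\xi)^{\mc W_\xi}$ is far from Noetherian, the ideal consists of flat functions, and one must control closures of ideals in Fr\'echet algebras. In the paper this is precisely the equality \eqref{eq:mWxi}, whose proof uses Po\'enaru's theorem \cite{Poe} (the polynomial generators of $\mc O(T_\xi(\Xi_{un}))^{W(R_\xi)}$ over $\mc O(T_\xi(\Xi_{un}))^{\mc W_\xi}$ still generate in the $C^\infty$ setting), flatness of formal completion, Borel's theorem and Whitney's spectral theorem, in the spirit of \cite{TouMer}. Until you supply these two arguments (or genuine substitutes), the isomorphism $\widehat{\mc S}_{\mc W\xi}\cong\big(\widehat{S(E_\xi)}\otimes\mr{End}_\C(\pi(\xi))\big)^{\mf R_\xi^*}$ on which Steps 2 and 3 rest is not established.
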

\begin{proof}
Write $\xi = (P,\delta,t)$. By \eqref{eq:FourierIso} and \eqref{eq:25} we may replace $\mc S$ by
\begin{equation}\label{eq:2}
\big( C^\infty (T^P_{un}) \otimes \mr{End}_\C (\C [W^P] \otimes V_\delta ) \big)^{\mc W_{P,\delta}} .
\end{equation}
Let $U_\xi \subset T^P_{un}$ be a nonempty open ball around $t$ such that
\[
\overline{U_\xi} \cap w \overline{U_\xi} = \left\{ \begin{array}{lll}
\overline{U_\xi} & \text{if} & w \in \mc W_\xi \\
\emptyset & \text{if} & w \in \mc W_{P,\delta} \setminus \mc W_\xi .
\end{array} \right.
\]
The localization of \eqref{eq:2} at $U_\xi$ is
\begin{equation}
\begin{split}\label{eq:6}
C^\infty (U_\xi)^{\mc W_\xi} \otimes_{C^\infty (T^P_{un})^{\mc W_{P,\delta}}}
\big( C^\infty (T^P_{un}) \otimes \mr{End}_\C (\C [W^P] \otimes V_\delta ) \big)^{\mc W_{P,\delta}} \\
\cong \big( C^\infty (U_\xi) \otimes \mr{End}_\C (\C [W^P] \otimes V_\delta ) \big)^{\mc W_\xi} .
\end{split}
\end{equation}
The action of $\mc W_\xi$ on $C^\infty (U_\xi) \otimes \mr{End}_\C (\C [W^P] \otimes V_\delta )$
is still defined by \eqref{eq:actSections}, so for $w \in \mc W_\xi$ and $t' \in U_\xi$
\begin{equation}\label{eq:7}
(w \cdot f)(t') = \pi (w,P,\delta, w^{-1}t') f (w^{-1} t') \pi (w,P,\delta, w^{-1}t')^{-1} .
\end{equation}
Let $V_\xi$ be the vector space $\C [W^P] \otimes V_\delta$ endowed with the
$\mc H$-representation $\pi (\xi)$. It is also a projective $\mc W_\xi$-representation, so we can
define a $\mc W_\xi$-action on $C^\infty (U_\xi) \otimes \mr{End}_\C (V_\xi)$ by
\begin{equation}\label{eq:8}
(w \cdot f)(t') = \pi (w,\xi) f (w^{-1} t') \pi (w,\xi)^{-1} .
\end{equation}
The difference between \eqref{eq:7} and \eqref{eq:8} is that $\pi (w,P,\delta, w^{-1}t')$
depends on $t'$, while $\pi (w,\xi)$ does not. Since $U_\xi$ is $\mc W_\xi$-equivariantly
contractible to $t$, we are in the right position to apply \cite[Lemma 7]{Sol-Chern}.
Its proof, and in particular \cite[(20)]{Sol-Chern} shows that the algebra \eqref{eq:6},
with the $\mc W_\xi$-action \eqref{eq:7}, is isomorphic to
\begin{equation}\label{eq:9}
\big( C^\infty (U_\xi) \otimes \mr{End}_\C (V_\xi ) \big)^{\mc W_\xi},
\end{equation}
with respect to the $\mc W_\xi$-action \eqref{eq:8}.
By Theorem \ref{thm:KnappStein}.a the elements of $W (R_{\xi})$ act trivially on
$\mr{End}_\C (V_\xi)$, but nontrivially on $C^\infty (U_\xi)$. Moreover by \eqref{eq:3} 
$W (R_{\xi})$ is normal in $\mc W_\xi$ and $\mc W_\xi / W (R_{\xi}) \cong \mf R_\xi$,
so we can rewrite \eqref{eq:9} as
\begin{equation}\label{eq:10}
\big( C^\infty (U_\xi )^{W (R_{\xi})} \otimes \mr{End}_\C (V_\xi ) \big)^{\mf R_\xi} .
\end{equation}
The formal completion of \eqref{eq:10} at $\xi$ is
\begin{equation}\label{eq:11}
\mc{F}_\xi^{\mc{W}_\xi}\widehat\otimes_{C^\infty (U_\xi )^{\mc{W}_\xi}}
\big( C^\infty (U_\xi )^{W (R_{\xi})} \otimes \mr{End}_\C (V_\xi ) \big)^{\mf R_\xi} .
\end{equation}
By Lemma \ref{lem:compl}.a the latter is isomorphic to
\begin{equation}\label{eq:12}
\Big( (C^\infty (U_\xi )^{W (R_{\xi})} \big/ (\overline{m^{\mc{W}_\xi,\infty}_\xi
C^\infty (U_\xi )^{W (R_{\xi} )}}   ) \otimes \mr{End}_\C (V_\xi ) \Big)^{\mf R_\xi} ,
\end{equation}
where $m^{\mc{W}_\xi,\infty}_\xi\subset C^\infty (U_\xi )^{\mc{W}_\xi}$
denotes the ideal of flat functions at $\xi$. We claim that
\begin{equation}\label{eq:mWxi}
\overline{m^{\mc{W}_\xi,\infty}_\xi C^\infty (U_\xi )^{W (R_{\xi})}} =
m^{W(R_\xi),\infty}_\xi ,
\end{equation}
the ideal of flat functions in $C^\infty (U_\xi )^{W (R_{\xi} )}$. By Whitney's
spectral theorem the right hand side of \eqref{eq:mWxi} is closed, so it contains
the left hand side. Let us prove the opposite inclusion.
By the finiteness of $\mf{R}_\xi$ the ring of $W(R_\xi)$-invariant
polynomials $\mc{O}(T_\xi (\Xi_{un}))^{W (R_{\xi})}$ is finitely generated
as a module over $\mc{O}(T_\xi (\Xi_{un}))^{\mc{W}_\xi}$.
Since $\mc{O}(T_\xi (\Xi_{un}))^{\mc{W}_\xi}$
is Noetherian ring, there exists a finite presentation
\[
\big( \mc{O}(T_\xi (\Xi_{un}))^{\mc{W}_\xi} \big)^{n_2} \xrightarrow{\; A \;}
\big( \mc{O}(T_\xi (\Xi_{un}))^{\mc{W}_\xi} \big)^{n_1} \xrightarrow{\; P \;}
\mc{O}(T_\xi (\Xi_{un}))^{W (R_{\xi})} \to 0.
\]
Let $P_i\in\mc{O}(T_\xi (\Xi_{un}))^{W (R_{\xi})}$ be the image of
the $i$-th basis vector under $P$. Then $A$ is an $n_1 \times n_2$-matrix
with entries in $\mc{O}(T_\xi (\Xi_{un}))^{\mc{W}_\xi}$, expressing
the relations between the generators $P_i$ over $\mc{O}(T_\xi (\Xi_{un}))^{\mc{W}_\xi}$.
Because the formal completion $\mc{F}_\xi^{\mc{W}_\xi}$
of $\mc{O}(T_\xi (\Xi_{un}))^{\mc{W}_\xi}$ at $\xi$ is
flat over $\mc{O}(T_\xi (\Xi_{un}))^{\mc{W}_\xi}$, the relations module of the
generators $P_i$ is also given by the matrix $A$ when working over the formal
power series ring $\mc{F}_\xi^{\mc{W}_\xi}$. In addition one easily checks that
\begin{equation}\label{eq:formal}
\mc{F}_\xi^{\mc{W}_\xi}\otimes_{\mc{O}(T_\xi (\Xi_{un}))^{\mc{W}_\xi}}
\mc{O}(T_\xi (\Xi_{un}))^{W (R_{\xi})} =\mc{F}_\xi^{W (R_{\xi})} .
\end{equation}
Now let $f\in m^{W(R_\xi),\infty}_\xi$. By an argument of Po\'enaru
\cite[p.106]{Poe} the $P_i$ also generate $C^\infty (U_\xi )^{W (R_{\xi})}$ as a
module over $C^\infty (U_\xi )^{\mc{W}_\xi}$, so we may write
$f = \sum^{n_1}_{i=1} \phi_i P_i$ with $\phi = (\phi_i )_{i=1}^{n_1} \in
\big( C^\infty (U_\xi )^{\mc{W}_\xi} \big)^{n_1}$.
By applying the Taylor series homomorphism $\tau_\xi$ at $\xi$
we get $0 = \sum_{i=1}^{n_1} \tau_\xi (\phi_i) P_i$.
It follows from \eqref{eq:formal} and the remarks just above
that there exists a $\widehat\psi\in(\mc{F}_{\xi}^{\mc{W}_\xi})^{n_2}$
such that $\tau_\xi (\phi)=A(\widehat \psi )$. By Borel's Theorem there exist
$\psi\in (C^\infty (U_\xi )^{\mc{W}_\xi})^{n_2}$ such that $\tau_\xi (\psi) = \widehat \psi$.
Writing $\phi^0 := A(\psi) \in (C^\infty (U_\xi )^{\mc{W}_\xi})^{n_1}$, we obtain
\begin{align*}
& \phi_i - \phi_i^0\in
m^{W(R_\xi),\infty}_\xi\cap C^\infty (U_\xi )^{\mc{W}_\xi}=m^{\mc{W}_\xi,\infty}_\xi , \\
& f = \sum_{i=1}^{n_1} \phi_i P_i = \sum_{i=1}^{n_1} (\phi_i - \phi_i^0 ) P_i \in
m^{\mc{W}_\xi ,\infty}_\xi C^\infty (U_\xi )^{W (R_{\xi})} .
\end{align*}
This proves \eqref{eq:mWxi}, from which we conclude that \eqref{eq:12} is isomorphic to
\begin{equation}\label{eq:12.5}
\big( \mc{F}_{\xi}^{W(R_\xi)} \otimes \mr{End}_\C (V_\xi ) \big)^{\mf R_\xi} .
\end{equation}
For compatibility with Section \ref{sec:alginv} we put
$M = E_\xi^* \otimes_\R \C,\; m= 0$ and $G = G_m = \mf{R}_\xi^*$. By Theorem \ref{thm:KnappStein}.c
every irreducible representation of $\C [\mf R_\xi,\kappa_\xi] = p_\xi^* \C [\mf R_\xi^*]$ 
appears at least once in $V_\xi$, so $V_\xi$ contains precisely the same irreducible 
$\mf R_\xi^*$-representations as the right regular representation of $\mf R_\xi^*$ on
$\C [\mf R_\xi,\kappa^{-1}_\xi] = p_\xi \C [\mf R_\xi^*]$. Now parts (a) and (d) of 
Theorem \ref{thm:Moritaeq} say that \eqref{eq:12.5} is Morita equivalent to
\begin{equation}\label{eq:16}
\big( \widehat{S (E_\xi)} \otimes \mr{End}_\C (p_\xi \C [\mf R^*_\xi] ) \big)^{\mf R^*_\xi} 
\end{equation}
Because $p_\xi$ is central and by Lemma \ref{lem:CrossedProduct}, \eqref{eq:16} is isomorphic to
\begin{equation}\label{eq:18}
\widehat{S (E_\xi)} \rtimes p_\xi \C [\mf R^*_\xi] = 
p_\xi \big( \widehat{S (E_\xi)} \rtimes \mf R^*_\xi \big) .
\end{equation}
This establishes the required Morita equivalence. Since the bimodules in the proof of Theorem
\ref{thm:Moritaeq}.a are bornologically projective, this Morita equivalence is an exact functor.
That and parts (c) and (f) of Lemma \ref{lem:compl} yield the equivalences of exact categories.
\end{proof}

The above proof and Theorem \ref{thm:Moritaeq} show that the Morita bimodules are
\begin{equation}\label{eq:5.4}
\Big( \widehat{S (E_\xi)} \otimes \mr{Hom}_\C \big( \pi (\xi), p_\xi \C[\mf R_\xi^*] \big) 
\Big)^{\mf R_\xi^*} \; \text{and} \; \Big( \widehat{S (E_\xi)} \otimes 
\mr{Hom}_\C \big( p_\xi \C[\mf R_\xi^*], \pi (\xi) \big) \Big)^{\mf R_\xi^*} ,
\end{equation}
where $\C [\mf R_\xi^*]$ is considered as the right regular representation of $\mf R_\xi^*$.
The equivalence of categories maps $\pi \in \mr{Mod}_{\mc W \xi}(\mc S)$ to the 
$\widehat{S (E_\xi)} \rtimes \mf R_\xi^*$-module
\[
\mr{Hom}_\C \big( \pi (\xi), p_\xi \C[\mf R_\xi^*] \big)^{\mf R_\xi^*} 
\underset{\mr{End}_\C (\pi (\xi))^{\mf R_\xi^*}}{\otimes} \pi =
p_\xi \mr{Hom}_{\C [\mf R_\xi^*]} \big( \pi (\xi), \C[\mf R_\xi^*] \big)
\underset{\mr{End}_{\C [\mf R_\xi^*]} (\pi (\xi))}{\otimes} \pi .
\]                                                                                         
By construction $\mr{End}_{\C [\mf R_\xi^*]} (\pi (\xi)) = \pi (\xi)(\mc H)$, while 
$\mr{Hom}_{\C [\mf R_\xi^*]} \big( \pi (\xi), \C[\mf R_\xi^*] \big)$ is isomorphic to
the dual space of $\pi (\xi)$ via the map $\C [\mf R_\xi^*] \to \C$ defined
by evaluation at 1. Hence the above module simplifies to
\[
p_\xi \pi (\xi)^* \otimes_{\mc H} \pi = \pi (\xi)^* \otimes_{\mc H} \pi = 
\mr{Hom}_{\mc H}(\pi (\xi),\pi) .
\]
Conversely, consider a $\widehat{S (E_\xi)} \rtimes p_\xi \C [\mf R_\xi^*]$-module 
$\rho$ with central character $0 \in E_\xi^*$. The Morita equivalence sends it to
\begin{multline*}
\mr{Hom}_\C \big( p_\xi \C[\mf R_\xi^*], \pi (\xi) \big)^{\mf R_\xi^*} 
\underset{\mr{End}_\C (p_\xi \C [\mf R_\xi^*])^{\mf R_\xi^*}}{\otimes} \rho =
p_\xi^* \mr{Hom}_{\C [\mf R_\xi^*]} \big( p_\xi \C[\mf R_\xi^*], \pi (\xi) \big)
\underset{p_\xi \C [\mf R_\xi^*]}{\otimes} \rho \\
= p_\xi^* \pi (\xi) \underset{p_\xi \C [\mf R_\xi^*]} \rho =
\pi (\xi) \underset{\C [\mf R_\xi ,\kappa^{-1}_\xi]}{\otimes} \rho .
\end{multline*}
Together with \eqref{eq:5.2} and \eqref{eq:5.3} this confirms that Theorem \ref{thm:equivCat} 
generalizes the covariant version of Theorem \ref{thm:KnappStein}.d.

If $\pi' \in \mr{Mod}_{f, un, \mc W \xi} (\mc S)$, then by \eqref{eq:5.1} 
\begin{equation}\label{eq:1}
\Fxi (\pi) \otimes \Phi_\xi (\pi')
\text{ is a representation of } \C [\mf R_\xi,1],
\end{equation} 
that is, a (linear) representation of the group $\mf R_\xi$.

\begin{thm}\label{thm:Ext}
Let $\pi, \pi'$ be finite dimensional unitary $\mc{S}$-modules with central character
$\mc W \xi$. Let $\Fxi (\pi) = \mr{Hom}_{\mc H}(\pi ,\pi (\xi)) \in 
\mr{Mod}(\mathbb{C}[\mf{R}_\xi,\kappa_\xi])$ and \\
$\Phi_\xi (\pi') = \mr{Hom}_{\mc H}(\pi (\xi),\pi') \in 
\mr{Mod}(\mathbb{C}[\mf{R}_\xi,\kappa^{-1}_\xi])$ be as in Theorem \ref{thm:KnappStein}. Then
\begin{equation}
\mr{Ext}^n_\mathcal{H}(\pi,\pi') \cong
\big( \Fxi (\pi) \otimes_\C \Phi_\xi (\pi') \otimes_\R \wig^n E^*_\xi \big)^{\mathfrak{R}_\xi} .
\end{equation}
For $\pi = \pi'$ the Yoneda-product on $\mr{Ext}^*_{\mc H} (\pi,\pi)$ corresponds to the
usual product on $\big( \mr{End}_\C (\Phi_\xi (\pi)) \otimes_\R \wig^* E^*_\xi \big)^{\mf R_\xi}$.
\end{thm}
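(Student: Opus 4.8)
The plan is to assemble the statement from three ingredients that are already in place: the equivalence of categories from Theorem~\ref{thm:equivCat}, the Ext-computation for crossed products of power series rings from Theorem~\ref{thm:YonedaExt}, and the fact (Corollary~\ref{cor:extloc}) that the formal completion functor at $\mc W\xi$ preserves the relevant Ext-groups. First I would reduce $\mr{Ext}^n_{\mc H}(\pi,\pi')$ to an Ext-group over $\widehat{\mc S}_{\mc W\xi}$: since $\pi,\pi'$ are finite-dimensional with central character $\mc W\xi$, they lie in $\mr{Mod}^{\mc W\xi,tor}_{bor}(\mc S)$, hence come from $\widehat{\mc S}_{\mc W\xi}$-modules via $q^*$, and Corollary~\ref{cor:extloc}(b),(c) gives $\mr{Ext}^n_{\mc H}(\pi,\pi')\cong\mr{Ext}^n_{\widehat{\mc S}_{\mc W\xi}}(\widehat\pi,\widehat{\pi'})$. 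Next I would transport this across the Morita equivalence of Theorem~\ref{thm:equivCat}, which is exact (its bimodules are bornologically projective), so it preserves Yoneda Ext. This identifies the Ext-group with $\mr{Ext}^n_{p_\xi(\widehat{S(E_\xi)}\rtimes\mf R_\xi^*)}(\rho,\rho')$, where under the dictionary worked out after Theorem~\ref{thm:equivCat} the module attached to $\pi$ is $p_\xi\,\mr{Hom}_{\mc H}(\pi(\xi),\pi)=\Phi_\xi(\pi)$, viewed as a $\C[\mf R_\xi,\kappa_\xi^{-1}]$-module sitting at the central character $0\in E_\xi^*$, and similarly for $\pi'$.

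The third step is to apply Theorem~\ref{thm:YonedaExt}(d) with $M=E_\xi^*\otimes_\R\C$, $m=0$, $G=G_m=\mf R_\xi^*$: since $p_\xi$ is central, modules over $p_\xi(\widehat{S(E_\xi)}\rtimes\mf R_\xi^*)$ are exactly the $\widehat{S(E_\xi)}\rtimes\mf R_\xi^*$-modules killed by $p_\xi^*$, and for such modules part~(d) of that theorem computes
\[
\mr{Ext}^n_{\widehat{S(E_\xi)}\rtimes\mf R_\xi^*}\!\big(\mr{Ind}_0 U,\mr{Ind}_0 U'\big)\cong
\big(\mr{Hom}_\C(U,U')\otimes_\R\wig^n E_\xi\big)^{\mf R_\xi^*}.
\]
Here I must match $\mr{Ind}_0$ of the appropriate $\mf R_\xi^*$-representations with the modules $\Phi_\xi(\pi),\Phi_\xi(\pi')$; since those already carry the $\widehat{S(E_\xi)}$-character $0$, they \emph{are} of the form $\mr{Ind}_0$ of themselves as $\mf R_\xi^*$-modules (the isotropy group at $0$ is all of $\mf R_\xi^*$). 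So the Ext-group becomes $\big(\mr{Hom}_\C(\Phi_\xi(\pi),\Phi_\xi(\pi'))\otimes_\R\wig^n E_\xi\big)^{\mf R_\xi^*}$. Finally I would rewrite $\mr{Hom}_\C(\Phi_\xi(\pi),\Phi_\xi(\pi'))$ as $\Phi_\xi(\pi)^*\otimes\Phi_\xi(\pi')$; using the trace identification $\Fxi(\pi)\cong\Phi_\xi(\pi)^*$ from \eqref{eq:5.2} and the fact that $\wig^n E_\xi$ is the dual of $\wig^n E_\xi^*$, plus \eqref{eq:1} which says $\Fxi(\pi)\otimes\Phi_\xi(\pi')$ is a \emph{linear} $\mf R_\xi$-representation (the cocycle cancels), I can replace the $\mf R_\xi^*$-invariants by $\mf R_\xi$-invariants and land on
\[
\big(\Fxi(\pi)\otimes_\C\Phi_\xi(\pi')\otimes_\R\wig^n E_\xi^*\big)^{\mf R_\xi}.
\]
The Yoneda-product statement for $\pi=\pi'$ then follows immediately from Theorem~\ref{thm:YonedaExt}(b),(d), since all the intervening equivalences (formal completion, Morita equivalence) are exact functors between exact categories and hence respect the Yoneda product on $\mr{Ext}^*$.

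The step I expect to be the main obstacle is the careful bookkeeping of the two cocycles $\kappa_\xi$ and $\kappa_\xi^{-1}$ and of the Schur-extension formalism: one must check that $\Phi_\xi(\pi)$ really is a $p_\xi\C[\mf R_\xi^*]$-module (equivalently a $\C[\mf R_\xi,\kappa_\xi^{-1}]$-module), that $\Fxi(\pi)$ is a $\C[\mf R_\xi,\kappa_\xi]$-module, and that $\Hom_\C(\Phi_\xi(\pi),\Phi_\xi(\pi'))\cong\Fxi(\pi)\otimes\Phi_\xi(\pi')$ as a module over $\C[\mf R_\xi,\kappa_\xi^{-1}]\otimes\C[\mf R_\xi,\kappa_\xi]$, whose image in the diagonal is $\C[\mf R_\xi,1]=\C[\mf R_\xi]$ by \eqref{eq:1}; the passage between $\mf R_\xi^*$-invariants and $\mf R_\xi$-invariants hinges on precisely this cancellation, together with the fact that the action on $\wig^n E_\xi$ factors through the quotient $\mf R_\xi^*\twoheadrightarrow\mf R_\xi$. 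Everything else is a formal concatenation of results already established in the excerpt.
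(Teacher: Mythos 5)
Your proposal is correct and follows essentially the same route as the paper's proof: reduce via \cite[Corollary 3.7.b]{OpSo1} and Corollary \ref{cor:extloc} to the formal completion $\widehat{\mc S}_{\mc W \xi}$, transport through the Morita equivalence of Theorem \ref{thm:equivCat} (under which $\pi$ becomes $\Phi_\xi(\pi)$), view the result as a module over $\widehat{S(E_\xi)} \rtimes \mf R^*_\xi$ since $p_\xi\big(\widehat{S(E_\xi)} \rtimes \mf R^*_\xi\big)$ is a direct summand, apply Theorem \ref{thm:YonedaExt}, and descend from $\mf R^*_\xi$- to $\mf R_\xi$-invariants via \eqref{eq:1}, with the Yoneda product handled by Theorem \ref{thm:YonedaExt}.b. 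Only a cosmetic slip: modules over $p_\xi\big(\widehat{S(E_\xi)} \rtimes \mf R^*_\xi\big)$ are those on which $p_\xi$ acts as the identity (not those killed by $p_\xi^*$), which does not affect the argument.
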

\begin{proof}
Our strategy is to consider $\pi$ and $\pi'$ as modules over various algebras $A$, such that
the extension groups $\mr{Ext}_A^n (\pi, \pi')$ for different $A$'s are all isomorphic.
By \cite[Corollary 3.7.b]{OpSo1}
\begin{equation}\label{eq:ExtHS}
\mr{Ext}^n_{\mc H} (\pi,\pi') \cong \mr{Ext}^n_{\mc S} (\pi,\pi') \qquad n \in \Z_{\geq 0}.
\end{equation}
Since $\pi$ and $\pi'$ have finite dimension, it does not matter whether we calculate 
their Ext-groups in the category of algebraic $\mc S$-modules or in any of the categories 
of topological $\mc S$-modules studied in Section \ref{sec:Ext}.

From \eqref{eq:ExtHS} and exactness of localization we see that the first steps of the proof
of Theorem \ref{thm:Moritaeq}, up to \eqref{eq:10}, preserve the Ext-groups.
For the computation of $\textup{Ext}_\mc{S}^n (\pi,\pi')$, Corollary \ref{cor:extloc} allows us
to replace $\mc S$ by its formal completion $\widehat{\mc S}_{\mc W \xi} \cong \Fs_\xi^{\mc W_\xi}
\widehat{\otimes}_{\mc Z} \mc S$ at the central character $\mc{W}\xi$.
This brings us to \eqref{eq:11}, which is Morita equivalent to \eqref{eq:18}. Under these
transformations $\pi$ corresponds to $\Phi_\xi (\pi)$, regarded as a representation of 
\eqref{eq:18} or of $\widehat{S (E_\xi)} \rtimes \mf R_\xi^*$-representation via the 
evaluation at 0. Because \eqref{eq:18} is a direct summand of $\widehat{S (E_\xi)} \rtimes 
\mf R^*_\xi$, we can consider the Ext-groups of $\Phi_\xi (\pi)$ and $\Phi_\xi (\pi')$ just as
well with respect to $\widehat{S (E_\xi)} \rtimes \mf R^*_\xi$. Therefore
\begin{equation}\label{eq:21}
\mr{Ext}_{\mc H}^n (\pi,\pi') \cong
\mr{Ext}^n_{\widehat{S (E_\xi)} \rtimes \mf R^*_\xi} (\Phi_\xi (\pi),\Phi_\xi (\pi')) ,
\end{equation}
and we can apply Theorem \ref{thm:YonedaExt}. Here $\Fs_{Gm} = \Fs_m =
\widehat{S (E_\xi)}$ and $\wig^n_\C T_m (M) \cong \wig^n_\R E_\xi^* \otimes_\R \C$,
so by  Theorem \ref{thm:YonedaExt}.a
\begin{equation}\label{eq:26}
\mr{Ext}_{\mc H}^n (\pi,\pi') \cong
\big( \Fxi (\pi) \otimes_\C \Phi_\xi (\pi') \otimes_\R \wig^n E^*_\xi \big)^{\mathfrak{R}^*_\xi} .
\end{equation}
Moreover $\Fxi (\pi) \otimes_\C \Phi_\xi (\pi')$ is a $\mf R_\xi$-representation, so the entire 
action factors through $\mf R^*_\xi \to \mf R_\xi$. Thus we may just as well use $\mf R_\xi$ 
to determine the invariants in \eqref{eq:26}. The Yoneda product is described by
Theorem \ref{thm:YonedaExt}.b.
\end{proof}
\vspace{1mm}

\section{The Euler--Poincar\'e pairing}

Let $\pi, \pi' \in \mr{Mod}_f (\mc H)$. Their Euler--Poincar\'e pairing is defined as
\begin{equation}\label{eq:4}
EP_{\mc H} (\pi,\pi') = \sum_{n \geq 0} (-1)^n \dim_\C \mr{Ext}_{\mc H}^n (\pi,\pi') .
\end{equation}
Since $\mc H$ is Noetherian and has finite cohomological dimension \cite[Proposition 2.4]{OpSo1},
this pairing is well-defined.
By the main result of \cite{OpSo1}, for $\pi, \pi' \in \mr{Mod}_f (\mc S)$
\begin{equation} \label{eq:5}
EP_{\mc S} (\pi,\pi') := \sum_{n \geq 0} (-1)^n \dim_\C \mr{Ext}_{\mc S}^n (\pi,\pi')
\quad \text{equals} \quad EP_{\mc H}(\pi,\pi') .
\end{equation}
It is clear from the definition \eqref{eq:4} that $EP_{\mc H}(\pi,\pi') = 0$ if $\pi$ and $\pi'$
admit different $Z(\mc H)$-characters. With \eqref{eq:5} we can strengthen this to
$EP_{\mc H}(\pi,\pi') = 0$ whenever $\pi$ and $\pi'$ admit different $Z(\mc S)$-characters.
This is really stronger, because $Z(\mc S)$ is larger than the closure of $Z(\mc H)$ in $\mc S$.
By \eqref{eq:25} any discrete series representation $\delta$ is projective in $\mr{Mod}_f (\mc S)$, so 
\[
EP_{\mc S}(\delta,\pi') = \dim_\C \mr{Hom}_{\mc S}(\delta,\pi') 
\text{ for all } \pi' \in \mr{Mod}_f (\mc S) .
\]
Together with \eqref{eq:5} this shows that $\delta$ also behaves like a projective 
$\mc H$-representation for the Euler--Poincar\'e pairing (over $\mc H)$ of tempered modules, 
something that is completely unclear from \eqref{eq:4}.

For $\ep \in \mh R$ let $q^\ep$ be the parameter function $q^\ep (w) = q(w)^\ep$.
For every $\ep$ we have the affine Hecke algebra $\mc H (\mc R ,q^\ep )$ and its Schwartz
completion $\mc S (\mc R ,q^\ep )$. We note that $\mc H (\mc R ,q^0 ) = \mh C [W]$
is the group algebra of $W$ and that $\mc S (\mc R ,q^0 ) = \mc S (W)$ is the
Schwartz algebra of rapidly decreasing functions on $W$.
The intuitive idea is that these algebras depend continuously on $\ep$.
We will use this in the form of the following rather technical result.

\begin{thm}\label{thm:scalingReps}
\textup{\cite[Corollary 4.2.2]{Sol-Irr}} \\
For $\ep \in [-1,1]$ there exists a family of additive functors
\begin{align*}
& \tilde \sigma_\ep : \mr{Mod}_f (\mc H (\mc R ,q)) \to
\mr{Mod}_f (\mc H (\mc R ,q^\ep )) , \\
& \tilde \sigma_\ep (\pi ,V) = (\pi_\ep ,V) .
\end{align*}
with the properties
\begin{itemize}
\item[(1)] the map
\[
[-1,1] \to \mr{End}\, V : \ep \mapsto \pi_\ep (N_w)
\]
is analytic for any $w \in W$,
\item[(2)] $\tilde \sigma_\ep$ is a bijection if $\ep \neq 0$,
\item[(3)] $\tilde \sigma_\ep$ preserves unitarity,
\item[(4)] $\tilde \sigma_\ep$ preserves temperedness if $\ep \geq 0$,
\item[(5)] $\tilde \sigma_\ep$ preserves the discrete series if $\ep > 0$.
\end{itemize}
\end{thm}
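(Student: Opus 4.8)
\emph{Proof strategy.} This is \cite[Corollary 4.2.2]{Sol-Irr}, which rests on Opdam's scaling isomorphisms \cite{Opd-Sp}; I sketch the plan. The key point is to deform the parameter function $q$ inside a framework where all the algebras $\mc H (\mc R, q^\ep)$ are visible simultaneously and the deformation is transparent, and then to read off the listed properties from the explicit construction.

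The plan is to work one block at a time. Fix a $W_0$-orbit $W_0 t \subset T$ of $\mc A$-characters and write $t = |t|\, u$ with $u = t|t|^{-1} \in T_{un}$. Lusztig's reduction theorems identify the formal completion of $\mc H (\mc R, q)$ at the central character $W_0 t$ with a matrix algebra over the completion at the corresponding point of a \emph{graded affine Hecke algebra} $\mh H_u$ (possibly crossed with a finite group), attached to the subsystem $R_u \subset R_{nr}$ of roots ``unramified at $u$'' and to parameters $k_q = (k_\alpha)$ depending on $q$ in such a way that $k_{q^\ep} = \ep\, k_q$ (they are, essentially, the logarithms of the $q_\alpha$). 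On the graded side, multiplication by $\ep$ on the underlying vector space intertwines $\mh H_u$ with parameters $k_q$ and with parameters $\ep k_q$, and induces an equivalence of the completed module categories when $\ep \neq 0$, degenerating at $\ep = 0$ to the semidirect product of a polynomial ring with $W(R_u)$. Composing the reduction isomorphism for $q$, this graded rescaling, and the inverse reduction isomorphism for $q^\ep$ (whose block sits over $W_0 (|t|^\ep u)$) yields an additive functor between the corresponding blocks. Since the reduction isomorphisms are given by explicit formulas in the rank-one $c$-functions $c_\alpha$ and chosen square roots of these, and since $k_{q^\ep} = \ep k_q$ is linear in $\ep$, every matrix entry $\ep \mapsto \pi_\ep (N_w)$ extends analytically to a neighbourhood of $[-1,1]$; this is property (1), and together with the normalization $\tilde \sigma_1 = \mr{id}$ it also pins down the functor uniquely. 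As $\mc H (\mc R, q)$ has only finitely many blocks meeting any bounded region of $T/W_0$, these local functors assemble to the desired $\tilde \sigma_\ep$ on $\mr{Mod}_f (\mc H (\mc R, q))$.

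The remaining properties I would read off from this description. For $\ep \neq 0$ every constituent step (Lusztig reduction, graded rescaling) is an equivalence and $t \mapsto |t|^\ep u$ is a bijection of $T$, so $\tilde \sigma_\ep$ is an equivalence of categories, hence a bijection on isomorphism classes, which is (2). For (3), the involution $N_w^* = N_{w^{-1}}$ is respected by the reduction isomorphisms and by rescaling with real $\ep$ (the parameters $\ep k_q$ remain real), and the unitary part $u$ of the central character is untouched, so an invariant Hermitian form on $V$ transports to one on $(\pi_\ep, V)$. For (4) and (5) I would track $\mc A$-weights: the construction sends a module with weights $\{ t_i = |t_i| u_i \}$ to one with weights $\{ |t_i|^\ep u_i \}$. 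Since $\log T^- = \mf a^-$ and $\log T^{--} = \mf a^{--}$ are cones stable under multiplication by any scalar $\geq 0$, respectively $> 0$, the condition $|t_i| \in T^-$ for all $i$ is preserved when $\ep \geq 0$ and the condition $|t_i| \in T^{--}$ is preserved when $\ep > 0$.

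The main obstacle is the analyticity and continuity bookkeeping in the assembly step, because the reduction data depend on $q$ in a potentially discontinuous way: the subsystem $R_u$, the central idempotent cutting out a block, and the zeros and poles of the $c_\alpha$ on $T_{un}$ can all jump as $q$ — hence $\ep$ — varies, so a priori nothing guarantees a single analytic family over all of $[-1,1]$. Controlling this requires Opdam's analysis of the residual cosets and of the arrangement of $c$-function hypersurfaces, in particular that the central characters of discrete series form a finite family depending analytically on $q$ and that the combinatorial data are locally constant away from a discrete set of $\ep$, after which one bootstraps from $\ep$ near $1$ to all $\ep \in [-1,1]$ and glues. This is precisely what is established in the cited corollary.
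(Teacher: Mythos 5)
The paper does not prove this statement at all: it is imported verbatim from \cite[Corollary 4.2.2]{Sol-Irr}, which is exactly what your proposal ultimately does as well. Your sketch of the underlying construction (Lusztig reduction to a graded Hecke algebra attached to the unitary part $u$ of the central character, the scaling $k_q \mapsto \ep k_q$, transformation of $\mc A$-weights $|t|u \mapsto |t|^\ep u$ giving properties (2)--(5), and analyticity in $\ep$) faithfully reflects the strategy of the cited proof in \cite{Sol-Irr} building on \cite{Opd-Sp}, and, like the paper, you defer the delicate analytic bookkeeping to that reference.
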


\begin{thm}\label{thm:EP}
\enuma{
\item For all $\pi,\pi' \in \mr{Mod}_f (\mc H)$ and $\ep \in [0,1] $:
\[
EP_{\mc H (\mc R, q^\ep)} (\tilde \sigma_\ep (\pi), \tilde \sigma_\ep (\pi')) = EP_{\mc H} (\pi,\pi') .
\]
\item The pairing $EP_{\mc H}$ is symmetric and positive semidefinite.
\item Suppose $P \subset F_0 ,\; \R P \neq \mf a^*$ and $V \in \mr{Mod}_f (\mc H^P)$.
Then $\mr{Ind}_{\mc H^P}^{\mc H}(V)$ lies in the radical of $EP_{\mc H}$.
}
\end{thm}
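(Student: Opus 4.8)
The plan is to prove part (c) directly by Frobenius reciprocity, reducing it to the vanishing $EP_{\mc H^P}\equiv 0$; to deduce part (b) from part (c) together with the Langlands classification; and to obtain part (a) from the scaling machinery of \cite{OpSo1,Sol-Irr}. I treat the parts in the order (c), (b), (a).

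For part (c): since $W=W^P\cdot W(\mc R^P)$ with $W^P$ the minimal length coset representatives, $\mc H$ is free as a right $\mc H^P$-module, so $\mr{Ind}_{\mc H^P}^{\mc H}=\mc H\otimes_{\mc H^P}(-)$ is exact and left adjoint to $\mr{Res}_{\mc H^P}$; it therefore carries projective resolutions to projective resolutions, and for all $\pi'\in\mr{Mod}_f(\mc H)$ and $n\ge 0$ we get $\mr{Ext}^n_{\mc H}(\mr{Ind}_{\mc H^P}^{\mc H}V,\pi')\cong\mr{Ext}^n_{\mc H^P}(V,\mr{Res}_{\mc H^P}\pi')$, hence $EP_{\mc H}(\mr{Ind}_{\mc H^P}^{\mc H}V,\pi')=EP_{\mc H^P}(V,\mr{Res}_{\mc H^P}\pi')$. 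It suffices to show $EP_{\mc H^P}\equiv 0$. The hypothesis $\R P\neq\mf a^*$ says $\mc R^P$ is not semisimple, so $T^P$ has positive dimension and the algebra automorphisms $\phi_t$ $(t\in T^P)$ of $\mc H^P$ vary regularly over the connected torus $T^P$, translating $Z(\mc H^P)$-characters by $t$. Fixing $V,W\in\mr{Mod}_f(\mc H^P)$ and a finite-length resolution $P_\bullet\to V$ by finitely generated projective $\mc H^P$-modules (possible since $\mc H^P$ has finite global dimension, \cite[Proposition 2.4]{OpSo1}), the complex $P_\bullet\circ\phi_t\to V\circ\phi_t$ is again such a resolution, so $EP_{\mc H^P}(V\circ\phi_t,W)=\sum_i(-1)^i\dim_\C\mr{Hom}_{\mc H^P}(P_i\circ\phi_t,W)$. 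Writing $P_i$ as the image of an idempotent in $M_{n_i}(\mc H^P)$, each term $\dim_\C\mr{Hom}_{\mc H^P}(P_i\circ\phi_t,W)$ is the rank, i.e. the trace, of an idempotent on $W^{n_i}$ whose matrix depends regularly on $t$; being an integer-valued regular function on the connected torus $T^P$, it is constant. Thus $t\mapsto EP_{\mc H^P}(V\circ\phi_t,W)$ is constant on $T^P$; but for $t$ outside a proper closed subset the finitely many central characters of $V\circ\phi_t$ and $W$ are disjoint, so all $\mr{Ext}^n_{\mc H^P}(V\circ\phi_t,W)$ vanish and the constant value is $0$. Hence $EP_{\mc H^P}(V,W)=0$.

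For part (b): symmetry -- equivalently, that the sesquilinear extension of $EP_{\mc H}$ to $G_\C(\mc H)$ is Hermitian -- is \cite[Theorem 3.5.a]{OpSo1}. For positive semidefiniteness, note first that if $\mc R$ is not semisimple then $\R F_0\neq\mf a^*$, so part (c) with $P=F_0$ gives $EP_{\mc H}\equiv 0$. Assume $\mc R$ semisimple. By the Langlands classification (Theorem \ref{thm:parametrizationOfDual}) the map $\rho\mapsto\xi^+(\rho)$ is injective, $\rho$ occurs with multiplicity one as the unique irreducible quotient of $\pi(\xi^+(\rho))=\mr{Ind}_{\mc H^P}^{\mc H}(\delta\circ\phi_t)$ for $\xi^+(\rho)=(P,\delta,t)$, and every other constituent $\rho'$ satisfies $\norm{cc_{P'}(\delta')}>\norm{cc_P(\delta)}$ with $\xi^+(\rho')=(P',\delta',t')$. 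Since $\mc R$ is semisimple, $P\subsetneq F_0$ forces $\R P\neq\mf a^*$, so $[\pi(\xi^+(\rho))]\in\mr{rad}(EP_{\mc H})$ by part (c), while $P=F_0$ forces $\pi(\xi^+(\rho))=\rho$ to be a discrete series representation. As $\norm{cc_P(\delta)}$ takes finitely many values, a downward induction on it, using $[\rho]=[\pi(\xi^+(\rho))]-\sum_{\rho'\neq\rho}m_{\rho'}[\rho']$, shows that modulo $\mr{rad}(EP_{\mc H})$ the group $G_\C(\mc H)$ is spanned by discrete series classes. By \eqref{eq:25} discrete series are projective in $\mr{Mod}_f(\mc S)$, so $EP_{\mc S}(\delta,\delta')=\dim_\C\mr{Hom}_{\mc S}(\delta,\delta')$ equals $1$ if $\delta\cong\delta'$ and $0$ otherwise; combined with $EP_{\mc S}=EP_{\mc H}$ \eqref{eq:5}, the form $EP_{\mc H}$ restricts to the standard positive definite form on the span of discrete series classes, hence is positive semidefinite on all of $G_\C(\mc H)$. (Alternatively, for tempered $\pi,\pi'$ one reads off from Theorem \ref{thm:Ext} that on each tempered central-character block $EP_{\mc H}$ is the Hermitian form $|\mf R_\xi|^{-1}\sum_{r}\det_{E^*_\xi}(1-r)\,\mr{tr}(r)\overline{\mr{tr}(r)}$ with nonnegative weights $\det_{E^*_\xi}(1-r)\ge 0$.)

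For part (a): the scaling functors $\se$ realise $\se\pi$ and $\se\pi'$ as analytic families of $\mc H(\mc R,q^\ep)$-modules on fixed underlying vector spaces; transporting a finite-length projective resolution of $\pi$ along the family gives, for each $\ep$, a complex of finitely generated projective $\mc H(\mc R,q^\ep)$-modules computing $\mr{Ext}^*_{\mc H(\mc R,q^\ep)}(\se\pi,\se\pi')$ whose Euler characteristic -- computed exactly as in part (c), as an alternating sum of ranks of idempotents now varying analytically in $\ep$ -- is independent of $\ep$; this is the content of \cite[Section 3]{OpSo1} (one may also reduce to the tempered case via (c) and the Langlands reduction above and then use Theorem \ref{thm:Ext} together with the compatibility of $\se$ with parabolic induction and with the analytic $R$-groups, \cite{Sol-Irr}). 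I expect part (a) to be the delicate point: making rigorous that $EP$ is constant along the scaling family requires uniform control of projective resolutions, or equivalently of the whole harmonic-analytic apparatus (intertwining operators, Plancherel density, $R$-groups) as $\ep$ varies, which is precisely the technical work of \cite{OpSo1,Sol-Irr}. Granted (a) -- or merely Theorem \ref{thm:Ext} for the tempered case -- parts (c) and (b) are, respectively, a short Frobenius-reciprocity computation and a formal consequence of the Langlands classification.
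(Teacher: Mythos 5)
Your part (c) is correct, and it is a genuinely different route from the paper's: the paper proves Theorem \ref{thm:EP} simply by citing \cite[Proposition 3.4 and Theorem 3.5]{OpSo1}, where (b) and (c) are obtained by scaling to $\ep =0$ and analysing $EP_W = EP_{\mc H (\mc R, q^0)}$, whereas you prove (c) directly by Shapiro's lemma (using that $\mc H = \bigoplus_{w \in W^P} N_w \mc H^P$ is free over $\mc H^P$) together with a Bernstein-style translation argument showing $EP_{\mc H^P} \equiv 0$ when $\R P \neq \mf a^*$: twisting a finite resolution by finitely generated projectives along $\phi_t$, $t \in T^P$, the terms $\dim_\C \mr{Hom}_{\mc H^P}(P_i \circ \phi_t, W)$ are integer-valued regular functions on the connected torus $T^P$, hence constant, while for generic $t$ the central characters separate. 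This is exactly the analogue of \cite[Claim 4.3]{Bez}, which the paper itself only invokes later, for $p$-adic groups, in Proposition \ref{prop:7.2}.a; your argument is sound. For part (a) and for the symmetry in (b) you, like the paper, ultimately rely on \cite{OpSo1}; your sketch of (a) by transporting resolutions is only heuristic, but since the paper's proof of (a) is the same citation this is not a defect relative to it.

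The genuine gap is in your positive semidefiniteness argument. Theorem \ref{thm:parametrizationOfDual} concerns positive induction data $(P,\delta,t)$ with $\delta$ discrete series and $|t| \in T^{P+}$, and it does not give the strict inequality you use: the other constituents $\rho'$ of $\pi (\xi^+ (\rho))$ only satisfy $\norm{cc_{P'}(\delta')} \geq \norm{cc_P (\delta)}$, and equality is typical. Indeed, for unitary $\xi$ all constituents of $\pi (\xi)$ lie in $\mr{Irr}_{\mc W \xi}(\mc H)$ by \eqref{eq:packetReps} and Theorem \ref{thm:KnappStein}, so they all carry the same norm, and $\xi^+$ is not injective on them either; you have conflated this parametrization with the Langlands classification (tempered $\sigma$, $|t| \in T^{P++}$), for which the strict inequality of \cite[Lemma 2.2.6.b]{Sol-Irr} does hold. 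Consequently your downward induction stalls at unitary data, and its conclusion is false: modulo the radical of $EP_{\mc H}$, $G_\C (\mc H)$ is \emph{not} spanned by discrete series classes. For example, if $\xi \in \Xi_{un}$ has $\mf R_\xi \cong \Z /2\Z$ acting by $-1$ on $T_\xi (\Xi_{un})$, Theorem \ref{thm:ArthurFormula} gives Gram matrix $\matje{1}{-1}{-1}{1}$ on the two constituents of $\pi (\xi)$: neither is a discrete series, neither lies in the radical, and since representations with different $Z(\mc S)$-characters are $EP$-orthogonal, neither lies in the span of discrete series plus the radical. The viable repair is the route you mention only parenthetically: reduce modulo properly induced classes to tempered classes via the genuine Langlands classification (this is Lemma \ref{lem:Elltemp}), use orthogonality of distinct $Z(\mc S)$-characters, and read off positive semidefiniteness on each tempered block from Theorem \ref{thm:Ext}, i.e.\ from the nonnegative weights $\det (1-r)_{E_\xi^*} \geq 0$ appearing in Theorem \ref{thm:ArthurFormula}; this is legitimate within the paper's logical order, since Theorem \ref{thm:Ext} does not depend on Theorem \ref{thm:EP}.
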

\begin{proof}
See Proposition 3.4 and Theorem 3.5 of \cite{OpSo1}. We note that the symmetry
of $EP_{\mc H}$ is not automatic, it is proved via part (a) for $\ep = 0$ and a more detailed
study of $EP_{W} = EP_{\mc H (\mc R ,q^0)}$ \cite[Theorem 3.2]{OpSo1}.
\end{proof}

The pairing $EP_{\mc H}$ extends naturally to a Hermitian form on $G_\C (\mc H)$, say complex
linear in the second argument. By Theorem \ref{thm:EP}.c it factors through the quotient
\[
\mr{Ell} (\mc H) := G_\C (\mc H) \Big/ \sum_{P \subset F_0, \R P \neq \mf a^*}
\mr{Ind}_{\mc H^P}^{\mc H} \big( G_\C (\mc H^P) \big)
\]
Following \cite{Opd-ICM} we call Ell$(\mc H)$ the space of elliptic characters of $\mc H$. Notice
that Ell$(\mc H) = 0$ and $EP_{\mc H} = 0$ if the root datum $\mc R$ is not semisimple.

\begin{lem}\label{lem:Elltemp}
The composite map $G_\C (\mc S) \to G_\C (\mc H) \to \mr{Ell} (\mc H)$ is surjective.
\end{lem}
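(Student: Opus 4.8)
The plan is to reduce the computation of $\mr{Ell}(\mc H)$ to discrete series by means of the Langlands classification of $\mr{Irr}(\mc H)$. First I would dispose of the case where $\mc R$ is not semisimple: then $\mr{Ell}(\mc H) = 0$ and the statement is vacuous. So assume $\mc R$ is semisimple; then $F_0$ spans $\mf a^*$, so for $P \subseteq F_0$ we have $\R P = \mf a^*$ if and only if $P = F_0$, and the subspace divided out in the definition of $\mr{Ell}(\mc H)$ is exactly $\sum_{P \subsetneq F_0} \mr{Ind}_{\mc H^P}^{\mc H}\big( G_\C(\mc H^P) \big)$.

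The first substantive step is to invoke the Langlands classification underlying Theorem \ref{thm:parametrizationOfDual} (from \cite{Sol-Irr}) in the form: the classes of the standard modules $\pi(\xi^+)$, with $\xi^+ = (P,\delta,t)$ ranging over $\Xi^+$, span $G_\C(\mc H)$. This is the usual triangularity statement --- within each $Z(\mc H)$-central character block of $\mr{Mod}_f(\mc H)$, which is finite, the transition matrix between irreducible classes and standard modules is triangular with respect to the partial order on induction data given by $\norm{cc_P(\delta)}$, hence invertible over $\Z$. The second step is that for $P \subsetneq F_0$ we have $\pi(\xi^+) = \mr{Ind}_{\mc H^P}^{\mc H}(\delta \circ \phi_t)$, so $[\pi(\xi^+)]$ lies in $\mr{Ind}_{\mc H^P}^{\mc H}\big( G_\C(\mc H^P) \big)$ and therefore maps to $0$ in $\mr{Ell}(\mc H)$. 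Combining these two steps shows that $\mr{Ell}(\mc H)$ is already spanned by the images of the standard modules with $P = F_0$.

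The third step identifies those remaining standard modules. Because $R_0^\vee$ spans $\mf a$, one has $\mc R^{F_0} = \mc R_{F_0} = \mc R$, so $\mc H^{F_0} = \mc H_{F_0} = \mc H$ and $\mr{Ind}_{\mc H^{F_0}}^{\mc H}$ is the identity functor; moreover $X^{F_0} = X/(X \cap \Q F_0) = 0$ since $F_0$ spans $\mf a^*$, so $T^{F_0}$ is trivial and the only standard module with $P = F_0$ is $\pi(F_0,\delta,1) = \delta$, a discrete series representation of $\mc H$. Since $\delta$ is tempered, it is a finite dimensional $\mc S$-module, so $[\delta]$ lies in the image of $G_\C(\mc S) \to G_\C(\mc H)$; hence $G_\C(\mc S) \to G_\C(\mc H) \to \mr{Ell}(\mc H)$ is surjective. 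The only ingredient that is not pure bookkeeping with the cones and subtori recalled in the first section is the spanning of $G_\C(\mc H)$ by standard modules in the first step, i.e.\ the triangularity part of the Langlands classification, which I would simply cite from \cite{Sol-Irr}; I expect this to be the one point requiring external input.
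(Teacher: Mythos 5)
Your steps 2 and 3 are sound, but step 1 is false, and the argument collapses there. Theorem \ref{thm:parametrizationOfDual} does \emph{not} give a bijection between $\mr{Irr}(\mc H)$ and induction data $(P,\delta,t)$ with $\delta$ a discrete series: the map $\rho \mapsto \mc W\xi^+(\rho)$ is only finite-to-one, the packet $\mr{Irr}_{\mc W\xi}(\mc H)$ having $|\mr{Irr}(\C[\mf R_\xi,\kappa_\xi])|$ elements. Hence the ``transition matrix'' between the classes $[\pi(\xi^+)]$, $\xi^+ \in \Xi^+$, and the irreducible classes in a central character block is rectangular, and triangularity with respect to $\norm{cc_P(\delta)}$ cannot make it invertible; in general the standard modules $\pi(\xi^+)$ do \emph{not} span $G_\C(\mc H)$. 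Concretely, for the Iwahori--Hecke algebra of $SL_2(\Q_p)$ (a rank-one semisimple root datum with equal parameters) there is a $W_0$-fixed point $t_0 \in T_{un}$ of order two where $\mf R_{\xi} \cong \Z / 2\Z$ with trivial cocycle, so $\pi(\xi) = \pi^+ \oplus \pi^-$ with $\pi^\pm$ irreducible tempered, while no discrete series of $\mc H$ has central character $W_0 t_0$. In that block the only standard module is $\pi(\xi)$ itself, so the span of standard modules meets the block in the line $\C\,([\pi^+]+[\pi^-])$, not the whole two-dimensional space. The same example shows that your final conclusion --- that $\mr{Ell}(\mc H)$ is spanned by images of discrete series --- is strictly stronger than the lemma and is wrong: $[\pi^+]$ has nonzero image in $\mr{Ell}(\mc H)$, yet all your spanning classes map either to $0$ or to discrete series classes, of which there are none in this block. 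This is precisely why the lemma is stated with $G_\C(\mc S)$, i.e.\ with all tempered representations, rather than with discrete series.

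The correct input, and the route the paper takes, is the genuine Langlands classification: irreducibles are in bijection with data $(P,\sigma,t)$ where $\sigma$ is an irreducible \emph{tempered} $\mc H_P$-representation and $|t| \in T^{P++}$ is strictly positive, and by \cite[Lemma 2.2.6.b]{Sol-Irr} the difference $\pi(P,\sigma,t) - L(P,\sigma,t)$ is a sum of Langlands quotients $L(Q,\tau,s)$ with $\norm{cc_P(\sigma)} < \norm{cc_Q(\tau)}$. Here the parametrization is a bijection, so one can induct on $\norm{cc}$ (which takes finitely many values for a fixed central character): modulo $\sum_{P \neq F_0} \mr{Ind}_{\mc H^P}^{\mc H}(G_\C(\mc H^P))$ every irreducible becomes a combination of classes with $P = F_0$, and in the semisimple case these are irreducible tempered representations, hence come from $G_\C(\mc S)$. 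If you want to keep your bookkeeping with cones and subtori, you can graft it onto this version of the classification; with the discrete-series standard modules it does not work.
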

\begin{proof}
We may and will assume that $\mc R$ is semisimple. We have to show that every irreducible
$\mc H$-representation $\pi$ can be written as a linear combination of tempered representations
and of representations induced from proper parabolic subalgebras.
Recall \cite[Section 2.2]{Sol-Irr} that a Langlands datum is a triple $(P,\sigma,t)$ such that
\begin{itemize}
\item $P \subset F_0$ and $\sigma$ is an irreducible tempered $\mc H_P$-representation;
\item $t \in T^P$ and $|t| \in T^{P++}$.
\end{itemize}
The Langlands classification \cite[Theorem 2.2.4]{Sol-Irr} says that the $\mc H$-representation
$\pi (P,\sigma,t)$ has a unique irreducible quotient $L(P,\sigma,t)$ and that there is
(up to equivalence) a unique Langlands datum such that $L(P,\sigma,t) \cong \pi$.

If $P = F_0$, then $T^P = \{1\}$ because $\mc R$ is semisimple.
So $\mc H_P = \mc H^P = \mc H$ and $\pi \cong \sigma$, which by definition is tempered.

Therefore we may suppose that $P \neq F_0$. Since $\pi (P,\sigma,t)$ is induced from $\mc H^P$,
we have $\pi = L(P,\sigma,t) - \pi (P,\sigma,t)$ in Ell$(\mc H)$. By \cite[Lemma 2.2.6.b]{Sol-Irr}
\[
\pi (P,\sigma,t) - L (P,\sigma,t) \in G_\C (\mc H)
\]
is a sum of representations $L(Q,\tau,s)$ with $P \subset Q$ and
$\norm{cc_P (\sigma)} < \norm{cc_Q (\tau)}$, where $cc_P (\sigma)$ is as in
\eqref{eq:ccdelta}. Given the central character of $\pi$ (an element of $T/W_0$),
there are only finitely many possibilities for $cc_P (\sigma)$. Hence we can deal with
the representations $L(Q,\tau,s)$ via an inductive argument.
\end{proof}

From \cite[Lemma 4.2.3.a]{Sol-Irr} we know that $\tilde \sigma_0 : \mr{Mod}_f (\mc H) \to
\mr{Mod}_f (W)$ commutes with parabolic induction, so it induces a linear map
\begin{equation}
\sigma_{\mr{Ell}} : \mr{Ell}(\mc H) \to \mr{Ell}(W) = \mr{Ell} \big( \mc H (\mc R ,q^0) \big) .
\end{equation}

\begin{thm}\label{thm:sigmaEll}
The pairings $EP_{\mc H}$ and $EP_{W}$ induce Hermitian inner products on respectively
Ell$(\mc H)$ and Ell$(W)$, and the map $\sigma_{\mr{Ell}}$ is an isometric bijection.
\end{thm}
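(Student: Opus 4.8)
We may assume throughout that $\mc R$ is semisimple, since both sides vanish otherwise; the plan is to reduce everything to the explicit formula \eqref{eq:ArKaScSt} of Theorem \ref{thm:Ext}, the known structure of the elliptic pairing for finite groups, and the scaling functors $\tilde\sigma_\ep$. By Theorem \ref{thm:EP}.b the form $EP_{\mc H}$ on $G_\C(\mc H)$ is symmetric and positive semidefinite, and by Theorem \ref{thm:EP}.c its radical contains $\sum_{P\subsetneq F_0}\mr{Ind}_{\mc H^P}^{\mc H}\big(G_\C(\mc H^P)\big)$; hence it descends to a symmetric positive semidefinite form on $\mr{Ell}(\mc H)$, and likewise $EP_W$ descends to $\mr{Ell}(W)=\mr{Ell}(\mc H(\mc R,q^0))$. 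The real content of the first assertion is therefore positive \emph{definiteness}, i.e.\ that $\mathrm{rad}(EP_{\mc H})$ equals the parabolically induced subspace and nothing larger.

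For positive definiteness I would argue as follows. By \eqref{eq:5} and Lemma \ref{lem:Elltemp}, $\mr{Ell}(\mc H)$ is spanned by images of tempered characters, so it suffices to analyse $EP_{\mc S}$ on $G_\C(\mc S)$. Since $\mr{Ext}^n_{\mc S}(\pi,\pi')=0$ for representations with distinct tempered central characters, $EP_{\mc S}$ is block diagonal over $\Xi_{un}/\mc W$, and on the block attached to $\mc W\xi$ Theorem \ref{thm:Ext} identifies it, via \eqref{eq:ArKaScSt}, with the pairing $\langle\rho,\rho'\rangle_\xi=|\mf R_\xi|^{-1}\sum_{r\in\mf R_\xi}|d(r)|\,\mr{tr}_\rho(r)\,\overline{\mr{tr}_{\rho'}(r)}$ on $G_\C(\C[\mf R_\xi,\kappa_\xi])$, with $d(r)=\det_{T_\xi(\Xi_{un})}(1-r)$. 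This is precisely the (twisted) elliptic pairing of the finite group $\mf R_\xi$ acting on $T_\xi(\Xi_{un})$ of \cite[Section 5.6]{Ree} and \cite[Chapter 3]{OpSo1}, whose radical is spanned by the characters induced from the subgroups $\mf R_\xi^Q$ of \eqref{eq:RxiQ} with $Q\subsetneq F_0$, these being the stabilizers of the nonzero points of $T_\xi(\Xi_{un})$. By the induction formula \eqref{eq:indPhi}, each such induced character corresponds under the Knapp--Stein bijection of Theorem \ref{thm:KnappStein} to a constituent of $\mr{Ind}_{\mc H^Q}^{\mc H}$ of a tempered $\mc H^Q$-module, hence lies in $\sum_{P'\subsetneq F_0}\mr{Ind}_{\mc H^{P'}}^{\mc H}\big(G_\C(\mc H^{P'})\big)$. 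Combining this with the reverse containment above gives $\mathrm{rad}(EP_{\mc H})=\sum_{P\subsetneq F_0}\mr{Ind}_{\mc H^P}^{\mc H}\big(G_\C(\mc H^P)\big)$, so $EP_{\mc H}$ is positive definite on $\mr{Ell}(\mc H)$; the identical computation with $q^0$ in place of $q$ (or directly \cite[Chapter 3]{OpSo1}) handles $EP_W$.

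It remains to treat $\sigma_{\mr{Ell}}$. By \cite[Lemma 4.2.3.a]{Sol-Irr} the functor $\tilde\sigma_0$ commutes with parabolic induction, so it carries the parabolically induced subspace into its counterpart and descends to $\sigma_{\mr{Ell}}:\mr{Ell}(\mc H)\to\mr{Ell}(W)$; by Theorem \ref{thm:EP}.a at $\ep=0$ we have $EP_W(\tilde\sigma_0\pi,\tilde\sigma_0\pi')=EP_{\mc H}(\pi,\pi')$ for all $\pi,\pi'$, so $\sigma_{\mr{Ell}}$ is an isometry, and by the previous step it is injective; in particular $\dim\mr{Ell}(\mc H)\le\dim\mr{Ell}(W)$. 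For the reverse inequality I would invoke the scaling functors again: for $\ep\in(0,1]$, $\tilde\sigma_\ep$ is a bijection on finite-length modules (Theorem \ref{thm:scalingReps}(2)) and is compatible with parabolic induction, so it induces an isometric isomorphism $\mr{Ell}(\mc H)\isom\mr{Ell}(\mc H(\mc R,q^\ep))$; using the analyticity of Theorem \ref{thm:scalingReps}(1) and the finiteness of the block decomposition of the previous step, one lets $\ep\downarrow 0$ and concludes $\dim\mr{Ell}(W)\le\dim\mr{Ell}(\mc H)$, whence $\sigma_{\mr{Ell}}$ is an isometric bijection.

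The main obstacle is the positive-definiteness step: proving that $\mathrm{rad}(EP_{\mc H})$ is \emph{exactly} the parabolically induced subspace requires both the new formula \eqref{eq:ArKaScSt} and a careful matching, through \eqref{eq:indPhi} and Theorem \ref{thm:KnappStein}, between induction of projective $\mf R_\xi$-characters and parabolic induction of tempered $\mc H$-modules, so that the radical of the finite-group elliptic pairing is transported to the right subspace. The limit $\ep\downarrow 0$ in the last step is the second delicate point, since the tempered central characters and the groups $\mf R_\xi$ genuinely change at $\ep=0$; controlling this is where one leans on the deformation results of \cite{Sol-Irr}.
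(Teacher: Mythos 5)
Your treatment of positive definiteness is correct, but it takes a genuinely different route from the paper. You prove directly that the radical of $EP_{\mc H}$ restricted to tempered virtual characters is exactly the parabolically induced part, by combining the formula \eqref{eq:ArKaScSt} (Theorem \ref{thm:ArthurFormula}, not Theorem \ref{thm:Ext} as you cite) with Reeder's computation of the radical of the elliptic pairing for the finite group $\mf R_\xi^*$ acting on $T_\xi(\Xi_{un})$, and then transport induced $\mf R_\xi^Q$-characters back through \eqref{eq:indPhi} and Theorem \ref{thm:KnappStein}.d. This is in substance the paper's proof of Corollary \ref{cor:radEP}, which the paper deliberately keeps independent of Theorem \ref{thm:sigmaEll} (precisely because that argument generalizes to $p$-adic groups); so your ordering --- radical first, definiteness of $EP_{\mc H}$ on $\mr{Ell}(\mc H)$ as a consequence --- is legitimate and not circular. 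The paper instead goes the other way: it quotes positive definiteness of $EP_W$ on $\mr{Ell}(W)$ from \cite[Theorem 3.2.b]{OpSo1}, establishes that $\sigma_{\mr{Ell}}$ is an isometric \emph{bijection}, and pulls definiteness back to $\mr{Ell}(\mc H)$. Your route buys an argument internal to the present paper; the paper's route buys brevity at the cost of two external inputs.

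The genuine gap is in your surjectivity step for $\sigma_{\mr{Ell}}$. Injectivity is fine (isometry via Theorem \ref{thm:EP}.a at $\ep=0$ plus definiteness on the source), but the proposed limit $\ep\downarrow 0$ does not yield $\dim \mr{Ell}(W)\le\dim\mr{Ell}(\mc H)$ as stated: for $\ep>0$ the functors $\tilde\sigma_\ep$ indeed give isometric isomorphisms $\mr{Ell}(\mc H)\cong\mr{Ell}(\mc H(\mc R,q^\ep))$, so nothing changes along the deformation, and all the difficulty is concentrated at $\ep=0$, where $\tilde\sigma_0$ is no longer a bijection, central characters collide, and the groups $\mf R_\xi$ and the block decomposition change discontinuously; analyticity of $\ep\mapsto\pi_\ep(N_w)$ gives no a priori control of $\dim\mr{Ell}$ in this degeneration (semicontinuity could go the wrong way). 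What is actually needed is the statement that $\tilde\sigma_0$ induces a bijection on elliptic spaces, which is exactly the external result \cite[(3.37)]{Sol-Irr} that the paper cites (together with Lemma \ref{lem:Elltemp}); your closing appeal to ``the deformation results of \cite{Sol-Irr}'' is in effect this citation, but the dimension-count sketch you give in its place would not stand on its own. If you replace that paragraph by an explicit invocation of \cite[(3.37)]{Sol-Irr} (or reprove it), the rest of your argument goes through.
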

\begin{proof}
By Lemma \ref{lem:Elltemp} and \cite[(3.37)]{Sol-Irr}, $\sigma_{\mr{Ell}}$ is a linear bijection.
According to \cite[Theorem 3.2.b]{OpSo1} $EP_{W}$ induces a Hermitian inner product on
Ell$(W)$, and by Theorem \ref{thm:EP} $\sigma_{\mr{Ell}}$ is an isometry. Therefore the
sesquilinear form on Ell$(\mc H)$ induced by $EP_{\mc H}$ is also a Hermitian inner product.
\end{proof}

\subsection{Arthur's formula}

In this subsection we prove Arthur's formula for the Euler--Poincar\'e pairing \eqref{eq:4}
for tempered representations of affine Hecke algebras \eqref{eq:ArKaScSt}.

Recall the setup of Theorem \ref{thm:Ext}.
The expression $\det (1-w )_{T_\xi (\Xi_{un})} = \det (1-w )_{\mf a^P}$
is analogous to the ``Weyl measure'' in equation \eqref{eq:elliptic} and to $d(r)$
in equation \eqref{eq:Arthur}. Notice that
\[
\det (1-w )_{T_\xi (\Xi_{un})} \geq 0
\]
because the tangent space of $\Xi_{un}$ at $\xi$ is a real representation of the finite 
group $\mc W_\xi$. Clearly $\det (1-w )_{T_\xi (\Xi_{un})} \neq 0$ if and only if $w$ acts 
without fixed points on $T_\xi (\Xi_{un}) \setminus \{0\}$, in which case we say that $w$ 
is elliptic in $\mc W_\xi$.

It is an elementary result in homological algebra that, for the purpose of computing
Euler--Poincar\'e pairings, one may replace any module by its semisimplification.
Hence in the next theorem it suffices to compute $EP_\mc{H}(\pi,\pi')$ for
$\pi,\pi'\in \mr{Mod}_{f,\mc W \xi}(\mc S)$ completely reducible. Recall that irreducible
tempered modules are unitarizable \cite[Corollary 3.23]{DeOp1}. In particular Theorem
\ref{thm:KnappStein}.d applies to $\pi$ and $\pi'$.

\begin{thm}\label{thm:ArthurFormula}
Let $\pi, \pi' \in \mr{Mod}_{f,un,\mc W \xi}(\mc S)$, as in Theorem \ref{thm:KnappStein}.d.
Denote by $\Fxi (\pi) = \mr{Hom}_{\mc H}(\pi ,\pi (\xi))$ and $\Fxi (\pi') =
\mr{Hom}_{\mc H}(\pi', \pi (\xi))$ the corresponding modules of
$\mathbb{C}[\mf{R}_\xi,\kappa_\xi]$. Then
\[
EP_{\mc H} (\pi,\pi') = | \mf R_\xi |^{-1} \sum_{r \in \mf R_\xi}
\det (1-r )_{T_\xi (\Xi_{un})} \mr{tr}_{\Fxi (\pi)}(r) \overline{\mr{tr}_{\Fxi (\pi')}(r)}  .
\]
\end{thm}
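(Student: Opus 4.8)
The plan is to combine Theorem~\ref{thm:Ext} with the definition \eqref{eq:4} of $EP_{\mc H}$ and to turn the alternating sum of dimensions of $\mf R_\xi$-invariants into a weighted character sum. By Theorem~\ref{thm:Ext},
\[
EP_{\mc H}(\pi,\pi')=\sum_{n\geq 0}(-1)^n\dim_\C\big(\Fxi(\pi)\otimes_\C\Phi_\xi(\pi')\otimes_\R\wig^n E^*_\xi\big)^{\mf R_\xi},
\]
where $\Phi_\xi(\pi')=\mr{Hom}_{\mc H}(\pi(\xi),\pi')$ is the $\C[\mf R_\xi,\kappa^{-1}_\xi]$-module of Theorem~\ref{thm:KnappStein}; this is a finite sum since $\wig^n E^*_\xi=0$ once $n>\dim E_\xi=\dim T_\xi(\Xi_{un})$. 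By \eqref{eq:1} the tensor product $\Fxi(\pi)\otimes_\C\Phi_\xi(\pi')$ is a genuine (linear) representation of the finite group $\mf R_\xi$, so the projection formula $\dim_\C M^{\mf R_\xi}=|\mf R_\xi|^{-1}\sum_{r\in\mf R_\xi}\mr{tr}_M(r)$ applies; together with multiplicativity of the trace under $\otimes$ and the standard identity $\sum_n(-1)^n\mr{tr}_{\wig^n E^*_\xi}(r)=\det(1-r)_{E^*_\xi}$ (valid after complexifying $E^*_\xi$) this gives
\[
EP_{\mc H}(\pi,\pi')=|\mf R_\xi|^{-1}\sum_{r\in\mf R_\xi}\det(1-r)_{E^*_\xi}\;\mr{tr}_{\Fxi(\pi)}(r)\;\mr{tr}_{\Phi_\xi(\pi')}(r),
\]
where $\mr{tr}_{\Fxi(\pi)}(r)$ and $\mr{tr}_{\Phi_\xi(\pi')}(r)$ denote the traces of the canonical basis elements $T_r$ of \eqref{eq:5.1}; only their product matters, and that product is the trace of $r$ on the genuine representation above. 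It then remains to identify $\mr{tr}_{\Phi_\xi(\pi')}(r)$ with $\overline{\mr{tr}_{\Fxi(\pi')}(r)}$ and $\det(1-r)_{E^*_\xi}$ with $\det(1-r)_{T_\xi(\Xi_{un})}$.

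For the first identity I would use unitarity. Since $\pi'$ and $\pi(\xi)$ are unitary $\mc H$-modules and the intertwiners $\pi(r,\xi)$ are unitary for $\xi\in\Xi_{un}$ (Theorem~\ref{thm:intOp}), the Hilbert-space adjoint $\psi\mapsto\psi^{*}$ is a conjugate-linear bijection $\Phi_\xi(\pi')\to\Fxi(\pi')$ with $(\psi\circ\pi(r,\xi)^{-1})^{*}=\pi(r,\xi)\circ\psi^{*}$; hence it conjugates the operator $T_r$ on $\Phi_\xi(\pi')$ into the operator $T_r$ on $\Fxi(\pi')$, and conjugation by a conjugate-linear bijection replaces a trace by its complex conjugate.

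The determinant identity is the point I expect to require the most care, and I would settle it by distinguishing whether the root system $R_\xi\subset\mf a^{P,*}$ is empty. If $R_\xi=\emptyset$ then $W(R_\xi)$ is trivial, so $\R[T_\xi(\Xi_{un})]^{W(R_\xi)}=S(T^*_\xi(\Xi_{un}))$ and therefore $E_\xi=m_\xi/m_\xi^2\cong T^*_\xi(\Xi_{un})$ as $\mf R_\xi$-representations, whence $E^*_\xi\cong T_\xi(\Xi_{un})$ and the two determinants coincide. If $R_\xi\neq\emptyset$, I claim both determinants vanish for every $r\in\mf R_\xi$. First, by \eqref{eq:24} $r$ permutes the positive system $R_\xi^+$, so it maps the (nonempty) open chamber into itself; hence the average over $\langle r\rangle$ of any chamber vector is a nonzero $r$-fixed vector in $T_\xi(\Xi_{un})\cong i\mf a^P$, and $\det(1-r)_{T_\xi(\Xi_{un})}=0$. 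Second, the $W(R_\xi)$-invariant Euclidean form on $\mr{span}_\R(R_\xi^\vee)\subset\mf a^P$, viewed as a quadratic polynomial on $\mf a^P$, is a degree-two $W(R_\xi)$-invariant not lying in $m_\xi^2$ (the degree-two part of $m_\xi^2$ consists of products of linear invariants, which only involve the $W(R_\xi)$-fixed directions $(\mf a^P)^{W(R_\xi)}$), so it represents a nonzero class in $E_\xi=m_\xi/m_\xi^2$; this class is $\mf R_\xi$-fixed because $\mf R_\xi$ preserves both the form and $\mr{span}_\R(R_\xi^\vee)$, so $1$ is an eigenvalue of every $r$ on $E_\xi$, equivalently on $E^*_\xi$, and $\det(1-r)_{E^*_\xi}=0$. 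In both cases $\det(1-r)_{E^*_\xi}=\det(1-r)_{T_\xi(\Xi_{un})}$; substituting this and the conjugation identity into the displayed formula yields the theorem. (It is harmless to assume $\pi,\pi'$ completely reducible, hence to invoke Theorem~\ref{thm:KnappStein}, since passing to semisimplifications changes neither side.)
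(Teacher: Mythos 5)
Your proposal is correct and follows essentially the same route as the paper: apply Theorem \ref{thm:Ext}, average characters over the R-group, and reduce $\det(1-r)_{E_\xi^*}$ to $\det(1-r)_{T_\xi(\Xi_{un})}$ by showing both vanish when $R_\xi\neq\emptyset$ (the paper phrases this via the fixed vector $\sum_{\alpha\in R_\xi^+}\alpha^\vee$ and works with $\mf R_\xi^*$ before descending to $\mf R_\xi$, whereas you work with $\mf R_\xi$ directly). Your extra justifications — the invariant quadratic class in $E_\xi$ forcing the eigenvalue $1$, and the unitary-adjoint argument identifying $\mr{tr}_{\Phi_\xi(\pi')}(r)$ with $\overline{\mr{tr}_{\Fxi(\pi')}(r)}$ — are sound and in fact fill in points the paper treats tersely.
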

\begin{proof}
By \eqref{eq:26}
\begin{align}\label{eq:20}
EP_{\mc H}(\pi,\pi') & = \sum_{n \geq 0} (-1)^n \dim_\C
\big( \Fxi (\pi) \otimes_\C \Phi_\xi (\pi') \otimes_\R \wig^n E^*_\xi \big)^{\mathfrak{R}^*_\xi} \\
\nonumber & = \sum_{n \geq 0} (-1)^n |\mf R^*_\xi |^{-1} \sum_{r \in \mf R^*_\xi}
\mr{tr}_{\Fxi (\pi)}(r) \mr{tr}_{\Phi_\xi (\pi')}(r) \mr{tr}_{\wig^n E_\xi^*}(r) \\
\nonumber & = |\mf R^*_\xi |^{-1} \sum_{r \in \mf R^*_\xi}\mr{tr}_{\Fxi (\pi)}(r)
\overline{\mr{tr}_{\Fxi (\pi')}(r)} \sum_{n \geq 0} (-1)^n \mr{tr}_{\wig^n E_\xi^*}(r) \\
\nonumber & = |\mf R^*_\xi |^{-1} \sum_{r \in \mf R^*_\xi}
\mr{tr}_{\Fxi (\pi)}(r) \overline{\mr{tr}_{\Fxi (\pi')}(r)} \det (1-r )_{E_\xi^*} .
\end{align}
Notice that this formula does not use the entire action of $\mf R^*_\xi$ on
${E_\xi^*}$, only $\det (1-r )_{{E_\xi^*}}$. This determinant is zero
whenever $r \in \mf R^*_\xi$ fixes a nonzero vector in ${E_\xi^*}$. Suppose that
$R_{\xi}$ is nonempty. Then $R_{\xi}^\vee$ is a nonempty root system in $\mf a^P$ and
$\R R_{\xi}^\vee$ can be identified with a subspace of ${E_\xi^*}$.
Every $r \in \mf R^*_\xi$ fixes
$\sum_{\alpha \in R^+_{\xi}} \alpha^\vee \in \mf a^P \setminus \{0\}$, so $r$ fixes
nonzero vectors of $T_t (U_\xi)$ and ${E_\xi^*}$. We conclude that
\[
\det (1-r )_{{E_\xi^*}} = 0 = \det (1-r )_{T_t (U_\xi)}
\]
whenever $R_{\xi}$ is nonempty. Therefore we may always replace
${E_\xi^*}$ by $T_t (U_\xi) = T_\xi (\Xi_{un})$ in \eqref{eq:20}.
So by \eqref{eq:21} and \eqref{eq:20}
\begin{equation}\label{eq:22}
EP_{\mc H} (\pi, \pi') = | \mf R^*_\xi |^{-1} \sum_{r \in \mf R^*_\xi}
\det (1-r )_{T_\xi (\Xi_{un})} \mr{tr}_{\Fxi (\pi)}(r) \overline{\mr{tr}_{\Fxi (\pi')}(r)} .
\end{equation}
Finally we want to reduce from $\mf R^*_\xi$ to $\mf R_\xi$. The action of $\mf R^*_\xi$ on 
$T_\xi (\Xi_{un})$ is defined via the quotient map $\mf R^*_\xi \to \mf R_\xi$, so that is no 
problem. By \eqref{eq:1} the $\mf R^*_\xi$-representation $\Fxi (\pi) \otimes \Phi_\xi (\pi')$ 
is actually a representation of $\mf R_\xi$, with trace
\[
\mr{tr}_{\Fxi (\pi) \otimes \Fxi (\pi')^*} (r) =
\mr{tr}_{\Fxi (\pi)}(r) \overline{\mr{tr}_{\Fxi (\pi')}(r)}.
\]
Hence any two elements of $\mf R^*_\xi$ with the same image in $\mf R_\xi$ give the same
contribution to \eqref{eq:22}.
\end{proof}

The next result follows from Theorem \ref{thm:sigmaEll} but it is also interesting to derive
it from Theorem \ref{thm:ArthurFormula}, since that proof can be generalized to reductive groups.

\begin{cor}\label{cor:radEP}
Let $\xi = (P,\delta,t) \in \Xi_{un}$ and let $\chi \in G_\C \big( \mr{Mod}_{f,un,\mc W \xi}(\mc S) \big)
\subset G_\C (\mc H)$. Then $EP_{\mc H} (\chi,\chi) = 0$ if and only if $\chi \in
\sum_{P \subset Q \subset F_0, \R Q \neq \mf a^*} \mr{Ind}_{\mc H^Q}^{\mc H} (G_\C (\mc H^Q))$.
\end{cor}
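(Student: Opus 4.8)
The plan is to reduce everything to the explicit formula of Theorem \ref{thm:ArthurFormula} and a character-theoretic computation inside the finite group $\mf R_\xi$. By Theorem \ref{thm:KnappStein}.d the functor $\Fxi$ identifies $G_\C\big(\mr{Mod}_{f,un,\mc W\xi}(\mc S)\big)$ with the Grothendieck group $G_\C(\C[\mf R_\xi,\kappa_\xi])$, and this is the same as $G_\C(\C[\mf R_\xi,1]) = G_\C(\mf R_\xi)$ once we fix a splitting (or rather, the span of the irreducible $\kappa_\xi$-representations inside the space of class functions on $\mf R_\xi$). Writing $\chi$ for the corresponding virtual character $\Fxi(\chi)$ on $\mf R_\xi$, Theorem \ref{thm:ArthurFormula} (extended sesquilinearly) gives
\[
EP_{\mc H}(\chi,\chi) = |\mf R_\xi|^{-1} \sum_{r \in \mf R_\xi} \det(1-r)_{T_\xi(\Xi_{un})}\, |\chi(r)|^2 .
\]
Since $\det(1-r)_{T_\xi(\Xi_{un})} \geq 0$ for all $r$ (the tangent space is a real $\mc W_\xi$-representation), this sum vanishes if and only if $\chi(r) = 0$ for every $r \in \mf R_\xi$ that is elliptic in $\mc W_\xi$, i.e. every $r$ acting without nonzero fixed vectors on $T_\xi(\Xi_{un})$.

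The next step is to identify the ``parabolically induced'' part of $G_\C$ on the R-group side. I would use \eqref{eq:indPhi}: for $P \subset Q \subset F_0$, the functor $\Fxi \circ \mr{Ind}_{\mc H^Q}^{\mc H}$ on tempered modules equals $\mr{Ind}_{\C[\mf R_\xi^Q,\kappa_\xi^Q]}^{\C[\mf R_\xi,\kappa_\xi]} \circ \Phi_\xi^{*Q}$, where $\mf R_\xi^Q = \{r \in \mf R_\xi : r \text{ fixes } tT^Q_{un} \text{ pointwise}\}$ by \eqref{eq:RxiQ}. Thus $\sum_{P\subset Q\subset F_0,\ \R Q\neq\mf a^*}\mr{Ind}_{\mc H^Q}^{\mc H}(G_\C(\mc H^Q))$ corresponds, under $\Fxi$, to the span of all characters induced from subgroups $\mf R_\xi^Q$ with $\mf R_\xi^Q$ having a nonzero fixed space on $T_\xi(\Xi_{un})$. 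The claim then reduces to a purely finite-group statement: for a finite group $\Gamma$ acting on a finite-dimensional real vector space $E$, a (virtual) character $\chi$ of $\Gamma$ satisfies $\chi(r)=0$ for all $r$ with $E^r = 0$ if and only if $\chi$ lies in the span of characters induced from the isotropy-type subgroups $\{r \in \Gamma : E^r \supsetneq 0, \ r \text{ fixes a given subspace}\}$ — more precisely the pointwise stabilizers of nonzero subspaces. This is exactly the content underlying the definition of $\mr{Ell}(W)$ in Reeder's work and \cite[Chapter 3]{OpSo1}: the radical of the elliptic pairing $EP_\Gamma$ (for $\Gamma$ acting on a real representation or a lattice) is precisely the span of such parabolically induced characters, and the pairing descends to a positive definite form on the quotient.

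So the cleanest route is: (i) transport the statement through $\Fxi$ and \eqref{eq:indPhi} to a statement about $EP_{\mf R_\xi}$-pairing of (twisted) characters of the finite group $\mf R_\xi$ acting on $T_\xi(\Xi_{un})$; (ii) invoke the known description of $\mr{Ell}$ for finite groups acting on real vector spaces, namely that $EP_\Gamma(\chi,\chi)=0$ iff $\chi$ is in the radical iff $\chi$ is a combination of characters induced from pointwise stabilizers of nonzero subspaces — equivalently, by the displayed formula, iff $\chi(r)=0$ for all elliptic $r$. The main obstacle I anticipate is bookkeeping rather than conceptual: one must match the proper-parabolic condition "$\R Q \neq \mf a^*$" on the Hecke side with the "nonzero fixed space" condition on $T_\xi(\Xi_{un})$, using $T_\xi(\Xi_{un}) \cong i(\mf a/\mf a_P)$ and the fact that $\mf R_\xi^Q$ fixes $tT_{un}^Q$ pointwise precisely when its fixed space on $\mf a^P/\mf a_Q = \mf a^Q$ is nonzero, i.e. when $Q \subsetneq F_0$; and one must check that every pointwise stabilizer of a nonzero rational subspace of $T_\xi(\Xi_{un})$ arises (up to conjugacy and after a harmless change of $\xi$ in its orbit) from such an $\mf R_\xi^Q$. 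Alternatively — and this is probably shortest — one simply cites Theorem \ref{thm:sigmaEll}: $EP_{\mc H}$ induces a positive definite form on $\mr{Ell}(\mc H)$, so $EP_{\mc H}(\chi,\chi)=0$ iff the image of $\chi$ in $\mr{Ell}(\mc H)$ vanishes, which by definition of $\mr{Ell}(\mc H)$ is exactly the asserted membership in $\sum_{\R Q \neq \mf a^*}\mr{Ind}_{\mc H^Q}^{\mc H}(G_\C(\mc H^Q))$; the only thing to add is that for $\chi$ supported on a single tempered block $\mc W\xi$ one may take the inducing subalgebras in the range $P \subset Q \subset F_0$, which follows from \eqref{eq:indPhi} since induction from $\mc H^Q$ with $Q \not\supset P$ contributes nothing to that block.
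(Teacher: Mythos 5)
Your main route is the same as the paper's: interpret Theorem \ref{thm:ArthurFormula} as Reeder's elliptic pairing for the finite group acting on $T_\xi(\Xi_{un})$, use his description of the radical as the span of characters induced from subgroups with nonzero fixed space, and transport back through \eqref{eq:indPhi} and Theorem \ref{thm:KnappStein}.d. However, the step you postpone as ``bookkeeping'' is exactly the content of the corollary beyond Theorem \ref{thm:EP}.c, and you do not carry it out: one must show that every subgroup $\Gamma$ with $T_\xi(\Xi_{un})^{\Gamma}\neq 0$ occurring in Reeder's description is, up to $\mc W_\xi$-conjugacy, contained in $\mf R_\xi^Q$ for a set of \emph{simple} roots $Q$ with $P\subset Q\subset F_0$ and $\R Q\neq\mf a^*$. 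The paper does this by observing that $\mf R_\xi$ consists of elements of the Weyl groupoid $\mc W$, so the fixed space $T_\xi(\Xi_{un})^{\Gamma}$ is $\mc W_\xi$-conjugate to $\mf a^Q$ for some $Q\supset P$, whence $\Gamma$ lands in $\mf R_\xi^Q$ by \eqref{eq:3} and \eqref{eq:RxiQ}; then Theorem \ref{thm:KnappStein}.d for $\mc H^Q$ together with \eqref{eq:indPhi} converts the induced $\C[\mf R_\xi^Q,\kappa_\xi^Q]$-characters into $\mr{Ind}_{\mc H^Q}^{\mc H}$ of tempered characters. Note also that Reeder's proposition concerns genuine representations: the paper applies it on the Schur extension $\mf R_\xi^*$ and restricts to the block of $G_\C(\mf R_\xi^*)$ corresponding to $\C[\mf R_\xi,\kappa_\xi]$, using that representations with different central characters of $\ker(\mf R_\xi^*\to\mf R_\xi)$ are orthogonal; your identification of $G_\C(\C[\mf R_\xi,\kappa_\xi])$ with (class functions on) $G_\C(\mf R_\xi)$ is not correct as stated, though harmless once one works on $\mf R_\xi^*$.

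Your proposed ``shortest'' alternative via Theorem \ref{thm:sigmaEll} does not yield the statement as formulated. It gives only $\chi\in\sum_{Q\subset F_0,\,\R Q\neq\mf a^*}\mr{Ind}_{\mc H^Q}^{\mc H}(G_\C(\mc H^Q))$ with no control on $Q$, and your repair --- that induction from $\mc H^Q$ with $Q\not\supset P$ ``contributes nothing to that block'' --- is false: $\mr{Ind}_{\mc H^Q}^{\mc H}$ produces constituents in the block $\mc W\xi$ whenever $Q$ contains a $W_0$-conjugate of $P$, by transitivity of induction. Even granting a conjugation fix, one would still need to know that the block component of a sum of (generally non-tempered) induced virtual characters is again a sum of characters induced from parabolic subalgebras indexed by $Q\supset P$; that is precisely what the R-group computation in the paper's proof supplies, so the shortcut does not avoid it.
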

\begin{proof}
The formula \eqref{eq:22} says (by definition) that $EP_{\mc H}(\chi,\chi')$ is the elliptic
pairing $e_{\mf R_\xi^*}(\Phi_\xi (\chi'),\Phi_\xi (\chi))$ of trace functions on $\mf R_\xi^*$
with respect to the $\mf R_\xi^*$-representation $T_\xi (\Xi_{un})$, in the sense of Reeder
\cite{Ree}. By \cite[Proposition 2.2.2]{Ree} the radical of the Hermitian form $e_{\mf R_\xi^*}$
is $\sum_\Gamma \mr{Ind}_{\Gamma}^{\mf R_\xi^*} (G_\C (\Gamma))$, where the sum runs
over all subgroups $\Gamma \subset R_\xi^*$ for which $T_\xi (\Xi_{un})^\Gamma \neq 0$.
Clearly it suffices to consider $\Gamma$'s that contain the central subgroup $Z_\xi =
\ker (\mf R_\xi^* \to \mf R_\xi)$. From \eqref{eq:22} we see that $\mf R_\xi^*$-representations
with different $Z_\xi$-character are orthogonal for $e_{\mf R_\xi^*}$, so the above remains
valid if we restrict to $p G_\C (\mf R_\xi^*) = G_\C (\C [\mf R_\xi,\kappa_\xi])$. In particular
\begin{equation}\label{eq:ell=0}
e_{\mf R_\xi^*} (\Phi_\xi (\chi),\Phi_\xi (\chi)) = 0 \; \Longleftrightarrow \;
\Phi_\xi (\chi) \in \sum\nolimits_{\Gamma^*} \mr{Ind}_{p \C [\Gamma^*]}^{p \C [\mf R_\xi^*]}
\big( G_\C (p \C [\Gamma^*]) \big) ,
\end{equation}
where $Z_\xi \subset \Gamma^* \subset \mf R_\xi^*$ and $T_\xi (\Xi_{un})^{\Gamma^*} \neq 0$.
Since $\mf R_\xi$ is built from elements of the Weyl groupoid $\mc W ,\;
T_\xi (\Xi_{un})^{\Gamma^*}$ is $\mc W_\xi$-conjugate to $\mf a^Q$ for some set of simple roots
$Q \supset P$. In view of \eqref{eq:3} and \eqref{eq:RxiQ} this means that $\Gamma^* / Z_\xi$
is conjugate to a subgroup of $\mf R_\xi^Q$. Thus the right hand side of \eqref{eq:ell=0} becomes
\[
\Phi_\xi (\chi) \in \sum_{P \subset Q \subset F_0, \mf a^Q \neq 0}
\mr{Ind}_{\C \big[ \mf R_\xi^Q,\kappa_\xi^Q \big]}^{\C [\mf R_\xi,\kappa_\xi]}
\big( G_\C (\C [\mf R_\xi^Q,\kappa_\xi^Q]) \big) .
\]
Theorem \ref{thm:KnappStein}.d implies that
\[
\Phi_\xi^Q : G_\C \big( \mr{Mod}_{f,un,\mc W^Q \xi}(\mc S (\mc R^Q,q)) \big) \to
G_\C (\C [\mf R_\xi^Q,\kappa_\xi^Q])
\]
is bijective, which together with \eqref{eq:indPhi} allows us to 
rephrase \eqref{eq:ell=0} in $G_\C (\mc H)$ as
\[
EP_{\mc H} (\chi,\chi) = 0 \; \Longleftrightarrow \;
\chi \in \sum_{P \subset Q \subset F_0, \mf a^Q \neq 0} \mr{Ind}_{\mc H^Q}^{\mc H}
\big( G_\C \big( \mr{Mod}_{f,un,\mc W^Q \xi}(\mc S (\mc R^Q,q)) \big) \big) .
\]
Finally we note that the condition $\mf a^Q \neq 0$ is equivalent to $\R Q \neq \mf a^*$.
\end{proof}
\vspace{1mm}

\section{The case of reductive $p$-adic groups}
\label{sec:padic}

Here we discuss how the proofs of our main results can be adjusted so that they apply
to tempered representations of reductive groups over local non-archimedean fields.

Let $L$ be a reductive $p$-adic group, let $\mc H (L)$ be its Hecke algebra and let Mod$(\mc H (L))$
be the category of smooth $L$-representations. Let $K \subset L$ be any compact open subgroup and
consider the subalgebra $\mc H (L,K)$ of $K$-biinvariant functions in $\mc H (L)$. According to
\cite[Section 3]{BeDe} there exist arbitrarily
small "good" compact open $K$ such that Mod$(\mc H (L,K))$ is equivalent to the category
consisting of those smooth $L$-representations that are generated by their $K$-invariant vectors.
Here the adjective good means that the latter category is a Serre subcategory of Mod$(\mc H (L))$.
It also is known from \cite{BeDe} that these subcategories exhaust Mod$(\mc H(L))$.

The tempered smooth $L$-representations are precisely those smooth representations that extend
in a continuous way to modules over the Harish-Chandra--Schwartz algebra $\mc S (L)$. 
All extensions of admissible $\mc S(L)$-modules can be studied with subalgebras $\mc S (L,K)$ 
of $K$-biinvariant functions, see \cite{ScZi1,OpSo3}. Clearly \eqref{eq:FourierIso} is similar 
to the Plancherel isomorphism for the $\mc S (L)$ \cite{ScZi2,Wal}. One can easily deduce from 
\cite{Wal} that the subalgebra $\mc S (L,K)$ has exactly the same shape as \eqref{eq:25}, see 
\cite[Theorem 10]{Sol-Chern}.

Suppose that $V \in \mr{Mod}(\mc S (L))$ is admissible. By comparing explicit projective 
resolutions of $V$ as an $\mc H (L)$-module and as an $\mc S(L)$-module, it is shown 
in \cite[Proposition 4.3.a]{OpSo3} that
\begin{equation}\label{eq:7.8}
\mr{Ext}_{\mc H (L)}^n (V,V') \cong \mr{Ext}_{\mc S (L)}^n (V,V')
\quad \text{for all} \quad V' \in \mr{Mod}(\mc S (L)) .
\end{equation}
Here we work in the category of all modules over $\mc H (L)$ or $\mc S (L)$, as advocated 
by Schneider and Zink \cite{ScZi1,ScZi2}. Assume that $V$ and $V'$ are generated by their
$K$-invariant vectors for some good compact open subgroup $K$. Then \cite[Proposition 4.3.b]{OpSo3} 
says that \eqref{eq:7.8} is also isomorphic to $\mr{Ext}_{S (L,K)}^n (V^K,{V'}^K)$. In case that 
moreover ${V'}^K$ is a Fr\'echet $\mc S (L,K)$-module, \cite[Proposition 4.3.c]{OpSo3} 
provides a natural isomorphism
\begin{equation}\label{eq:7.9}
\mr{Ext}_{S (L,K)}^n (V^K,{V'}^K) \cong \mr{Ext}_{\mr{Mod}_{Fr}(S (L,K))}^n (V^K,{V'}^K) .
\end{equation}
Here the subscript 'Fr' indicates the category of Fr\'echet modules with exact sequences that
are linearly split. These results can play the role of \cite[Corollary 3.7]{OpSo1}
in the proofs of Corollary \ref{cor:extloc} and Theorem \ref{thm:Ext} for $p$-adic groups.

Alternatively, one can consider $\mc S(L)$ as a bornological algebra. It is natural to endow 
$\mc S (L)$ with the precompact bornology and $\mc H (G)$ with the fine bornology, see 
\cite{Mey-Ho}. For all bornological $\mc S (L)$-modules $V,V'$ and all $n \in \Z_{\geq 0}$
\begin{equation}\label{eq:Meyer}
\mr{Ext}_{\mr{Mod}_{bor}(\mc H (L))}^n (V,V') \cong 
\mr{Ext}_{\mr{Mod}_{bor}(\mc S (L))}^n (V,V')
\end{equation}
by \cite[Theorem 21]{Mey-Ho}. It follows quickly from the definition of 
the fine bornology \cite[pp. 364--365]{Mey-Ho} that
\[
\mr{Ext}_{\mc H (L)}^n (V,V') \cong \mr{Ext}_{\mr{Mod}_{bor}(\mc H (L))}^n (V,V')
\quad \text{if } V \text{ is admissible.}
\]
Let $P$ be a parabolic subgroup and $M$ a Levi factor of $P$.
Let $\sigma$ be an irreducible smooth unitary $M$-representation which is square-integrable
modulo the center of $M$. The $L$-representation $\mathcal{I}_P^L (\sigma)$, the
smooth normalized parabolic induction of $\sigma$, plays the role of $\pi (\xi)$.
Choose good compact open subgroups $K_i \subset L$
such that $\mathcal{I}_P^L (\sigma)$ is generated by its $K_i$-invariant vectors. We may
assume that the $K_i$ decrease when $i \in \N$ increases and that $\bigcap_i K_i = \{1\}$,
so $\mc S (L) = \bigcup_i \mc S (L,K_i)$. Harish-Chandra's Plancherel isomorphism for
$L$ shows that $\mc S (L)$ contains a direct summand
\begin{equation}\label{eq:SLsigma}
\mc S (L)^\sigma = \bigcup\nolimits_i \mc S (L,K_i)^\sigma
\end{equation}
which governs all tempered $L$-representations in the block determined by $\sigma$.
Moreover the algebras $\mc S (L,K_i )^\sigma$ are nuclear Fr\'echet of the form
\eqref{eq:3.1}, and they are all Morita equivalent. The algebra $\mc S(L)^\sigma$ is
Morita equivalent to $\mc S (L,K_i )^\sigma$ via the bimodules $\mc S(L)^\sigma e_{K_i}$
and $e_{K_i} \mc S(L)^\sigma$, where $e_{K_i} \in \mc H (L)$ is the idempotent
associated to $K_i$. However, $\mc S(L)^\sigma$ is not a Fr\'echet algebra, only an
inductive limit of Fr\'echet algebras. 

All the algebras in \eqref{eq:SLsigma} have the same center, which by \cite{Wal} is isomorphic
to $C^\infty (T)^{\mc W}$ for a suitable compact torus $T$ and a finite group $\mc W$.
The representation $\sigma$ determines a point of $T$ and an orbit $\mc W \sigma
\subset T$. Let $m^\infty_{\mc W \sigma} \subset C^\infty (T)^{\mc W}$ be the ideal
of functions that are flat at $\mc W \sigma$. We define
\begin{align*}
& \widehat{\mc S (L)}_{\mc W \sigma} =
\mc S (L)^\sigma / \overline{m^\infty_{\mc W \sigma} \mc S (L)^\sigma} , \\
& \mr{Mod}_{bor}^{\mc W \sigma, tor}(\mc S (L)) =
\{ V \in \mr{Mod}_{bor}(\mc S (L)) : m^\infty_{\mc W \xi} V = 0 \} .
\end{align*}
All the results from Sections \ref{sec:Ext} and \ref{sec:alginv} also hold for these objects,
with some obvious changes of notation. We have to be careful only when we want to
apply Theorem \ref{thm:Moritaeq} to $\mc S (L)^\sigma$. The complication is that over there
we work with a finite dimensional representation $V$ of some finite
group $G$, which in the case under consideration is a central extension $\mf R_\sigma^*$
of the R-group $\mf R_\sigma$ from \cite{Sil}. That is enough for $\mc S (L,K_i )^\sigma$,
but for $\mc S (L)^\sigma$ we are naturally lead to the infinite dimensional module
$V = \mc I_P^L (\sigma)$. Then we must replace the bimodules \eqref{eq:bimodules} by
\begin{equation}\label{eq:moduleslim}
\begin{array}{lll}
D_1 & = & \lim\limits_{i \to \infty} C^\infty (T) \otimes \mr{Hom}_\C
\big(\mc I_P^L (\sigma)_{K_i},V' \big), \\
D_2 & = & \lim\limits_{i \to \infty} C^\infty (T) \otimes \mr{Hom}_\C
\big( V',\mc I_P^L (\sigma)^{K_i} \big),
\end{array}
\end{equation}
where the subscript $K_i$ means coinvariants and the superscript $K_i$ means invariants.
In view of \eqref{eq:SLsigma}, the proof of Theorem \ref{thm:Moritaeq} goes through.

The proof of Theorem \ref{thm:Ext} relies on two deep results: the Knapp--Stein linear
independence theorem (Theorem \ref{thm:KnappStein}) and the Plancherel isomorphism for
$\mc S$ \eqref{eq:FourierIso}. These also hold for the algebras $\mc S(L), \mc S (L)^\sigma$
and $\mc S (L,K_i)^\sigma$, as we indicated above, so our proof remains valid.
We obtain equivalences of exact categories
\begin{equation}\label{eq:equivCat}
\mr{Mod}^{\mc W \sigma,tor}_{bor}\big( \mc S (L) \big) \cong
\mr{Mod}_{bor} \big( \widehat{\mc S (L)}_{\mc W \sigma} \big)
\cong \mr{Mod}_{bor} \big( p_\sigma (\widehat{S (E_\sigma)} \rtimes \mf R^*_\sigma) \big) .
\end{equation}
We note however that these equivalences do not preserve Fr\'echet modules, essentially because
$\mc S (L)$ is too large to admit enough such modules. The first part of \eqref{eq:equivCat} does
not change the modules, the second part comes from a Morita equivalence of bornological algebras.
To find the Morita bimodules, we start with \eqref{eq:moduleslim}. From the proof of
Theorem \ref{thm:Moritaeq} we see that we must take $\mf R_\sigma^*$-invariants and that we
have to replace $C^\infty (T)$ by a formal power series ring, which is none other than
$\widehat{S (E_\sigma)}$. So the functor from left to right in \eqref{eq:moduleslim} is given
by the bornological tensor product with
\begin{equation}
D = \lim_{i \to \infty} \Big( \widehat{S (E_\sigma)} \otimes \mr{Hom}_\C
\big( \mc I_P^L (\sigma)_{K_i},p_\sigma \C [\mf R_\sigma^*] \big) \Big)^{\mf R_\sigma^*}
\end{equation}
over $\widehat{\mc S (L)}_{\mc W \sigma}$.
For the opposite direction we can tensor with the bimodule
\begin{equation}
D^\vee = \lim_{i \to \infty} \Big( \widehat{S (E_\sigma)} \otimes \mr{Hom}_\C
\big( p_\sigma \C [\mf R_\sigma^*], \mc I_P^L (\sigma)^{K_i} \big) \Big)^{\mf R_\sigma^*}
\end{equation}
over the algebra
\[
p_\sigma \big( \widehat{S (E_\sigma)} \rtimes \mf R^*_\sigma \big) \cong \big( \widehat{S (E_\sigma)} 
\otimes \mr{End}_\C (p_\sigma \C [\mf R_\sigma^*]) \big)^{\mf R_\sigma^*} = \big( \widehat{S (E_\sigma)}
\otimes \mr{End}_\C (\C [\mf R_\sigma ,\kappa^{-1}_\sigma]) \big)^{\mf R_\sigma^*} .
\]
To compute the Ext-groups we need the fundamental result \eqref{eq:Meyer}. Using that the
proof of Theorem \ref{thm:Ext} goes through, thus establishing Theorem \ref{thm:2}.

Now it is clear that our proof of \ref{thm:ArthurFormula} is also valid for $L$.
To formulate the result, consider the real Lie algebra $\mr{Hom}(X^* (M), \R)$
of the center of $M$. The group $\mf R_\sigma$ acts on $\mr{Hom} (X^* (M), \R)$
and we denote by $d(r)$ the determinant of the linear transformation $1 - r$.
Let $\pi$ be a finite length unitary tempered $L$-representation all whose irreducible constituents
occur in $\mathcal{I}_P^L(\sigma)$ and let $\rho = \textup{Hom}_L (\pi,\mathcal{I}_P^L(\sigma))$
be the projective $\mf R_\sigma$-representation associated to it via the Knapp--Stein Theorem. Then
\begin{equation}\label{eq:7.1}
EP_L (\pi,\pi^\prime) = |\mf R_\sigma|^{-1} \sum_{r \in \mf R_\sigma} |d(r)| \,
\mr{tr}_\rho (r) \, \overline{\mr{tr}_{\rho'} (r)} .
\end{equation}
To relate this to Kazhdan's elliptic pairing we need some additional properties
of the Euler--Poincar\'e pairing in Mod$(\mc H (L))$. Recall that $G_\C (L)$ is the Grothendieck
group of the category of admissible $L$-representations, tensored with $\C$. Since the elliptic
pairing $e_L (\pi,\pi')$ depends only on the traces of $\pi$ and $\pi'$, it factors via the canonical
map from admissible representations to the Grothendieck group. By standard homological algebra
$EP_L$ has the same property. We extend pairings to $G_\C (L)$ by making them conjugate linear
the first argument and linear in the second.

\begin{lem}\label{lem:7.1}
Suppose that the center $Z(L)$ of $L$ is compact. Then $G_\C (L)$ is spanned by
the union of all irreducible tempered $L$-representations and all
representations parabolically induced from proper Levi subgroups of $L$.
\end{lem}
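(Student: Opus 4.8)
The plan is to repeat the proof of Lemma~\ref{lem:Elltemp} almost verbatim, replacing the Langlands classification for affine Hecke algebras by the Langlands classification for reductive $p$-adic groups. Write $G_\C (L)$ additively. The hypothesis that $Z(L)$ is compact means exactly that the maximal split subtorus of the centre of $\mathbf L$ is trivial, i.e.\ $\mr{Hom}(X^*(L),\R) = 0$; this is the $p$-adic counterpart of the semisimplicity of $\mc R$ used in Lemma~\ref{lem:Elltemp}.

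First I would invoke the Langlands classification \cite{Sil}: every irreducible smooth $L$-representation $\pi$ is the unique irreducible quotient $J(P,\sigma,\nu)$ of a standard module $\mc I_P^L (\sigma_\nu)$, where $P = MN$ is a standard parabolic subgroup, $\sigma$ an irreducible tempered representation of the Levi factor $M$, and $\nu$ a point in the open positive chamber of the real vector space $\mr{Hom}(X^*(M),\R)$, this datum being unique up to conjugacy. If $P = L$, then $M = L$ and $\mr{Hom}(X^*(M),\R) = \mr{Hom}(X^*(L),\R) = 0$ by the hypothesis, so $\nu = 0$ and $\pi \cong \sigma$ is tempered. Suppose now $P \subsetneq L$, so that $M$ is a proper Levi subgroup. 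Since $\mc I_P^L (\sigma_\nu)$ has finite length, in $G_\C (L)$ we have
\[
[\mc I_P^L (\sigma_\nu)] \;=\; [\pi] \;+\; \sum\nolimits_j [\pi_j]
\]
for certain irreducible constituents $\pi_j \not\cong \pi$. The class $[\mc I_P^L (\sigma_\nu)]$ is that of a representation parabolically induced from the proper Levi $M$, so it lies in the asserted span, and it therefore suffices to show that each $[\pi_j]$ does.

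Each $\pi_j$ is again an irreducible $L$-representation, $\pi_j = J(Q_j,\tau_j,\mu_j)$, and all the $\pi_j$ have the same cuspidal support, hence the same central character, as $\pi$. The decisive input is the monotonicity built into the Langlands classification — the $p$-adic analogue of \cite[Lemma~2.2.6.b]{Sol-Irr}, which in the reductive setting comes from the partial order on Langlands data (cf.\ \cite[Section~2]{Art}, \cite{Sil}): for every $j$ one has $P \subseteq Q_j$ up to conjugacy and $\norm{cc(\tau_j)} > \norm{cc(\sigma)}$, where $cc(-)$ denotes the norm of the real part of the central character of the tempered inducing datum. For a fixed cuspidal support there are only finitely many Levi subgroups containing the supercuspidal Levi, and for each of them only finitely many tempered representations with compatible cuspidal support, so $\norm{cc(-)}$ takes only finitely many values on the irreducibles with the central character of $\pi$. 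Hence one may argue by descending induction on $\norm{cc(\sigma)}$: in the base case this quantity is maximal, so the sum over $j$ is empty and $[\pi] = [\mc I_P^L(\sigma_\nu)]$ lies in the span; in general each $[\pi_j]$ lies in the span by the inductive hypothesis, and therefore so does $[\pi] = [\mc I_P^L(\sigma_\nu)] - \sum_j [\pi_j]$. I expect the main obstacle to be exactly this monotonicity statement: one must isolate the correct functional on Langlands parameters and check that the induction is well-founded, which is the reductive-group version of \cite[Lemma~2.2.6]{Sol-Irr} and underlies the unitriangularity of the transition matrix between standard modules and irreducibles in $G_\C(L)$; once it is in hand, the rest is a routine transcription of the proof of Lemma~\ref{lem:Elltemp}.
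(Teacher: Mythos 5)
Your proposal is correct and follows essentially the same route as the paper: the paper's proof simply invokes the Langlands classification for reductive $p$-adic groups (citing \cite[Section XI.2]{BoWa} and \cite{Kon} rather than \cite{Sil}) and notes that the argument is analogous to that of Lemma \ref{lem:Elltemp}, which is exactly the transcription you carry out, including the descending induction on the norm of the exponent of the tempered inducing datum and the finiteness coming from the fixed cuspidal support.
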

\begin{proof}
This follows from the Langlands classification, for which we refer to \cite[Section XI.2]{BoWa}
and \cite{Kon}. The argument is analogous to the proof of Lemma \ref{lem:Elltemp}.
\end{proof}

The next result is known from \cite[Lemma III.4.18]{ScSt},  but the authors found it useful to
have a simpler proof that does not depend on the projective resolutions constructed in \cite{ScSt}.

\begin{prop}\label{prop:7.2}
\enuma{
\item Let $P$ be a parabolic subgroup of $L$ with Levi factor $M$, such that $Z(M)$ is not
compact. Then $\mc I_P^L (G_\C (M))$ lies in the radical of $EP_L$.
\item $EP_L$ is a Hermitian form on $G_\C (L)$.
\item $EP_L$ is positive semidefinite and its radical is $\sum_{P,M} \mc I_P^L (G_\C (M))$,
where the sum runs over $P$ and $M$ as in part (a).
}
\end{prop}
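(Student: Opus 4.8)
The plan is to prove (a) first, then deduce (b) and (c) from it together with the explicit tempered formula \eqref{eq:7.1}, treating the cases $Z(L)$ noncompact and $Z(L)$ compact separately. The single new ingredient needed is the vanishing $EP_L \equiv 0$ whenever $Z(L)$ is noncompact, which I would prove by a deformation argument rather than by the explicit projective resolutions of \cite{ScSt}.

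To prove that vanishing, fix admissible finite-length $\pi,\pi'$ and let $\Psi(L)$ be the complex torus of unramified characters of $L$, which is positive-dimensional because $Z(L)$ is noncompact; put $R=\mc O(\Psi(L))$. Let $\Pi=\pi'\otimes_\C R$ carry the twisted $L$-action $\ell\mapsto\pi'(\ell)\otimes\mr{ev}_\ell$; it is an admissible $\mc H(L)\otimes R$-module, free over $R$. Choose a bounded resolution $P^\bullet\to\pi$ by finitely generated projective $\mc H(L)$-modules, which is possible since $\pi$ is admissible and $L$ has finite cohomological dimension, working inside $\mc H(L,K)$ for a good compact open $K$ (Noetherian of finite global dimension). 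Then $\mr{Hom}_{\mc H(L)}(P^\bullet,\Pi)$ is a bounded complex of finitely generated projective $R$-modules with $H^n=\mr{Ext}^n_{\mc H(L)}(\pi,\Pi)$, and specialising it at $\chi\in\Psi(L)$ computes $\mr{Ext}^n_L(\pi,\pi'\otimes\chi)$. Taking Euler characteristics of the specialised complex and using that over a field the alternating sum of cohomology dimensions equals that of the terms gives
\[
EP_L(\pi,\pi'\otimes\chi)=\sum_n(-1)^n\mr{rank}_R\,\mr{Hom}_{\mc H(L)}(P^n,\Pi),
\]
which is independent of $\chi$. Since $\dim\Psi(L)\ge 1$, for generic $\chi$ the central characters of $\pi$ and $\pi'\otimes\chi$ on the maximal split central torus of $L$ differ, so the right-hand side vanishes; hence $EP_L(\pi,\pi')=0$. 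This deformation/rank argument is what I expect to be the technical heart: one has to check that $\Pi$ is flat over $R$, that $\mr{Hom}(P^n,\Pi)$ is $R$-projective, and that specialisation commutes with $\mr{Hom}(P^\bullet,-)$.

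For (a): normalised parabolic induction $\mc I_P^L$ is exact and has exact adjoints on both sides, namely it is right adjoint to the Jacquet functor $r_P^L$ (Frobenius reciprocity) and left adjoint to $r_{\bar P}^L$ (Bernstein's second adjointness). Hence $\mr{Ext}^n_L(\mc I_P^L V,W)\cong\mr{Ext}^n_M(V,r_{\bar P}^L W)$ and $\mr{Ext}^n_L(W,\mc I_P^L V)\cong\mr{Ext}^n_M(r_P^L W,V)$ for all $n$ and all admissible $W$, so $EP_L(\mc I_P^L V,W)=EP_M(V,r_{\bar P}^L W)$ and $EP_L(W,\mc I_P^L V)=EP_M(r_P^L W,V)$; both vanish because $EP_M\equiv 0$ when $Z(M)$ is noncompact. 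Thus $\mc I_P^L(G_\C(M))$ lies in the radical of $EP_L$.

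For (b) and (c): if $Z(L)$ is noncompact the vanishing gives $EP_L\equiv 0$, which is trivially Hermitian and positive semidefinite, with radical $G_\C(L)=\mc I_L^L(G_\C(L))$, the $P=M=L$ term of (a). If $Z(L)$ is compact then every proper Levi $M$ of $L$ has $Z(M)$ noncompact, so by Lemma \ref{lem:7.1} and (a), $G_\C(L)$ is spanned by irreducible tempered representations together with the sum $\sum_{M\subsetneq L}\mc I_P^L(G_\C(M))$, which lies in $\mr{rad}(EP_L)$. On virtual tempered characters $EP_L$ is given by \eqref{eq:7.1}, a positive combination of the Hermitian forms $(\rho,\rho')\mapsto\mr{tr}_\rho(r)\overline{\mr{tr}_{\rho'}(r)}$ which vanishes between different tempered blocks, hence Hermitian and positive semidefinite there; transporting this through the above decomposition yields (b) and shows that $\mr{rad}(EP_L)$ is the sum of $\sum_{M\subsetneq L}\mc I_P^L(G_\C(M))$ with the radicals of the restrictions of $EP_L$ to the individual tempered blocks. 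For a tempered block of $\sigma$, the $p$-adic analogue of the equivalence $\Phi_\xi$ of Theorem \ref{thm:KnappStein} identifies, via \eqref{eq:7.1}, the form $EP_L$ on that block with Reeder's elliptic pairing on $G_\C(\C[\mf R_\sigma,\kappa_\sigma])$ for the $\mf R_\sigma^*$-representation $\mr{Hom}(X^*(M),\R)$; by \cite[Proposition 2.2.2]{Ree} and the translation carried out in the proof of Corollary \ref{cor:radEP}, its radical corresponds to $\sum_{M\subseteq M'\subsetneq L}\mc I_{P'}^L(G_\C(M'))$, and all these $M'$ are proper Levis, hence of noncompact centre because $Z(L)$ is compact. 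Combining, $\mr{rad}(EP_L)=\sum_{P,M}\mc I_P^L(G_\C(M))$ with $P,M$ as in (a), which is (c). Besides the vanishing lemma, the point most likely to need care is verifying that the $p$-adic analogues of $\Phi_\xi$ and of Corollary \ref{cor:radEP} go through unchanged, and that all the adjunctions used descend to every $\mr{Ext}^n$ because the functors involved are exact.
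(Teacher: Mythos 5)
Your argument is correct, and its overall skeleton matches the paper's: a vanishing statement $EP_L\equiv 0$ for noncompact centre plus adjunction isomorphisms for (a), then Lemma \ref{lem:7.1}, the tempered formula \eqref{eq:7.1} and the mechanism of Corollary \ref{cor:radEP} for (b) and (c). The differences are in three technical choices. First, you re-prove the vanishing by a deformation/flatness argument over $\mc O(\Psi(L))$; the paper simply cites \cite[Claim 4.3]{Bez} (Bernstein's unramified-twisting argument, essentially the same idea), so your version makes this step self-contained at the cost of invoking the existence of bounded resolutions by finitely generated projectives (finite global dimension of the blocks), an input of the same depth as what is already implicit in the definition of $EP_L$. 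Second, for the left-radical half of (a) you use Bernstein's second adjointness $\mr{Ext}^n_L(\mc I_P^L V,W)\cong\mr{Ext}^n_M(V,r_{\bar P}^L W)$; the paper deliberately avoids that deep theorem and instead uses the contragredient duality $\mr{Ext}^n_L(V,W)\cong\mr{Ext}^n_L(\tilde W,\tilde V)$ together with $\widetilde{\mc I_P^L(V')}\cong\mc I_P^L(\tilde V')$ and ordinary Frobenius reciprocity, which buys a more elementary proof; your route is shorter but imports a heavier external result. Third, you obtain the Hermitian property (b) from the explicit formula \eqref{eq:7.1} on the tempered span plus cross-block orthogonality, whereas the paper deduces symmetry from unitarity of tempered representations and the same contragredient duality; both work, and your version correctly relies on (a) giving membership in \emph{both} the left and right radicals, which you do establish. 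Two small points of care: in the deformation step the correct logic is that $EP_L(\pi,\pi'\otimes\chi)$ is $\chi$-independent and vanishes for generic $\chi$ (so the constant, not literally ``the right-hand side'', is zero), and the cross-block orthogonality of tempered characters you use in (b)--(c) should be justified, as in the paper, by the vanishing of $\mr{Ext}^*_{\mc S(L)}$ between modules with different $Z(\mc S(L))$-characters via \eqref{eq:Meyer}.
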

\begin{proof}
(a) Since we do not yet know that $EP_L$ is Hermitian, we have to deal with both its left
and its right radical. According to \cite[Claim 4.3]{Bez}, an elementary argument which
Bezrukavnikov ascribes to Bernstein,
\begin{equation}\label{eq:7.2}
EP_L = 0 \text{ if } Z(L) \text{ is not compact.}
\end{equation}
Let $V,W$ be smooth $L$-representations and $V',W'$ admissible smooth $M$-representations.
By Frobenius reciprocity
\begin{equation}\label{eq:7.3}
\mr{Ext}^n_L (V, \mc I_P^L (W')) \cong \mr{Ext}^n_M (r_P^L (V),W') ,
\end{equation}
where $r_P^L$ denotes Jacquet's restriction functor. As $M$ is a proper Levi subgroup of $L$,
its center is not compact, so \eqref{eq:7.2} and \eqref{eq:7.3} show that $\mc I_P^L (W')$
is in the right radical of $EP_L$. Now we could use Bernstein's second adjointness theorem to
reach to same conclusion for the left radical, but we prefer to do without that deep result.

Let $\tilde V$ denote the contragredient representation of $V$, that is, the smooth part of
the algebraic dual space of $V$. The functor $V \mapsto \tilde V$ is exact and for admissible
representations $\tilde{\tilde V} \cong V$. Suppose that $V,W$ are admissible and that
\begin{equation}\label{eq:7.4}
0 \to W \to V_n \to \cdots \to V_1 \to V \to 0
\end{equation}
is an $n$-fold extension in Mod$(\mc H (L))$. The contragredience functor
yields $n$-fold extensions
\begin{align}
\label{eq:7.5} & 0 \to \tilde V \to \tilde V_1 \to \cdots \to \tilde V_n \to \tilde W \to 0 , \\
\label{eq:7.6} & 0 \to W \to \tilde{\tilde V}_n \to \cdots \to \tilde{\tilde V}_1 \to V \to 0 .
\end{align}
The existence of a natural inclusion $V_j \to \tilde{\tilde V}_j$ means that \eqref{eq:7.6} is equivalent to
\eqref{eq:7.4} in the sense of Yoneda extensions. Hence the functors $V_j \mapsto \tilde{\tilde V}_j$
and $V_j \mapsto \tilde V_j$ induce isomorphisms between the corresponding Yoneda Ext-groups.
Since Mod$(\mc H (L))$ has enough projectives, we may also phrase this with the derived functors
of $\mr{Hom}_L$:
\begin{equation}\label{eq:7.7}
\mr{Ext}^n_L (V,W) \cong \mr{Ext}^n_L (\tilde W, \tilde V) .
\end{equation}
By \eqref{eq:7.7}, \cite[Proposition 3.1.2]{Cas} and \eqref{eq:7.3}
\begin{multline*}
\mr{Ext}^n_L (\mc I_P^L (V'),W) \cong
\mr{Ext}^n_L \big( \tilde W,  \widetilde{\mc I_P^L (V')} \big) \cong
\mr{Ext}^n_L \big( \tilde W, \mc I_P^L (\tilde{V'}) \big) \cong
\mr{Ext}^n_M \big( r_P^L (\tilde W), \tilde{V'} \big) .
\end{multline*}
Now \eqref{eq:7.2} shows that $\mc I_P^L (V')$ is in the left radical of $EP_L$.\\
(b) By part (a) and Lemma \ref{lem:7.1} it suffices to check that $EP_L$ is symmetric for
tempered admissible $L$-representations. Since this pairing factors via the
Grothendieck group we may moreover replace every representation by its semisimplification.
Such tempered representations $V,W$ are unitary by \cite[Prop III.4.1]{Wal}, which implies
that $V$ (resp. $W$) is isomorphic to the contragredient of the conjugate representation
$\overline V$ (resp. $\overline W$). Using \eqref{eq:7.7} we conclude that
\[
EP_L (V,W) = EP_L (\overline V, \overline W)  =
EP_L \big( \tilde{\overline W},\tilde{\overline V} \big) = EP_L (W,V) .
\]
(c) By \eqref{eq:Meyer} tempered representations with different $Z(\mc S(L))$-characters are
orthogonal for $EP_L$. Thus we only have to check positive definiteness for the Grothendieck group
of finite length unitary $\mc S (L)$-representations with one fixed $Z(\mc S(L))$-character, modulo
the span of representations that are parabolically induced from the indicated Levi subgroups. In this
setting the proof of Corollary \ref{cor:radEP} applies, all the required properties of
R-groups are provided by \cite[Section 2]{Art}. That determines the radical, while \eqref{eq:7.1}
shows that $EP_L$ is positive semidefinite.
\end{proof}

Recall the elliptic pairing $e_L$ on $G_\C (L)$ from \cite{Kaz} and \eqref{eq:elliptic}.

\begin{thm}\label{thm:7.3}
Suppose that the local non-archimedean field underlying $L$ has characteristic 0. Then
$EP_L (\pi,\pi') = e_L (\pi,\pi')$ for all admissible $L$-representations.
\end{thm}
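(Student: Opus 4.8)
The strategy is to reduce the general statement to the tempered case already covered by \eqref{eq:7.1}, using that both pairings factor through the Grothendieck group and behave well under parabolic induction. By Proposition \ref{prop:7.2} we may assume $Z(L)$ is compact; if $Z(L)$ is not compact, then $C^{ell} = \emptyset$, so $e_L \equiv 0$, and \eqref{eq:7.2} gives $EP_L \equiv 0$, so the identity holds trivially. So assume $Z(L)$ compact. Both $e_L$ and $EP_L$ are Hermitian sesquilinear forms on $G_\C (L)$ (for $e_L$ this is immediate from \eqref{eq:elliptic}, for $EP_L$ it is Proposition \ref{prop:7.2}.b), and both vanish on the subspace $\sum_{P,M} \mc I_P^L (G_\C (M))$ where $M$ runs over proper Levi subgroups: for $EP_L$ this is Proposition \ref{prop:7.2}.a, and for $e_L$ it follows because the distributional character of a properly parabolically induced representation vanishes on $C^{ell}$ (an induced character is supported on conjugates of elements of $M$, none of which is elliptic in $L$). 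Hence both forms descend to the quotient $G_\C (L) \big/ \sum_{P,M} \mc I_P^L (G_\C (M))$, and by Lemma \ref{lem:7.1} this quotient is spanned by the classes of irreducible tempered representations.

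It therefore suffices to prove $EP_L (\pi,\pi') = e_L (\pi,\pi')$ for $\pi,\pi'$ irreducible tempered. Now both sides vanish unless $\pi$ and $\pi'$ occur in a common standard induced module $\mc I_P^L(\sigma)$: for $EP_L$ this is because representations with distinct $Z(\mc S(L))$-characters are orthogonal (this uses the Schwartz-algebra comparison \eqref{eq:Meyer} and \eqref{eq:ZS}-type reasoning); for $e_L$ it is the classical statement (recalled just before \eqref{eq:Arthur}) that the elliptic pairing of tempered characters from different blocks is zero. So fix $P,M,\sigma$ with $\pi,\pi' \prec \mc I_P^L(\sigma)$, and let $\rho = \mr{Hom}_L(\pi,\mc I_P^L(\sigma))$, $\rho' = \mr{Hom}_L(\pi',\mc I_P^L(\sigma))$ be the associated projective $\mf R_\sigma$-representations. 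By Theorem \ref{thm:3} (equation \eqref{eq:7.1}) we have
\[
EP_L(\pi,\pi') = |\mf R_\sigma|^{-1} \sum_{r \in \mf R_\sigma} |d(r)| \, \mr{tr}_\rho (r)\, \overline{\mr{tr}_{\rho'}(r)} ,
\]
and by Arthur's formula \cite[Corollary 6.3]{Art}, i.e. \eqref{eq:Arthur}, the right-hand side equals $e_L(\pi,\pi')$. This uses char$(\mathbb F) = 0$, which is the hypothesis of the theorem and is exactly what is needed for Arthur's local trace formula argument. Combining the two equalities gives $EP_L(\pi,\pi') = e_L(\pi,\pi')$ on the tempered block, and by bilinearity the identity extends to all of $G_\C(L)$, hence to all admissible $\pi,\pi'$.

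The main obstacle is not in the homological input—that is packaged in Theorem \ref{thm:3} and Proposition \ref{prop:7.2}—but in justifying that both pairings genuinely descend to the same quotient of $G_\C(L)$ and that the Langlands-classification reduction (Lemma \ref{lem:7.1}) is compatible with both of them. One must be careful that the reduction via the Langlands classification expresses an arbitrary irreducible as a $\Z$-linear combination of tempered irreducibles plus properly induced modules \emph{in the Grothendieck group}, and that this is exactly the combination on which $e_L$ and $EP_L$ are known to agree term by term; the induced terms drop out because they lie in the common radical. Once this bookkeeping is in place, the proof is a direct comparison of \eqref{eq:7.1} with \eqref{eq:Arthur}.
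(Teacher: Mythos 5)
Your proof is correct and follows essentially the same route as the paper: reduce via the common radical and the Langlands classification (Lemma \ref{lem:7.1}, Proposition \ref{prop:7.2}) to irreducible tempered representations, then compare \eqref{eq:7.1} with Arthur's formula \eqref{eq:Arthur}. The only minor variation is that you verify directly that $e_L$ kills properly induced classes (induced characters vanish on the regular elliptic set), whereas the paper invokes Kazhdan's description of the radical of $e_L$ together with Proposition \ref{prop:7.2}.c; both are fine.
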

\begin{proof}
According to \cite[Theorem A]{Kaz} and Proposition \ref{prop:7.2}.c the Hermitian forms $e_L$
and $EP_L$ have the same radical. Reasoning as in the proof of Proposition \ref{prop:7.2}.b, it suffices
to check the equality $EP_L (\pi,\pi') = e_L (\pi,\pi')$ for irreducible tempered $L$-representations
$\pi,\pi'$. This follows from \eqref{eq:7.1} and \cite[Corollary 6.3]{Art}.
\end{proof}
\vspace{1mm}

\end{document}